\numberwithin{equation}{section}
\newcommand\reallywidecheck[1]{%
\savestack{\tmpbox}{\stretchto{%
  \scaleto{%
    \scalerel*[\widthof{\ensuremath{#1}}]{\kern-.6pt\bigwedge\kern-.6pt}%
    {\rule[-\textheight/2]{1ex}{\textheight}}
  }{\textheight}%
}{0.6ex}}%
\stackon[1pt]{#1}{\scalebox{-0.8}{\tmpbox}}%
}
\definecolor{airforceblue}{rgb}{0.36, 0.54, 0.66}
\definecolor{amethyst}{rgb}{0.6, 0.4, 0.8}
\definecolor{applegreen}{rgb}{0.55, 0.71, 0.0}
\def\corAB{}
\def\corOZ{}
\definecolor{purple}{rgb}{0.9,0,0.8}
\newcommand{\abbr}[1]{{\sc\lowercase{#1}}}
\begin{document}

\newtheorem{theorem}{Theorem}
\newtheorem{proposition}[theorem]{Proposition}
\newtheorem{conjecture}[theorem]{Conjecture}
\newtheorem{corollary}[theorem]{Corollary}
\newtheorem{lemma}[theorem]{Lemma}
\newtheorem{dfn}{Definition}
\newtheorem{assumption}[theorem]{Assumption}
\newtheorem{claim}[theorem]{Claim}
\theoremstyle{definition}
\newtheorem{remark}[theorem]{Remark}

\numberwithin{theorem}{section}
\numberwithin{dfn}{section}


\newcommand{\B}{\mathbb{B}}
\newcommand{\R}{\mathbb{R}}
\newcommand{\T}{\mathcal{T}}
\newcommand{\C}{\mathbb{C}}
\newcommand{\D}{\mathbb{D}}
\newcommand{\G}{\mathcal{G}}
\newcommand{\Z}{\mathbb{Z}}
\newcommand{\Q}{\mathbb{Q}}
\newcommand{\E}{\mathbb E}
\renewcommand{\P}{\mathbb P}
\newcommand{\N}{\mathbb N}
\newcommand{\gd}{\mathfrak{d}}
\newcommand{\gb}{\mathfrak{b}}
\newcommand{\gL}{\mathfrak{L}}
\newcommand{\vep}{\varepsilon}
\newcommand{\cS}{\mathcal{S}}
\newcommand{\cI}{\mathcal{I}}
\newcommand{\cY}{\mathcal{Y}}
\newcommand{\cB}{\mathcal{B}}
\newcommand{\cM}{\mathcal{M}}
\newcommand{\cN}{\mathcal{N}}
\newcommand{\cA}{\mathcal{A}}
\newcommand{\cX}{\mathcal{X}}
\newcommand{\frakg}{\mathfrak{g}}
\newcommand{\supp}{{\mbox{\rm Supp }}}

\newcommand{\barray}{\begin{eqnarray*}}
\newcommand{\earray}{\end{eqnarray*}}

\newcommand{\dvec}[1]{ \llbracket #1 \rrbracket}

\newcommand{\Def}{:=}


\DeclareDocumentCommand \Pr { o }
{%
\IfNoValueTF {#1}
{\operatorname{Pr}  }
{\operatorname{Pr}\left[ {#1} \right] }%
}
\newcommand{\Prob}{\Pr}
\newcommand{\Exp}{\mathbb{E}}
\newcommand{\expect}{\mathbb{E}}
\newcommand{\1}{\one}
\newcommand{\Pto}{\overset{\mathbb{P}}{\to} }
\newcommand{\weakto}{\Rightarrow}
\newcommand{\lawequals}{\overset{\mathcal{L}}{=}}
\newcommand{\prob}{\Pr}
\newcommand{\pr}{\Pr}
\newcommand{\filt}{\mathscr{F}}
\newcommand{\ohadI}{\mathbbm{1}}
\DeclareDocumentCommand \one { o }
{%
\IfNoValueTF {#1}
{\ohadI }
{\ohadI\left\{ {#1} \right\} }%
}
\newcommand{\sgn}{\operatorname{sgn}}
\newcommand{\Bernoulli}{\operatorname{Bernoulli}}
\newcommand{\Binomial}{\operatorname{Binom}}
\newcommand{\Binom}{\Binomial}
\newcommand{\Poisson}{\operatorname{Poisson}}
\newcommand{\Exponential}{\operatorname{Exp}}

\newcommand{\Var}{\operatorname{Var}}
\newcommand{\Cov}{\operatorname{Cov}}


\newcommand{\Id}{\operatorname{Id}}
\newcommand{\diag}{\operatorname{diag}}
\newcommand{\tr}{\operatorname{tr}}
\newcommand{\proj}{\operatorname{proj}}
\newcommand{\Span}{\operatorname{span}}


\newcommand{\nicefrac}{\frac}
\newcommand{\half}{\frac12}
\DeclareDocumentCommand \JB { O{n} O{\lambda} } {J_{{#1}}({#2})}

\DeclareDocumentCommand \LP { O{\ESD} } {U_{ {#1} }}
\newcommand{\LPL}{ \LP[{\mu_N}] }

\newcommand{\ESD}{ L_N^{\model} }

\newcommand{\model}{\mathcal{M}}
\newcommand{\SVD}{\Sigma}
\newcommand{\LL}{\mathcal{L}}
\newcommand{\PI}{\Pi}
\DeclareDocumentCommand \PG { O{n} }
{
\mathfrak{S}_{{ #1 }}
}
\newcommand{\RS}{\mathcal{C}}

\newcommand{\TODO}[1]{ {\bf TODO: #1} }

\newcommand{\row}{X}
\newcommand{\col}{Y}
\newcommand{\srow}{x}
\newcommand{\scol}{y}
\newcommand{\csrow}{w}
\newcommand{\cscol}{z}
\newcommand{\COMP}[1]{ \check{#1} }
\newcommand{\gy}{\mathfrak y}
\newcommand{\gx}{\mathfrak x}
\newcommand{\gP}{\mathfrak{P}}
\newcommand{\gX}{\mathfrak{X}}
\newcommand{\gY}{\mathfrak{Y}}
\newcommand{\gc}{\mathfrak{c}}
\newcommand{\cL}{\mathcal{L}}
\newcommand{\gz}{\mathfrak{z}}
\newcommand{\gG}{\mathfrak{G}}
\newcommand{\gH}{\mathfrak{H}}
\newcommand{\gs}{\mathfrak{s}}
\newcommand{\wc}{\reallywidecheck}

\title[Outliers of random perturbation of Toeplitz] {Outliers of random perturbations of Toeplitz matrices with finite symbols}
\author[A.\ Basak]{Anirban Basak$^*$}
 \address{$^*$International Centre for Theoretical Sciences
 \newline\indent Tata Institute of Fundamental Research 
 \newline\indent Bangalore 560089, India}
\author[O.\ Zeitouni]{Ofer Zeitouni$^{\mathsection}$}
\address{$^{\mathsection}$Department of Mathematics, Weizmann Institute of Science 
 \newline\indent POB 26, Rehovot 76100, Israel
 \newline \indent and
 \newline\indent Courant Institute, New York University
 \newline \indent 251 Mercer St, New York, NY 10012, USA}


\begin{abstract}
Consider an $N\times N$ Toeplitz matrix $T_N$ with symbol ${\bm a }(\lambda) := \sum_{\ell=-d_2}^{d_1} a_\ell \lambda^\ell$, perturbed by 
an additive noise matrix $N^{-\gamma} E_N$,
where the entries of $E_N$ are centered  
i.i.d.~random variables of unit variance and $\gamma>1/2$. 
It is known that the empirical measure of eigenvalues 
of the perturbed matrix converges weakly, as $N\to\infty$,  to the law of ${\bm a}(U)$, where $U$ is distributed uniformly on $\mathbb{S}^1$.  
In this paper, we consider the outliers, i.e.~eigenvalues that are at a positive ($N$-independent) distance from ${\bm a}(\mathbb{S}^1)$. We prove that there are no outliers outside ${\rm spec} \, T({\bm a})$, the spectrum of the limiting Toeplitz operator, with probability approaching one, as $N \to \infty$. \corAB{In contrast,} in ${\rm spec}\, T({\bm a})\setminus {\bm a}({\mathbb S}^1)$ the process of outliers converges to the point process described by the zero set of certain random \corAB{analytic} functions. The limiting random \corAB{analytic} functions can be expressed as linear combinations of the determinants of finite sub-matrices of an infinite dimensional matrix, whose entries are i.i.d.~having the same law as that of $E_N$. The coefficients in the linear combination depend on the roots of the polynomial $P_{z, {\bm a}}(\lambda):= ({\bm a}(\lambda) -z)\lambda^{d_2}=0$ and semi-standard Young Tableaux with shapes determined by the number of roots of $P_{z,{\bm a}}(\lambda)=0$ that are greater than one in moduli.
\end{abstract}
\date{\today}
\maketitle

\section{Introduction}
Let ${\bm a} : \C \mapsto \C$ be a Laurent polynomial. That is, 
\begin{equation}\label{eq:laurent-poly}
{\bm a }(\lambda) := \sum_{\ell=-d_2}^{d_1} a_\ell \lambda^\ell, \qquad \lambda \in \C,
\end{equation}
for some $d_1, d_2 \in \N$ and some sequence of 
complex numbers $\{a_\ell\}_{\ell=-d_2}^{d_1}$, so that $d_1>0$\footnote{We remark that if one is interested in the case where $d_1=0$ but $d_2>0$,
one may simply consider, when computing spectra, the transpose of $T_N$ or of $M_N=T_N+\Delta_N$. For this reason, the restriction to $d_1>0$ does not reduce 
generality.}.
Define
$T({\bm a}): \C^\N \mapsto \C^\N$ to be the Toeplitz operator with symbol ${\bm a}$, that is the operator  given by
\[
(T({\bm a}) \underline x)_i := \sum_{\ell=-d_2}^{d_1} a_\ell x_{\ell+i}, \qquad \text{ for } i \in \N, \quad \text{ where } \underline x := (x_1, x_2, \ldots) \in \C^\N,
\]
and we set $x_i =0$ for non-positive integer values of $i$. 
For $N \in \N$, we denote by $T_N({\bm a})$ the natural $N$-dimensional truncation of the infinite dimensional Toeplitz operator $T({\bm a})$. As a matrix of dimension $N \times N$, we have 
(for $N>\max(d_1,d_2)$) that
\[
T_N:=T_N({\bm a}):=\begin{bmatrix}
a_{0} & a_{1} & a_2 & \cdots & \cdots & 0\\
a_{-1}& a_{0} & a_1 &\ddots &  & \vdots\\
a_{-2} &a_{-1}& \ddots & \ddots & \ddots & \vdots \\
\vdots & \ddots & \ddots & \ddots & a_1 & a_2\\
\vdots & & \ddots & a_{-1} & a_0 & a_1\\
0 & \cdots & \cdots & a_{-2} &a_{-1} & a_{0}
\end{bmatrix}.
\]
In general, $T_N$ is not a normal matrix, and thus its spectrum can be sensitive
to small perturbations. In this paper, we will be interested in the spectrum of 
$M_N:=T_N+\Delta_N$, where $\Delta_N$ is a ``vanishing'' random perturbation, and especially in
outliers, i.e.~eigenvalues that are at positive distance from the limiting spectrum.

Let $L_N$ denote the empirical measure of eigenvalues $\{\lambda_i\}_{i=1}^N$ of $M_N$, i.e.
$L_N:=N^{-1}\sum_{i=1}^N \delta_{\lambda_i}$, where $\delta_x$ is the Dirac measure at $x$.
It has been shown in \cite{BPZ-non-triang} that under a fairly general condition on the (polynomially vanishing)
noise matrix $\Delta_N$, $L_N$ converges (weakly, in probability) to
${\bm a}_* {\rm Unif}(\mathbb{S}^1)$, 
where ${\rm Unif}(\mathbb{S}^1)$ denotes the Haar measure on $\mathbb{S}^1:=\{z \in \C: |z| =1\}$.
That is, the limit is the law of
${\bm a}(U)$ where $U \sim {\rm Unif}(\mathbb{S}^1)$ (see also \cite{SV3} for the case of Gaussian noise).
However, simulations (see Figure \ref{fig:1}) suggest
that although the bulk of the eigenvalues approach ${\bm a}(\mathbb{S}^1)$, as $N \to \infty$, there are a few eigenvalues of $M_N$
that wander around outside a small neighborhood of 
${\bm a}(\mathbb{S}^1)$. Following standard terminology, we call them outliers. The goal of this paper is to characterize these outliers.

\begin{figure}[h]
\begin{center}
  \includegraphics[width=2.2in]{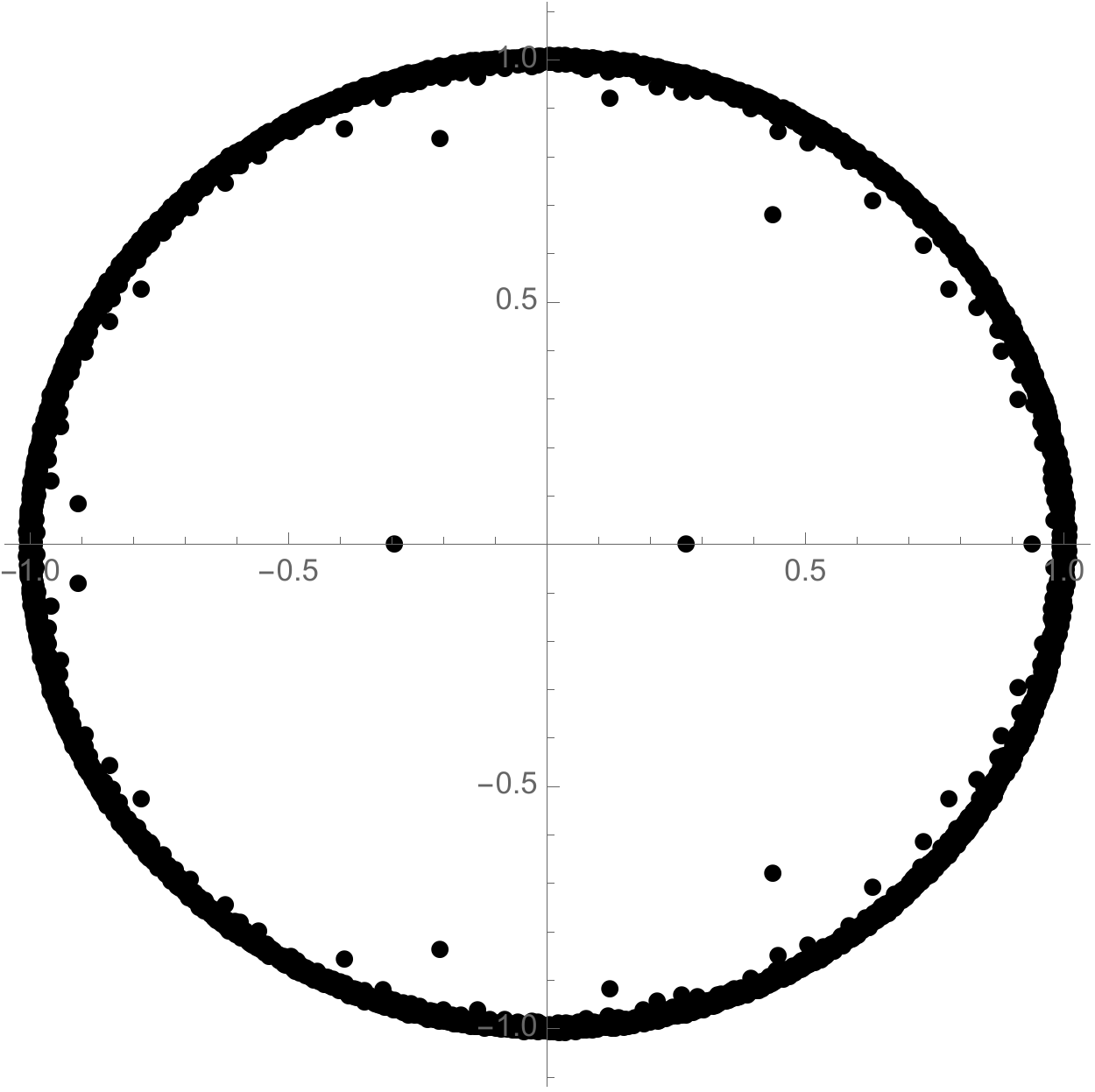}
  \qquad
  \includegraphics[width=2.2in]{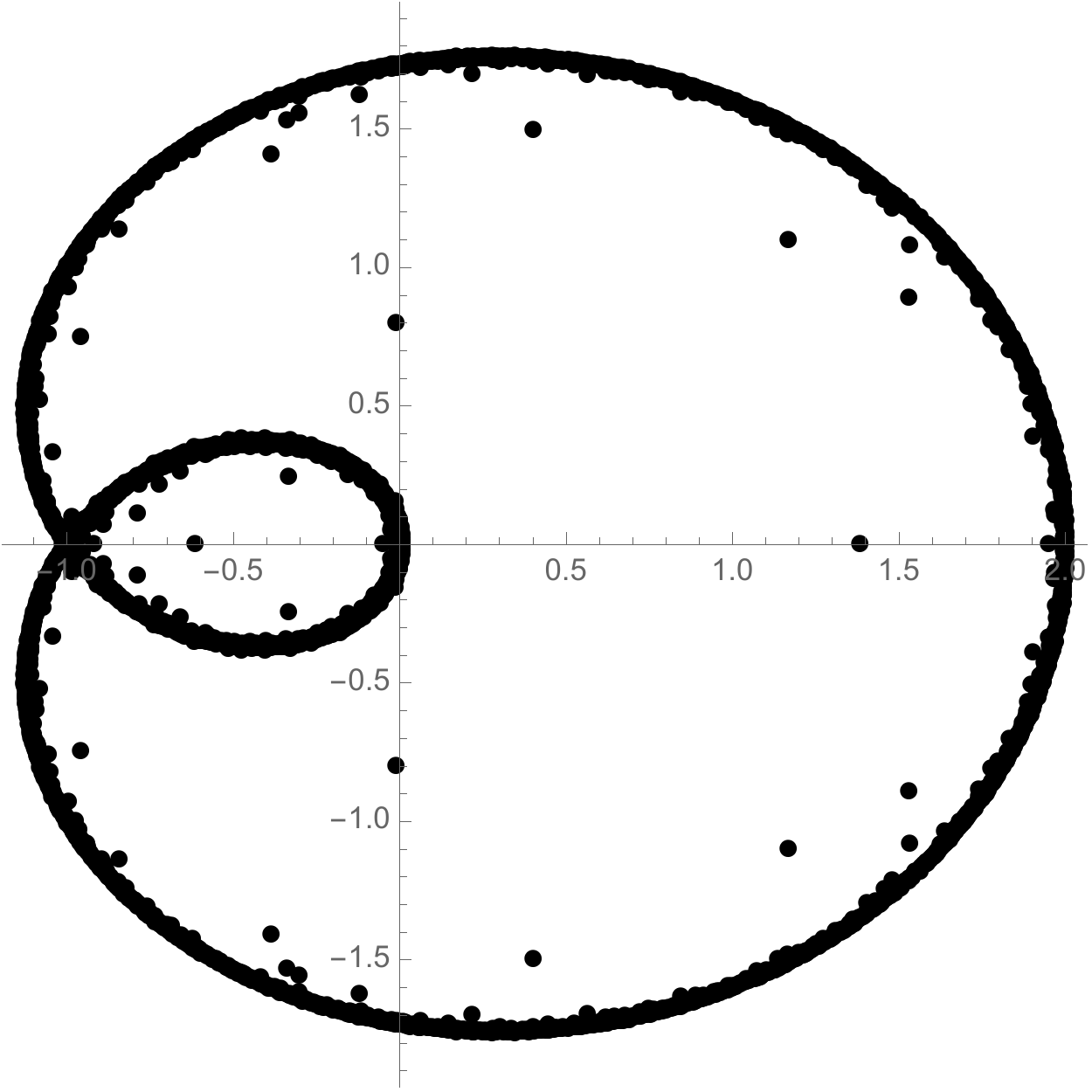}
  \includegraphics[width=2.2in]{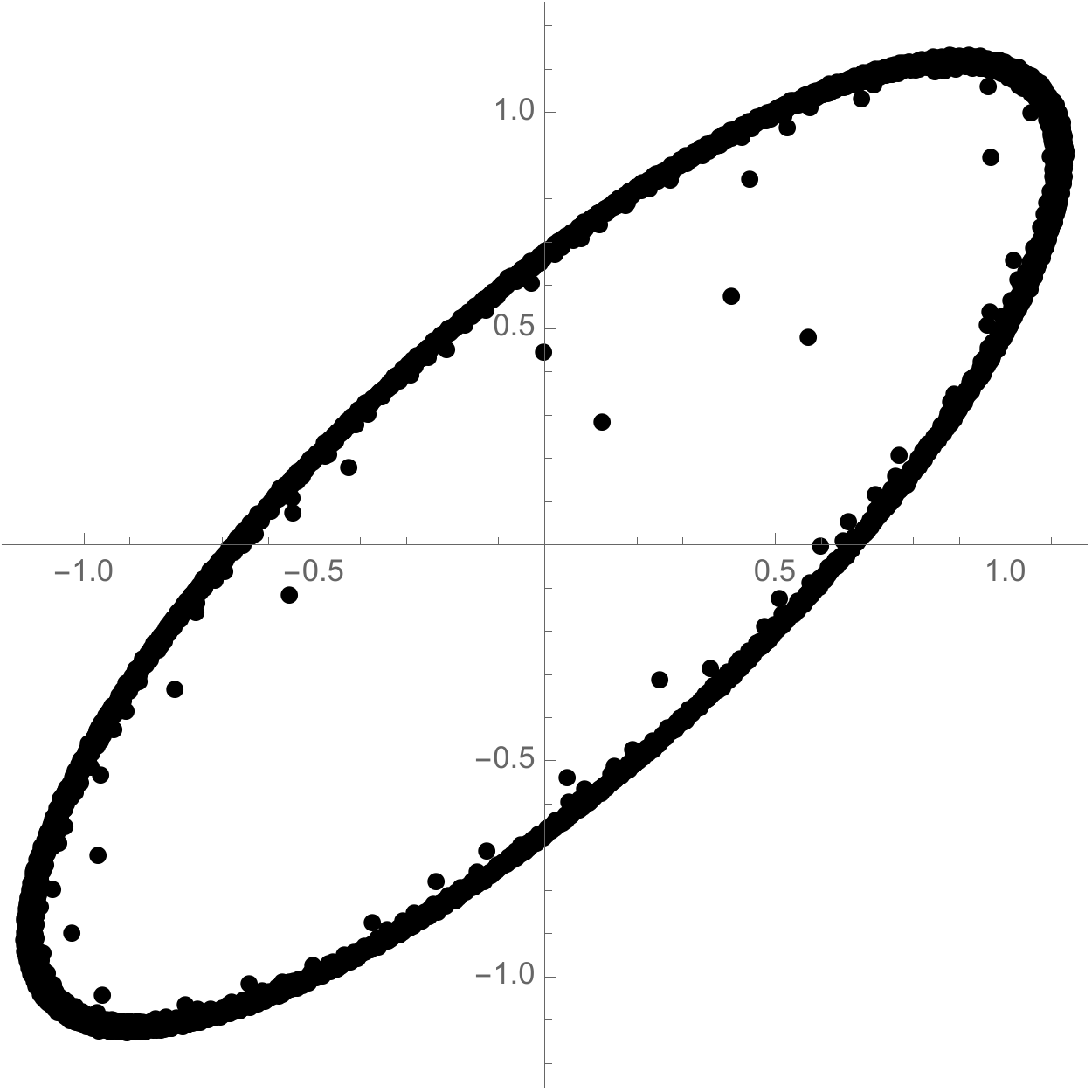}
\caption{
 The eigenvalues of $T_N({\bm a})+N^{-\gamma} E_N$, with $N=4000$, $\gamma=0.75$, $E_N$ a real Ginibre matrix, and various symbols ${\bm a}$. On the top
 left,
 ${\bm a}(\lambda)=\lambda$. On the top right, ${\bm a}(\lambda)=\lambda+\lambda^2$. On the
 bottom, ${\bm a}(\lambda)=\lambda^{-1} + 0.5 i \lambda$.
}
\label{fig:1}
\end{center}
\end{figure}

In  simulations depicted in 
Figure \ref{fig:1}, all eigenvalues of 
$M_N=T_N({\bm a}) + N^{-\gamma} E_N$, where $\gamma =0.75$ and $E_N$ is a standard real Ginibre matrix, are inside the unit disk, the lima\c con, and the ellipse when the symbol is ${\bm a}(\lambda)=\lambda, \lambda+ \lambda^2$, and $\lambda^{-1} + 0.5 i \lambda$, respectively. It follows from standard results on the spectrum of Toeplitz operators, e.g.~\cite[Corollary 1.12]{bottcher-finite-band} that these regions are precisely ${\rm spec}\ T({\bm a})$, the spectrum of the Toeplitz operator 
$T({\bm a})$ acting on $L^2(\mathbb{N})$.
Thus, Figure \ref{fig:1} suggests that there are no outliers outside ${\rm spec} \ T({\bm a})$. In our first result,  Theorem \ref{thm:no-outlier} below, we confirm this and prove the universality of this phenomenon for any finitely banded Toeplitz matrix, $\gamma >\frac12$, and under a minimal assumption on the entries of the noise matrix. 

We introduce the following standard notation: for $\D \subset \C$ and $\vep >0$, let $\D^\vep$ be the $\vep$-fattening of $\D$. That is, $\D^\vep:=\{z \in \C: {\rm dist} (z, \D) \le \vep\}$, where ${\rm dist}(z, \D):= \inf_{z' \in \D} |z-z'|$. Further  denote $\D^{-\vep}:= ((\D^c)^\vep)^c$. 

\begin{theorem}\label{thm:no-outlier}
  Let ${\bm a}$ be a Laurent polynomial. {Let}
  $T({\bm a})$ {denote}
the Toeplitz operator on $L^2(\mathbb{N})$ with symbol ${\bm a}$,
 and let $T_N({\bm a})$  be its natural $N$-dimensional projection. 
Assume $\Delta_N= N^{-\gamma} E_N$ for some $\gamma >\frac12$, where the entries of $E_N$ are independent (real or complex-valued) with zero mean and unit variance.
 Further, let $L_N$ 
denote the empirical measure of eigenvalues of $T_N({\bm a})+\Delta_N$. Fix $\vep>0$. Then,
\begin{equation}\label{eq:no-outlier}
\P\left( L_N\left(\left( \left({\rm spec} \ T({\bm a})\right)^c\right)^{-\vep}\right) =0\right) \to 1 \qquad \text{ as } N \to \infty.
\end{equation}
\end{theorem}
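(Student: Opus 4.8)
The plan is to show that with high probability no eigenvalue of $M_N = T_N({\bm a}) + \Delta_N$ lies in the open set $\Omega := ((\operatorname{spec} T({\bm a}))^c)^{-\vep}$, which is a compact set of complex numbers bounded away from $\operatorname{spec} T({\bm a})$. The standard route is through resolvent/smallest-singular-value bounds: for $z \notin \operatorname{spec} M_N$ one has $z \notin \operatorname{spec} M_N$ iff $M_N - z$ is invertible, so it suffices to prove that
\begin{equation}\label{eq:plan-goal}
\P\left( \inf_{z \in \Omega} s_{\min}(M_N - z) > 0 \right) \to 1,
\end{equation}
where $s_{\min}$ denotes the least singular value. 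The key structural input is that $T_N({\bm a}) - z = T_N({\bm a} - z)$ and, because $z$ is at positive distance from $\operatorname{spec} T({\bm a})$, the symbol ${\bm a} - z$ has winding number zero about the origin and no zeros on $\mathbb{S}^1$; hence $T({\bm a}-z)$ is invertible as an operator on $L^2(\mathbb{N})$ with a uniformly (in $z \in \Omega$) bounded inverse. By the finite-section theory for banded Toeplitz operators (e.g.\ the references to B\"ottcher cited in the excerpt), $T_N({\bm a}-z)$ is then invertible for $N$ large with $\|T_N({\bm a}-z)^{-1}\|$ bounded uniformly in $z \in \Omega$ and $N$ large. This gives a deterministic lower bound $s_{\min}(T_N({\bm a}) - z) \ge c > 0$ uniformly on $\Omega$.

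Next I would handle the perturbation. Writing $M_N - z = (T_N({\bm a}) - z)(I + (T_N({\bm a})-z)^{-1}\Delta_N)$, it is enough to show $\|(T_N({\bm a})-z)^{-1}\Delta_N\| \le \|(T_N({\bm a})-z)^{-1}\| \cdot \|\Delta_N\| \le c^{-1}\|\Delta_N\| < 1$ with high probability. Since $\Delta_N = N^{-\gamma}E_N$ with $E_N$ having i.i.d.\ (or independent) centered unit-variance entries, a standard operator-norm bound for random matrices (truncation of large entries plus the Bai--Yin-type or moment-method estimate, or simply a crude net bound since we only need $\|\Delta_N\| = o(1)$) gives $\|\Delta_N\| = O(N^{1/2-\gamma})\cdot \text{polylog} = o(1)$ with probability $\to 1$, using $\gamma > 1/2$. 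Here one has to be slightly careful because only a unit-variance assumption is imposed (no higher moments), so the clean way is to truncate: write $E_N = E_N' + E_N''$ where $E_N'$ has entries truncated at level $N^{\delta}$ for small $\delta$ and recentered; then $\|N^{-\gamma}E_N'\| = o(1)$ by the bounded-entry operator norm bound, while $E_N''$ is zero on all entries with high probability (since $\E|E_{ij}|^2 = 1$ forces $\P(|E_{ij}| > N^\delta) = o(N^{-2\delta})$, and there are $N^2$ entries — this requires $\delta$ chosen so that $N^2 \cdot \P(|E_{ij}|>N^\delta) \to 0$, which follows from uniform integrability of $\{|E_{ij}|^2\}$; if the entries are i.i.d.\ this is immediate, and for merely independent entries one needs a uniform integrability hypothesis, which is implicitly available here).

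Combining: on the high-probability event where $\|\Delta_N\| < c$, the factor $I + (T_N({\bm a})-z)^{-1}\Delta_N$ is invertible for every $z \in \Omega$ (uniformly), hence $M_N - z$ is invertible for every $z \in \Omega$, hence $L_N(\Omega) = 0$. Since $\Omega = ((\operatorname{spec} T({\bm a}))^c)^{-\vep} \supseteq \big(((\operatorname{spec} T({\bm a}))^c)^{\vep}\big)$... more precisely $((\cdot)^c)^{-\vep}$ is exactly the set of points at distance $> \vep$ from $\operatorname{spec} T({\bm a})$, this is the claimed statement \eqref{eq:no-outlier}.

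The main obstacle, and the only genuinely non-trivial deterministic ingredient, is establishing the \emph{uniform} invertibility of the finite sections $T_N({\bm a}-z)$ with a bound on $\|T_N({\bm a}-z)^{-1}\|$ that is uniform over $z \in \Omega$ and over large $N$. Pointwise invertibility for fixed $z$ with zero winding number is classical finite-section theory, but one must upgrade it to a uniform statement: this can be done by a compactness argument on $\Omega$ combined with the continuity of the relevant quantities in $z$, or more robustly by exhibiting an explicit approximate inverse built from the Wiener--Hopf factorization of ${\bm a}-z$ (factor ${\bm a}(\lambda)-z = \lambda^{-d_2} c \prod (\lambda - \alpha_i(z)) $, separate roots inside vs.\ outside $\mathbb{S}^1$; since winding is zero there are exactly $d_2$ inside and $d_1$ outside, and the factorization is stable and bounded as $z$ ranges over the compact set $\Omega$). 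On the probabilistic side, the truncation argument for merely-independent entries requires care, but is routine given uniform integrability of the squared entries.
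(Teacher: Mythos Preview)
Your approach is the resolvent/Neumann-series route, which the paper explicitly discusses in Remark~\ref{rmk:resolvent} and identifies as \emph{insufficient} under the stated hypotheses. The genuine gap is the claim that $\|\Delta_N\| = \|N^{-\gamma}E_N\| \to 0$ in probability when the entries have only finite second moment. Your truncation argument does not close: from $\E|E_{ij}|^2 < \infty$ you get $\P(|E_{ij}| > N^\delta) = o(N^{-2\delta})$, but this does \emph{not} imply $N^2 \P(|E_{ij}| > N^\delta) \to 0$ unless $\delta > 1$. For $\delta \le 1$ the union bound over $N^2$ entries fails, and for $\delta > 1$ the truncated matrix $E_N'$ can still have operator norm of order $N^\delta \gg \sqrt{N}$. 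Concretely, if $\P(|E_{ij}| > t) \sim c/(t^2 \log t)$ (finite variance, infinite fourth moment), then the largest entry of $E_N$ is typically of order $N/\sqrt{\log N}$, so $\|E_N\| \ge N/\sqrt{\log N}$ and $\|N^{-\gamma}E_N\| \ge N^{1-\gamma}/\sqrt{\log N} \to \infty$ for $\gamma \in (1/2,1)$. Thus for such $\gamma$ and such entry laws, the Neumann-series step $\|(T_N({\bm a})-z)^{-1}\Delta_N\| < 1$ simply does not hold with high probability, and your argument breaks.

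The paper circumvents this by working with the determinant rather than the resolvent. It expands $\det(T_N({\bm a}(z)) + \Delta_N) = \sum_{k=0}^N P_k(z)$ where $P_k$ is degree $k$ in the noise entries, proves a uniform lower bound on $|P_0(z)| = |\det T_N({\bm a}(z))|$ over $\cS_0^{-\vep}$ via a Schur-polynomial identity (Lemma~\ref{lem:toep-det-unif-lbd}), and shows $\sum_{k\ge 1} P_k(z)$ is small in $L^2$ uniformly in $z$ using only second moments of the entries (Lemma~\ref{lem:rouche-multi-gr-d_0}); Rouch\'e's theorem then gives the conclusion. The point is that second moments suffice to control the \emph{variance} of the polynomials $P_k$, even when they do not control the operator norm of $E_N$. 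A secondary issue: your $\Omega$ is not compact (it is unbounded), so even the deterministic uniform resolvent bound needs a separate argument for large $|z|$; the paper handles this via a Hilbert--Schmidt bound, see \eqref{eq:no-outlier-22}.
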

{In the terminology of  \cite{SV3},
$\C \setminus {\rm spec} \, T({\bm a})$ is a 
{\em zone of spectral stability} for $T_N({\bm a})$.}
The following remarks discuss some generalizations and extensions of Theorem \ref{thm:no-outlier}.

\begin{remark}
For clarity of presentation, in Theorem \ref{thm:no-outlier} we assume the entries of $E_N$ to have a unit variance. The 
proof shows that the same conclusion continues to hold under the assumption that the entries of $E_N$ are jointly independent (possibly having $N$-dependent distributions), and have zero mean and uniformly bounded second moment, i.e.
\begin{equation}\label{eq:gen-cond}
\E [E_N(i,j)] =0 \ \forall i,j  \in \{1,2,\ldots, N\} \qquad \text{ and } \qquad \sup_{N \in \N} \, \max_{i,j=1}^N \E[E_N(i,j)^2] < \infty,
\end{equation}
where $E_N(i,j)$ denotes the $(i,j)$-th entry of $E_N$.

We emphasize that under the general assumption \eqref{eq:gen-cond} on the entries of $E_N$,
one may not have the convergence of the empirical measure of the eigenvalues of $T_N({\bm a})+N^{-\gamma} E_N$ to ${\bm a}(U)$. Theorem \ref{thm:no-outlier} shows that even under such perturbations there are no eigenvalues in the complement of ${\rm spec} \, T({\bm a})$. 
\end{remark}

\begin{remark}\label{rmk:poly-vanish}
  The proof of Theorem \ref{thm:no-outlier} shows that {its conclusion}
  continues to hold if $\Delta_N = \mathfrak{a}_N E_N$, with
  any sequence $\{\mathfrak{a}_N\}_{N \in \N}$ such that $\mathfrak{a}_N = o(N^{-1/2})$ (the standard notation $a_N = o(b_N)$ means that $\lim_{N \to \infty} a_N/b_N=0$). For {conciseness, we only consider $\mathfrak{a}_N=
  N^{-\gamma}$.}
\end{remark}

\begin{remark}\label{rmk:varying-eps}
Theorem \ref{thm:no-outlier} shows that, with probability approaching one, all eigenvalues of the random perturbation of $T_N({\bm a})$ are contained in an $\vep$-fattening of the spectrum of 
the infinite dimensional Toeplitz operator $T({\bm a})$. Here we have chosen to work with a fixed parameter $\vep>0$. With some additional efforts it can be shown that in \eqref{eq:no-outlier} one can allow $\vep =\vep_N$ to decay to zero slowly with $N$. We do not pursue this direction.
\end{remark}

\begin{remark}\label{rmk:spec-rad}
The ideas used to prove Theorem \ref{thm:no-outlier} also show that 
{the sequence $\{\varrho_N\}_{N \in \N}$ is} tight, where $\varrho_N$ is the spectral radius (maximum modulus eigenvalue) of $E_N/\sqrt{N}$, with $E_N$ as in Theorem \ref{thm:no-outlier}. See Proposition \ref{prop-specrad}. It has been conjectured in \cite{bordenave} that the spectral radius of a matrix with i.i.d.~entries of zero mean and variance $1/N$ converges to one, in probability. Thus Proposition \ref{prop-specrad} proves a weaker form of this conjecture.  
\end{remark}

\begin{remark}
\label{rmk:resolvent}
\corOZ{In \cite[\corAB{Proposition 3.13}]{SV3}, the authors show that \corAB{the resolvent of $T_N({\bm a})$ remains bounded in compact subsets of $\left({\rm spec} \ T({\bm a})\right)^c$}.
As noted in \cite{SV3}, 
this implies Theorem \ref{thm:no-outlier} in the Gaussian case because
in that case, the operator norm of $N^{-\gamma} E_N$ is bounded with high probability. For more general perturbations possessing only \corAB{four} or less moments, the operator norm of $N^{-\gamma} E_N$ is in general not bounded, for some $\gamma\in (1/2,1)$, and a similar argument fails.}
\end{remark}


We turn to the identification of 
the limiting law of the random point process consisting
of the outliers of $M_N$,
which by
Theorem  \ref{thm:no-outlier} are contained in  ${\rm spec} \ T({\bm a})$. Before stating the 
results, we review 
standard
definitions of random point processes and their notion of convergence, taken from
\cite{DV}.

For $\D \subset \C$ we let $\cB(\D)$  denote the Borel $\sigma$-algebra on it. Recall that a Radon measure on $(\D, \cB(\D))$ is a measure that is finite for all Borel compact subsets of $\D$. 
\begin{dfn}\label{dfn:conv}
A random point process $\zeta$ on an open  set $\D \subset \C$ is a probability measure on the space of all non-negative integer valued Radon measures on $(\D, \cB(\D))$. Given a sequence of random point processes $\{\zeta_n\}_{n \in \N}$ on $(\D, \cB(\D))$, we say that $\zeta_n$ converges weakly to a (possibly random) point process $\zeta$ on the same space, and write $\zeta_n \Rightarrow \zeta$, if for all compactly supported bounded real-valued continuous functions $f$,  
\[
\int f(z) d\zeta_n(z) \to \int f(z) d\zeta(z), \qquad \text{ in distribution, as } n \to \infty,
\]
when viewed as real-valued Borel measurable random variables.
\end{dfn}

Next we proceed to describe the limit. We will see below that the limit is given by the zero set a random analytic function, where the description of the limiting random analytic function differs across various regions in the complex plane. This necessitates the following definition.

\begin{dfn}[Description of regions]\label{dfn:cS-gd}
For any 
Laurent polynomial ${\bm a}$ as in \eqref{eq:laurent-poly}, set $P_{z,{\bm a}}(\lambda):=\lambda^{d_2} ({\bm a}(\lambda)-z)$. Writing $d:=d_1+d_2$,  let $\{-\lambda_\ell(z)\}_{\ell=1}^d$  be the roots of the equation $P_{z,{\bm a}}(\lambda)=0$ arranged in an non-increasing order of their moduli.
For $\gd$ an integer such that $-d_2 \le \gd  \le d_1$, set
\[
\cS_\gd:=\{ z \in \C\setminus {\bm a}(\mathbb{S}^1): d_0(z) = d_1 - \gd, \text{ where } d_0(z) \text{ such that } |\lambda_{d_0(z)}(z)| > 1 > |\lambda_{d_0(z)+1}(z)|\},
\]
where for convenience we set $\lambda_{d+1}(z) = 0$ and $\lambda_0(z)=\infty$ for all $z \in \C$.  
\end{dfn}

Note that for $z \in \C \setminus {\bm a}(\mathbb{ S}^1)$ all roots of the polynomial $P_{z,{\bm a}}(\lambda)=0$ have moduli different from one. Therefore for such values of $z$, $d_0(z)$ is well defined, and hence so is $\cS_\gd$. 

By construction, $\cup_{\gd=-d_2}^{\gd_1} \cS_\gd = \C\setminus {\bm a}(\mathbb{S}^1)$. 
Since, by \cite{BPZ-non-triang}, the bulk of the eigenvalues of $M_N$
approaches ${\bm a}(\mathbb{S}^1)$ in the large $N$ limit, to study the outliers we only need to analyze the roots of $\det(T_N({\bm a}) + \Delta_N - z\Id_N) =0$ that are in $\cup_{\gd =-d_2}^{d_1} \cS_\gd$. 

Before describing the limiting random field let us mention some relevant properties of the regions $\{\cS_\gd\}_{\gd=-d_2}^{d_1}$.  As  ${\bm a}(\cdot)$ is a Laurent polynomial satisfying \eqref{eq:laurent-poly} it is straightforward to check that for $z \in \cS_\gd$ we have ${\rm wind}_z \, ({\bm a}) =\gd$, where ${\rm wind}_z ({\bm a})$ denotes the winding number about $z$ of the closed curve induced by the map $\lambda \mapsto {\bm a}(\lambda)$ for $\lambda \in \mathbb{S}^1$. Thus $\{\cS_\gd\}_{\gd=-d_2}^{d_1}$ splits the complement of ${\bm a}(\mathbb{S}^1)$ according to the winding number. Moreover, as will be seen later, the description of the law of the limiting random point process differs across the regions $\{\cS_\gd\}_{\gd=-d_2}^{d_1}$. 

Furthermore, from  \cite[Corollary 1.12]{bottcher-finite-band} we have that 
\[
{\rm spec} \, T({\bm a}) = {\bm a}(\mathbb{S}^1) \cup \{ z \in \C \setminus {\bm a}(\mathbb{S}^1): {\rm wind}_z ({\bm a}) \ne 0\}.
\]
It was noted above that ${\rm wind}_z \, ({\bm a}) =0$ for $z \in \cS_0$. So 
\begin{equation}\label{eq:cS_0-spec}
\cS_0 = ({\rm spec} \, T({\bm a}))^c. 
\end{equation}
Hence in light of Theorem \ref{thm:no-outlier} we conclude that
to find the limiting law of the outliers it suffices to analyze the 
eigenvalues of $T_N({\bm a})+\Delta_N$ that are in $\cup_{\gd \ne 0} \cS_\gd$. 

Finally, we note that from the continuity of the roots of $P_{z,{\bm a}}(\lambda)=0$ in $z$ in the symmetric product topology (see \cite[Appendix 5, Theorem 4A]{whitney}) it follows that the regions $\{\cS_\gd\}_{\gd=-d_2}^{d_1}$ are open, and hence by Definition \ref{dfn:conv} the random point processes on those regions are well defined. 


\begin{remark}
We highlight that one or more of the regions $\{\cS_\gd\}_{\gd=-d_2}^{d_1}$ may be empty. For example, when ${\bm a}(\lambda)= \lambda^{-1} + 0.5 i \lambda$ the product of the moduli of the roots of $P_{z,{\bm a}}(\lambda)=0$ is $1/0.5 =2$. So both roots of $P_{z,{\bm a}}(\lambda)=0$ cannot be less than one in moduli. Therefore $\cS_1=\emptyset$ in this case. It can be checked that under this same set-up $\cS_0$ and $\cS_{-1}$ are the outside and the inside of the ellipse, respectively, in the bottom panel of Figure \ref{fig:1}.
Furthermore, if ${\bm a}(\lambda)= a_1 \lambda + a_{-1} \lambda^{-1}$ with $|a_1| = |a_{-1}|$ then both $\cS_1$ and $\cS_{-1}$ are empty.
\end{remark}

Fix an integer $\gd \ne 0$ such that $-d_2 \le \gd \le d_1$. As mentioned above, the limiting random point process in $\cS_\gd$ will given by the zero set of a
 random analytic function $\gP_\gd^\infty(\cdot)$ to be defined on $\cS_\gd$. The function $\gP_\gd^\infty(\cdot)$ can be written as a linear combination of determinants of $|\gd| \times |\gd|$ sub-matrices of the noise matrix, where the coefficients depend on the roots of the polynomial $P_{z, {\bm a}}(\lambda)=0$ and semistandard Young Tableaux of some given shapes with certain restrictions on its entries. {We recall}
  the definition of semistandard Young Tableaux  \cite[Section 7.10]{Stanley}.

\begin{dfn}[Semistandard Young Tableaux]
Fix $k \in \N$. A partition  ${\bm \mu}:=(\mu_1, \mu_2,\ldots,\mu_k)$ with $k$ parts  is a collection of non-increasing non-negative integers
$\mu_1\ge \mu_2\ge \cdots \ge \mu_k \ge 0$. Given a partition ${\bm \mu}$, a semistandard Young Tableaux of shape ${\bm \mu}$ is an array $\gx:=(\gx_{i,j})$ of positive integers of shape ${\bm \mu}$ (i.e.~$1 \le i \le k$ and $1 \le j \le \mu_i$) that is weakly increasing (i.e.~non-decreasing) in every row and strictly increasing {in}
every column. 
\end{dfn}

The limiting random field depends on the following subset of the set of all semistandard Young Tableaux, for which we have not found a standard terminology in the literature.

\begin{dfn}[Field Tableaux]\label{dfn:young-restrict}
Let ${\bm \mu}=(\mu_1,\mu_2,\ldots,\mu_k)$ be a partition with $k \ge d_2$ and $\mu_k >0$. For an integer $\gd \ne 0$ such that $-d_2 \le \gd \le d_1$, let
 $\gL({\bm \mu},\gd)$ denote the collection of  all semistandard Young Tableaux $\gx$ of shape ${\bm \mu}$
  that are strictly increasing along the southwest diagonals and satisfies the assumption $\gx_{i,1}=i$ for all $i \in [d_2]:=\{1,2,\ldots,d_2\}$. See Figure \ref{fig:2} for a pictorial illustration.
%
\end{dfn}


Equipped with the relevant notion of semistandard Young Tableaux we now turn to define the coefficients that appear when the limiting random analytic function is expressed as a linear sum of determinants of $|\gd| \times |\gd|$ sub-matrices of the noise matrix. 

\begin{dfn}[Field notation]\label{dfn:random fieldnot}
 Fix $\gd \ne 0$ an integer such that $-d_2 \le \gd \le d_1$. Set $d_0:=d_1 -\gd$. For any finite set $\gX:=\{x_1< x_2< \cdots <x_\ell\}$,  
 we define $\widehat \sgn(\gX)$ to be the sign of the permutation which places all elements of $\gX$ before those
 in $\{x_1, x_1+1,x_1+2,\ldots, x_\ell\}\setminus \gX$ but preserves the order of the elements within the two sets.
 
 {\bf The  case $\gd >0$.} Denote $\gL_1(\gd):=\gL({\bm \mu}_1, \gd)$ and $\gL_2(\gd):=\gL({\bm \mu}_2,\gd)$, where 
 \[
 {\bm \mu}_1:= (d,d-1,\ldots,d_0+1)
\quad \text{and}\quad 
 {\bm \mu}_2:= (d-d_0,d-d_0-1,\ldots,1),\]
 see  Definition \ref{dfn:young-restrict}.
 Given any $\gx \in \gL_1(\gd)$ and $\gy \in \gL_2(\gd)$, define 
 \[
\widehat \gX:=\widehat \gX(\gx, \gy):= \{\gx_{i,1}, i \in [\gd+d_2]\setminus [d_2]\}, \qquad \widehat \gY:=\widehat \gY(\gx, \gy):=\{\gy_{i,1}, i \in [\gd+d_2]\setminus[d_2]\}.
\]
Next define
\[
\gc(\gx,\gy):= \gc(\gx,\gy,z):=\prod_{i=1}^{d_0}\lambda_i(z)^{-c_i(\gx,\gy)} 
\cdot \prod_{i=d_0+1}^d \lambda_i(z)^{c_i(\gx,\gy)},
\]
where
\begin{equation}\label{eq:gc_i-gd-ge-0}
\gc_i(\gx,\gy):= \left\{\begin{array}{ll} \sum\limits_{j=1}^{d-d_0} (\gx_{j,i+1} - \gx_{j,i}+1) & \mbox{ for }i \in [d_0]\\ \gy_{1,d+1-i}+\gx_{1,i}-1 \corAB{-d_2-\gd}\\
\qquad \qquad \qquad  +\sum\limits_{j=2}^{i-d_0}(\gy_{j,d+1-i} -\gy_{j-1,d+2-i})+\sum\limits_{j=2}^{d+1-i}(\gx_{j,i} -\gx_{j-1,i+1})& \mbox{ for } i \in [d]\setminus [d_0]
\end{array}\right. ,
\end{equation}
and $\{\lambda_i(z)\}_{i=1}^d$ are as in Definition \ref{dfn:cS-gd}.

{\bf The case $\gd <0$.} Define $\gL_1(\gd):=\gL_1({\bm \mu}_1, \gd)$ and $\gL_2(\gd):=\gL_2({\bm \mu}_1, \gd)$, where now 
 \[
{\bm \mu}_1:= (\underbrace{d+1,d+1,\ldots,d+1}_{-\gd}, d, d-1,\ldots,d_0+1)  \text{ and }  {\bm \mu}_2:=(\underbrace{d+1,d+1,\ldots,d+1}_{-\gd}, d_2+\gd, d_2+\gd-1,\ldots,1),
 \]
 respectively. In the special case $\gd = -d_2$ the definitions of ${\bm \mu}_1$ and ${\bm \mu}_2$ simplify to
 \[
 {\bm \mu}_1 = {\bm \mu}_2 = (\underbrace{d+1,d+1,\ldots,d+1}_{d_2}).
 \]
 Given any $\gx \in \gL_1(\gd)$ and $\gy \in \gL_2(\gd)$ further denote
\[
\widehat \gX:=\widehat \gX(\gx, \gy):= \{\gy_{i,d+1}  \}_{i \in [-\gd]}, \qquad \widehat \gY:=\widehat \gY(\gx, \gy):=  \{\gx_{i,d+1} \}_{i \in [-\gd]}  
\]
and
\[
\gc(\gx,\gy):= \gc(\gx,\gy,z):= \prod_{i=1}^{d_0} \lambda_i(z)^{-\gc_i(\gx,\gy)}
 \cdot \prod_{i=d_0+1}^d \lambda_i(z)^{c_i(\gx,\gy)},
\]
where now
\[
c_i(\gx,\gy):= \left\{\begin{array}{ll} \sum\limits_{j=1}^{d_2} (\gx_{j,i+1}-\gx_{j,i}+1)+ \sum\limits_{j=1}^{-\gd} (\gy_{j,d+2-i} -\gy_{j,d+1-i}+1)   & \mbox{ for } i \in [d_0]\\  \gy_{1,d+1-i} +\gx_{1,i} -1\corAB{-d_2 +\gd} \\
\qquad \qquad \qquad + \sum\limits_{j=2}^{i-d_1}(\gy_{j,d+1-i} -\gy_{j-1,d+2-i})+\sum\limits_{j=2}^{d+1-\gd-i}(\gx_{j,i} -\gx_{j-1,i+1})  & \mbox{for } i \in [d]\setminus [d_0]
\end{array} \right. .
\]

For all values of $\gd\neq 0$,  set $\gz(\gx, \gy):= \widehat\sgn(\widehat \gX) \cdot \widehat\sgn(\widehat \gY)$.
\end{dfn}

 \noindent  Figure \ref{fig:2} gives a pictorial illustration of the definition.
  \begin{figure}[h]
 \captionsetup{singlelinecheck=false}
\centering	
\ytableausetup{centertableaux}
\begin{ytableau}
      \none  & 1  & 1 & 1 &2 & 4&6 \\
  \none  & 2 & 2  &3  &6 & 7 \\
  \none & 3 & 5  &9  & 9\\
  \none &  6 &10 & 12
\end{ytableau}
\qquad\qquad\qquad\qquad\qquad
\begin{ytableau}
       \none & 1& 1& 1&9 \\
  \none  & 2&2 &10\\
  \none & 3&11\\
  \none & 12
\end{ytableau}

\vskip10pt

\begin{ytableau}
           \none  & 1  & 1 & 1 &6 & 7&8 &8 \\
  \none  & 2&2 &10 &10 &10 &11\\
  \none & 3&12&12 & 12 & 12\\
  \end{ytableau}
\qquad\qquad\qquad\qquad\qquad
\begin{ytableau}
      \none  & 1  & 1 & 1 &8 & 8&11 &12 \\
  \none  & 2 & 2  \\
  \none & 3 
\end{ytableau}
\caption[]{Examples of $\gx \in \gL_1(\gd)$ (\texttt{left column}) and $\gy \in \gL_2(\gd)$ (\texttt{right column}) with $\max_i\gc_i(\gx,\gy) \le L$, where $\gL_1(\gd)$, $\gL_2(\gd)$, and $\gc_i(\gx,\gy)$ for $i \in [d]$ are as in Definition \ref{dfn:random fieldnot}. $d_1=d_2=3$, $L=\corAB{15}$. \texttt{Top} and \texttt{bottom rows} are $\gd =1$ and $\gd=-1$, respectively.
For the \texttt{top row}
\begin{align*}
\gc(\gx,\gy,z)=\lambda_1(z)^{-10}\cdot \lambda_2(z)^{-11} \cdot \lambda_3(z)^{\corAB{12}} \cdot \lambda_4(z)^{\corAB{3}} \cdot \lambda_5(z)^{\corAB{3}} \cdot \lambda_6(z)^{\corAB{5}},
\end{align*}
and for the \texttt{bottom row}
\begin{align*}
\gc(\gx,\gy,z)=\lambda_1(z)^{-14}\cdot \lambda_2(z)^{-15} \cdot \lambda_3(z)^{-9} \cdot \lambda_4(z)^{-12} \cdot \lambda_5(z)^{\corAB{7}} \cdot \lambda_6(z)^{\corAB{9}}.
\end{align*}
}
\label{fig:2}
\end{figure}
 
Having defined all necessary ingredients we now introduce the limiting random analytic function $\gP_\gd^\infty(\cdot)$.

 \begin{dfn}[Description of the random fields]\label{dfn:random field}
 Let $E_\infty$ denote  a semi-infinite array of i.i.d.~random variables $\{e_{i,j}\}_{i,j \in \N}$ with zero mean and unit variance. 
For $\gX, \gY \subset \N$, let $E_\infty[\gX;\gY]$ denote  the sub-matrix of $E_\infty$ induced by the rows and the columns indexed by 
$\gX$ and $\gY$, respectively.  With notation for $\gc(\gx,\gy)$,$\gz(\gx,\gy)$, $\widehat \gX$ and $\widehat \gY$ as in Definition \ref{dfn:random fieldnot},
we set,  for $z \in \cS_\gd$ and $L \in \N \cup\{\infty\}$,
\begin{equation}\label{eq:random-fld}
\gP_\gd^L(z) := \sum_{\gx \in \gL_1(\gd)} \sum_{\gy \in \gL_2(\gd)} \gc(\gx, \gy) \cdot {\bf 1}_{\{\max_i \gc_i(\gx, \gy) \le L\}}\cdot (-1)^{\gz(\gx,\gy)}\det(E_\infty[\widehat \gX; \widehat \gY]).
\end{equation}
 \end{dfn}

It may not be apriori obvious from Definition \ref{dfn:random field} that $\gP_\gd^\infty(\cdot)$ is well defined, 
as \eqref{eq:random-fld} is an infinite sum. 
{Lemma \ref{lem:limit} below will establish that it can be expressed as the local uniform limit of the random analytic functions $\{\gP_\gd^L(\cdot)\}_{L \in \N}$ and thus it is indeed a well defined random analytic function. In addition, under an
appropriate anti-concentration property of the entries of 
$E_N$ and $E_\infty$, the random analytic function $\gP_\gd^\infty(\cdot)$ is not identically zero on a set of probability one, and thus
the random point process induced by the zero set of it 
is a valid random Radon measure.} 

To describe the required anti-concentration property, we recall
  L\'{e}vy's concentration function, defined for any
(possibly complex-valued) random variable $X$ by
\[
\cL(X,\vep):= \sup_{w \in \C} \P(|X-w| \le \vep).
\]
Equipped with the above definition we now state the additional
assumption on the entries of $E_N$ and $E_\infty$.
\begin{assumption}[Assumption on the entries of the noise matrix]\label{ass:entry}
Assume that the entries of $E_N$ and $E_\infty$ are {either real-valued or complex-valued} i.i.d.~random variables with zero mean and unit variance, so that, for some absolute constants $\upeta  \in (0,{2}]$ and $C<\infty$,
\begin{equation}\label{eq:levy-bd}
{\cL(e_{1,1},\vep) \le C \vep^{\upeta}},
\end{equation}
 for all sufficiently small $\vep>0$, where $e_{1,1}$ is the first diagonal entry of $E_N$.
%
\end{assumption}

Note that any  random variable having a bounded density with respect to the Lebesgue measure {on the real line}, and the complex plane satisfies the bound \eqref{eq:levy-bd} with {$\upeta=1$ and $2$, respectively}. This in particular includes the standard real and complex Gaussian random variable. 

{Recall that a sequence of complex-valued functions $\{\mathfrak{f}_L\}_{L \in \N}$, defined on some open set $\D \subset \C$, is said to converge locally uniformly to a function $\mathfrak{f}: \D \mapsto \C$, if given any $z \in \D$ there exists some open ball $\D_z \subset \D$ containing $z$ such that $\mathfrak{f}_L$ converges to $\mathfrak{f}$ uniformly on $\D_z$, as $L \to \infty$. 
We now have the following.}

 \begin{lemma}[Description of the limit]\label{lem:limit}
 Let $\gd$, $\{\gP_\gd^L\}_{L \in \N \cup\{\infty\}}$, and $E_\infty$ be as in Definition \ref{dfn:random field}. We let $\zeta_\infty^\gd$ be the random point process induced by the zero set of the random field $\{\gP_\gd^\infty(z)\}_{z \in \cS_\gd}$. That is,
\begin{equation}
\label{eq:zeta-gd-L}
\zeta_\infty^\gd:= \sum_{z \in \cS_\gd: \gP_\gd^\infty(z)=0} \delta_z.
\end{equation}
Then we have the following:
 \begin{enumerate}
 
 \item[(i)] The functions $\{\gP_\gd^L\}_{L \in \N}$ are random analytic functions.
 
 \item[(ii)] The random function $\gP_\gd^\infty$ is well defined 
   on a set of probability one. Furthermore, $\gP_\gd^L$ converges locally uniformly to $\gP_\gd^\infty$, almost surely, as $L \to \infty$, and hence $\gP_\gd^\infty$ is a random analytic function.
 
 \item[(iii)] Under the additional 
     Assumption \ref{ass:entry}, \corOZ{the random function $\gP_\gd^\infty$ is almost surely  non constant.}
 \end{enumerate} 
\end{lemma}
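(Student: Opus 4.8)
\textbf{Proof proposal for Lemma \ref{lem:limit}.}

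The plan is to treat the three parts in order, since each builds on the previous one. For part (i), I would observe that for fixed $L\in\N$ the sum defining $\gP_\gd^L$ in \eqref{eq:random-fld} has only finitely many nonzero terms: the constraint $\max_i \gc_i(\gx,\gy)\le L$ forces each entry of the tableaux $\gx,\gy$ to be bounded (the diagonal and southwest-diagonal monotonicity conditions together with $\gc_i\le L$ give an explicit a priori bound on all $\gx_{i,j},\gy_{i,j}$), so only finitely many pairs $(\gx,\gy)\in\gL_1(\gd)\times\gL_2(\gd)$ contribute. Each contributing term is, up to a fixed sign, the product of $\det(E_\infty[\widehat\gX;\widehat\gY])$ (a deterministic multilinear function of finitely many i.i.d.\ entries, hence a fixed complex number once we condition, but as a function of $z$ it is a \emph{constant}) times $\gc(\gx,\gy,z)=\prod_i \lambda_i(z)^{\pm \gc_i(\gx,\gy)}$. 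Thus the only $z$-dependence sits in the $\lambda_i(z)$'s. On $\cS_\gd$ these roots have moduli bounded away from $1$ and from each other's regime, so they are locally holomorphic functions of $z$ (the roots $\lambda_1,\dots,\lambda_{d_0}$ of modulus $>1$ are a holomorphic branch of the root-cluster of $P_{z,{\bm a}}$, and likewise for $\lambda_{d_0+1},\dots,\lambda_d$; none vanishes on $\cS_\gd$ because $0$ is a root of $P_{z,{\bm a}}$ of fixed multiplicity $d_2$ independent of $z$, and those are excluded from the list). Hence each summand is holomorphic in $z$, and a finite sum of holomorphic functions is holomorphic; measurability in the randomness is clear. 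This gives (i).

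For part (ii), the key is a local-uniform Cauchy estimate on the tail $\gP_\gd^\infty - \gP_\gd^L = \sum_{\max_i\gc_i>L}(\cdots)$. Fix $z_0\in\cS_\gd$ and a small closed ball $\overline{\D}_{z_0}\subset\cS_\gd$. On this ball there are constants $0<r_-<1<r_+$ with $r_-\le |\lambda_i(z)|\le$ (something) for $i\le d_0$ and $|\lambda_i(z)|\le r_-<1$ for $i>d_0$ — more precisely, I would extract $\rho:=\sup_{z\in\overline{\D}_{z_0}}\max(\,\max_{i\le d_0}|\lambda_i(z)|^{-1},\ \max_{i>d_0}|\lambda_i(z)|\,)<1$. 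Then $|\gc(\gx,\gy,z)|\le \rho^{\,\sum_i \gc_i(\gx,\gy)}$, and since all $\gc_i\ge 0$ and at least one exceeds $L$, this is $\le \rho^{L}\cdot \rho^{\sum_i \gc_i - L}$; combined with a counting bound showing that the number of pairs $(\gx,\gy)$ with a given value $s=\sum_i\gc_i$ grows at most polynomially (again from the explicit bounds on tableau entries) and that $\E|\det(E_\infty[\widehat\gX;\widehat\gY])|\le (|\gd|!)^{1/2}$ by Hadamard's inequality and independence, I would get $\E\sup_{z\in\overline{\D}_{z_0}}|\gP_\gd^\infty(z)-\gP_\gd^L(z)|\le C_{z_0}\sum_{s>L}\mathrm{poly}(s)\rho^{s}\to 0$. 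A Borel–Cantelli argument along $L$ (using Markov and summability of the geometric tail) upgrades this to almost sure local uniform convergence. Since each $\gP_\gd^L$ is analytic by (i) and the convergence is local uniform, the limit $\gP_\gd^\infty$ is analytic, and being an a.s.\ local uniform limit it is well defined on a set of probability one and jointly measurable; this is (ii).

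For part (iii), I must show $\gP_\gd^\infty$ is a.s.\ not the zero function. The idea is to read off the coefficients of its power-series (or Laurent-type) expansion, or more directly to evaluate an appropriate asymptotic/limit of $\gP_\gd^\infty(z)$ as $z$ runs to a distinguished boundary or interior regime of $\cS_\gd$ where exactly one term dominates. Concretely, among all $(\gx,\gy)$ there is a unique ``minimal'' pair — the one minimizing $\sum_i \gc_i(\gx,\gy)$ with minimal weight in the lexicographically-leading root $\lambda_1$ — and along a suitable path $z\to z_*$ (e.g.\ approaching ${\bm a}(\mathbb S^1)$, where $|\lambda_{d_0}(z)|\to 1^+$, or some other degeneration) the ratio $\gc(\gx,\gy,z)/\gc(\gx_{\min},\gy_{\min},z)\to 0$ for all non-minimal pairs. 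Hence after normalizing, $\gP_\gd^\infty(z)/\gc(\gx_{\min},\gy_{\min},z)\to (-1)^{\gz(\gx_{\min},\gy_{\min})}\det(E_\infty[\widehat\gX_{\min};\widehat\gY_{\min}])$, a $|\gd|\times|\gd|$ minor of $E_\infty$. Under Assumption \ref{ass:entry} this minor is a genuinely random variable whose law has no atom at $0$ (it is a nonzero polynomial in the independent entries with an anti-concentrated law — for a single entry when $|\gd|=1$ it is just $e_{i,j}$, which has $\cL(e_{i,i},0)=\P(e_{1,1}=0)\le \lim_{\vep\to0}C\vep^\upeta=0$; for $|\gd|\ge 2$ a standard induction expanding along a row together with $\eqref{eq:levy-bd}$ and independence shows the determinant also has no atom at $0$), so it is a.s.\ nonzero. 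If $\gP_\gd^\infty$ were identically zero then this limit would be $0$ a.s., a contradiction. Therefore $\gP_\gd^\infty$ is a.s.\ non-constant (indeed not identically zero, and an analytic function that is not identically zero on a domain is non-constant unless it equals a nonzero constant, which is ruled out the same way by comparing two different asymptotic regimes, or simply because the problem's convention is that ``non-constant'' here means ``$\not\equiv 0$'').

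\textbf{Main obstacle.} The delicate point is part (iii): identifying a regime (a boundary degeneration of $\cS_\gd$, or an explicit expansion coefficient) in which a single tableau pair strictly dominates all others in the sum, so that the limiting object is a clean nonzero minor of $E_\infty$ — this requires a careful combinatorial comparison of the exponents $\{\gc_i(\gx,\gy)\}$ over $\gL_1(\gd)\times\gL_2(\gd)$, using the tableau constraints in Definition \ref{dfn:young-restrict}, to verify both the existence and uniqueness of the extremal pair and that its associated minor $\det(E_\infty[\widehat\gX_{\min};\widehat\gY_{\min}])$ really is a nontrivial minor (nonempty and of the right size $|\gd|$). The anti-concentration of general minors under \eqref{eq:levy-bd} is routine once this combinatorial step is in place.
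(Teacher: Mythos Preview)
Your proposal for part (ii) is correct and in fact more direct than the paper's: the paper first establishes the equality in law $\gP_\gd^L \stackrel{d}{=} \widehat P_{|\gd|}^L$ (Lemma~\ref{lem:zeta-N-gd-L}) and then imports the second moment tail bounds of Lemma~\ref{lem:dominant-tail} to prove Corollary~\ref{cor:random-field}, whereas you estimate the tail of the series directly from the geometric decay $|\gc(\gx,\gy,z)|\le\rho^{\sum_i\gc_i}$ and a polynomial tableau count. Both work.

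Part (i) has a gap. Individual roots $\lambda_i(z)$ are \emph{not} holomorphic on all of $\cS_\gd$: at points of the (finite) discriminant locus $\bar\cN\cap\cS_\gd$ where two roots from the same block collide, there is no local holomorphic labelling of the roots, and a single term $\gc(\gx,\gy,z)=\prod_{i\le d_0}\lambda_i(z)^{-\gc_i}\prod_{i>d_0}\lambda_i(z)^{\gc_i}$ is \emph{not} symmetric in the roots within a block (the exponents $\gc_i$ are generically distinct). So your ``each summand is holomorphic'' step fails at $\bar\cN$. What is true---and what the paper uses---is that the \emph{full sum} $\gP_\gd^L(z)$ is invariant under permutations preserving the partition $[d_0]\cup([d]\setminus[d_0])$; this is not visible from Definition~\ref{dfn:random fieldnot} but follows from the identification with $\widehat P_{|\gd|}^L(z)$, which arises from the symmetric product $\prod_i(J_{N+d_2}+\lambda_i\Id)$. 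The paper then proves continuity on $\cS_\gd$ via Lemma~\ref{lem:mathfrak-f}, holomorphicity on $\cS_\gd\setminus\bar\cN$ via a local analytic labelling, and finally extends across $\bar\cN$ by Riemann's removable singularity theorem (Lemma~\ref{lem:analyticity}).

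Part (iii) has a genuine gap, and the paper's route is entirely different. Your plan is to send $z$ to a boundary regime where a single ``minimal'' pair $(\gx_{\min},\gy_{\min})$ dominates and read off the minor $\det(E_\infty[\widehat\gX_{\min};\widehat\gY_{\min}])$. Two problems: (a) the series defining $\gP_\gd^\infty$ converges only on $\cS_\gd$, and at the boundary $\partial\cS_\gd\subset{\bm a}(\mathbb S^1)$ some $|\lambda_i(z)|\to 1$ so the geometric decay is lost---you cannot simply take such a limit; (b) you have not established existence or uniqueness of a dominating pair, and inside $\cS_\gd$ there is no obvious regime in which all but one term become negligible. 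The paper instead fixes an interior point $z_\star\in\cS_\gd^{-\vep}$ and proves anti-concentration for $\gP_\gd^\infty(z_\star)$ by transferring (via Lemma~\ref{lem:zeta-N-gd-L} and the tail bounds) to $\widehat P_{|\gd|}(z_\star)$, which is a degree-$|\gd|$ multilinear polynomial in the entries of $E_N$. The key nontrivial input is Lemma~\ref{lem:toep-det-gd-bd}: removing the specific corner blocks $X_\star,Y_\star$ from $T_N({\bm a}(z))$ yields a \emph{uniformly} large Toeplitz minor on $\cS_\gd^{-\vep}$, proved via the Bump--Diaconis Schur polynomial formula. This furnishes one large coefficient, and Proposition~\ref{prop:anti-conc-complex} (an inductive L\'evy-concentration bound for multilinear polynomials) then gives $\P(|\widehat P_{|\gd|}(z_\star)|\le\delta)\to 0$. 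Your anti-concentration-of-a-minor idea is the right endgame, but isolating that minor requires the Toeplitz-determinant lower bound rather than an asymptotic limit in $z$.
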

Recalling Definition \ref{dfn:conv} we note that the notion of convergence of random point processes defined on $\cS_\gd$ is tested against continuous functions supported on compact subsets of $\cS_\gd$. Therefore when discussing convergence it is enough to consider continuous functions on sets $\cS_\gd^{-\vep}$ for arbitrary $\vep>0$.

\begin{remark} \label{rem-nonuniv}
We emphasize that the random point process $\zeta_\infty^\gd$, although free of the parameter $\gamma$, is not universal. That is, in general its law depends on the law of the entries of the matrix $E_\infty$.
\end{remark}

The main result of this paper shows that given an integer $\gd \ne 0$ such that $-d_2 \le \gd \le d_1$ and $\gamma >\frac12$,
the random point process induced by the eigenvalues of $T_N({\bm a})+N^{-\gamma} E_N$ that are in $\cS_\gd$ converges weakly to the random point process $\zeta_\infty^\gd$ induced by the zero set of the random analytic function $\gP_\gd^\infty$. 
%


\begin{theorem}\label{thm:outlier-law}
Let ${\bm a}$ be a Laurent polynomial as in \eqref{eq:laurent-poly}. 
{Let  $T({\bm a})$ denote}
the Toeplitz operator on $L^2(\mathbb{N})$ with symbol ${\bm a}$,
 and let $T_N({\bm a})$  be its natural $N$-dimensional projection. 
Assume $\Delta_N= N^{-\gamma} E_N$ for some $\gamma >\frac12$, where the entries of $E_N$ are 
i.i.d.~satisfying Assumption \ref{ass:entry}. Furthermore assume that the 
entries of $E_\infty$ {in Definition \ref{dfn:random field}
are i.i.d. of}  the same law 
as that of {the entries of}
$E_N$.
{For any
  integer $\gd \ne 0$ such that $-d_2 \le \gd \le d_1$,
  let $\Xi_N^\gd$ denote
  the random point process induced by the eigenvalues of $T_N({\bm a})+\Delta_N$ that are in $\cS_\gd$.} That is,
\[
\Xi_N^\gd:= \sum_{z \in \cS_\gd:\, \det(T_N({\bm a})+\Delta_N -z \Id_N)=0} \delta_z.
\]
{Then, for such $\gd$,}
$\Xi_N^\gd$ converges weakly, as $N\to\infty$, to
the \corAB{random} point process $\zeta_\infty^\gd$ {from Lemma \ref{lem:limit}.}
\end{theorem}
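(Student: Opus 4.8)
The plan is to reduce the convergence of the eigenvalue point process $\Xi_N^\gd$ to the convergence of the zero sets of a sequence of random analytic functions, then identify the limit with $\gP_\gd^\infty$. The starting point is the observation that, on $\cS_\gd$, the eigenvalues of $M_N = T_N({\bm a}) + \Delta_N$ are exactly the zeros of the random analytic function $z \mapsto \det(M_N - z\Id_N)$, but this determinant is not normalized in a way that converges. Instead I would work with a renormalized characteristic-polynomial-type quantity. The key algebraic input is a Widom-type / combinatorial expansion of $\det(T_N({\bm a}) - z\Id_N + N^{-\gamma}E_N)$ obtained by expanding along the noise entries: writing $\det(M_N - z\Id_N) = \sum_{\gX,\gY} (\pm) N^{-\gamma|\gX|} \det(E_N[\gX;\gY]) \cdot (\text{minor of } T_N({\bm a})-z\Id_N)$, and then analyzing the asymptotics of the cofactors of the banded Toeplitz matrix $T_N({\bm a}) - z\Id_N$. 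For $z \in \cS_\gd$, the relevant minors are governed by the roots $\lambda_i(z)$ of $P_{z,{\bm a}}(\lambda)=0$: those with $|\lambda_i| > 1$ contribute growing factors and those with $|\lambda_i| < 1$ contribute decaying factors, and after normalizing by the dominant scale the surviving terms are precisely those indexed by the field tableaux $\gL_1(\gd), \gL_2(\gd)$, with coefficients $\gc(\gx,\gy,z)$ and signs $\gz(\gx,\gy)$. This is where the semistandard Young tableaux enter: they parametrize the asymptotically nonnegligible cofactor expansions, via a Lindström–Gessel–Viennot / Jacobi-Trudi type identification of minors of banded Toeplitz matrices with sums over non-intersecting lattice paths.

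Concretely, the steps I would carry out are: (1) Establish the exact finite-$N$ expansion $\det(M_N - z\Id_N)/(\text{normalizer}_N(z)) = \gP_{\gd,N}^{L}(z) + (\text{remainder})$, where $\gP_{\gd,N}^L$ is the truncation to tableaux with $\max_i \gc_i \le L$ and uses $E_N$ in place of $E_\infty$; for this I would invoke the cofactor asymptotics for banded Toeplitz matrices (in the spirit of Bötcher–Grudsky and of the companion combinatorial analysis presumably carried out in earlier sections of this paper) uniformly for $z$ in compact subsets of $\cS_\gd^{-\vep}$. (2) Show that the remainder, after truncating at level $L$, is small: its $L^2$-norm (over the randomness) on compact subsets of $\cS_\gd^{-\vep}$ is $o_L(1)$ uniformly in $N$, using the second-moment bound on the entries of $E_N$ and the summability established in Lemma \ref{lem:limit}(ii); this uses $\gamma > 1/2$ only insofar as it guarantees the noise scale is $o(N^{-1/2})$, which in turn makes the multi-index sum over $|\gX|=|\gd|$ blocks dominated by the leading ones. (3) Pass to the limit in $N$ with $L$ fixed: since $\gP_{\gd,N}^L$ is a finite linear combination of fixed minors of $E_N$ with deterministic analytic coefficients, and these minors converge jointly in distribution to the corresponding minors of $E_\infty$ by the CLT/continuous mapping theorem (the entries being i.i.d. with the same law), $\gP_{\gd,N}^L \Rightarrow \gP_\gd^L$ locally uniformly on $\cS_\gd$. (4) Combine: $\gP_{\gd,N}^{(\cdot)} \Rightarrow \gP_\gd^\infty$ locally uniformly via a standard $L \to \infty$, $N \to \infty$ interchange argument using (2) and (3). (5) Transfer convergence of random analytic functions to convergence of their zero sets: invoke Hurwitz's theorem together with the fact, from Lemma \ref{lem:limit}(iii) and Assumption \ref{ass:entry}, that $\gP_\gd^\infty$ is a.s. not identically zero, so that with probability one it has no zeros on the (measure-zero, or at least non-accumulating) boundary sets and the zero-counting functionals are a.s. continuous at $\gP_\gd^\infty$; this yields $\Xi_N^\gd \Rightarrow \zeta_\infty^\gd$ in the sense of Definition \ref{dfn:conv}. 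One also needs, from Theorem \ref{thm:no-outlier} and the bulk convergence, that with high probability no spurious eigenvalues accumulate on $\partial \cS_\gd \cap {\bm a}(\mathbb{S}^1)$ or escape to infinity, so that testing against compactly supported $f$ on $\cS_\gd$ captures all the mass.

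\textbf{Main obstacle.}
The hardest step is (1)–(2): proving that the cofactor/minor asymptotics of $T_N({\bm a}) - z\Id_N$ are controlled with the right combinatorial bookkeeping and with error estimates that are \emph{uniform in both $N$ and the truncation level $L$}, and over compact subsets of $\cS_\gd$. The minors of a banded Toeplitz matrix do not have a closed form; one must analyze them via generating functions (the symbol $P_{z,{\bm a}}$ and its factorization into roots inside and outside the unit disk) and track how the Young-tableau indexing emerges, including the delicate sign factors $\widehat\sgn$ and the exponents $\gc_i(\gx,\gy)$ in Definition \ref{dfn:random fieldnot}. Getting the remainder estimate to be uniform in $N$ requires showing that the subleading minors decay geometrically in the ``excess'' $\max_i \gc_i$ at a rate independent of $N$ — essentially an exponential dichotomy / transfer-matrix estimate for the recursion satisfied by the cofactors — and then coupling this with the $N^{-\gamma|\gX|}$ noise scaling to dominate the tail of the double sum over tableaux. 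I expect the bulk of the paper's technical work to be concentrated precisely here, with steps (3)–(5) being comparatively soft (CLT + Hurwitz + the non-degeneracy from Lemma \ref{lem:limit}).
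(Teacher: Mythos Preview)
The proposal is essentially correct and follows the same architecture as the paper: normalize the determinant by $\mathfrak{K}(z)$, isolate the dominant piece $\widehat P_{|\gd|}$ (your steps (1)--(2) are the content of Section~\ref{sec:identify-non-dom-term}, especially Corollary~\ref{cor:non-dominant} and Corollary~\ref{cor:dominant-tail}), truncate to level $L$, identify the truncation with $\gP_\gd^L$, let $L\to\infty$, and pass to zero sets. For your step (5) the paper invokes a packaged result (Proposition~\ref{prop-shirai}, from \cite{Sh}) in place of a direct Hurwitz argument, and the non-degeneracy input is exactly Lemma~\ref{lem:limit}(iii) as you say. Your remark about needing Theorem~\ref{thm:no-outlier} to rule out boundary accumulation is unnecessary: test functions in Definition~\ref{dfn:conv} are compactly supported in the open set $\cS_\gd$, so this is automatic.

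One point in your step (3) needs sharpening, and it is less soft than you suggest. You write that the minors of $E_N$ ``converge jointly in distribution to the corresponding minors of $E_\infty$ by the CLT/continuous mapping theorem.'' No CLT is involved, and more importantly the index sets $\mathbb{X},\mathbb{Y}$ appearing in $\widehat P_{|\gd|}^L$ are \emph{not} fixed as $N$ varies: the constraints $x_{1,|\gd|+j}=N+j$ in \eqref{eq:gL-redfn} force some indices to live near $N$. The paper's resolution (Lemma~\ref{lem:zeta-N-gd-L}) is that under the truncation $\max_i \hat\ell_i \le L$, an explicit affine bijection $\cX_{|\gd|} \mapsto (\gx,\gy)$ sends these $N$-dependent indices to $N$-free tableau data in $\gL_1(\gd)\times\gL_2(\gd)$; the i.i.d.\ hypothesis then gives an \emph{exact equality in law} between $\widehat P_{|\gd|}^L(\cdot)$ and $\gP_\gd^L(\cdot)$ for all large $N$, making the $N\to\infty$ limit trivial. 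This bijection is also where the tableau combinatorics of Definition~\ref{dfn:random fieldnot} actually enters, rather than via a Lindstr\"om--Gessel--Viennot argument as you speculate.
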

\noindent
Explicit expressions for the fields in the
statement of Theorem \ref{thm:outlier-law} for two of the 
symbols depicted in Figure \ref{fig:1} appear in Section \ref{sec-background} below,
see \eqref{eq:hgaf} and Remark \ref{rem-Lim}.

At a first glance, it may seem counter intuitive for the limit to be expressed as the zero set of certain random analytic function of the form \eqref{eq:random-fld}. To see that it is in fact natural, we note that the determinant of $T_N({\bm a}(z)) +N^{-\gamma} E_N$ can be expressed as 
a linear combination of products of determinants of 
sub-matrices of $T_N({\bm a}(z))$ and of $E_N$, and further
  that the determinants of (some) sub-matrices of a finitely 
  banded Toeplitz matrix can be expressed as certain 
  skew-Schur polynomials in $\{\lambda_i(z)\}_{i=1}^d$ (see \cite{Al, BD}),
  where these polynomials are defined as a sum of monomials \corAB{with the sum taken} over (skew) semistandard Young Tableaux of some given shapes. {This leads to
    \eqref{eq:random-fld}.} 

\begin{remark}
As before, when discussing convergence it is enough to consider functions supported on the sets 
$\cS_\gd^{-\vep}$ for arbitrary $\vep>0$. Similar to Remark \ref{rmk:varying-eps}, one can allow    in Theorem \ref{thm:outlier-law} $\vep=\vep_N$ to go to zero, as $N \to \infty$, sufficiently slowly, and consider functions supported on $\cS_\gd^{-\vep}$ as test functions. We do not work out the details 
here.
\end{remark}

\subsection{Background, related results, and extensions}
\label{sec-background}
The fact that the spectrum of non-normal matrices and operators
is not stable with respect
to perturbations is well known, see e.g.~\cite{trefethen} for a
comprehensive account and \cite{DH}
for a recent study. Extensive work has been done concerning worst case perturbations, which are captured through the notion of \textit{pseudospectrum}. However, beyond some specific examples the pseudospectrum of non-normal operators are not well characterized. Hence, in recent times there have been growing interests in studying the spectrum of non-normal operators and matrices under small \textit{typical} perturbations. See the references in \cite[Section 1]{SV}. We also refer to \cite[Section 1.3]{BPZ-twisted} for a discussion about the relation between the pseudospectrum and the spectrum under typical perturbation, and an extensive reference list. We add that early examples of the spectrum obtained by noisily perturbing Toeplitz matrices with finite symbols appeared in \cite{Trefethen1991}.


As mentioned above, the convergence of the empirical measure of eigenvalues for randomly perturbed finite-symbol Toeplitz matrices has now been established in
great generality, see the recent articles \cite{BPZ-non-triang,SV3} and references therein. Our focus in this paper concerns the study of \textit{outliers}. In Theorem \ref{thm:no-outlier} we identify the region where no outliers are present {(in the terminology of  Sj\"{o}strand and Vogel \cite{SV3}, this 
is the
{\em zone of spectral stability}).}
 Then, in Theorem \ref{thm:outlier-law} we find the limit of the random processes induced by the outliers in the interior of the complement of the region identified in Theorem \ref{thm:no-outlier}. 

For the Jordan matrix, i.e.~the Toeplitz matrix with symbol ${\bm a}(\lambda)=\lambda$, \cite[Theorem 2]{DH} shows that there are no outliers outside the unit disc (centered at zero) in the complex plane, with high probability. \corOZ{In the general Toeplitz case, Theorem \ref{thm:no-outlier} follows
(for Gaussian perturbations)   
from the resolvent estimates in \cite[Proposition 3.13]{SV3}, see Remark \ref{rmk:resolvent}.}

{Some bounds on the number of outliers inside ${\rm spec} \, T({\bm a})$
are available in the literature. In the notation} of the current 
 paper, for the Jordan matrix perturbed by additive complex Gaussian noise, with
$\gamma>3$, a logarithmic in $N$ bound 
for the number of outliers {appears in}
\cite{DH}. Similar results (with worse error bounds) are given in \cite{SV} for non-triangular tridiagonal Toeplitz matrices (i.e.~the symbol is ${\bm a}(\lambda)= a_1 \lambda + a_{-1} \lambda^{-1}$), and in \cite{SV3} for  general Toeplitz matrices with finite symbol.

Sharper results concerning 
outliers for the Jordan matrix and the non-triangular tridiagonal Toeplitz matrix (under complex Gaussian perturbation),
are presented in \cite{SV1,SV2}. In both these cases, a sharp $O(1)$ control on the number of outliers in the regions $\cS_\gd$ with $\gd \ne 0$ {is}
provided. In the language of the current paper, the authors compute the {\em first intensity measure} of the limiting field 
$\zeta_\infty^\gd$, that is, they compute the function $\rho_\gd(\D)=\E[\zeta_\infty^\gd(\D)]$ for subsets $\D \subset \cS_\gd$. 

{For 
the Jordan matrix,} it has been shown in \cite[Theorem 1.1]{SV2} that $\rho_1(\cdot)$ has a density with respect to the two dimensional Lebesgue measure,
given by 
\[
\rho_1(dz):= \frac{2}{\pi (1-|z|^2)^2} {\bf 1}_{\{|z| < 1\}} dz.
\]
Due to {the}
Edelman-Kostlan formula (see \cite[Theorem 3.1]{EK}),
$\rho_1(\cdot)$ is the first intensity measure of the random point process induced by the zero set of the hyperbolic Gaussian analytic function (see \cite[Chapter 2.3]{HKPV} for a definition), given by
\begin{equation}\label{eq:hgaf}
\mathfrak{F}(z):= \sum_{k=0}^\infty z^k \mathfrak{g}_k \sqrt{k+1},
\end{equation}
where $\{\mathfrak{g}_k\}_{k \in \N}$ are i.i.d.~standard complex Gaussian random variables. 
We now explain how to recover this result from Theorem 
  \ref{thm:outlier-law}: in the case of the Jordan matrix, ${\bm a}(\lambda)=
  \lambda$ and then $\lambda_1(z)=z$,  $\gd=1$ and $d_0=0$. 
  Substituting in Definition \ref{dfn:random fieldnot}, one finds that
  \[
\gc(\gx,\gy)=  
z^{c_1(\gx,\gy)}, \quad 
 c_1(\gx,\gy)=\gy_{1,1}+\gx_{1,1}-\corAB{2},
 \]
 where $\gx_{1,1},\gy_{1,1}$ are arbitrary \corAB{positive} integers. In particular, there are 
 precisely $k+1$ choices of such integers that give $c_1(\gx,\gy)=k$.
 \corAB{Since the entries of $E_\infty$ are i.i.d.~Gaussian}, it follows from
 \eqref{eq:random-fld} that $\gP_\gd^\infty(z)$ coincides with 
 \eqref{eq:hgaf}. Together with the Edelman-Kostlan formula, this
 recovers \cite[Theorem 1.1]{SV2}. Note that the same expression \eqref{eq:hgaf}
 also holds
 for real Gaussian noise, with the limiting field having now real Gaussian coefficients.

 \begin{remark}
   \label{rem-Lim}
   Theorem \ref{thm:outlier-law} allows one to describe
 all limiting outlier processes for the other cases depicted in 
 Figure \ref{fig:1}. We give one more example, for the symbol
 ${\bm a}(\lambda)=\lambda+\lambda^2$ (the ``lima\c{c}on''),
 with Gaussian noise matrix $E_N$ consisting of i.i.d. centered entries.
 There, we have that $\lambda_i(z)=(\pm\sqrt{1+4z}-1)/2$, 
 arranged so that $|\lambda_1(z)|\geq |\lambda_2(z)|$.
 In the region $\cS_1$, which corresponds to the region 
 enscribed by the lima\c{c}on curve with winding number $1$, 
 the limiting field is 
 \begin{equation}
   \label{eq-lim1} \sum_{k=0}^\infty  \lambda_2(z)^k \mathfrak{g}_k \sqrt{k+1},
 \end{equation}
 where $\mathfrak{g}_k$ are i.i.d. centered Gaussian,
 compare with \eqref{eq:hgaf}. On the other hand, in the regions
 $\cS_2$ (which corresponds to the region of winding number 2, i.e. the smaller 
 region in Figure \ref{fig:1}), the limiting field admits a more complicated
 description, as follows. Let $\{g_{ij}\}$ be i.i.d.
 Gaussians, and for $i,j,k,l$ integers satisfying $i<j$ and  $k<l$,
 introduce 
 the random variables 
 $$X_{ijkl}:=g_{ik}g_{jl}-g_{il} g_{jk}$$ and the
 functions 
 $$W_{ijkl}(z):= \sum_{s,t: i\leq s < j, k\leq t <l} 
 \left( \frac{\lambda_2(z)}{\lambda_1(z)} \right)^{s-t}.$$
 Then,
 \begin{equation}
   \label{eq-coronaday2}
   \gP_{\gd}^\infty (z)=\sum_{1\leq i<j, 1\leq k<l} \lambda_1(z)^{i+j-3}\lambda_2(z)^{k+l-3}
   (-1)^{(j-i-1)(l-k-1)} W_{ijkl}(z) X_{ijkl}.
 \end{equation}
 In particular, the random coefficients in \eqref{eq-coronaday2}
 are in general not Gaussian, and 
 there are terms in the sum in \eqref{eq-coronaday2} with non-trivial
 correlations.
\end{remark}

  Sj\"{o}strand and Vogel in  \cite{SV1} compute $\rho_\gd$ 
  for the non-triangular tridiagonal Toeplitz matrix. 
  Again by the Edelman-Kostlan formula, they identify $\rho_\gd$ 
  with the first intensity measure of the random point 
  process induced by the zero set of some Gaussian analytic 
  function with some covariance kernel $\mathbb{K}_\gd(\cdot, \cdot)$.  
Our Theorem \ref{thm:outlier-law},
when applied to non-triangular tridiagonal Toeplitz matrix, again
shows that under complex Gaussian perturbation the 
limiting random fields are the zero sets of Gaussian analytic functions, 
and a computation (which we omit) shows that its covariance
kernel is given by $\mathbb{K}_\gd(\cdot,\cdot)$.
Thus, Theorem \ref{thm:outlier-law} again recovers the results 
of \cite{SV1}.  

We also mention the relevant work \cite{NV}, where local statistics
for the
eigenvalues of
random perturbations of certain pseudo-differential operators are computed 
and related to local statistics of the zeros of random Gaussian analytic 
functions. 

Based on \cite{SV1, SV2,NV} one may be tempted to predict that for general finitely banded Toeplitz matrices the limiting random field is the zero set of some Gaussian analytic function. Theorem \ref{thm:outlier-law} shows that, even under complex Gaussian perturbations, the limit may not be the zero set of Gaussian analytic functions, e.g.~consider ${\bm a}(\lambda)=\lambda+\lambda^2$ and the limit of the random point process induced by the outlier eigenvalues in $\cS_2$. Furthermore, even in the framework of \cite{SV1, SV2}, under general perturbation, as already mentioned in Remark \ref{rem-nonuniv}, the limit turns out to be non-universal.  

{The work of \'{S}niady \cite{Sn} considers situations where the
additive noise is Gaussian of standard deviation $\sigma N^{-1/2}$, and deals
with the limit where first $N\to\infty$ and then $\sigma\to 0$. 
Some of the subsequent work, reviewed e.g. \cite[Section 1.4]{BPZ-twisted}, 
can
be seen as an attempt to modify the order of limits. In this direction and 
concerning outliers,
Bordenave and Capitaine \cite{BC} study outliers of 
deformed i.i.d.~random matrices.} Namely, for a sequence of deterministic matrices $\{A_N\}_{N \in \N}$ they study the outlier eigenvalues of $\model_N^\sigma:=A_N + \frac{\sigma}{\sqrt{N}} E_N$, where the entries of $E_N$ are i.i.d.~complex-valued random variables satisfying some assumptions on its moments, and $\sigma >0$ is a parameter. When $A_N$ is the Jordan matrix, in \cite[Corollary 1.10]{BC} it is shown that for any $\sigma>0$ the random point process induced by the outlier eigenvalues of $\model_N^\sigma$ converges to the zero set of a Gaussian analytic function with some covariance kernel $\mathcal{K}_\sigma(\cdot, \cdot)$. They also noted that, as $\sigma \to 0$, the kernel $\mathcal{K}_\sigma(\cdot, \cdot)$ admits a non-trivial limit and the limiting kernel turns out to be the covariance kernel of the hyperbolic Gaussian analytic function given by \eqref{eq:hgaf}. 
It is striking to see that for the complex Gaussian perturbation of the Jordan matrix the same limit appears in these two rather different frameworks:~in \cite{BC} $N \to \infty$ is followed by $\sigma \downarrow 0$, whereas in this paper $\sigma^{-1}$ and $N$ are sent to infinity together with $\sigma = N^{-\updelta}$ for some $\updelta >0$. However, it should also be noted that, unlike \cite{BC}, here the limit is non-universal. Based on this observation, we predict that the same phenomenon should continue to hold for general finitely banded Toeplitz matrices.


Next, we discuss {possible}
extensions of our results. 
A first obvious direction is to consider 
in Theorem \ref{thm:outlier-law} the case of $E_N$ without the density assumption. {Many steps of the proof go through, except for anti-concentration results of the type discussed
in Section \ref{sec:anti-concentration}. As will be explained in Section \ref{sec:prelim} below, in Section \ref{sec:anti-concentration} we derive anti-concentration bounds for linear combinations of determinants of sub-matrices of $E_N$. To obtain such a bound we use that there is at least one term in the linear sum with a large coefficient.} 

We conjecture that it should be possible to dispense of any density assumption on the entries of the noise matrix and the conclusion of Theorem 
\ref{thm:outlier-law} should continue to hold under minimal assumptions on the entries $E_N$, e.g.~i.i.d.~with zero mean and unit variance. At the level of convergence of empirical measures,
this has been verified, first in \cite{W} and then in \cite{BPZ-non-triang}. The non-universality of the limit process for outliers, see Remark \ref{rem-nonuniv},
complicates however the task of proving this.

The next section outlines the proofs of Theorems \ref{thm:no-outlier} and \ref{thm:outlier-law}.


%
%

\subsection{Outline of the proof}\label{sec:prelim}

We remind the reader that the bulk of the eigenvalues of $T_N({\bm a}) +\Delta_N$ approach the curve ${\bm a}(\mathbb{S}^1)$, as $N \to \infty$. Thus, to study the outlier eigenvalues we need to analyze the set $\{z \in \cup_{\gd=-d_2}^{d_1}\cS_\gd: \det (T_N({\bm a}(z)) +\Delta_N)=0\}$, where for brevity, hereafter we denote ${\bm a}(z)(\cdot):={\bm a}(\cdot) - z$ and recall the definition of $\cS_\gd$ from Definition \ref{dfn:cS-gd}. 

To this end, a key observation is that for $z \in \cS_\gd$ the dominant term in the expansion of $\det(T_N({\bm a}(z))+\Delta_N)$ is $P_{|\gd|}(z)$, where for $k \in [N]$, $P_k(z)$ is the homogeneous polynomial of degree $k$ in the entries of the noise matrix $\Delta_N$ in the expansion of the determinant (see \eqref{eq:P_k-z} for a precise formulation, and \eqref{eq;spec-rad-new} for 
a decomposition of the determinant in terms of these polynomials). It suggests that, the roots of $\det(T_N({\bm a}(z))+\Delta_N)=0$ that are in $\cS_\gd$ should be close to those of $P_{|\gd|}(z)=0$. This, in turn, indicates that the limit of the random point process induced by the roots of $\det(T_N({\bm a}(z))+\Delta_N)=0$ that are in $\cS_\gd$ should be the same for the equation $P_{|\gd|}(z)=0$. {The proof} then boils down to identifying the limit induced by the roots of $P_{|\gd|}(z)=0$ that are in $\cS_\gd$. The goal of this paper is to make these heuristics precise, leading to
 the conclusions of Theorems \ref{thm:no-outlier} and \ref{thm:outlier-law}.  
 
The heuristics described above can be mathematically formulated as below. We fix $\vep >0$, and consider the region $\cS_\gd^{-\vep}$.
From \cite[Lemma A.1]{BPZ-non-triang} it follows that the determinant of $T_N({\bm a}(z))+\Delta_N$ can be written as a sum of $P_k(z)$, where $k$ runs from zero to $N$. 
 From \cite[Lemma A.3]{BPZ-non-triang}, after some preprocessing, it follows that $P_{|\gd|}(z)$ is a polynomial in $\{\lambda_i(z)\}_{i=1}^d$ such that it is of degree $N$ in each variable (see \eqref{eq:P-k-decompose} below), where we remind the reader that $\{-\lambda_i(z)\}_{i=1}^d$ are the roots of the polynomial $P_{z, {\bm a}}(\lambda)=0$, see Definition \ref{dfn:cS-gd}. {Since for} $z \in \cS_\gd$ we have $|\lambda_1(z)|  \ge \cdots \ge |\lambda_{d_1-\gd}(z)| > 1 > |\lambda_{d_1-\gd+1}(z)| \ge \cdots \ge |\lambda_d(z)|$, it is natural to believe that for large $L \in \N$ the roots of $P_{|\gd|}(z)=0$ and of $P_{|\gd|}^L(z)=0$ should be close to each other, where $P_{|\gd|}^L(z)$ is obtained from $P_{|\gd|}(z)$ by removing terms having exponents of $\{\lambda_i(z)\}_{i=1}^{d_1-\gd}$ and $\{\lambda_i(z)\}_{i=d_1-\gd +1}^{d}$ that are less than $N - O(L)$, and greater than $O(L)$, respectively. 
 Indeed, we show in Section  
 \ref{sec:identify-non-dom-term} that the errors made by replacing
 the determinant of  $T_N({\bm a}(z))+\Delta_N$ (which is an analytic function)
 by $P_{|\gd|}^L(z)$ are small (in the sense that the supremum
 of the modulus of the difference over $\cS_\gd^{-\vep}$, properly
 normalized, has small second moment when $N\to\infty$ followed by 
 $L\to\infty$). We also prove that $z\mapsto P_{|\gd|}^L(z)$ are analytic
 functions in $\cS_{\gd}^{-\vep}$, see
 Lemma \ref{lem:analyticity}.

 The advantage of working with (the normalized form of) $P_{|\gd|}^L(z)$
 is that, for fixed $L$, it has a law independent of $N$. This fact is a 
 consequence of a combinatorial analysis, presented in Section
 \ref{sec:limit-tight}.  The upshot is that  $P_{|\gd|}^L(z)$, properly
 normalized,
 can be replaced by the $N$-independent analytic 
 fields $\gP_\gd^L$, and these fields 
 in turn possess a well defined analytic 
 limit $\gP_\gd^\infty$ as $L\to\infty$.

 In order to pass from convergence of the fields to convergence of the process of
 zeros, we employ a general criteria formulated in
 \cite{Sh}. Namely, we 
 need to check that the limit field $\gP_\gd^\infty$ is non degenerate,
 i.e.~does not vanish identically. Since $\gP_\gd^\infty$ was obtained
 as a limit, it suffices to check that the pre-limit possesses good
 enough anti-concentration properties at a fixed 
 point $z\in \cS_\gd^{-\vep}$. The pre-limit for which we 
 prove anti-concentration is $\widehat{P}_\gd(z)$, see Corollary
 \ref{cor:anti-conc-complex-dom}; the latter  builds on an anti-concentration
 result for polynomials in independent variables with maximal degree one in each variable, see Proposition \ref{prop:anti-conc-complex}.

The proof of Theorem \ref{thm:no-outlier} follows a simpler 
line of argument. Indeed, we show that now the the term with
$\gd=0$ is dominant, now for all $z \in \cS_0^{-\vep}$, 
on a set of probability $1-o(1)$. Thus the task reduces 
{to}
showing that $P_0(z)$ does not have any root in $\cS_0^{-\vep}$. Turning to do the same, using an operator norm bound on the noise matrix an $N$-dependent region $\mathcal{D}_N$ can be identified to not have any eigenvalue of $T_N({\bm a})+\Delta_N$, with high probability. Hence, one only needs to find a uniform lower bound on {the modulus of}
$P_0(z)= \det T_N({\bm a}(z))$ for $z \in \cS_0^{-\vep}\setminus \mathcal{D}_N$.

Evaluating the determinant of a finitely banded Toeplitz matrix has a long and impressive history. If the roots of $P_{z,{\bm a}}(\cdot)=0$ are distinct then the determinant of $T_N({\bm a}(z))$ is given by Widom's formula (see \cite[Theorem 2.8]{bottcher-finite-band} and \cite{Bax-Sch}), whereas in the case of double roots there is an analogous result, known as Trench's 
formula, see \cite[Theorem 2.10]{bottcher-finite-band} and \cite{trench} for
a proof. Recently, Bump and Diaconis \cite{BD} noted that, irrespective of whether $P_{z,{\bm a}}(\cdot)=0$ has double roots or not, the determinant of a finitely banded Toeplitz matrix can be expressed as a ratio of certain Schur polynomials in the roots of $P_{z, {\bm a}}(\lambda)=0$. Since we are interested in finding a uniform lower bound on \corAB{the modulus of} the determinant we work with the formulation of Bump and Diaconis, from which the desired uniform lower bound follows. This finishes the outline of the proof of Theorem \ref{thm:no-outlier}.

\subsection*{Outline of the rest of the paper} In Section \ref{sec:identify-non-dom-term}, using the second moment method, we find upper bounds on the non-dominant terms. Section
 \ref{sec:limit-tight} presents the combinatorial
 analysis leading
 to controls of the fields $\gP_\gd^L$ and $\gP_\gd^\infty$.
 Section \ref{sec:anti-concentration} is devoted to 
 deriving the 
 general anti-concentration bounds alluded to
 above, which is then applied to yield the same for the dominant term. 
 Section \ref{sec:dom-term} is devoted to the proof of
  Theorem \ref{thm:outlier-law}, while
  Section \ref{sec:proof-main-1} is devoted to the proof of
Theorem \ref{thm:no-outlier}.
Finally, as mentioned in Remark \ref{rmk:spec-rad}, 
extending the ideas of proof of Theorem \ref{thm:no-outlier}, in Appendix \ref{sec:spec-rad} we prove that the spectral radii of $\{N^{-1/2}E_N\}_{N \in \N}$ are tight. 

\subsection*{Acknowledgements} Research of AB is partially supported by a grant from Infosys Foundation, an Infosys--ICTS Excellence grant, and a Start-up Research Grant (SRG/2019/001376) and MATRICS grant (MTR/2019/001105) from Science and Engineering Research Board of Govt.~of India. OZ is partially supported by Israel Science Foundation grant 147/15 and funding from the European Research Council (ERC) under the European Unions Horizon 2020 research and innovation program (grant agreement number 692452). We thank Mireille Capitaine for her interest and for discussing \cite{BC} with us, and thank Martin Vogel for  Remark \ref{rmk:resolvent} and other useful comments. We are grateful to the 
anonymous referee for her/his suggestions that led to a 
shortening of our original proof of Theorem \ref{thm:outlier-law},
and 
also to a weakening of its hypotheses. We also thank the referee for
several other useful comments.

\section{Identification of
dominant term and tightness of the scaled determinants}\label{sec:identify-non-dom-term}
 In this section we show that the determinant of $T_N({\bm a}(z)) +\Delta_N $, when scaled appropriately, is uniformly tight, where we remind the reader that ${\bm a}(z)(\cdot):= {\bm a}(\cdot) -z$ and $\Delta_N = N^{-\gamma} E_N$. This will be shown by deriving uniform upper bounds on the non-dominant terms of the scaled determinant, as well as the same for the dominant term. For later use, we will also derive bounds on the second moment of the tail of the dominant term.

 Before proceeding further we
  introduce relevant definitions. For $k \in [N]$ set
 \begin{equation}\label{eq:P_k-z}
P_k(z):= \sum_{\substack{X, Y \subset [N]\\ |X|=|Y|=k}} (-1)^{\sgn(\sigma_{X}) \sgn(\sigma_{Y})} \det (T_N({\bm a}(z))[{X}^c; {Y}^c]) N^{-\gamma k} \det (E_N[X; Y]),
\end{equation}
where we recall that
 $E_N[X; Y]$ denotes the sub-matrix of $E_N$ induced by the rows in $X$ and columns in $Y$, ${X}^c:=[N]\setminus X$, ${Y}^c:= [N]\setminus Y$, and for $Z \in \{X, Y\}$, $\sigma_Z$ is the permutation on $[N]$ which places all the elements of $Z$ before all the elements of ${Z}^c$, but preserves the order of the elements within the two sets. Additionally denote $P_0(z):= \det (T_N({\bm a}(z)))$. 
 
 From \cite[Lemma A.1]{BPZ-non-triang} it follows that 
\begin{equation}\label{eq;spec-rad-new}
\det(T_N({\bm a}(z))+N^{-\gamma} E_N) = \sum_{k=0}^N P_k(z).
\end{equation}
Below we will {show  that for $z \in \cS_\gd$,} the dominant term in the expansion \eqref{eq;spec-rad-new} is $P_{|\gd|}(z)$. In Section \ref{sec:anti-concentration},
 it will be further argued that  {for $z \in \cS_\gd^{-\vep}$,} $|P_{|\gd|}(z)|$ is of the order {$\mathfrak{K}(z)$, where
  \begin{equation}
  \label{fancyK}
 \mathfrak{K}(z):=  \mathfrak{K}(z, \gd):= a_{d_1}^N \cdot N^{-\gamma |\gd|} \cdot \prod_{i=1}^{d_1-\gd} \lambda_i(z)^{N+d_2}. 
 \end{equation}
 (By convention, we set an empty product to equal $1$.) Thus, for uniform tightness, we scale the determinants by  this factor, setting}
\begin{equation}\label{eq:hatP-k}
\widehat P_k(z):= P_k(z)/\mathfrak{K}(z), \qquad k=1,2,\ldots, N,
\end{equation}
and
\begin{equation}\label{eq:hat-det}
\widehat \det_N(z):= \det(T_N({\bm a}(z))+N^{-\gamma} E_N)/\mathfrak{K}(z).
\end{equation} 

We will show that, for any compact set $K \subset \cS_\gd^{-\vep}$, the sequence of random variables $\{\|\widehat\det_N (z)\|_K\}_{N \in \N}$ is uniformly tight. It follows from \cite[Lemma 2.6]{Sh} that if $\sup_N \E [|\widehat \det_N(z)|^p]$ is locally integrable for some $p >0$, then the sequence $\{\|\widehat\det_N (z)\|_K\}_{N \in \N}$ is indeed uniformly tight. Therefore it suffices to derive the local integrability of $\sup_N \E [|\widehat \det_N(z)|^p]$. In this section we derive this local integrability with $p=2$. The following are the main results of this section. 

The first two results, {whose proofs are postponed,} derive a bound on the second moments of the supremum (in $z$) of the non-dominant terms in the expansion of $\widehat \det_N(z)$. 

 \begin{lemma}\label{lem:rouche-multi-gr-d_0}
Fix $\vep >0$, $\gamma > \frac12$, and an integer $\gd$ such that $ -d_2 \le \gd \le d_1$. Let $T_N({\bm a})$ be an $N \times N$ Toeplitz matrix with symbol ${\bm a}$, where ${\bm a}$ as in \eqref{eq:laurent-poly}. Assume that the entries of $E_N$ are independent with zero mean and unit variance. Then, there exists $\eta_0 >0$ such that 
\begin{equation}\label{eq:gr-d-0}
\sup_{k = |\gd|+1}^N  \E \left[ \sup_{z \in \cS_\gd^{-\vep}}  |\widehat P_k(z)|^2 \right] \cdot N^{\eta_0(k- |\gd|)}  \le \sup_{k = |\gd|+1}^N  \E \left[ \sup_{z \in \cS_\gd^{-\vep}}  |\widehat P_k(z)|^2 \cdot \prod_{i=1}^{d_1-\gd} |\lambda_i(z)|^{2(k+d_2)} \right] \cdot N^{\eta_0(k- |\gd|)} \le  1,
 \end{equation}
 for all large $N$. 
\end{lemma}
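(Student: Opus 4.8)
The plan is to expand $\widehat P_k(z)$ using \eqref{eq:P_k-z} and \eqref{eq:hatP-k}, then estimate the second moment by exploiting the independence and mean-zero property of the entries of $E_N$. Writing $\widehat P_k(z) = \mathfrak{K}(z)^{-1} \sum_{|X|=|Y|=k} (-1)^{\sgn(\sigma_X)\sgn(\sigma_Y)} \det(T_N({\bm a}(z))[X^c;Y^c]) N^{-\gamma k} \det(E_N[X;Y])$, the first step is to take the expectation of $|\widehat P_k(z)|^2$. Since the entries of $E_N$ are independent with mean zero and variance one, $\Exp[\det(E_N[X;Y])\overline{\det(E_N[X';Y'])}]$ vanishes unless the two minors share the same multiset of entries; tracking which pairs $(X,Y),(X',Y')$ survive gives $\Exp[|\det(E_N[X;Y])|^2] \le k!$ and a Cauchy--Schwarz bound on the cross terms. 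This reduces the problem to a deterministic bound on $\mathfrak{K}(z)^{-2} N^{-2\gamma k} \sum_{|X|=|Y|=k} |\det(T_N({\bm a}(z))[X^c;Y^c])|^2$ (roughly), uniformly over $z \in \cS_\gd^{-\vep}$.

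The second, and main, step is to bound the complementary Toeplitz minors $\det(T_N({\bm a}(z))[X^c;Y^c])$. Here one uses the Bump--Diaconis/Widom-type description (cited from \cite{BD, bottcher-finite-band}) expressing minors of a banded Toeplitz matrix as skew-Schur polynomials in the roots $\{-\lambda_i(z)\}_{i=1}^d$ of $P_{z,{\bm a}}(\lambda)=0$, together with the normalization by $\mathfrak{K}(z)$. Because $z \in \cS_\gd^{-\vep}$ forces $|\lambda_1(z)| \ge \cdots \ge |\lambda_{d_1-\gd}(z)| > 1+c(\vep) > 1 - c(\vep) > |\lambda_{d_1-\gd+1}(z)| \ge \cdots$, the factor $\prod_{i=1}^{d_1-\gd} |\lambda_i(z)|^{N+d_2}$ in $\mathfrak{K}(z)$ is comparable to the top-order growth of the full determinant, and removing $k$ rows and $k$ columns costs at most a factor $N^{O(k)}$ in the combinatorial count but saves a factor that is exponentially small in $N$ per "extra" deleted index associated with a root of modulus $>1$ (or a power of the moduli of the small roots). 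The key quantitative input is that each unit increase of $k$ beyond $|\gd|$ contributes an extra $N^{-\gamma}$ from the explicit $N^{-\gamma k}$ prefactor while the combinatorial/Schur factor grows only polynomially; since $\gamma > \tfrac12$, one can choose $\eta_0 \in (0, 2\gamma - 1)$ (say $\eta_0 = \gamma - \tfrac12$) so that $\Exp[\sup_z |\widehat P_k(z)|^2] \cdot N^{\eta_0(k-|\gd|)} \le 1$ for all large $N$ and all $k > |\gd|$.

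Two technical points need care. First, passing from a pointwise-in-$z$ second moment bound to a bound on $\Exp[\sup_{z \in \cS_\gd^{-\vep}} |\widehat P_k(z)|^2]$ requires a uniform estimate; the standard route is to show $\widehat P_k$ is analytic on $\cS_\gd^{-\vep}$ (deferring to Lemma \ref{lem:analyticity}) and use a mean-value/subharmonicity argument, or to directly bound the sup by a sum over a fine net combined with the explicit polynomial structure and a crude derivative bound. Second, the moduli $|\lambda_i(z)|$ are bounded and bounded away from the critical value $1$ uniformly on $\cS_\gd^{-\vep}$ (by compactness and continuity of the roots in $z$, cf.\ \cite{whitney}), so all implied constants are uniform there; this is what makes the inequality $\prod_{i=1}^{d_1-\gd}|\lambda_i(z)|^{2(k+d_2)} \ge 1$ (used for the first inequality in \eqref{eq:gr-d-0}) legitimate. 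The main obstacle I anticipate is the bookkeeping in the second step: correctly accounting, via the skew-Schur formula, for exactly how much is gained in $\mathfrak{K}(z)^{-2} N^{-2\gamma k}\sum |\det(\cdot)|^2$ per extra deleted index, so that the surplus $N^{-\gamma}$ per unit of $k$ genuinely dominates the polynomial growth — this is where the precise definition of $\mathfrak{K}(z,\gd)$ in \eqref{fancyK} and the ordering of the $\lambda_i(z)$ must be used with care.
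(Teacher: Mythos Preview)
Your plan is in the right spirit, but it misses the key structural simplification that makes the $\sup_z$ tractable, and your proposed routes for handling the supremum would likely founder when you try to get the bound uniform over $k$ up to $N$.

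The paper does not try to bound the Toeplitz minors $\det(T_N({\bm a}(z))[X^c;Y^c])$ directly via skew-Schur formulas. Instead it writes $T_N({\bm a}(z))$ as a submatrix of the upper-triangular $(N+d_2)\times(N+d_2)$ Toeplitz matrix $T_{N+d_2}({\bm a},z;d)=a_{d_1}\prod_{i=1}^d(J_{N+d_2}+\lambda_i(z)\Id_{N+d_2})$ and applies Cauchy--Binet. Each $P_k(z)$ then becomes a sum over tuples $\cX_k=(X_1,\ldots,X_{d+1})$ of products of \emph{bidiagonal} determinants, each of which is simply a monomial $\prod_i\lambda_i(z)^{\ell_i}$ for explicit exponents (equation \eqref{eq:det-bidiagonal}). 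Grouping by the exponent vector ${\bm \ell}$ yields the decomposition
\[
\widehat P_k(z)=a_{d_1}^{-k}N^{-\gamma(k-|\gd|)}\sum_{{\bm \ell}}\Bigl(\prod_{i\le d_0}\lambda_i(z)^{-\hat\ell_i}\prod_{i>d_0}\lambda_i(z)^{\hat\ell_i}\Bigr)\cdot \widehat Q_{{\bm \ell},k},
\]
where $\widehat Q_{{\bm \ell},k}$ is a \emph{$z$-independent} random variable. This is the crucial point: the $z$-dependence and the randomness are now completely decoupled. One simply pulls $\sup_{z\in\cS_\gd^{-\vep}}$ inside the sum, bounds each $z$-factor by $(1-\vep_0)^{\tilde\ell_i}$ via the uniform root separation (your ``second technical point''), and then applies Cauchy--Schwarz in the form $\E\bigl[(\sum_{\bm\ell}|\kappa_{\bm\ell}|\,|\widehat Q_{{\bm\ell},k}|)^2\bigr]\le(\sum_{\bm\ell}|\kappa_{\bm\ell}|(\E|\widehat Q_{{\bm\ell},k}|^2)^{1/2})^2$. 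No analyticity, subharmonicity, or net argument is needed; those would be very hard to make uniform in $k$ when $k$ is as large as $N$.

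The variance $\E|\widehat Q_{{\bm\ell},k}|^2=k!\cdot\mathfrak N_{{\bm\ell},k}$ is then controlled by a purely combinatorial count (Lemma \ref{lem:combinatorial-1}), and the remaining sum over ${\bm\ell}$ is estimated by splitting into cases according to whether $k$ is small ($\le N^{1/\log\log N}$) or large, and within each case whether $\max_i\hat\ell_i$ is small or large relative to $k$. This case analysis is where the factor $N^{-(\gamma-1/2)(k-|\gd|)}$ from $N^{-\gamma k}\sqrt{k!\binom{N+d_2}{k-|\gd|}}$ beats the polynomial growth in the binomial coefficients, yielding $\eta_0>0$. Your intuition that the surplus $N^{-\gamma}$ per unit of $k$ dominates polynomial growth is correct, but the mechanism is this explicit sum over ${\bm\ell}$ with geometric decay $(1-\vep_0)^{\sum\tilde\ell_i}$, not a direct minor estimate.
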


\begin{lemma}\label{lem:rouche-multi-le-d_0}
Under the same set-up as in Lemma \ref{lem:rouche-multi-gr-d_0}, there exists an $\vep_\star \in (0,1)$, depending only on {${\bm a}$ and $\vep>0$}, such that
\[
\sup_{k=0}^{|\gd|-1}\E\left[\sup_{z \in \cS_\gd^{-\vep}} |\widehat P_k(z)|^2\right] \le (1-\vep_\star)^{N},
 \]
 for all large $N$. 
\end{lemma}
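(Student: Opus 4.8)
\textbf{Proof proposal for Lemma \ref{lem:rouche-multi-le-d_0}.}

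The plan is to bound $\E[\sup_{z \in \cS_\gd^{-\vep}} |\widehat P_k(z)|^2]$ for each fixed $k \in \{0,1,\dots,|\gd|-1\}$ by an exponentially small quantity, uniformly in $k$. The key point is that for these ``sub-critical'' values of $k$ the polynomial $P_k(z)$ carries strictly fewer powers of the noise matrix than the dominant term $P_{|\gd|}(z)$, so after dividing by the normalizing factor $\mathfrak{K}(z)$ in \eqref{fancyK} one is left with a genuine deficit. First I would write $P_k(z)$ via \eqref{eq:P_k-z} as a sum over pairs $X,Y \subset [N]$ of size $k$, and use the second moment method: since the entries of $E_N$ are independent with zero mean and unit variance, $\E[\det(E_N[X;Y]) \overline{\det(E_N[X';Y'])}]$ vanishes unless $X=X'$ and $Y=Y'$ (the determinant is a sum of monomials $\prod_{m} e_{i_m,j_m}$ with all indices distinct, and the mean-zero property kills any cross term), and for $X=X'$, $Y=Y'$ the second moment is at most $k!$. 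Hence
\[
\E\Big[|P_k(z)|^2\Big] \le k! \cdot N^{-2\gamma k} \sum_{\substack{X,Y \subset [N] \\ |X|=|Y|=k}} \big|\det(T_N({\bm a}(z))[X^c;Y^c])\big|^2.
\]

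The heart of the matter is then to estimate the minor sum $\sum_{X,Y} |\det(T_N({\bm a}(z))[X^c;Y^c])|^2$, and to show that after dividing by $|\mathfrak{K}(z)|^2$ and multiplying by $k! N^{-2\gamma k}$ one gets something like $(1-\vep_\star)^N$ uniformly over $z \in \cS_\gd^{-\vep}$ and over $k < |\gd|$. The natural tool here is the Cauchy--Binet formula: $\sum_{X,Y:|X|=|Y|=k} |\det(T_N({\bm a}(z))[X^c;Y^c])|^2$ is, up to combinatorial bookkeeping, a sum of principal minors of $T_N T_N^*$ of size $N-k$, which can be compared with $\det(T_N T_N^*) = |\det T_N({\bm a}(z))|^2 = |P_0(z)|^2$ and lower-order symmetric functions of the singular values of $T_N({\bm a}(z))$. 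The crucial input is a lower bound on $|P_0(z)| = |\det T_N({\bm a}(z))|$ of the order of $|\mathfrak{K}(z,\gd)|$ only when $\gd = 0$; for $\gd \neq 0$ the determinant $\det T_N({\bm a}(z))$ is \emph{exponentially smaller} than $\mathfrak{K}(z,\gd)$, because $\mathfrak{K}$ is calibrated to the size of $P_{|\gd|}$, not of $P_0$. Concretely, using the Bump--Diaconis / Widom description of $\det T_N({\bm a}(z))$ as a Schur-type polynomial in $\{\lambda_i(z)\}_{i=1}^d$, one sees $|\det T_N({\bm a}(z))| \asymp a_{d_1}^N \prod_{i=1}^{d_1} |\lambda_i(z)|^{N+d_2}$ up to subexponential factors, whereas $|\mathfrak{K}(z,\gd)| = a_{d_1}^N N^{-\gamma|\gd|}\prod_{i=1}^{d_1-\gd}|\lambda_i(z)|^{N+d_2}$; for $\gd > 0$ the quotient $|P_0(z)/\mathfrak{K}(z)|$ contains the factor $\prod_{i=d_1-\gd+1}^{d_1}|\lambda_i(z)|^{N+d_2}$ with $|\lambda_i(z)| < 1$, hence is at most $(1-\vep_\star)^N$, and similarly one handles $\gd < 0$ using the transpose reduction. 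The minors $\det(T_N[X^c;Y^c])$ with $|X|=k<|\gd|$ change the top $\lambda$-exponents by only $O(k)=O(1)$, so they are within a constant factor of $P_0$ on the exponential scale, and the combinatorial prefactor $\binom{N}{k}^2 k! N^{-2\gamma k}$ is at most polynomial in $N$ (recall $k$ is bounded by $|\gd| - 1 = O(1)$), which is absorbed into $(1-\vep_\star)^N$ after slightly shrinking $\vep_\star$.

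To make the bound uniform in $z$ over the non-compact-looking but actually compact-closure set $\cS_\gd^{-\vep}$: I would first note that $\cS_\gd^{-\vep}$ has the property that $\prod_{i=1}^{d_1-\gd}|\lambda_i(z)| > 1$ and $\prod_{i=d_1-\gd+1}^{d_1}|\lambda_i(z)| \le e^{-c(\vep)}$ for some $c(\vep) > 0$, by continuity of the roots of $P_{z,{\bm a}}$ and compactness; this gives a uniform $\vep_\star$. One also needs that the Schur-polynomial expression for the minors does not blow up: the relevant skew-Schur polynomials in $\{\lambda_i(z)\}$ are bounded on $\cS_\gd^{-\vep}$ by $C^N$ for some $C$ depending only on $\sup_{z}\max_i|\lambda_i(z)|$ (finite on the closure), and this $C^N$ is again killed by the exponential gain provided we take $\vep_\star$ small enough. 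Finally, taking expectation of the supremum rather than the supremum of the expectation: since each $\widehat P_k$ is analytic in $z$ on $\cS_\gd^{-\vep}$, I would apply a standard maximum-modulus / mean-value argument on a slightly fattened domain $\cS_\gd^{-\vep/2}$ together with Markov's inequality — i.e., bound $\sup_{z \in \cS_\gd^{-\vep}}|\widehat P_k(z)|^2$ by an average of $|\widehat P_k|^2$ over $\cS_\gd^{-\vep/2}$ via subharmonicity, then exchange expectation and the integral over $z$ by Fubini. This is exactly the mechanism already used for Lemma \ref{lem:rouche-multi-gr-d_0}, so I would quote whatever uniformization lemma is established there.

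\textbf{Main obstacle.} The genuinely delicate step is the estimate of the minor sum $\sum_{|X|=|Y|=k}|\det(T_N({\bm a}(z))[X^c;Y^c])|^2$ and its comparison to $|\mathfrak{K}(z)|^2$ with an \emph{explicit exponential gain} that is uniform in $z \in \cS_\gd^{-\vep}$ and in $k < |\gd|$. Getting a clean lower/upper bound on determinants of near-full-size minors of a banded Toeplitz matrix in terms of the roots $\{\lambda_i(z)\}$ requires care — one must control the skew-Schur polynomials appearing in the Bump--Diaconis formula uniformly, and must verify that deleting $k$ rows and columns (with $k$ bounded) perturbs the leading exponential behavior only by a bounded multiplicative factor and never \emph{improves} the $\lambda$-exponents beyond those of $\mathfrak{K}$. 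Establishing this, together with the uniform separation of the moduli $|\lambda_{d_1-\gd}(z)| > 1 > |\lambda_{d_1-\gd+1}(z)|$ on $\cS_\gd^{-\vep}$ with quantitative constants, is where the real work lies; everything else (second moment orthogonality, combinatorial counting, passage from $\sup\E$ to $\E\sup$) is routine.
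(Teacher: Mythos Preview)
Your overall architecture (second moment, orthogonality of $\det(E_N[X;Y])$, then bound the deterministic minor sum) is fine, but the central claim --- that ``the minors $\det(T_N[X^c;Y^c])$ with $|X|=k<|\gd|$ change the top $\lambda$-exponents by only $O(k)=O(1)$, so they are within a constant factor of $P_0$ on the exponential scale'' --- is false. Take ${\bm a}(\lambda)=\lambda+\lambda^2$ and $z=-1/4\in\cS_2$, so $\lambda_1(z)=\lambda_2(z)=1/2$, $d_0=0$, $|\gd|=2$. Then $|P_0(z)|=(1/4)^N$, while the $1$-minor obtained by deleting row $N$ and column $1$ equals $\sum_{x=1}^N\lambda_1^{x-1}\lambda_2^{N-x}=N(1/2)^{N-1}$ (you can check this directly for small $N$). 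So this minor is larger than $P_0$ by a factor $\sim 2^N$: deleting a single row and column at the corners shifts the $\lambda$-exponents by order $N$, not $O(1)$. Your Cauchy--Binet route hits the same wall: $e_{N-k}(s^2)/e_N(s^2)$ is exponentially large because $T_N({\bm a}(z))$ has $|\gd|$ exponentially small singular values for $z\in\cS_\gd$, $\gd\ne 0$.

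What is true --- and what the paper proves --- is the weaker statement that all $k$-minors for $k<|\gd|$ are exponentially smaller than $\mathfrak K(z)$ (not than $P_0$). The paper does this via the decomposition \eqref{eq:P-k-decompose-0}, which writes $\widehat P_k(z)$ as $a_{d_1}^{-k}N^{-\gamma(k-|\gd|)}\sum_{\bm\ell}\kappa^\star({\bm\ell},z)\,\widehat Q_{{\bm\ell},k}$ with $\widehat Q_{{\bm\ell},k}$ independent of $z$; the key combinatorial input is Lemma~\ref{lem:combinatorial-2} (from \cite[Lemma 4.3]{BPZ-non-triang}), which says that for $k<|\gd|$ the index set $\gL_{{\bm\ell},k}$ is empty unless $\sum_i\hat\ell_i\ge N-d^2$. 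Combined with Lemma~\ref{lem:root-cont-0} this forces $|\kappa^\star({\bm\ell},z)|\le(1-\vep_0)^{N-d^2}$ uniformly on $\cS_\gd^{-\vep}$, and a crude polynomial bound on $\mathfrak N_{{\bm\ell},k}$ (since $k<|\gd|\le d$) finishes. Note also that this separation of $z$-dependent $\kappa^\star$ from $z$-independent $\widehat Q$ is how the paper handles $\E\sup_z$ in Lemma~\ref{lem:rouche-multi-gr-d_0} as well --- not via subharmonicity --- so the mechanism you propose to quote is not the one actually used.
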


Building on Lemmas \ref{lem:rouche-multi-gr-d_0}-\ref{lem:rouche-multi-le-d_0} we derive the following bound on the second moment of the non-dominant terms. 

\begin{corollary}\label{cor:non-dominant}
Under the same set-up as in Lemma \ref{lem:rouche-multi-gr-d_0}, for large $N$ it holds that
\[
  \E \left[ \sup_{z \in \cS_\gd^{-\vep}} \left|\widehat \det_N(z) -\widehat P_{|\gd|}(z)\right|^2 \right] \le N^{-\frac{\eta_0}{2}}.
\]
\end{corollary}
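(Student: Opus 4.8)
\textbf{Proof proposal for Corollary \ref{cor:non-dominant}.}

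The plan is to split the difference $\widehat\det_N(z) - \widehat P_{|\gd|}(z)$ into the two ranges of $k$ covered by Lemmas \ref{lem:rouche-multi-gr-d_0} and \ref{lem:rouche-multi-le-d_0}, and control each by the triangle inequality in $L^2(\P)$. Using the decomposition \eqref{eq;spec-rad-new} divided by $\mathfrak{K}(z)$, we have
\[
\widehat\det_N(z) - \widehat P_{|\gd|}(z) = \sum_{k=0}^{|\gd|-1} \widehat P_k(z) + \sum_{k=|\gd|+1}^N \widehat P_k(z).
\]
For the low-order terms I would apply Minkowski's inequality in $L^2$ together with Lemma \ref{lem:rouche-multi-le-d_0}: since there are at most $|\gd|$ such terms and each has $\E[\sup_z |\widehat P_k(z)|^2] \le (1-\vep_\star)^N$, the contribution of $\sup_z |\sum_{k=0}^{|\gd|-1}\widehat P_k(z)|^2$ has expectation at most $|\gd|^2 (1-\vep_\star)^N$, which is $o(N^{-\eta_0/2})$ (in fact exponentially small).

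For the high-order terms, the point is that Lemma \ref{lem:rouche-multi-gr-d_0} gives a bound that decays geometrically in $k-|\gd|$: $\E[\sup_z |\widehat P_k(z)|^2] \le N^{-\eta_0(k-|\gd|)}$. Applying Minkowski again to the (at most $N$) terms in $\sum_{k=|\gd|+1}^N \widehat P_k(z)$,
\[
\left( \E\Big[ \sup_z \Big| \sum_{k=|\gd|+1}^N \widehat P_k(z) \Big|^2 \Big] \right)^{1/2} \le \sum_{k=|\gd|+1}^N \left(\E\big[\sup_z |\widehat P_k(z)|^2\big]\right)^{1/2} \le \sum_{k=|\gd|+1}^N N^{-\eta_0(k-|\gd|)/2} = \sum_{j\ge 1} N^{-\eta_0 j/2},
\]
and for $N$ large this geometric series is at most, say, $2 N^{-\eta_0/2}$. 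Squaring and combining with the low-order estimate, and possibly shrinking $\eta_0$ or enlarging the threshold for "large $N$", yields the bound $N^{-\eta_0/2}$ as stated (the constant can be absorbed by passing from $\eta_0$ to a slightly smaller exponent, or one simply states the bound with an implicit constant and then reabsorbs).

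The main obstacle here is not in this corollary itself — once Lemmas \ref{lem:rouche-multi-gr-d_0} and \ref{lem:rouche-multi-le-d_0} are in hand, the corollary is a routine $L^2$-triangle-inequality argument with a geometric series. The real work lies in those two lemmas (postponed in the text), namely establishing the uniform-in-$z$ second moment bounds on $\widehat P_k(z)$ over the fattened region $\cS_\gd^{-\vep}$, which requires the combinatorial expansion of $\det(T_N({\bm a}(z))[X^c;Y^c])$ in terms of $\{\lambda_i(z)\}$ and careful counting of how the factor $\prod_{i=1}^{d_1-\gd}|\lambda_i(z)|^{2(k+d_2)}$ in \eqref{eq:gr-d-0} interacts with the normalization $\mathfrak{K}(z)$. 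One should double-check that the supremum over $z$ commutes acceptably with the expectation — this is where one uses that $\widehat P_k$ is analytic on $\cS_\gd^{-\vep}$ (Lemma \ref{lem:analyticity}) so that $\sup_z$ on a compact set is controlled, e.g. via a maximum-modulus or mean-value argument on a slightly larger region, but for the present corollary this is already baked into the statements of the two lemmas.
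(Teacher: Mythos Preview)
Your proof is correct and is essentially the same as the paper's: both use the decomposition $\widehat\det_N(z)-\widehat P_{|\gd|}(z)=\sum_{k\ne|\gd|}\widehat P_k(z)$, bound $\sup_z|\sum_k\widehat P_k(z)|$ by $\sum_k\sup_z|\widehat P_k(z)|$, apply the triangle inequality in $L^2(\P)$ (the paper phrases this via Cauchy--Schwarz on the cross terms, see \eqref{eq:hat-det-decompose}), and then sum the bounds from Lemmas \ref{lem:rouche-multi-gr-d_0} and \ref{lem:rouche-multi-le-d_0} as a geometric series plus an exponentially small piece. Your final remark about analyticity and commuting $\sup_z$ with $\E$ is unnecessary here, since the lemmas already deliver bounds on $\E[\sup_z|\widehat P_k(z)|^2]$ directly.
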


The next lemma, {whose proof is deferred, gives}  an upper bound on the second moment of the dominant term in the expansion of $\widehat \det_N(z)$. 

\begin{lemma}\label{lem:rouche-multi-eq-d_0}
Under the same set-up as in Lemma \ref{lem:rouche-multi-gr-d_0}, there exists a constant $C_0$ such that
\[
\sup_N  \E \left[\sup_{z \in \cS_\gd^{-\vep}}  |\widehat P_{|\gd|}(z)|^2 \right] \le C_0. 
\]
\end{lemma}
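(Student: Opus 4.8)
\textbf{Proof proposal for Lemma \ref{lem:rouche-multi-eq-d_0}.}
The plan is to give a direct second-moment computation for $\widehat P_{|\gd|}(z)$, uniform over $z\in\cS_\gd^{-\vep}$, mirroring the treatment of the non-dominant terms in Lemma \ref{lem:rouche-multi-gr-d_0} but keeping track of the fact that now $k=|\gd|$ so no extra power of $N$ is gained. Starting from the defining expansion \eqref{eq:P_k-z} with $k=|\gd|$, I would write
\[
\widehat P_{|\gd|}(z)=\mathfrak{K}(z)^{-1}\!\!\sum_{\substack{X,Y\subset[N]\\|X|=|Y|=|\gd|}}\!\!(-1)^{\sgn(\sigma_X)\sgn(\sigma_Y)}\det(T_N({\bm a}(z))[X^c;Y^c])\,N^{-\gamma|\gd|}\det(E_N[X;Y]),
\]
and use that the entries of $E_N$ are independent with zero mean and unit variance, so that $\E[\det(E_N[X;Y])\overline{\det(E_N[X';Y'])}]$ vanishes unless $X=X'$ and $Y=Y'$, in which case it is bounded by $|\gd|!$ (expand each determinant over permutations and pair up matching products; the cross terms have a free centered factor). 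This collapses the second moment to a single sum
\[
\E\big[|\widehat P_{|\gd|}(z)|^2\big]\le |\gd|!\cdot N^{-2\gamma|\gd|}\,|\mathfrak{K}(z)|^{-2}\!\!\sum_{\substack{X,Y\subset[N]\\|X|=|Y|=|\gd|}}\!\!\big|\det(T_N({\bm a}(z))[X^c;Y^c])\big|^2 .
\]
The core of the argument is then a deterministic estimate: bound each minor $\det(T_N({\bm a}(z))[X^c;Y^c])$ via the Bump--Diaconis / Widom-type representation of minors of finitely banded Toeplitz matrices as (skew-)Schur polynomials in $\{\lambda_i(z)\}_{i=1}^d$, cf.\ \cite{Al,BD} and the discussion around \eqref{eq:P-k-decompose}, so that for $z\in\cS_\gd^{-\vep}$ the largest contribution is of order $a_{d_1}^N\prod_{i=1}^{d_1-\gd}|\lambda_i(z)|^{N+d_2}$ times a factor polynomial in $N$ coming from the number of $(X,Y)$ pairs ($O(N^{2|\gd|})$ of them) and from the combinatorial weights. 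After dividing by $|\mathfrak{K}(z)|^2=a_{d_1}^{2N}N^{-2\gamma|\gd|}\prod_{i=1}^{d_1-\gd}|\lambda_i(z)|^{2(N+d_2)}$, the $N$-dependent exponential and $N^{-2\gamma|\gd|}$ factors cancel exactly and one is left with a quantity that is $O(1)$ uniformly on $\cS_\gd^{-\vep}$ — this is precisely why $\mathfrak{K}(z)$ was chosen as in \eqref{fancyK}. To upgrade the pointwise bound to a bound on $\E[\sup_{z\in\cS_\gd^{-\vep}}|\widehat P_{|\gd|}(z)|^2]$, I would note that $z\mapsto\widehat P_{|\gd|}(z)$ is analytic on a slightly larger region $\cS_\gd^{-\vep/2}$ (same argument as in Lemma \ref{lem:analyticity}), so by the subharmonicity of $|\widehat P_{|\gd|}|^2$ and a mean-value / Cauchy-integral bound the supremum over $\cS_\gd^{-\vep}$ is controlled by an average of the pointwise second moment over $\cS_\gd^{-\vep/2}$, which is $O(1)$ by the above; here one also uses that $\cS_\gd^{-\vep}$ has a fixed positive distance to $\partial\cS_\gd^{-\vep/2}$ and that $\cS_\gd^{-\vep}$ can be taken bounded (outliers at infinity being excluded, or handled separately as in the treatment preceding Lemma \ref{lem:rouche-multi-gr-d_0}).

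The step I expect to be the main obstacle is the uniform-in-$z$ deterministic estimate on the minors $\det(T_N({\bm a}(z))[X^c;Y^c])$: one must show that \emph{no} choice of removed rows/columns $X,Y$ produces a minor that beats $a_{d_1}^N\prod_{i=1}^{d_1-\gd}|\lambda_i(z)|^{N+d_2}$ by more than a polynomial-in-$N$ factor, and that the relevant skew-Schur polynomials do not blow up as $z$ approaches $\partial\cS_\gd$ faster than the normalization allows (the roots $\lambda_i(z)$ can collide, and $|\lambda_{d_1-\gd}(z)|$ or $|\lambda_{d_1-\gd+1}(z)|$ can approach $1$, but on $\cS_\gd^{-\vep}$ these are bounded away from $1$ by a constant depending only on ${\bm a}$ and $\vep$, which is exactly where the $\vep$-fattening is used). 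This is the same combinatorial bookkeeping underlying Lemma \ref{lem:rouche-multi-gr-d_0}, specialized to $k=|\gd|$, so I would structure the proof to quote or reuse that bookkeeping rather than redo it from scratch, and the only genuinely new point is verifying that at $k=|\gd|$ the bound is $O(1)$ rather than $o(1)$.
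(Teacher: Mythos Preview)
Your plan contains the right ingredients but has a genuine gap in the step you yourself flag as the main obstacle, and the route you propose to bridge it is different from (and more circuitous than) the paper's.

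The paper does not compute a pointwise second moment and then upgrade via subharmonicity. Instead, it uses the representation \eqref{eq:P-k-decompose-0} to write
\[
\widehat P_{|\gd|}(z)=a_{d_1}^{-|\gd|}\sum_{\bm\ell}\prod_{i\le d_0}\lambda_i(z)^{-\hat\ell_i}\prod_{i>d_0}\lambda_i(z)^{\hat\ell_i}\cdot\widehat Q_{\bm\ell,|\gd|},
\]
where $\widehat Q_{\bm\ell,|\gd|}$ is $z$-free. The supremum over $z$ is then pulled inside term-by-term via the triangle inequality and Cauchy--Schwarz exactly as in \eqref{eq:P-k-cs}, using that $\sup_{z\in\cS_\gd^{-\vep}}\prod_i|\lambda_i(z)|^{\pm\hat\ell_i}\le(1-\vep_0)^{\sum_i\hat\ell_i}$ by Lemma \ref{lem:root-cont-0}. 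The variance of $\widehat Q_{\bm\ell,|\gd|}$ is controlled by Lemma \ref{lem:combinatorial-1}, and the decisive point is that at $k=|\gd|$ the factor $\binom{N+d_2}{k-|\gd|}$ in \eqref{eq:N-l-k-bd} equals $1$, so the sum over $\bm\ell$ is a convergent geometric series with an $N$-independent bound. This is packaged as Lemma \ref{lem:dominant-tail}, and the paper's proof of the present lemma is then a one-line consequence of Lemma \ref{lem:dominant-tail}(i) and Corollary \ref{cor:dominant-tail}.

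The gap in your argument is the sentence asserting that after dividing by $|\mathfrak K(z)|^2$ ``one is left with a quantity that is $O(1)$.'' As you write it, the sum $\sum_{X,Y}|\det(T_N({\bm a}(z))[X^c;Y^c])|^2$ runs over $\sim N^{2|\gd|}$ pairs, and there is nothing in $\mathfrak K(z)$ to absorb that polynomial factor (the $N^{-2\gamma|\gd|}$ cancels against the one already present, not against the pair count). The bound you want is true, but it holds precisely because most $(X,Y)$ give minors that are exponentially smaller than the extremal one; establishing this is equivalent to the $\bm\ell$-decomposition and the combinatorial count of Lemma \ref{lem:combinatorial-1} at $k=|\gd|$. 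So ``quoting the bookkeeping from Lemma \ref{lem:rouche-multi-gr-d_0}'' really means adopting the paper's decomposition, at which point the subharmonicity/mean-value step becomes unnecessary --- and that step is in any case problematic when $\gd=0$, since $\cS_0^{-\vep}$ is unbounded and your area bound would diverge.
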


The key to the proof of the above  results is a
representation of
$P_k(z)$ as linear combinations of products 
of determinants of certain bidiagonal matrices with coefficients that are determinants of sub-matrices of $E_N$. Toward this end, we borrow ideas from \cite{BPZ-non-triang}. 

If $T_N({\bm a}(z))$ is an upper triangular matrix then it is obvious that 
\begin{equation}
T_N({\bm a}(z)) =a_d \cdot  \prod_{i=1}^{d} (J_N+\lambda_i (z)\Id_N), \notag
\end{equation}
where $J_N$ is the nilpotent matrix given by $(J_N)_{i,j} = {\bf 1}_{j=i+1}$ for $i, j \in [N]$. Then the desired representation is simply a consequence of Cauchy-Binet theorem. For a general Toeplitz matrix the above product representation does not hold. It was noted in \cite{BPZ-non-triang} that $T_N({\bm a(z)})$ can be viewed as a certain sub-matrix of an upper triangular finitely banded Toeplitz matrix with a slightly larger dimension. {(This is related to the Grushin problem discussed by  Sj\"{o}strand and Vogel, see e.g. \cite{SV3}, in that
  one replaces the study in dimension $N$ with a slightly larger dimension.
However, the details of the replacement, as well as the goals, are different.)}
Therefore one can essentially repeat the same product representation and apply the Cauchy-Binet theorem. 

To use efficiently this idea,  we introduce the following definition.

\begin{dfn}[Toeplitz with a shifted symbol]\label{dfn:toep-shifted}
Let $T_N({\bm a})$ be a Toeplitz matrix with finite symbol ${\bm a}(\lambda)=\sum_{\ell=-d_2}^{d_1} a_\ell \lambda^\ell$. For $M>d$, $z \in \C$  and
$\bar d_1, \bar d_2 \in \mathbb{N}$ such that $\bar d_1 + \bar d_2=d_1+d_2=d$,
let
$T_M({\bm a}, z; \bar d_1)$ denote the $M \times M$ Toeplitz matrix with the first row and column 
\[
(a'_{d_1 -\bar d_1}, a'_{d_1-\bar d_1+1}, \ldots, a'_{d_1}, 0, \ldots, 0) \qquad \text{ and } \qquad (a'_{d_1 - \bar d_1}, a'_{d_1 - \bar d_1-1}, \ldots, a'_{-d_2}, 0, \ldots, 0)^{\sf T},\] 
respectively, where $a'_j := a_j - z \delta_{j,0}$, $j=-d_2,-d_2+1,\ldots, d_1$. 
\end{dfn}

From Definition \ref{dfn:toep-shifted},
it follows that
\[
T_N({\bm a}(z))=T_N({\bm a}, z;d_1) = T_{N+d_2}( {\bm a}, z; d) [[N]; [N+d_2]\setminus [d_2]],
\]
since
\[
T_{N+d_2}({\bm a},z;d_1)=\begin{bmatrix}
a_{-d_2} &\cdots &a_0-z& \cdots & a_{d_1}&0&\cdots& 0\\
0& a_{-d_2} & & a_0-z & &\ddots& & \vdots\\
\vdots & \ddots &\ddots &&\ddots&&\ddots & \vdots\\
\vdots & & \ddots & \ddots &&\ddots& & a_{d_1}\\
\vdots & &  & \ddots & \ddots & &\ddots & \vdots\\
\vdots && &  &\ddots& \ddots & & a_0 -z\\
\vdots & & & &&  \ddots & \ddots & \vdots \\
 0 & \cdots & \cdots & \cdots&\cdots &\cdots & 0 & a_{-d_2}
\end{bmatrix}.
\]
Note that $T_{N+d_2}({\bm a}, z;d)$ is an upper triangular Toeplitz matrix. As $\{-\lambda_\ell(z)\}_{\ell=1}^d$ are the roots of the equation $P_{z, {\bm a} }(\lambda)=0$ we obtain that
\[
T_{N+d_2}({\bm a}, z;d) = \sum_{\ell=0}^d (a_{\ell-d_2} - z \delta_{\ell, d_2}) J_{N+d_2}^\ell = a_{d_1} \prod_{\ell=1}^d (J_{N+d_2}+ \lambda_\ell(z) \Id_{N+d_2}).
\]
 Hence, recalling the definition of $\{P_k(z)\}_{k=1}^N$ from  \eqref{eq:P_k-z}, applying the
 Cauchy-Binet theorem, and writing $S+ \ell:= \{x+\ell, x \in S\}$ for any set of integers $S$ and an integer $\ell$, we obtain that 
\begin{eqnarray}\label{eq:det-decompose-1}
P_k(z) 
&= &\sum_{\substack{X, Y \subset [N]\\ |X|=|Y|=k}} (-1)^{\sgn(\sigma_{X}) \sgn(\sigma_{Y})} \det (T_{N+d_2}({\bm a}, z; d)[{X}^c; {Y}^c+d_2]) \cdot N^{-\gamma k} \cdot \det (E_N[X; Y])
\nonumber\\
& =&  \sum_{\substack{X, Y \subset [N]\\|X|=|Y|=k}}\sum_{i=2}^{d-1} \sum_{\substack{X_i \subset [N+d_2]\\ |X_i|=k+d_2}} (-1)^{\sgn(\sigma_{X}) \sgn(\sigma_{Y})}a_{d_1}^{N-k} \cdot \prod_{i=1}^d \det\left((J_{N+d_2} + \lambda_i(z)\Id_{N+d_2})[\COMP{X}_i; \COMP{X}_{i+1}]\right)\nonumber\\
  &&\qquad\cdot N^{-\gamma k} \cdot \det (E_N[X; Y]),
\end{eqnarray}
where 
\begin{equation}\label{eq:X-Y-constraint}
X_1:= X_1(X):=X \cup [N+d_2]\setminus [N], \qquad X_{d+1}:=X_{d+1}(Y):= (Y+d_2) \cup [d_2],
\end{equation}
and $\COMP{Z}:=[N+d_2]\setminus Z$ for any set $Z \subset [N+d_2]$. We emphasize the notational difference between $\COMP{Z}$ and $Z^c$. The former will be used to write the complement of $Z$ when viewed as a subset of $[N+d_2]$, where for the latter $Z$ will be viewed as a subset of $[N]$. 

The \abbr{RHS} of \eqref{eq:det-decompose-1} gives the desired representation of $P_k(z)$. To prove Lemmas \ref{lem:rouche-multi-gr-d_0}-\ref{lem:rouche-multi-le-d_0} we require some preprocessing of the \abbr{RHS} of \eqref{eq:det-decompose-1}. To obtain a tractable expression we express the sums in \eqref{eq:det-decompose-1} over $\{X_i\}_{i=1}^{d+1}$ as an iterated sums, see \eqref{eq:P-k-decompose} below. The inner sum will be over the choices of $\{X_i\}_{i=1}^{d+1}$ such that the product of the determinants of the bi-diagonal matrices is constant and the outer sum will be over all possible values of the product of the determinants.  

We now describe this decomposition. From \eqref{eq:det-decompose-1}-\eqref{eq:X-Y-constraint} we have that $|X_i| =k+d_2$, for $i \in [d+1]$. Therefore, we write 
\begin{equation}
\label{eq-Lnew}
X_i := \left\{ x_{i,1} < x_{i,2} < \cdots < x_{i,k+d_2} \right\}, \quad \cX_k:=(X_1, X_2, \ldots, X_{d+1}).
\end{equation}
 Using  \cite[Lemma A.3]{BPZ-non-triang} we note that
\begin{eqnarray}\label{eq:det-bidiagonal}
&&\det( (J_{N+d_2} + \lambda_i(z) \Id_{N+d_2}) [\COMP{X}_i; \COMP{X}_{i+1}]) =\\
&& \lambda_i(z)^{x_{i+1,1}-1} \cdot \left(\prod_{\ell=2}^{k+d_2} \lambda_i(z)^{x_{i+1,\ell}-x_{i,\ell}-1} \right) \cdot \lambda_i(z)^{N+d_2 - x_{i,k+d_2}} 
\cdot {\bf 1} \left\{ x_{i+1,\ell} \le x_{i,\ell} < x_{i+1, \ell+1}, \ell \in [k+d_2]\right\},
\nonumber
\end{eqnarray}
where we have set $x_{i+1,k+d_2+1}=\infty$ for convenience. In light of \eqref{eq:det-bidiagonal}, for any ${\bm \ell}:=(\ell_1,\ell_2,\ldots,\ell_d)$ with $0 \le \ell_i \le N+d_2$ for 
$i \in [d]$, and $k\in[N]$, we define
\begin{multline*}
L_{{\bm \ell},k}:= \{ \cX_k:  \, 1 \le x_{i+1,1} \le x_{i,1} < x_{i+1,2} \le x_{i,2} < \cdots < x_{i+1,k+d_2} \le x_{i,k+d_2} \le N+d_2,\\
\text{ and }  \, x_{i+1,1}+ \sum_{j=2}^{k+d_2}(x_{i+1,j}-x_{i,j-1})+ (N+d_2-x_{i,k+d_2}) =\ell_i+k+d_2, \text{ for all } i=1,2,\ldots,d\}.
\end{multline*}
Note that \eqref{eq:det-bidiagonal} implies that the summand in \eqref{eq:det-decompose-1} is non-zero only when $\cX_k \in L_{{\bm \ell}, k}$ for some ${\bm \ell}$ and in that case
\begin{equation}\label{eq:prod-decomp}
\prod_{i=1}^d \det\left((J_{N+d_2} + \lambda_i(z)\Id_{N+d_2})[\COMP{X}_i; \COMP{X}_{i+1}]\right) = \prod_{i=1}^d \lambda_i(z)^{\ell_i}.
\end{equation}
Recall that in Lemmas \ref{lem:rouche-multi-gr-d_0} and \ref{lem:rouche-multi-le-d_0} we aim to show that for $z \in \cS_\gd^{-\vep}$ and $k \ne |\gd|$, $|P_k(z)|$ is small compared 
to 
 $\mathfrak{K}(z)$ of \eqref{fancyK}.
 Thus it would be convenient to pull out this factor from the \abbr{RHS} of \eqref{eq:prod-decomp}. So, using the observation that 
\[
x_{i+1,1}+  \sum_{j=2}^{k+d_2}(x_{i+1,j}-x_{i,j-1}) +  \sum_{j=1}^{k+d_2}(x_{i,j}-x_{i+1,j}) + (N+d_2- x_{i,k+d_2}) = N+d_2,
 \]
we have the following equivalent representation of $L_{{\bm \ell}, k}$:
  \begin{multline}\label{eq:L-ell-k}
L_{{\bm \ell},k}:= \{ \cX_k:  \, 1 \le x_{i+1,1} \le x_{i,1} < x_{i+1,2} \le x_{i,2} < \cdots < x_{i+1,k+d_2} \le x_{i,k+d_2} \le N+d_2,\\
 \, \sum_{j=1}^{k+d_2}(x_{i,j}-x_{i+1,j}+1)   =\hat \ell_i; \ i=1, 2,\ldots,d_0,\\
\text{ and }  \, x_{i+1,1}+ \sum_{j=2}^{k+d_2}(x_{i+1,j}-x_{i,j-1})+ (N+d_2-x_{i,k+d_2}) =\hat \ell_i+k+d_2; \ i=d_0+1,d_0+2,\ldots,d\},
\end{multline}
where
\begin{equation}\label{eq:hat-ell}
\hat \ell_i := \left\{\begin{array}{ll} \ell_i & \mbox{ if } i > d_0,\\
N+d_2- \ell_i & \mbox{ if } i \le d_0.
\end{array}
\right.
\end{equation}
and $d_0=d_1-\gd$. Since $x_{i+1,j} \le x_{i,j}$ for any $i=1,2,\ldots, d$, and $j=1, 2, \ldots, k+d_2$, we note from \eqref{eq:L-ell-k} that
\begin{equation}\label{eq:hat-ell-lbd}
\hat \ell_i \ge k+d_2, \qquad \text{ for } i=1,2,\ldots, d_0. 
\end{equation}
This will be used below
in the proof of Lemma \ref{lem:rouche-multi-gr-d_0}. Equipped with the above notation we find that, for any $\cX_k \in L_{{\bm \ell}, k}$, 
\begin{equation}\label{eq:det-prod-decompose}
\prod_{i=1}^d \det\left((J_{N+d_2} + \lambda_i(z)\Id_{N+d_2})[\COMP{X}_i; \COMP{X}_{i+1}]\right) = \prod_{i=1}^{d_0} \lambda_i(z)^{N+d_2} \cdot \prod_{i=1}^{d_0} \lambda_i(z)^{-\hat \ell_i} \prod_{i=d_0+1}^d \cdot \lambda_i(z)^{\hat \ell_i}.
\end{equation}
Furthermore, the restriction \eqref{eq:X-Y-constraint} and the fact that the outer sum in \eqref{eq:det-decompose-1} is over $X, Y \subset [N]$ implies that the summand in \eqref{eq:det-decompose-1} vanishes unless $\cX_k \in \gL_{{\bm \ell},k}$, where
\begin{equation}\label{eq:gL-ell-k}
\gL_{{\bm \ell}, k} := \{\cX_k \in L_{{\bm \ell}, k}: x_{1,k+j}=N+j; \ j  \in [d_2] \quad \text{ and } \quad x_{d+1,j}=j; \ j \in [d_2]\}.
\end{equation}
Therefore, from \eqref{eq:det-decompose-1} and \eqref{eq:det-prod-decompose} we deduce that 
\begin{multline}\label{eq:P-k-decompose}
P_k(z)= a_{d_1}^{N-k} \cdot \prod_{i=1}^{d_0} \lambda_i(z)^{N+d_2}\cdot N^{-\gamma k} \sum_{{\bm \ell}} \prod_{i=1}^{d_0} \lambda_i(z)^{-\hat \ell_i} \cdot \prod_{i=d_0+1}^{d} \lambda_i(z)^{\hat \ell_i} \\
\cdot \sum_{\cX_k \in \gL_{{\bm \ell},k}} (-1)^{\sgn(\sigma_{\mathbb{X}}) \sgn(\sigma_{\mathbb{Y}})}\det(E_N[\mathbb{X}; \mathbb{Y}]),
\end{multline}
with
\begin{equation}\label{eq:mathbb-X}
 \mathbb{X}:=\mathbb{X}(X_1):= X_1 \cap [N] \qquad \text{ and } \qquad \mathbb{Y}:= \mathbb{Y}(X_{d+1}) := (X_{d+1} - d_2) \cap [N],
\end{equation}
where \eqref{eq:mathbb-X} is a consequence of \eqref{eq:X-Y-constraint}. 
We introduce {further}  notation. Set
\begin{equation}\label{eq:Q-ell-k}
Q_{{\bm \ell}, k}(z) := \prod_{i=1}^{d_0} \lambda_i(z)^{-\hat \ell_i}  \prod_{i=d_0+1}^{d} \lambda_i(z)^{\hat \ell_i} \cdot \widehat Q_{{\bm \ell},k},
\end{equation}
where
\begin{equation}\label{eq:hat-Q-ell-k}
\widehat Q_{{\bm \ell}, k}:=\sum_{ \cX_k \in \gL_{{\bm \ell},k}}  (-1)^{\sgn(\sigma_{\mathbb{X}}) \sgn(\sigma_{\mathbb{Y}})}\det(E_N[\mathbb{X}; \mathbb{Y}])
\end{equation}
does not depend on $z$. Recalling the definition of $\widehat P_k(z)$ (see \eqref{eq:hatP-k}), we have from \eqref{eq:P-k-decompose}  that
\begin{equation}\label{eq:P-k-decompose-0}
\widehat P_k(z)= \corAB{a_{d_1}^{-k}}N^{-\gamma (k- |\gd|)} \sum_{{\bm \ell}}  Q_{{\bm \ell}, k}(z) = \corAB{a_{d_1}^{-k}}N^{-\gamma (k-|\gd|)} \sum_{{\bm \ell}} \prod_{i=1}^{d_0} \lambda_i(z)^{-\hat \ell_i}  \prod_{i=d_0+1}^{d} \lambda_i(z)^{\hat \ell_i} \cdot \widehat Q_{{\bm \ell}, k}
\end{equation}
Having obtained a tractable expression in \eqref{eq:P-k-decompose-0} we now proceed to apply the second moment method to prove Lemmas \ref{lem:rouche-multi-gr-d_0} and \ref{lem:rouche-multi-le-d_0}. So, we next estimate  the variance of $\widehat Q_{{\bm \ell}, k}$. Using the facts that the entries of $E_N$ are independent with zero mean and unit variance it is straightforward to see that
\[
\E \left[\det(E_N[\mathbb{X}_*; \mathbb{Y}_*] \cdot \overline{\det(E_N[\mathbb{X}'; \mathbb{Y}'] )}\right] = \left\{\begin{array}{ll}
 k! & \mbox{if } \mathbb{X}_*= \mathbb{X}' \text{ and } \mathbb{Y}_*=\mathbb{Y}'\\
0 & \mbox{ otherwise},
\end{array}
\right.
\]
for any collection of subsets $\mathbb{X}_*, \mathbb{Y}_*, \mathbb{X}', \mathbb{Y}' \subset [N]$, each of cardinality $k$. Hence, we deduce that 
\begin{align}\label{eq:var-bound-multi}
\E[|\widehat Q_{{\bm \ell}, k}|^2]=\Var (\widehat Q_{{\bm \ell}, k})  & \, = k! \cdot \mathfrak{N}_{{\bm \ell}, k},
\end{align}
where
\begin{equation}\label{eq:N-ell-k}
\mathfrak{N}_{{\bm \ell}, k}:=\left| \left\{\cX_k =(X_1, X_2,\ldots, X_{d+1}), \cX_k'=(X_1', X_2', \ldots, X_{d+1}') \in \gL_{{\bm \ell}, k}: X_1=X_1', X_{d+1}=X_{d+1}' \right\}\right|.
\end{equation}
Thus an estimate on the variance of $\widehat Q_{{\bm \ell}, k}$ requires a bound on  $\mathfrak{N}_{{\bm \ell}, k}$. This is done in the lemma below. The proof is postponed to later in the section.
\begin{lemma}\label{lem:combinatorial-1}
Fix $k \in \N$, an integer $\gd$ such that $-d_2 \le \gd \le d_1$, and ${\bm \ell}=(\ell_1,\ell_2,\ldots,\ell_d)$ such that $0 \le \ell_i \le N+d_2$ for all $i \in [d]$.
For  $|\gd| \le k \le N$ and  ${\mathfrak N}_{{\bm \ell}, k}$  as in \eqref{eq:N-ell-k}, we have
\begin{align}\label{eq:N-l-k-bd}
\mathfrak{N}_{{\bm \ell}, k} & \, \le    \binom{N+d_2}{k-|\gd|} \cdot \prod_{i=1}^{d_0} \binom{\hat \ell_i-1}{k+d_2-1}^2 \cdot \prod_{i=d_0+1}^{d} \binom{\hat \ell_i+k+d_2}{k+d_2}^2.
\end{align}
\end{lemma}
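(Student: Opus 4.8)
The plan is to bound $\mathfrak{N}_{{\bm \ell}, k}$ by counting the number of admissible tuples $\cX_k = (X_1,\ldots,X_{d+1})$ and $\cX_k' = (X_1',\ldots,X_{d+1}')$ in $\gL_{{\bm \ell},k}$ subject to $X_1 = X_1'$ and $X_{d+1} = X_{d+1}'$. Since the two endpoint sets are shared, such a tuple is determined by the pair of ``interior paths'' $(X_2,\ldots,X_d)$ and $(X_2',\ldots,X_d')$ together with the common pair $(X_1, X_{d+1})$. The idea is to decouple: first count the number of choices of the common endpoints $X_1, X_{d+1}$, then, for each fixed choice of the full boundary data, count independently the number of admissible interior paths on each of the two ``sides'' of the index $d_0$ (the indices $i \le d_0$, corresponding to the roots of modulus greater than one whose exponents $\hat\ell_i$ have been flipped via \eqref{eq:hat-ell}, and the indices $i > d_0$). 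Each interior path is an interlacing chain $x_{i+1,j} \le x_{i,j} < x_{i+1,j+1}$ with a prescribed ``weight'' $\hat\ell_i$ given by \eqref{eq:L-ell-k}, and the number of such chains between two fixed adjacent levels is a binomial coefficient; squaring accounts for the fact that $\cX_k$ and $\cX_k'$ each contribute one such chain.

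The key steps, in order. First, by \eqref{eq:gL-ell-k} the sets $X_1$ and $X_{d+1}$ have their top $d_2$, respectively bottom $d_2$, elements pinned, so $X_1$ is determined by an unordered choice of $k$ elements of $[N]$ (equivalently of $X_1 \cap [N] = \mathbb{X}$) and likewise $X_{d+1}$ by $\mathbb{Y} \subset [N]$; but we will not need to count these freely, since the binomial factors from the interior levels already control the global count — instead the factor $\binom{N+d_2}{k-|\gd|}$ will emerge from the ``slack'' in one distinguished level where the path length can range. Second, for a fixed boundary and a fixed index $i$ with $i > d_0$, I will count the number of intermediate sets $X_{i+1}$ (or the number of admissible single interlacing steps) consistent with the weight constraint $x_{i+1,1} + \sum_{j=2}^{k+d_2}(x_{i+1,j} - x_{i,j-1}) + (N + d_2 - x_{i,k+d_2}) = \hat\ell_i + k + d_2$; this is a composition/stars-and-bars count, yielding $\binom{\hat\ell_i + k + d_2}{k + d_2}$, and since both $\cX_k$ and $\cX_k'$ supply one such step we pick up the square. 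Third, for $i \le d_0$ I will use the equivalent form of the constraint, $\sum_{j=1}^{k+d_2}(x_{i,j} - x_{i+1,j} + 1) = \hat\ell_i$, together with the bound \eqref{eq:hat-ell-lbd} that $\hat\ell_i \ge k + d_2$, so the count of admissible steps is $\binom{\hat\ell_i - 1}{k + d_2 - 1}$, again squared. Fourth, multiply the per-level counts; the telescoping nature of the interlacing chains means the levels are not fully independent, so I will need to argue that fixing the common boundary $(X_1, X_{d+1})$ and then choosing the $d-1$ interior levels sequentially (for each of the two copies) overcounts by at most the stated product, which reduces to the elementary fact that each interlacing step, given its two neighbouring levels' ``slack,'' is described by an independent binomial choice; the leftover degree of freedom — the overall length of the common path through $[N]$, constrained only by $|X_1 \cap [N]| = |X_{d+1}\cap[N]| = k$ and by the global shift $\sum \hat\ell_i$ — produces the single unsquared factor $\binom{N + d_2}{k - |\gd|}$.

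The main obstacle I anticipate is the bookkeeping in step four: the constraints \eqref{eq:L-ell-k} couple consecutive levels through the interlacing inequalities $x_{i+1,j} \le x_{i,j} < x_{i+1,j+1}$, so the naive product of per-level binomials is not immediately an upper bound, and one must be careful that the weight constraints $\hat\ell_i$ genuinely make the choices at distinct levels (conditionally) independent rather than merely compatible. The clean way around this is to observe that specifying $\cX_k$ is equivalent to specifying, for each $i$, the set of ``gaps'' $\{x_{i,j} - x_{i+1,j}\}_{j}$ (for $i \le d_0$) or the set of increments in the other parametrization (for $i > d_0$), that these gap-data across different $i$ are only linked through $X_1$ and $X_{d+1}$ which are fixed, and hence factor completely once the boundary is fixed; this is precisely the kind of reindexing used in \cite[Lemma A.3]{BPZ-non-triang} and the combinatorics of skew semistandard Young tableaux, so I expect the argument to go through with the binomial-coefficient bounds stated, the inequality (rather than equality) in \eqref{eq:N-l-k-bd} absorbing any residual overcounting from the interlacing slack.
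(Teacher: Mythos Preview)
Your overall strategy --- encoding each interlacing step by its ``gap data'' and bounding the number of admissible gap sequences at level $i$ by the binomial coefficients $\binom{\hat\ell_i-1}{k+d_2-1}$ (for $i\le d_0$) or $\binom{\hat\ell_i+k+d_2}{k+d_2}$ (for $i>d_0$) --- matches the paper's approach, and the squaring to account for the two copies $\cX_k,\cX_k'$ is exactly right. But there is a genuine gap at the point where you produce the unsquared factor $\binom{N+d_2}{k-|\gd|}$.

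You say this factor emerges from ``slack in one distinguished level,'' but as written your decomposition fixes $(X_1,X_{d+1})$ first and then counts interior paths. That ordering gives you no handle on the number of admissible $(X_1,X_{d+1})$; naively it is of order $\binom{N}{k}$ or larger, not $\binom{N+d_2}{k-|\gd|}$. The paper proceeds in the opposite order: it first chooses the full array of gaps $\{\delta_{i,j}(\cX_k)\}$ (this costs exactly one copy of the product of binomials), and then observes --- via a diagonal telescoping identity, see \eqref{eq:x_d+1-telescopic} --- that these gaps, together with the pinning constraints $x_{d+1,j}=j$ for $j\in[d_2]$ and $x_{1,k+j}=N+j$ for $j\in[d_2]$, already determine $|\gd|+d_2$ of the $k+d_2$ elements of $X_{d+1}$. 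Only the remaining $k-|\gd|$ elements of $X_{d+1}$ are free, which is where $\binom{N+d_2}{k-|\gd|}$ comes from. Once $X_{d+1}$ and the gaps are fixed, the entire $\cX_k$ is determined; for $\cX_k'$ one then only needs a second independent choice of gaps (since $X_{d+1}'=X_{d+1}$ is already fixed), giving the second copy of the binomial product. Without this telescoping observation your bookkeeping in step four cannot close, because the gap data at different levels $i$ are \emph{not} linked only through the endpoints --- they are linked through every intermediate $X_i$ --- and the factorization you want holds only as an \emph{upper} bound once you recognize that specifying gaps plus a partial $X_{d+1}$ reconstructs the whole chain.
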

 One final ingredient needed for the proof
of  {Lemmas \ref{lem:rouche-multi-gr-d_0}, \ref{lem:rouche-multi-le-d_0}, and \ref{lem:rouche-multi-eq-d_0},} is a uniform separation of the moduli of the roots $\{-\lambda_i(z)\}_{i=1}^d$ from one, for all $z \in \cS_\gd^{-\vep}$. This is formulated in the lemma below. 
\begin{lemma}\label{lem:root-cont-0}
Fix $\vep \in (0,1)$ and an integer $\gd$ such that $-d_2 \le \gd \le d_1$. Denote $d_0:=d_1-\gd$. Then
\begin{equation}\label{eq:lambda-bd}
\sup_{z \in \cS_\gd^{-\vep}} \max\left\{\max_{i=1}^{d_0}\{|\lambda_i(z)|^{-1}\} , \max_{i=d_0+1}^d\{|\lambda_i(z)|\}\right\} \le 1-\vep_0,
\end{equation}
for some sufficiently small $\vep_0>0$, depending only on $\vep$ and the symbol ${\bm a}$.
\end{lemma}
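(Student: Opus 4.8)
The plan is a continuity-and-compactness argument; the pointwise bound is immediate from Definition \ref{dfn:cS-gd}, and essentially all the work goes into making it uniform over $\cS_\gd^{-\vep}$, the one genuine subtlety being that $\cS_\gd$ need not be bounded. Write $d_0:=d_1-\gd$ and, for $z\in\C$ with $\lambda_{d_0}(z)\neq0$, set $\Psi(z):=\max\big\{\max_{1\le i\le d_0}|\lambda_i(z)|^{-1},\ \max_{d_0+1\le i\le d}|\lambda_i(z)|\big\}$ (the conventions $\lambda_0\equiv\infty$, $\lambda_{d+1}\equiv0$ cover the cases $d_0=0$ and $d_0=d$). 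By Definition \ref{dfn:cS-gd}, every $z\in\cS_\gd$ satisfies $|\lambda_{d_0}(z)|>1>|\lambda_{d_0+1}(z)|$, and since the $\lambda_i(z)$ are arranged in non-increasing order of modulus this forces $|\lambda_i(z)|>1$ for $i\le d_0$ and $|\lambda_i(z)|<1$ for $i>d_0$; hence $\Psi$ is well defined on $\cS_\gd$ and $\Psi(z)<1$ for every $z\in\cS_\gd$. The lemma thus amounts to showing $\sup_{z\in\cS_\gd^{-\vep}}\Psi(z)<1$.

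First I would establish continuity of $\Psi$. The coefficients of $P_{z,{\bm a}}(\lambda)=\lambda^{d_2}({\bm a}(\lambda)-z)$ are affine in $z$, so by continuity of the roots of a polynomial as a function of its coefficients in the symmetric product topology \cite[Appendix 5, Theorem 4A]{whitney}, together with the elementary fact that sorting a continuously varying unordered $d$-tuple by modulus produces continuous functions, each map $z\mapsto|\lambda_i(z)|$ is continuous on $\C$. Consequently $\Psi$ is continuous on the open set $\{z:|\lambda_{d_0}(z)|>0\}$, which contains $\cS_\gd$.

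The step that needs a little care — and the main (mild) obstacle — is the behaviour near $z=\infty$, since $\cS_\gd$ can be unbounded; this occurs only when $\gd=0$. Since ${\bm a}(\mathbb{S}^1)$ is compact, fixing $R_0>\max_{|\lambda|=1}|{\bm a}(\lambda)|$ puts every $z$ with $|z|\ge R_0$ in the unbounded component of $\C\setminus{\bm a}(\mathbb{S}^1)$, where ${\rm wind}_z({\bm a})=0$; hence $\{z:|z|\ge R_0\}\subset\cS_0$, so for $\gd\neq0$ we already have $\cS_\gd\subset B(0,R_0)$. For the case $\gd=0$ I would add the root asymptotics: writing $P_{z,{\bm a}}(\lambda)=\lambda^{d_2}{\bm a}(\lambda)-z\lambda^{d_2}$, a Rouch\'e estimate on suitable annuli shows that as $|z|\to\infty$ exactly $d_1$ of the roots have modulus comparable to $|z/a_{d_1}|^{1/d_1}\to\infty$ and the remaining $d_2$ have modulus comparable to $|a_{-d_2}/z|^{1/d_2}\to0$; since $d_0=d_1$ when $\gd=0$, after enlarging $R_0$ if necessary this gives $\Psi(z)\le\tfrac12$ whenever $|z|\ge R_0$.

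Finally I would conclude by compactness. Any $z\in\overline{\cS_\gd^{-\vep}}$ has ${\rm dist}(z,\cS_\gd^c)\ge\vep>0$, and $\cS_\gd^c$ is closed, so $\overline{\cS_\gd^{-\vep}}\subset\cS_\gd$; hence $K:=\overline{\cS_\gd^{-\vep}}\cap\overline{B(0,R_0)}$ is a compact subset of $\cS_\gd$, on which the continuous function $\Psi$ attains a maximum $m<1$ by the first paragraph. Combining with $\Psi\le\tfrac12$ on $\{z:|z|\ge R_0\}$ yields $\sup_{z\in\cS_\gd^{-\vep}}\Psi(z)\le\max(m,\tfrac12)<1$, which is \eqref{eq:lambda-bd} with $\vep_0:=1-\max(m,\tfrac12)$; since $m$ and $R_0$ depend only on $\vep$ and ${\bm a}$, so does $\vep_0$.
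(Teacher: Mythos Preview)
Your proof is correct, but it takes a genuinely different route from the paper's. The paper argues the contrapositive directly and quantitatively: if some root $\lambda_0(z)$ of $P_{z,{\bm a}}$ has $\big||\lambda_0(z)|-1\big|\le 2\vep_0$, then since $z={\bm a}(\lambda_0(z))$ (as $\lambda_0(z)\neq 0$) and $z_0:={\bm a}\big(\lambda_0(z)/|\lambda_0(z)|\big)\in{\bm a}(\mathbb{S}^1)$, a Lipschitz-type estimate on the Laurent polynomial ${\bm a}$ gives $|z-z_0|\le C({\bm a})\,\vep_0$, forcing $z\in({\bm a}(\mathbb{S}^1))^\vep$ for $\vep_0$ small enough. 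This contradicts $\cS_\gd^{-\vep}\cap({\bm a}(\mathbb{S}^1))^\vep=\emptyset$. The argument is short, uniform in $z$ from the outset, and never needs to distinguish bounded from unbounded $\cS_\gd$ or to analyse root asymptotics at infinity.

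Your approach --- continuity of the sorted moduli $|\lambda_i(z)|$, compactness of $\overline{\cS_\gd^{-\vep}}\cap\overline{B(0,R_0)}$, and a separate Newton-polygon/Rouch\'e analysis for $|z|\to\infty$ to handle the unbounded case $\gd=0$ --- is more structural and cleanly isolates the pointwise bound from its uniformization, but at the cost of the extra asymptotic step. Both yield an $\vep_0$ depending only on $\vep$ and ${\bm a}$; the paper's gives a slightly more explicit (if not sharp) dependence.
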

\begin{proof}
Recalling the definition of $\cS_\gd$ (see Definition \ref{dfn:cS-gd}) we have that $\cS_\gd \subset ({\bm a }(\mathbb{S}^1))^c$. This implies that 
\begin{equation}\label{eq:disjoint}
\cS_\gd^{-\vep} \cap ({\bm a}({\mathbb S}^1))^\vep = \emptyset.
\end{equation} 
On the other hand, if \eqref{eq:lambda-bd} is violated for some $z \in \cS_\gd^{-\vep}$ then there exists a root $\lambda_0(z)$ of the equation $P_{z, {\bm a}}(\lambda)=0$ such that 
\[
||\lambda_0(z)| -1 | \le 2 \vep_0,
\]
whenever $\vep_0 < \frac12$. Therefore, denoting
\[
z_0:= \sum_{i=-d_2}^{d_2} a_i \cdot \frac{\lambda_0(z)^i}{|\lambda_0(z)|^i} \in {\bm a}(\mathbb{S}^1).
\]
By the triangle inequality it follows that 
\[
|z -z_0| \le \sum_{i=-d_2}^{d_1} |a_i| \cdot |\lambda_0(z)|^i \cdot \left| 1 - \frac{1}{|\lambda_0(z)|^i}\right| =  \sum_{i=-d_2}^{d_1} |a_i| \cdot \left| |\lambda_0(z)|^i- 1 \right| \le d 2^{d-1} \max_{i=-d_2}^{d_1} |a_i| \cdot ||\lambda_0(z)| -1|.
\]
Now upon choosing $\vep_0$ sufficiently small we note that the above implies that $z \in ({\bm a}({\mathbb S}^1))^\vep$. This yields a contradiction to \eqref{eq:disjoint}, thereby proving the claim \eqref{eq:lambda-bd}. 
\end{proof}

We now proceed to the proof of Lemma \ref{lem:rouche-multi-gr-d_0}.

\begin{proof}[Proof of Lemma \ref{lem:rouche-multi-gr-d_0} {(assuming Lemma \ref{lem:combinatorial-1})}] 
Note that, as $|\lambda_i(z)| \ge 1$ on $\cS_\gd^{-\vep}$ for $i=1,2,\ldots, d_0$, the left most inequality in \eqref{eq:gr-d-0} is immediate. So we only need to prove the right inequality. To this end, fix $k>|\gd|$ and 
introduce the notation
\begin{equation}\label{eq:kappa}
\kappa({\bm \ell}, z) := a_{d_1}^{-k} N^{-\gamma (k- |\gd|)} \cdot \prod_{i=1}^{d_0} \lambda_i(z)^{-\hat \ell_i} \cdot \prod_{i=d_0+1}^d \lambda_i(z)^{\hat \ell_i} \cdot \prod_{i=1}^{d_0} \lambda_i(z)^{k+d_2}
\end{equation}
and
\begin{equation}\label{eq:kappa-tilde}
\widetilde \kappa({\bm \ell}):= \sup_{z \in \cS_\gd^{-\vep}} |\kappa({\bm \ell}, z)|.
\end{equation}
We now have that
\begin{multline}\label{eq:P-k-cs}
\E\left[ \sup_{z \in \cS_\gd^{-\vep}} |\widehat P_{k}(z)|^2 
\cdot \prod_{i=1}^{d_0} \lambda_i(z)^{2(k+d_2)}\right] = \E\left[ \sup_{z \in \cS_\gd^{-\vep}} \left|\sum_{\bm \ell} \kappa({\bm \ell}, z) \cdot \widehat Q_{{\bm \ell}, k}\right|^2\right] \\
\le  \E\left[ \sum_{{\bm \ell}, {\bm \ell}'} \sup_{z \in \cS_\gd^{-\vep}} \left|\kappa({\bm \ell}, z) \cdot \widehat Q_{{\bm \ell}, k} \cdot  \kappa({\bm \ell}', z)\cdot \widehat  Q_{{\bm \ell}', k}\right|\right] \\
\le  \sum_{{\bm \ell}, {\bm \ell}'} \widetilde \kappa({\bm \ell}) \cdot (\E|\widehat Q_{{\bm \ell}, k}|^2)^{1/2} \cdot \widetilde\kappa({\bm \ell}') \cdot (\E|\widehat  Q_{{\bm \ell}', k}|^2)^{1/2} = \left(\sum_{\bm \ell} \widetilde \kappa({\bm \ell}) \cdot (\E|\widehat Q_{{\bm \ell}, k}|^2)^{1/2}\right)^2,
\end{multline}
where the second inequality is a consequence of the Cauchy-Schwarz inequality. 
It follows from Lemmas \ref{lem:combinatorial-1} and \ref{lem:root-cont-0}, and \eqref{eq:var-bound-multi}, that
\begin{align}\label{eq:indv-term}
& \sup_{z \in \cS_\gd^{-\vep}} |\kappa({\bm \ell}, z)| \cdot (\E|\widehat Q_{{\bm \ell}, k}|^2)^{1/2} \\
 \le & \,  |a_{d_1}|^{-k} \cdot N^{-\gamma (k- |\gd|)} \prod_{i=1}^{d} (1-\vep_0)^{\tilde { \ell}_i} \cdot \sqrt{k! \cdot \binom{N+d_2}{k-|\gd|}} \cdot \prod_{i=1}^{d_0} \binom{\hat \ell_i-1}{k+d_2-1} \cdot \prod_{i=d_0+1}^{d} \binom{\hat \ell_i+k+d_2}{k+d_2}\notag\\
\le & \,   |a_{d_1}|^{-k} \cdot e^{d_2} \cdot N^{-(\gamma -1/2)(k- |\gd|)} \prod_{i=1}^{d} (1-\vep_0)^{\tilde { \ell}_i} \cdot k^{|\gd|/2} \cdot \prod_{i=1}^{d_0} \binom{\hat \ell_i-1}{k+d_2-1} \cdot \prod_{i=d_0+1}^{d} \binom{\hat \ell_i+k+d_2}{k+d_2}, \notag
\end{align}
where 
\[
\tilde \ell_i:= \left\{ \begin{array}{ll}
\hat \ell_i - (k+d_2) & \mbox{for } i=1,2,\ldots, d_0,\\
\hat \ell_i & \mbox{ for } i=d_0+1, d_0+2, \ldots, d.
\end{array}
\right.
\]
So, to find a bound on $\E[\sup_{z \in \cS_\gd^{-\vep}} |\widehat P_k(z)|^2\cdot \prod_{i=1}^{d_0} \lambda_i(z)^{2(k+d_2)}]$,
we need to sum the \abbr{RHS} of \eqref{eq:indv-term} over the range of ${\bm \ell}$. To control this sum effectively we consider the cases of $k$ small and large separately.
First we consider the case when $k$ is small. 

\noindent
{\bf Case $1$:} Let $k \le N^{1/\log \log N}$. In this case to evaluate the sum of the \abbr{RHS} of \eqref{eq:indv-term} over ${\bm \ell}$ we split the range of ${\bm \ell}$ into two further sub-cases. Let us consider the case when $\max_i \ell_i$ is small. Set
\[
{\sf R}_1:=\{{\bm \ell}: \hat \ell_i \le N^{\frac{2}{\log \log N}}, i=1,2,\ldots,d\}. 
\]
We find from the above that 
as by \eqref{eq:det-prod-decompose} $\tilde \ell_i$'s are non-negative for all $i=1,2,\ldots,d$, there exists $\eta_0 >0$ such that  
\begin{align}
\left[\sum_{{\bm \ell} \in {\sf R}_1} \widetilde \kappa({\bm \ell}) \cdot (\E|\widehat Q_{{\bm \ell}, k}|^2)^{1/2}\right] & \le \left[ |{\sf R}_1| \cdot \max_{{\bm \ell} \in {\sf R}_1}  \left\{\widetilde \kappa({\bm \ell}) \cdot (\E|\widehat Q_{{\bm \ell}, k}|^2)^{1/2}\right\} \right]\notag\\
 & \le  |a_{d_1}|^{-k} \cdot e^{d_2} \cdot N^{-(\gamma -1/2)(k- |\gd|)} \cdot  k^{|\gd|/2} \cdot N^{\frac{5(k+d_2)d}{\log \log N}} \le N^{-\eta_0(k -|\gd|)}, \label{eq:kell-ss}
\end{align}
for all large $N$. 

Thus, it remains to consider the case when $\max_i \ell_i$ is large. For any such ${\bm \ell}$ we divide the range as follows: we define
\begin{align*}
\wc{\sf R}_{1,i}&:= \Big\{(k,{\bm \ell}): \hat \ell_{(j)} \le N^{\frac{2}{\log \log N}}, j \in [i-1];  \quad  \text{ and }\quad  N^{\frac{2}{\log \log N}}\le \hat \ell_{(j)} \le N+d_2, j \in [d]\setminus [i-1]\Big\},
\end{align*}
for any $i \in [d]$, where $\hat \ell_{(1)} \le \hat \ell_{(2)} \le \cdots \le \hat \ell_{(d)}$ is a non-increasing rearrangement of $\{\hat \ell_i\}_{i=1}^d$. 

Now, if ${\bm \ell} \in \wc{\sf R}_{1,i}$, and as $k \le N^{1/\log \log N}$, we note that $\hat \ell_{(j)} \ge k^2 \vee N^{\frac{2}{\log \log N}}$, for any $j=i,i+1,\ldots,d$. Therefore, for any such $j$ we have that
\[
(1-\vep_0)^{\tilde \ell_{(j)}} \max\left\{\binom{\hat \ell_{(j)} -1}{k+d_2-1}, \binom{\hat \ell_{(j)} +k+d_2}{k+d_2}\right\} \le (1-\vep_0)^{\hat \ell_{(j)}/2},
\]
for all large $N$, where we set 
\begin{equation}
  \label{eq-coronaday}
  \tilde \ell_{(j)}:= \tilde \ell_{j'} \; \mbox{\rm
    if $\hat \ell_{(j)}= \hat \ell_{j'}$ for some $j' \in\{ 1,2,\ldots, d\}$.}
  \end{equation}
  Hence, using the fact that
\[
\sum_{\ell \ge L} (1-\vep_0)^\ell = (1-\vep_0)^L \cdot \vep_0^{-1}
\]
we observe that
\begin{multline}\label{eq:var-sum-bd-1}
\sum_{{\bm \ell} \in \wc{\sf R}_{1,i}} \prod_{j=1}^{d} (1-\vep_0)^{ \tilde\ell_j}  \prod_{j=1}^{d_0} \binom{\hat \ell_j-1}{k+d_2-1} \cdot \prod_{j=d_0+1}^{d-1} \binom{\hat \ell_j+k+d_2}{k+d_2} \\
\le \sum_{{\bm \ell} \in \wc{\sf R}_{1,i}}  \prod_{j=1}^{i-1} \max\left\{\binom{\hat \ell_{(j)}-1}{k+d_2-1} , \binom{\hat \ell_{(j)}+k+d_2}{k+d_2}\right\} \cdot \left[\prod_{j=i}^{d} (1-\vep_0)^{ \hat\ell_{(j)}/2} \right]\\
 \le N^{\frac{(5k+d_2) i}{\log \log N}} \cdot \vep_0^{-(d-i+1)} (1-\vep_0)^{(d-i+1)N^{\frac{1}{\log \log N}}} \le N^{\frac{(5k+d_2) d}{\log \log N}} \cdot \vep_0^{-d} (1-\vep_0)^{N^{\frac{1}{\log \log N}}}. 
\end{multline}
Therefore, using \eqref{eq:indv-term} we deduce that
\begin{equation}\label{eq:kell-sl}
 \left[\sum_{{\bm \ell} \in \wc{\sf R}_{1,i}} \widetilde \kappa({\bm \ell})\cdot (\E|\widehat Q_{{\bm \ell}, k}|^2)^{1/2}\right] \le N^{-\eta_0(k - |\gd|)} \cdot (1-\vep_0)^{N^{\frac{1}{\log \log N}}},
\end{equation}
for all large $N$ and any $i \in [d]$. Now summing \eqref{eq:kell-ss}, and \eqref{eq:kell-sl} for $i \in [d]$, we deduce from \eqref{eq:P-k-cs} that 
\[
\sup_{k=|\gd|+1}^{N^{\frac{1}{\log \log N}}} \E \left[\sup_{z \in \cS_\gd^{-\vep}} |\widehat P_k(z)|^2 \cdot \prod_{i=1}^{d_0} \lambda_i(z)^{2(k+d_2)} \right] \cdot N^{\eta_0(k - |\gd|)} \le 1,
\]
 for all large $N$. 
 
 It remains to consider the case when $k$ is large. 

\noindent
{\bf Case $2$:} Fix $k \ge N^{1/\log \log N}$. Similar to the above we split the range of ${\bm \ell}$. First we consider the case when $\max_i \ell_i$ is small compared to $k$. Set  
\[
{\sf R}_2:=\{{\bm \ell}: \hat \ell_i \le k (\log N)^2 \wedge (N+d_2), i=1,2,\ldots,d\}. 
\]
Recalling the inequality $\binom{n}{m} \le \left(\frac{en}{m}\right)^m$ we note that
\[
\max\left\{\binom{\hat \ell_j -1}{k+d_2-1}, \binom{\hat \ell_j +k+d_2}{k+d_2}\right\}\le (2e (\log N)^2)^{(k+d_2)}, 
\]
for any ${\bm \ell} \in {\sf R}_2$. Thus proceeding 
as before we deduce that
\begin{multline}\label{eq:kell-ls}
  \left[\sum_{{\bm \ell} \in {\sf R}_2} \widetilde \kappa({\bm \ell}) \cdot (\E|\widehat Q_{{\bm \ell}, k}|^2)^{1/2}\right]  \le \left[ |{\sf R}_2| \cdot \max_{{\bm \ell} \in {\sf R}_2}  \left\{\widetilde \kappa({\bm \ell}) \cdot (\E|\widehat Q_{{\bm \ell}, k}|^2)^{1/2}\right\} \right]\\
  \!\!\!\!\!
  \le  |a_{d_1}|^{-k} \cdot e^{d_2} \cdot N^{-(\gamma -1/2)(k- |\gd|)} \cdot  k^{|\gd|/2} \cdot (2e (\log N)^2)^{(k+d_2)d} \cdot (k(\log N)^2)^d 
  \le N^{-\eta_0(k -|\gd|)},
\end{multline}
for all large $N$ (notation as in \eqref{eq-coronaday}). 

We finally consider the case when $\max_i \ell_i$ is large compared to $k$. For $i \in [d]$, set 
\begin{multline*}
\wc{\sf R}_{2,i}:=\Big\{{\bm \ell}:  \hat \ell_{(j)} \le k(\log N)^2 \wedge (N+d_2), \, j \in [i-1]; 
k (\log N)^2  \le \hat \ell_{(j)} \le N+d_2, j \in [d]\setminus [i-1]\Big\}.
\end{multline*}
If ${\bm \ell} \in \wc{\sf R}_{2,i}$ then $ k(\log N)^2 \le \hat \ell_{(j)} \le N+d_2$, for $j=i,i+1,\ldots,d$, which in turn implies that 
\[
\max_{j=i}^d\left\{\left(\binom{\hat \ell_{(j)} -1}{k+d_2-1} \vee \binom{\hat \ell_{(j)} +k+d_2}{k+d_2}\right) \cdot (1-\vep_0)^{\tilde \ell_{(j)}/2}\right\}\le 1,
\]
for all large $N$. Therefore for any ${\bm \ell} \in \wc{\sf R}_{2,i}$ we obtain that
\begin{align*}
 \prod_{j=1}^{d} (1-\vep_0)^{ \tilde\ell_j}  \prod_{j=1}^{d_0} \binom{\hat \ell_j -1}{k+d_2-1} \cdot \prod_{j=d_0+1}^d \binom{\hat \ell_j +k+d_2}{k+d_2}  & \le 
 \prod_{j=i}^{d} (1-\vep_0)^{ \tilde \ell_{(j)}/2} \cdot (2e(\log N)^2)^{(k+d_2)(i-1)}\\
 & \le (1-\vep_0)^{N^{\frac{1}{2\log \log N}}} \cdot (2e(\log N)^2)^{d(k+d_2)},
\end{align*}
where the last step is a consequence of the fact that for any $j=i, i+1, \ldots, d$,
\[N^{1/\log \log N} \le k \le \hat \ell_{(j)}/(\log N)^2 \le \hat \ell_{(j)} - (k+d_2) \le \tilde \ell_{(j)}.\]  Therefore, by \eqref{eq:indv-term}, 
\begin{multline}\label{eq:kell-ll}
 \left[\sum_{{\bm \ell} \in \wc{\sf R}_{2,i}} |\widetilde \kappa({\bm \ell}) \cdot (\E|\widehat Q_{{\bm \ell}, k}|^2)^{1/2}\right]  \\ 
 \le |a_{d_1}|^{-k} \cdot e^{d_2} \cdot N^{-(\gamma -1/2)(k- |\gd|)}  \cdot k^{|\gd|/2} \cdot (1-\vep_0)^{N^{\frac{1}{2\log \log N}}} \cdot (2e(\log N)^2)^{d(k+d_2)} \cdot |\wc{\sf R}_{2,i}| \\
 \le (1-\vep_0)^{N^{\frac{1}{2\log \log N}}} \cdot N^{-\eta_0(k - |\gd|)},
\end{multline}
for all large $N$. Hence, summing \eqref{eq:kell-ll} for $i \in [d]$, and \eqref{eq:kell-ls} we derive from \eqref{eq:P-k-cs} that 
\[
\sup_{k=N^{\frac{1}{\log \log N}}}^N \E \left[\sup_{z \in \cS_\gd^{-\vep}} |\widehat P_k(z)|^2 \cdot \prod_{i=1}^{d_0} \lambda_i(z)^{2(k+d_2)} \right] \cdot N^{\eta_0(k- |\gd|)} \le 1,
\]
 for all large $N$. This completes the proof of the lemma. 
\end{proof}

Next we proceed to prove {Lemma \ref{lem:rouche-multi-eq-d_0}, that is, to prove} 
 an upper bound on the second moment of the {modulus of the } dominant term $\widehat P_{|\gd|}(z)$. To derive the uniform tightness of the limiting random field it will be also useful to find a bound on the second moment of the tail of $\widehat P_{|\gd|}(z)$, i.e.~a sum over the terms appearing in the \abbr{RHS} of \eqref{eq:P-k-decompose-0} involving at least one large negative or positive exponent of $\{\lambda_i\}_{i=1}^{d_0}$ and $\{\lambda_i\}_{i=d_0+1}^d$, respectively. To this end, we introduce the following set of notation.

Fix $z \in \cS_\gd$. For any $L \in \N$ we define
\[
  |\widehat P|_{|\gd|}^L(z):= {|a_{d_1}|^{-|\gd|}}
\sum_{{\bm \ell}: \max_i \hat \ell_i \le L} \prod_{i=1}^{d_0} |\lambda_i(z)|^{-\hat \ell_i}  \prod_{i=d_0+1}^{d} |\lambda_i(z)|^{\hat \ell_i} \cdot |\widehat Q_{{\bm \ell}, |\gd|}|,
\]
where we recall the definition of $\widehat Q_{{\bm \ell}, |\gd|}$ from \eqref{eq:hat-Q-ell-k}. 
Next, for $L_1, L_2 \in \N$, such that $L_1 \le L_2$, we set
\[
|\widehat P|_{|\gd|}^{\wc{L}_1, L_2}(z):= |\widehat P|_{|\gd|}^{L_2}(z) - |\widehat P|_{|\gd|}^{L_1}(z). 
\]
Note that for $L, L_1, L_2 \le N+d_2$, with $L_1 \le L_2$, the random functions $|\widehat P|_{|\gd|}^L(z)$ and  $|\widehat P|_{|\gd|}^{\wc{L}_1, L_2}(z)$ are well defined. The next lemma derives bounds on the second moments of $|\widehat P|_{|\gd|}^L(z)$ and  $|\widehat P|_{|\gd|}^{\wc{L}_1, L_2}(z)$. 

\begin{lemma}\label{lem:dominant-tail}
Under the same set-up as in Lemma \ref{lem:rouche-multi-gr-d_0}, we have the following:

\begin{enumerate}
\item[(i)] For any $L \in \N$, there exists a constant $\widehat C_L < \infty$ such that
\[
\sup_N  \E \left[ \sup_{z \in \cS_\gd^{-\vep}} \left||\widehat P|_{|\gd|}^L(z)\right|^2\right] \le \widehat C_L.
\]

\item[(ii)] There exists a constant $\widehat c >0$, such that for any $L_1, L_2 \in \N$ with $L_1 \le L_2$,
\[
\sup_N\E \left[\sup_{z \in \cS_\gd^{-\vep}}  \left||\widehat P|_{|\gd|}^{\wc{L}_1, L_2}(z)\right|^2\right] \le \exp(-\widehat c L_1). 
\]
\end{enumerate}
\end{lemma}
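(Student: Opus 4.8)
The plan is to prove both parts of Lemma \ref{lem:dominant-tail} by the same second moment method already deployed in the proof of Lemma \ref{lem:rouche-multi-gr-d_0}, but now specialized to $k=|\gd|$ and with the sum over ${\bm \ell}$ truncated to $\max_i \hat\ell_i \le L$ (for part (i)) or restricted to $L_1 < \max_i \hat\ell_i \le L_2$ (for part (ii)). The crucial simplification is that when $k=|\gd|$ there is no power of $N$ working against us: the factor $N^{-\gamma(k-|\gd|)} = 1$, so we are left to control a deterministic sum of products of binomial coefficients weighted by geometrically decaying powers $(1-\vep_0)^{\tilde\ell_i}$, where $\vep_0$ comes from Lemma \ref{lem:root-cont-0}. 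Concretely, first I would write, in parallel with \eqref{eq:P-k-decompose-0}, that $|\widehat P|_{|\gd|}^L(z) = |a_{d_1}|^{-|\gd|}\sum_{{\bm \ell}:\max_i\hat\ell_i\le L}\prod_{i=1}^{d_0}|\lambda_i(z)|^{-\hat\ell_i}\prod_{i=d_0+1}^d|\lambda_i(z)|^{\hat\ell_i}\,|\widehat Q_{{\bm \ell},|\gd|}|$, then apply Cauchy–Schwarz exactly as in \eqref{eq:P-k-cs} to reduce the second moment of the supremum to $\big(\sum_{{\bm \ell}:\max_i\hat\ell_i\le L}\widetilde\kappa({\bm \ell})(\E|\widehat Q_{{\bm \ell},|\gd|}|^2)^{1/2}\big)^2$, with $\widetilde\kappa$ as in \eqref{eq:kappa-tilde} (noting that after pulling out $\prod_{i\le d_0}|\lambda_i|^{k+d_2}$ the $\cS_\gd^{-\vep}$-supremum of $|\lambda_i(z)|^{-1}$ and $|\lambda_i(z)|$ is at most $1-\vep_0$).

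For part (i), the estimate \eqref{eq:indv-term} with $k=|\gd|$ gives each summand bounded by $|a_{d_1}|^{-|\gd|}e^{d_2}|\gd|^{|\gd|/2}\prod_{i=1}^d(1-\vep_0)^{\tilde\ell_i}\prod_{i=1}^{d_0}\binom{\hat\ell_i-1}{|\gd|+d_2-1}\prod_{i=d_0+1}^d\binom{\hat\ell_i+|\gd|+d_2}{|\gd|+d_2}$. Since $\hat\ell_i\le L$ for all $i$, each binomial coefficient is at most a constant $C_L$ depending only on $L$, $d$, $d_2$, and $|\gd|$, and the number of admissible ${\bm \ell}$ is at most $(L+1)^d$. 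Hence the whole sum is bounded by a finite constant $\widehat C_L$ that is uniform in $N$ (for $N$ large enough that $L\le N+d_2$, so that $|\widehat P|_{|\gd|}^L$ is genuinely defined and the combinatorial bound of Lemma \ref{lem:combinatorial-1} applies), which is exactly the claim. Here we do not even need the geometric decay; boundedness alone of the truncated sum suffices.

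For part (ii), the sum is now over ${\bm \ell}$ with $L_1 < \max_i\hat\ell_i$, so at least one coordinate, say the largest $\hat\ell_{(d)}$, exceeds $L_1$. I would split the range according to which reindexed coordinate $\hat\ell_{(j)}$ first crosses a suitable threshold — mimicking the $\wc{\sf R}_{1,i}$/$\wc{\sf R}_{2,i}$ decomposition in the proof of Lemma \ref{lem:rouche-multi-gr-d_0} — and use that for the coordinates that are large, the product of the binomial coefficient with $(1-\vep_0)^{\tilde\ell_{(j)}}$ is eventually at most $(1-\vep_0)^{\hat\ell_{(j)}/2}$ (the binomial grows only polynomially in $\hat\ell_{(j)}$ of fixed degree $|\gd|+d_2$, while the geometric factor wins once $\hat\ell_{(j)}$ is large compared to a constant). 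Summing the tail of a geometric series via $\sum_{\ell\ge L_1}(1-\vep_0)^{\ell/2}=(1-\vep_0)^{L_1/2}\cdot(1-(1-\vep_0)^{1/2})^{-1}$, together with the contributions of the $O(1)$-many small coordinates each bounded by a constant (depending only on $d$, not on $L_1,L_2,N$), yields a bound of the form $C\,(1-\vep_0)^{L_1/2}\le \exp(-\widehat c L_1)$ for $\widehat c := -\tfrac12\log(1-\vep_0)>0$, uniformly in $N$ and in $L_2\ge L_1$.

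The main obstacle, as in Lemma \ref{lem:rouche-multi-gr-d_0}, is bookkeeping: making sure the polynomial-in-$\hat\ell$ growth of the binomial coefficients is genuinely dominated by the geometric decay and that the constants produced by summing over the finitely many "small" coordinates are independent of $N$, $L_1$, and $L_2$. There is no new analytic difficulty — the powers of $N$ that made the argument in Lemma \ref{lem:rouche-multi-gr-d_0} delicate are absent when $k=|\gd|$ — so the proof is essentially a careful specialization of the earlier computation, and the only care needed is to carry the dependence on $L$ (resp. $L_1$) through cleanly.
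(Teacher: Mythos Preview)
Your proposal is correct and follows essentially the same approach as the paper: reduce via Cauchy--Schwarz to $\big(\sum_{\bm \ell}\widehat\kappa({\bm \ell})(\E|\widehat Q_{{\bm \ell},|\gd|}|^2)^{1/2}\big)^2$, then combine Lemma~\ref{lem:combinatorial-1} and Lemma~\ref{lem:root-cont-0} to bound each summand by a polynomial in $\hat\ell_i$ of fixed degree times $(1-\vep_0)^{\hat\ell_i}$. One minor simplification the paper exploits in part~(ii): since $k=|\gd|$ is fixed, the binomials $\binom{\hat\ell_i-1}{|\gd|+d_2-1}$ and $\binom{\hat\ell_i+|\gd|+d_2}{|\gd|+d_2}$ are polynomials in $\hat\ell_i$ of degree at most $2d$, so one can just use $\sum_{\ell\ge L_1}(\ell+2d)^{2d}(1-\vep_0)^{\ell}\le(1-\widetilde\vep)^{L_1}$ directly---the $\wc{\sf R}_{1,i}/\wc{\sf R}_{2,i}$ splitting from Lemma~\ref{lem:rouche-multi-gr-d_0} is unnecessary here.
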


For later use, for any $L \in \N$ denote
\begin{equation}\label{eq:dominant-L}
  \widehat P_{|\gd|}^L(z) := {a_{d_1}^{-|\gd|}} \sum_{{\bm \ell}: \, \max \hat \ell_i \le L} \prod_{i=1}^{d_1-\gd} \lambda_i(z)^{-\hat \ell_i} \cdot \prod_{i=d_1-\gd+1}^{d} \lambda_i(z)^{\hat \ell_i} 
\cdot \sum_{\cX_{|\gd|} \in \gL_{{\bm \ell},|\gd|}} (-1)^{\sgn(\sigma_{\mathbb{X}}) \sgn(\sigma_{\mathbb{Y}})}\det(E_N[\mathbb{X}; \mathbb{Y}]),
\end{equation}
and
\begin{equation}\label{eq:dominant-L-bar}
\widehat P_{|\gd|}^{\wc{L}}(z):=  \widehat P_{|\gd|}(z) - \widehat P_{|\gd|}^L(z).
\end{equation}
Setting $L_2 =N+d_2$,  by the triangle inequality it follows that
\[
|\widehat P_{|\gd|}^{\wc{ L}}(z)| \le |\widehat P|_{|\gd|}^{\wc{L}, N+d_2}(z).
\]
Thus, as a consequence of Lemma \ref{lem:dominant-tail}(ii) we obtain the following corollary.

\begin{corollary}\label{cor:dominant-tail}
Under the same set-up as in Lemma \ref{lem:rouche-multi-gr-d_0}, for any $L \in \N$,
\[
\sup_N \E\left[ \sup_{z \in \cS_\gd^{-\vep}} \left|\widehat P_{|\gd|}^{\wc{ L}}(z)\right|^2 \right] \le  \exp(-\widehat c L),
\]
where $\widehat c$ is as in Lemma \ref{lem:dominant-tail}. 
\end{corollary}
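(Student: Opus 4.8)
The plan is to reduce the corollary directly to Lemma \ref{lem:dominant-tail}(ii) via the termwise domination already recorded above. First I would recall from \eqref{eq:P-k-decompose-0}, specialized to $k=|\gd|$ (so that the factor $N^{-\gamma(k-|\gd|)}$ equals $1$), that
\[
\widehat P_{|\gd|}(z) = a_{d_1}^{-|\gd|} \sum_{{\bm \ell}} \prod_{i=1}^{d_0} \lambda_i(z)^{-\hat \ell_i} \prod_{i=d_0+1}^{d} \lambda_i(z)^{\hat \ell_i} \cdot \widehat Q_{{\bm \ell}, |\gd|},
\]
and that, by \eqref{eq:dominant-L}--\eqref{eq:dominant-L-bar}, the remainder $\widehat P_{|\gd|}^{\wc{L}}(z)$ is exactly this same sum restricted to those ${\bm \ell}$ with $\max_i \hat \ell_i > L$. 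Since every coordinate obeys $0 \le \hat \ell_i \le N+d_2$ by \eqref{eq:hat-ell} and the range of ${\bm \ell}$, this restricted index set is nonempty only when $N + d_2 > L$, and in that case it coincides with $\{{\bm \ell} : L < \max_i \hat \ell_i \le N+d_2\}$.

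For $N$ with $N + d_2 \le L$ the sum is empty, so $\widehat P_{|\gd|}^{\wc{L}}(z) \equiv 0$ and the claimed bound is trivial. For the remaining $N$, the triangle inequality applied termwise gives, for every $z \in \cS_\gd$,
\[
\bigl|\widehat P_{|\gd|}^{\wc{L}}(z)\bigr| \le |a_{d_1}|^{-|\gd|} \sum_{{\bm \ell} : L < \max_i \hat \ell_i \le N+d_2} \prod_{i=1}^{d_0} |\lambda_i(z)|^{-\hat \ell_i} \prod_{i=d_0+1}^{d} |\lambda_i(z)|^{\hat \ell_i} \cdot |\widehat Q_{{\bm \ell}, |\gd|}| = |\widehat P|_{|\gd|}^{\wc{L}, N+d_2}(z),
\]
which is precisely the inequality noted just above the statement. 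Taking the supremum over $z \in \cS_\gd^{-\vep}$, squaring, taking expectations, and invoking Lemma \ref{lem:dominant-tail}(ii) with $L_1 = L$ and $L_2 = N+d_2$ then yields
\[
\E\Bigl[\sup_{z \in \cS_\gd^{-\vep}} \bigl|\widehat P_{|\gd|}^{\wc{L}}(z)\bigr|^2\Bigr] \le \E\Bigl[\sup_{z \in \cS_\gd^{-\vep}} \bigl||\widehat P|_{|\gd|}^{\wc{L}, N+d_2}(z)\bigr|^2\Bigr] \le \exp(-\widehat c L),
\]
uniformly in $N$, which is the assertion.

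There is essentially no obstacle here beyond bookkeeping: the real content lies in Lemma \ref{lem:dominant-tail}(ii), whose proof (through the variance identity \eqref{eq:var-bound-multi}, the combinatorial bound of Lemma \ref{lem:combinatorial-1}, and the uniform root separation of Lemma \ref{lem:root-cont-0}) supplies the geometric decay in $L_1$. The one point I would be careful about is that the truncation level $L_2$ in that lemma is permitted to be $N$-dependent, which is exactly what makes the choice $L_2 = N+d_2$ legitimate and the resulting estimate uniform in $N$; one should also note that the passage from $\widehat P_{|\gd|}^{\wc{L}}$ to $|\widehat P|_{|\gd|}^{\wc{L},N+d_2}$ only changes the scalar prefactor $a_{d_1}^{-|\gd|}$ to its modulus, so no constants are lost.
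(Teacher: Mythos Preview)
Your proof is correct and follows essentially the same approach as the paper: the paper records the inequality $|\widehat P_{|\gd|}^{\wc{L}}(z)| \le |\widehat P|_{|\gd|}^{\wc{L},\,N+d_2}(z)$ (via the triangle inequality, with $L_2=N+d_2$) immediately before stating the corollary, and then invokes Lemma~\ref{lem:dominant-tail}(ii). Your writeup simply expands this one-line argument with the bookkeeping on the index range and the trivial case $N+d_2\le L$.
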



\begin{proof}[Proof of Lemma \ref{lem:dominant-tail} {(assuming  Lemma \ref{lem:combinatorial-1})}]
We begin by noting that, for any $L \in \N$,  
\begin{equation}\label{eq:dom-non-tail}
\E\left[ \sup_{z \in \cS_\gd^{-\vep}}  \left||\widehat P_{|\gd|}^L|(z)\right|^2\right] \le  \left(\sum_{\bm \ell: \max_i \hat \ell_i \le L} \widehat \kappa({\bm \ell})\cdot (\E|\widehat Q_{{\bm \ell}, |\gd|}|^2)^{1/2}\right)^2,
\end{equation}
where 
\[
\widehat \kappa({\bm \ell}):= \sup_{z \in \cS_\gd^{-\vep}} |\kappa^\star({\bm \ell}, z)|
\]
and 
\[
\kappa^\star({\bm \ell}, z) := a_{d_1}^{-k} 
\cdot \prod_{i=1}^{d_0} \lambda_i(z)^{-\hat \ell_i} \cdot \prod_{i=d_0+1}^d \lambda_i(z)^{\hat \ell_i}.
\] 
The proof of \eqref{eq:dom-non-tail} is analogous to that of \eqref{eq:P-k-cs}. Proceeding as in \eqref{eq:indv-term} we find that
\begin{multline*}
 \sup_{z \in \cS_\gd^{-\vep}} |\kappa^\star({\bm \ell}, z)| \cdot (\E|\widehat Q_{{\bm \ell}, |\gd|}|^2)^{1/2} 
 \le   |a_{d_1}|^{-|\gd|} \cdot |\gd|^{|\gd|/2}\cdot \prod_{i=1}^{d} (1-\vep_0)^{{\hat { \ell}_i}}  \cdot \prod_{i=1}^{d_0} \binom{\hat \ell_i-1}{|\gd|+d_2-1} \cdot \prod_{i=d_0+1}^{d} \binom{\hat \ell_i+|\gd|+d_2}{|\gd|+d_2}.
\end{multline*}
Therefore
\begin{multline}
  \sum_{{\bm \ell}: \max_i \hat \ell_i \le L} \widehat \kappa({\bm \ell}) \cdot (\E|\widehat Q_{{\bm \ell}, {|\gd|}}|^2)^{1/2} \\
\le  |a_{d_1}|^{-|\gd|} \cdot |\gd|^{|\gd|/2}  \sum_{{\bm \ell}: \max_i \hat \ell_i \le L}  \prod_{i=1}^{d} (1-\vep_0)^{\hat { \ell}_i} \cdot \prod_{i=1}^{d_0} \hat \ell_i^{2d}  \cdot \prod_{i=d_0+1}^{d} (\hat \ell_i+2d)^{2d}\le \widehat C_L^{1/2},
\end{multline}
for some constant $C_L < \infty$. This together with \eqref{eq:dom-non-tail} yields part (i) of the lemma. To prove the second part we proceed similarly as in \eqref{eq:dom-non-tail} to deduce that
\begin{equation}\label{eq:dom-tail}
\E\left[ \sup_{z \in \cS_\gd^{-\vep}}  \left||\widehat P_{|\gd|}^{\wc{L}_1, L_2}|(z)\right|^2\right] \le  \left(\sum_{\bm \ell: \max_i \hat \ell_i >  L_1} \widehat \kappa({\bm \ell}) \cdot (\E|\widehat Q_{{\bm \ell}, k}|^2)^{1/2}\right)^2,
\end{equation}
Since
\[
\sum_{\ell \ge 0} (\ell+2d)^{2d} \cdot (1-\vep_0)^\ell  < \infty \qquad \text{ and } \sum_{\ell \ge 0}  (\ell+2d)^{2d} \cdot (1-\vep_0)^\ell  \le (1-\widetilde \vep)^L,
\]
for any $L \in \N$ and some $\widetilde \vep >0$, we use \eqref{eq:indv-term} again to deduce that
\[
\sum_{{\bm \ell}: \max_i \hat \ell_i > L} \widehat \kappa({\bm \ell}) \cdot (\E|\widehat Q_{{\bm \ell}, k}|^2)^{1/2} \le \exp( - (\widehat c /2)\cdot L_1). 
\]
Plugging this bound in \eqref{eq:dom-tail} completes the proof of the lemma.  
\end{proof}
The proof of Lemma \ref{lem:rouche-multi-eq-d_0} is now immediate.
\begin{proof}[Proof of Lemma \ref{lem:rouche-multi-eq-d_0}]
As
\[
|\widehat P_{|\gd|}(z)| \le |\widehat P|_{|\gd|}^L(z) + |\widehat P_{|\gd|}^{\wc{ L}}(z)|,
\]
for any $L \in \N$, the conclusion of Lemma \ref{lem:rouche-multi-eq-d_0} is immediate from Lemma \ref{lem:dominant-tail}(i) and Corollary \ref{cor:dominant-tail}.
\end{proof}
Next we provide the proof of Lemma \ref{lem:combinatorial-1} which has been used in the proofs of Lemmas \ref{lem:rouche-multi-gr-d_0} and \ref{lem:dominant-tail}. Recall that Lemma \ref{lem:combinatorial-1} yields a bound on $\mathfrak{N}_{{\bm \ell}, k}$ for any $k= |\gd|, |\gd|+1, \ldots, N$.

\begin{proof}[Proof of Lemma \ref{lem:combinatorial-1}]
To prove \eqref{eq:N-l-k-bd} we need to consider the cases $\gd \ge 0$ and $\gd <0$ separately. First let us consider the case $\gd \ge 0$.

To this end, denote
\begin{equation}\label{eq:delta-dfn}
\delta_{i,j}:= \delta_{i,j}(\cX_k):=\left\{ \begin{array}{ll}
x_{i,j} -x_{i+1,j} & \mbox{ for } i \in [d_0] \text{ and } j \in [k+d_2]\\
x_{i+1,1} & \mbox{ for } i \in [d]\setminus [d_0], \ j =1\\
x_{i+1,j}-x_{i,j-1} &\mbox{ for } i \in [d]\setminus [d_0], \ j \in [k+d_2]\setminus \{1\}\\
N+d_2 - x_{i,k+d_2} & \mbox{ for } i \in [d]\setminus [d_0], \ j =k+d_2+1.
\end{array}
\right.
\end{equation}
We claim that \corAB{for $\cX_k \in \gL_{{\bm \ell},k}$, with $k \ge \gd$, the set of integers $\{\delta_{i,j}(\cX_k)\}$} fixes the choices of $\{x_{d+1,j}\}_{j=1}^{\gd+d_2}$. To see this we note that for any pair of integers $k$ and $j$ such that $k \ge \gd$ and $d_2+1 \le j \le \gd + d_2$,
we have
\[
j \le k+d_2 \qquad \text{ and } \qquad d-j+1 \ge d-d_2 -\gd + 1 = d_1-\gd+1=d_0+1.
\]
Therefore
\begin{equation}\label{eq:x_d+1-telescopic}
x_{d+1,j} = \sum_{i=d-j+2}^{d} (x_{i+1,j-(d-i)}- x_{i,j-(d-i)-1}) + x_{d-j+2,1}=\sum_{i=d-j+1}^{d} \delta_{i,j-(d-i)}(\cX_k),
\end{equation}
where the last equality follows from the definition \eqref{eq:delta-dfn} of the $\{\delta_{i,j}\}$'s. This proves that $\{\delta_{i,j}(\cX_k)\}$ fixes the choices of $\{x_{d+1,j}\}_{j=d_2+1}^{d_2+\gd}$. As $\cX_k \in \gL_{{\bm \ell}, k}$ we also have that
\begin{equation}\label{eq:x_d+1-contraint}
x_{d+1, j} = j, \quad j=1,2,\ldots, d_2.
\end{equation}
The last two observations prove the claim.

To complete the proof of the bound on $\mathfrak{N}_{{\bm \ell},k}$, for $\gd \ge 0$, we note that the remaining indices of $X_{d+1}$, i.e.~$\{x_{d+1,j}\}_{j=\gd+d_2+1}^{k+d_2}$ can be chosen in $\binom{N+d_2}{k- \gd}$ ways.
The fact that $\cX_k \in \gL_{{\bm \ell}, k} \subset L_{{\bm \ell}, k}$ also implies that $\{\delta_{i,j}(\cX_k)\}$ can be chosen in 
\begin{equation}\label{eq:choice-delta-bd}
\prod_{i=1}^{d_0} \binom{\hat \ell_i-1}{k+d_2-1} \cdot \prod_{i=d_0+1}^{d} \binom{\hat \ell_i+k+d_2}{k+d_2}
\end{equation}
ways. 

From \eqref{eq:delta-dfn} it is immediate that choosing $\{\delta_{i,j}(\cX_k)\}$ and $X_{d+1}$ fixes $\cX_k$. So, to find the bound on $\mathfrak{N}_{{\bm \ell},k}$ we then need to find the number of choices $\cX_k' \in \gL_{{\bm \ell}, k} \subset L_{{\bm \ell},k}$ such that $X_{d+1}'=X_{d+1}$. This amounts to choosing only $\{\delta_{i,j}(\cX_k')\}$, and the number of such choices, as already seen above, in bounded by \eqref{eq:choice-delta-bd}. Therefore, combining the above bounds we arrive at the desired bound for $\mathfrak{N}_{{\bm \ell},k}$, when $\gd \ge 0$. 

It remains to prove \eqref{eq:N-l-k-bd} for $\gd <0$. To this end, we claim that choosing $\{\delta_{i,j}(\cX_k)\}_{i,j}$ automatically fixes $\{x_{d+1,j}\}_{j=k+d_2 +\gd+1}^{k+d_2}$ for any $\cX_k \in \gL_{{\bm \ell},k}$ and $k \ge |\gd|$.

Indeed, for any $\cX_k \in \gL_{{\bm \ell},k}$ the indices $\{x_{1,k+\ell}\}_{\ell=1}^{d_2}$ are fixed. Therefore, choosing $\{\delta_{i,j}(\cX_k)\}$ fixes the indices $\{x_{d_0+1,k+\ell}\}_{\ell=1}^{d_2}$ (recall \eqref{eq:delta-dfn}). Now similar to \eqref{eq:x_d+1-telescopic} we observe that for any $j$ such that $ d-d_0+1 \le j \le d_2$
\[
x_{d+1,k+j} = \sum_{i=d_0+1}^{d} \delta_{i,k+j-(d-i)} + x_{d_0+1,k+j - (d-d_0)}.
\]
Therefore choosing $\{\delta_{i,j}(\cX_k)\}$ also fixes $\{x_{d+1, k+j}\}_{j=d-d_0+1}^{d_2}$ and hence the claim. On the other hand $\{x_{d+1,j}\}_{j=1}^{d_2}$ are fixed by the definition of $\gL_{{\bm \ell}, k}$. Now repeating the same argument as in the case $\gd\geq 0$, we arrive at the bound \eqref{eq:N-l-k-bd} for $\gd <0$. We omit further details.
\end{proof}

Finally we proceed to the proof of Lemma \ref{lem:rouche-multi-le-d_0}. 
We begin with  the following lemma that shows that if $k < |\gd|$ then $\gL_{{\bm \ell}, k} =\emptyset$ unless the sum of the $\hat \ell_i$'s is close to $N$.
The proof appears in  \cite[Proof of Lemma 4.3]{BPZ-non-triang}, see (4.24) there.
\begin{lemma}[{\cite[Lemma 4.3]{BPZ-non-triang}}]
\corAB{Fix an integer $k < |\gd|$ where $\gd$ as in Lemma \ref{lem:rouche-multi-gr-d_0}}.
 Then for any ${\bm \ell}:=(\ell_1,\ell_2,\ldots,\ell_d)$ such that $0 \le \ell_i \le N+d_2$, for all $i \in [d]$,
\[
\gL_{{\bm \ell}, k} \ne \emptyset \Rightarrow \sum_{i=1}^d \hat \ell_i \ge N - d^2,
\]
where $\gL_{{\bm \ell}, k}$ and $\{\hat \ell_i\}_{i=1}^d$ are as in \eqref{eq:gL-ell-k} and \eqref{eq:hat-ell}, respectively.
\label{lem:combinatorial-2}
\end{lemma}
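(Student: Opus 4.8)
This is \cite[Lemma 4.3]{BPZ-non-triang}; the plan is to reproduce that argument, which I sketch as follows. Fix any $\cX_k=(X_1,\dots,X_{d+1})\in\gL_{{\bm\ell},k}$ (which we assume is nonempty) and write $X_i=\{x_{i,1}<\dots<x_{i,k+d_2}\}$. The idea is to collapse the defining relations of $L_{{\bm\ell},k}$ from \eqref{eq:L-ell-k} into one scalar identity by summing over the rows $i=1,\dots,d$. For $i\in[d_0]$ the relation $\sum_j(x_{i,j}-x_{i+1,j}+1)=\hat\ell_i$ rearranges to $\sum_j x_{i+1,j}-\sum_j x_{i,j}=(k+d_2)-\hat\ell_i$; for $i\in[d]\setminus[d_0]$ the relation $x_{i+1,1}+\sum_{j\ge2}(x_{i+1,j}-x_{i,j-1})+(N+d_2-x_{i,k+d_2})=\hat\ell_i+k+d_2$, after telescoping the inner sum against $\sum_{j=1}^{k+d_2-1}x_{i,j}$, rearranges to $\sum_j x_{i+1,j}-\sum_j x_{i,j}=\hat\ell_i+k-N$. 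Summing over all $i$ and writing $S_1:=\sum_j x_{1,j}$, $S_{d+1}:=\sum_j x_{d+1,j}$ yields
\[
S_{d+1}-S_1=\sum_{i=1}^{d_0}\bigl((k+d_2)-\hat\ell_i\bigr)+\sum_{i=d_0+1}^{d}\bigl(\hat\ell_i+k-N\bigr).
\]
Combining this with $\sum_{i\le d_0}\hat\ell_i+\sum_{i>d_0}\hat\ell_i=\sum_i\hat\ell_i$ gives
\[
\sum_i\hat\ell_i=2\sum_{i\le d_0}\hat\ell_i-d_0(k+d_2)+(d-d_0)(N-k)-S_1+S_{d+1},
\]
and, interchanging the roles of the indices $\le d_0$ and $>d_0$,
\[
\sum_i\hat\ell_i=2\sum_{i>d_0}\hat\ell_i+d_0(k+d_2)-(d-d_0)(N-k)+S_1-S_{d+1}.
\]

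The remaining ingredients are elementary. From \eqref{eq:hat-ell-lbd}, $\hat\ell_i\ge k+d_2$ for $i\in[d_0]$ (each summand $x_{i,j}-x_{i+1,j}+1\ge1$), so $\sum_{i\le d_0}\hat\ell_i\ge d_0(k+d_2)$; and $\hat\ell_i\ge0$ for $i>d_0$. The pinning conditions in \eqref{eq:gL-ell-k} bound $S_1$ and $S_{d+1}$: since $x_{1,k+j}=N+j$ for $j\in[d_2]$ while the other $k$ entries of $X_1$ are distinct elements of $[N]$, one has $d_2N\le S_1\le kN-\binom{k}{2}+d_2N+\binom{d_2+1}{2}$; since $x_{d+1,j}=j$ for $j\in[d_2]$ while the other $k$ entries of $X_{d+1}$ are distinct elements of $[N+d_2]$, one has $\binom{k+d_2+1}{2}\le S_{d+1}\le kN+kd_2+\binom{d_2+1}{2}$.

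Next I split on the sign of $\gd$, using $d-d_0=d_2+\gd$. If $\gd\ge0$, then $d-d_0\ge\gd>k$; plugging $\sum_{i\le d_0}\hat\ell_i\ge d_0(k+d_2)$, the upper bound for $S_1$ and the lower bound for $S_{d+1}$ into the first expression, the terms linear in $N$ collect to $(\gd-k)N$ and, using $\binom{k}{2}+\binom{k+1}{2}=k^2$ and $\binom{k+d_2+1}{2}-\binom{d_2+1}{2}=kd_2+\binom{k+1}{2}$, all the remaining binomial and $O(d^2)$ contributions cancel, leaving $\sum_i\hat\ell_i\ge d_0(k+d_2)+(\gd-k)(N-k)$. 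If $\gd<0$, then $k<|\gd|\le d_2$ and $d-d_0=d_2-|\gd|\ge0$; plugging $\sum_{i>d_0}\hat\ell_i\ge0$, the lower bound for $S_1$ and the upper bound for $S_{d+1}$ into the second expression, the $N$-linear terms now collect to $(|\gd|-k)N$, the binomials cancel in the same way, and $\sum_i\hat\ell_i\ge d_0(k+d_2)+(|\gd|-k)(N-k)$. In either case, since $|\gd|-k\ge1$ and $d_0(k+d_2)\ge0$, we obtain $\sum_i\hat\ell_i\ge N-k\ge N-d\ge N-d^2$ for all large $N$, which is the claim (in fact one gets the stronger bound $N-k$).

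The step I expect to be delicate is the telescoping identity itself: the rows $i\le d_0$ (column‑preserving) and $i>d_0$ (column‑shifting) enter with opposite signs and carry different boundary contributions, the pinned entries of \eqref{eq:gL-ell-k} must be propagated through the sum correctly, and one has to notice that the ``$2\sum_{i\le d_0}$'' form of the identity is the useful one when $\gd\ge0$ while the ``$2\sum_{i>d_0}$'' form is the useful one when $\gd<0$ — only in the matching case does the coefficient of $N$ come out equal to the decisive quantity $|\gd|-k\ge1$. Everything downstream of the identity is routine binomial arithmetic, matching the computation around (4.24) in \cite{BPZ-non-triang}.
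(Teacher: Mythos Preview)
Your approach is correct and is the argument of \cite[Lemma~4.3]{BPZ-non-triang}, to which the paper simply defers. The telescoping identity and the case split on the sign of $\gd$ are exactly right.

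There is one arithmetic slip in the $\gd<0$ branch. You state the bounds $S_1\ge d_2N$ and $S_{d+1}\le kN+kd_2+\binom{d_2+1}{2}$, and then claim that ``the binomials cancel in the same way'' to yield $\sum_i\hat\ell_i\ge d_0(k+d_2)+(|\gd|-k)(N-k)$. With those particular bounds the computation actually gives
\[
\sum_i\hat\ell_i\;\ge\;d_0(k+d_2)+(|\gd|-k)N-|\gd|k-\tbinom{d_2+1}{2},
\]
which is off from your target by $k^2+\binom{d_2+1}{2}$ and does not in general dominate $N-d^2$. The fix is to use the equally elementary sharp bounds
\[
S_1\ge d_2N+\tbinom{d_2+1}{2}+\tbinom{k+1}{2},
\qquad
S_{d+1}\le kN+kd_2+\tbinom{d_2+1}{2}-\tbinom{k}{2},
\]
(the $k$ free entries of $X_1$ are distinct elements of $[N]$, and those of $X_{d+1}$ are distinct elements of $\{d_2+1,\dots,N+d_2\}$). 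With these, $S_1-S_{d+1}\ge (d_2-k)N-kd_2+k^2$, and the same calculation as in your $\gd\ge0$ case now gives exactly $\sum_i\hat\ell_i\ge d_0(k+d_2)+(|\gd|-k)(N-k)\ge N-k$, as you claim. Everything else stands.
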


We now prove Lemma \ref{lem:rouche-multi-le-d_0}.
\begin{proof}[Proof of Lemma \ref{lem:rouche-multi-le-d_0}]
As in the proofs 
of Lemmas \ref{lem:rouche-multi-gr-d_0} and \ref{lem:rouche-multi-eq-d_0}, 
a key step is a bound on $\mathfrak{N}_{{\bm \ell},k}$  of \eqref{eq:N-ell-k}. Since $k < |\gd|$ we cannot use the bound derived in Lemma \ref{lem:combinatorial-1}. Instead, we argue as follows.

We noted in the proof of Lemma \ref{lem:combinatorial-1} that
 choosing $\{\delta_{i,j}(\cX_k)\}$ and $X_{d+1}$ fixes the choice of $\cX_k$. Therefore it follows that for any $k \le d$, 
\begin{align}\label{eq:gn-simple-bd}
\mathfrak{N}_{{\bm \ell}, k}  & \le (N+d_2)^{k+d_2} \cdot \prod_{i=1}^{d_0} \binom{\hat \ell_i-1}{k+d_2-1}^2 \cdot \prod_{i=d_0+1}^{d} \binom{\hat \ell_i+k+d_2}{k+d_2}^2\\
& \le (N+d_2)^{k+d_2} \cdot (N+d_2)^{2(k+d_2)} \cdot (N+k+2d_2)^{2(k+d_2)} \le (2N)^{10d}, \notag
\end{align}
for all large $N$, where the second inequality  follows from the fact that $\hat \ell_i \le N+d_2$ for all $i \in [d]$. Now Lemma \ref{lem:combinatorial-2} yields that
\[
\widehat P_k(z)= a_{d_1}^{-k} N^{-\gamma (k-|\gd|)} \sum_{{\bm \ell}}^* \prod_{i=1}^{d_0} \lambda_i(z)^{-\hat \ell_i}  \prod_{i=d_0+1}^{d} \lambda_i(z)^{\hat \ell_i} \cdot \widehat Q_{{\bm \ell}, k},
\]
where the sum $\sum_{\bm \ell}^*$ is taken over all ${\bm \ell}$ such that $\sum_{i=1}^d \hat \ell_i \ge N -d^2$, and hence using the
Cauchy-Schwarz inequality and arguing similarly to \eqref{eq:P-k-cs}, we obtain that for any $k=0,1,2, \ldots, |\gd|-1$, 
\begin{equation}\label{eq:Pk-small-k}
\E\left[\sup_{z \in \cS_\gd^{-\vep}}  |\widehat P_{k}(z)|^2\right] \le \left(\sum_{\bm \ell}^* \widehat \kappa({\bm \ell}) \cdot (\E|\widehat Q_{{\bm \ell}, k}|^2)^{1/2}\right)^2.
\end{equation}
Using Lemma \ref{lem:root-cont-0} and \eqref{eq:gn-simple-bd}, 
and proceeding similarly as in \eqref{eq:indv-term}, for any ${\bm \ell}$ such that $\sum_{i=1}^d \hat \ell_i \ge N- d^2$, we derive that 
\[
 \sup_{z \in \cS_\gd^{-\vep}} |\kappa^\star({\bm \ell}, z)| \cdot (\E|\widehat Q_{{\bm \ell}, k}|^2)^{1/2} \le |a_{d_1}|^{-k} \cdot N^{\gamma d} \cdot \sqrt{d!} \cdot (2N)^{5d} \cdot (1-\vep_0)^{N/2},
\]
for all large $N$. 
Hence, from \eqref{eq:Pk-small-k} it is now immediate that
\[
\sup_{k=0}^{|\gd|-1}\E\left[\sup_{z \in \cS_\gd^{-\vep}} |\widehat P_k(z)|^2\right] \le (1-\vep_0)^{N/4},
\]
for all large $N$. This completes the proof of the lemma. 
\end{proof}

We end this section with the proof of Corollary \ref{cor:non-dominant}. 

\begin{proof}[Proof of Corollary \ref{cor:non-dominant}]
As 
$
\widehat \det_N(z)= \sum_{k=0}^N \widehat P_k(z),
$
see \eqref{eq;spec-rad-new}-\eqref{eq:hat-det},
applying the Cauchy-Schwarz inequality we find that
\begin{multline}\label{eq:hat-det-decompose}
\E\left[ \sup_{z \in \cS_\gd^{-\vep}}  \left|\widehat \det_N(z) - \widehat P_{|\gd|}(z)\right|^2 \right] \le \E \left[ \sup_{z \in \cS_\gd^{-\vep}}  \sum_{k, k' \ne |\gd|} |\widehat P_k(z)| \cdot |\widehat P_{k'}(z)| \right] \\
\!\!\!\!\!\!
\!\!\!\!\!\!
\le \sum_{k, k' \ne |\gd|} \left\{\E\left[\sup_{z \in \cS_\gd^{-\vep}} |\widehat P_k(z)|^2\right]\right\}^{1/2} \cdot \left\{\E\left[\sup_{z \in \cS_\gd^{-\vep}} |\widehat P_{k'}(z)|^2\right]\right\}^{1/2} 
= \left(\sum_{k \ne |\gd|} \left\{\E\left[\sup_{z \in \cS_\gd^{-\vep}} |\widehat P_k(z)|^2\right]\right\}^{1/2}\right)^2.
\end{multline}
The proof of the corollary now completes upon applying Lemmas \ref{lem:rouche-multi-gr-d_0} and \ref{lem:rouche-multi-le-d_0}. Further details are omitted. 
\end{proof}

\section{Tightness of the limiting random field}\label{sec:limit-tight}
Our main 
goal in this section 
is to derive the tightness of the limiting random field $\gP_\gd^\infty(\cdot)$.  Recall that 
Lemma \ref{lem:limit} states 
that the random fields $\{\gP_\gd^L(\cdot)\}_{L \in \N}$ 
approximate $\gP_\gd^\infty(\cdot)$. 
Thus to show the tightness of $\gP_\gd^\infty(\cdot)$,
it will suffice to show the uniform 
tightness of the random fields $\{\gP_\gd^L(\cdot)\}_{L \in \N}$, and to
control the convergence. 
We will further derive an exponential decay of 
the tail of $\gP_\gd^\infty(\cdot)$. 
These results 
will eventually be 
used in the proofs of the main result Theorem \ref{thm:outlier-law} and Lemma \ref{lem:limit}. 

We introduce the following notation.
For any $L \in \N \cup \{\infty\}$, and $\underline \gc:=(\gc_1,\gc_2, \ldots, \gc_d)$, where $\{\gc_i\}_{i=1}^d$ are non-negative integers, we set 
\begin{equation}\label{eq:random-fld-abs}
|\gP|_\gd^L(z) := \sum_{\underline \gc: \max_i \gc_i \le L} \prod_{i=1}^{d_0} |\lambda_i(z)|^{-\gc_i} \prod_{i=d_0+1}^d   |\lambda_i(z)|^{\gc_i} \cdot \left|\sum_{\gx \in \gL_1(\gd)} \sum_{\gy \in \gL_2(\gd)} (-1)^{\gz(\gx,\gy)}\det(E_\infty[\widehat \gX; \widehat \gY])\cdot  \prod_{i=1}^d {\bf 1}_{\{ \gc_i(\gx, \gy) =\gc_i\}}\right|,
\end{equation}
where we refer the reader to Definitions \ref{dfn:random fieldnot} and \ref{dfn:random field} to recall the definitions of the notation $\gL_1(\gd)$, $\gL_2(\gd)$, $\{\gc_i(\gx, \gy)\}_{i=1}^d$, $\widehat \gX$, $\widehat \gY$, and $E_\infty$. For any $L_1, L_2 \in \N$ such that $L_1 < L_2$ we set
\[
\gP_\gd^{\wc{L}_1, L_2}(z):= \gP_\gd^{L_2}(z) - \gP_\gd^{L_1}(z) \qquad \text{ and } \qquad |\gP|_\gd^{\wc{L}_1, L_2}(z):= |\gP|_\gd^{L_2}(z) - |\gP|_\gd^{L_1}(z). 
\] 

\begin{lemma}\label{lem:random-field-abs}
Fix $\vep >0$ and $\gd \ne 0$ an integer such that $-d_2 \le \gd \le d_1$. Let $E_\infty$ be a semi-infinte array of i.i.d.~random variables with zero mean and unit variance. Then we have the following: 
\begin{enumerate}
\item[(i)] For any $L \in \N$,
\[
\E \left[ \sup_{z \in \cS_\gd^{-\vep}} \left(|\gP|_\gd^L(z) \right)^2 \right] \le \widehat C_L,
\]
where $\widehat C_L$ is as in Lemma \ref{lem:dominant-tail}.

\item[(ii)] Fix $L_1\in \N$. Then 
\[
\sup_{L_2: L_2 >L_1}  \E \left[ \sup_{z \in \cS_\gd^{-\vep}}  \left(|\gP|_\gd^{\wc{ L}_1, L_2}(z) \right)^2 \right] \le \exp(- \widehat c L_1),
\]
where $\widehat c$ is as in Lemma \ref{lem:dominant-tail}.
\end{enumerate}
\end{lemma}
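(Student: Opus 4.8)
The plan is to mirror, essentially verbatim, the second-moment argument already developed for the finite-$N$ dominant terms in Section \ref{sec:identify-non-dom-term}, but now applied to the infinite array $E_\infty$ and the closed-form tableaux description of the coefficients. The key point is that $|\gP|_\gd^L(z)$ is, by construction, a sum over $\underline\gc=(\gc_1,\dots,\gc_d)$ of a product $\prod_{i=1}^{d_0}|\lambda_i(z)|^{-\gc_i}\prod_{i=d_0+1}^d|\lambda_i(z)|^{\gc_i}$ times the modulus of a linear combination of determinants $\det(E_\infty[\widehat\gX;\widehat\gY])$ of $|\gd|\times|\gd|$ submatrices of $E_\infty$. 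These determinants are mutually orthogonal in $L^2$ (pairwise uncorrelated unless the row set and column set both coincide, in which case the second moment is $|\gd|!$), exactly as in \eqref{eq:var-bound-multi}. Thus for each fixed $\underline\gc$ the inner sum has second moment $|\gd|!\cdot \mathfrak{N}_{\underline\gc}$, where $\mathfrak{N}_{\underline\gc}$ counts pairs of tableaux $(\gx,\gy),(\gx',\gy')\in\gL_1(\gd)\times\gL_2(\gd)$ with $\gc_i(\gx,\gy)=\gc_i(\gx',\gy')=\gc_i$ for all $i$ and with $\widehat\gX=\widehat\gX'$, $\widehat\gY=\widehat\gY'$. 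The combinatorial content is therefore to bound $\mathfrak{N}_{\underline\gc}$ polynomially in $\underline\gc$; this should be the tableaux-side analogue of Lemma \ref{lem:combinatorial-1}, and in fact the proof of Theorem \ref{thm:outlier-law} in Section \ref{sec:dom-term} (to which this lemma feeds) will presumably establish the exact correspondence between the finite-$N$ quantities $\widehat Q_{{\bm\ell},|\gd|}$ and the tableaux sums, so one can simply invoke that identification and the already-proven bound \eqref{eq:N-l-k-bd} with $k=|\gd|$.

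Concretely, first I would record the orthogonality of determinants of distinct submatrices of $E_\infty$, identical to the computation preceding \eqref{eq:var-bound-multi}, to get $\E\big[\big|\sum_{\gx,\gy:\,\gc_i(\gx,\gy)=\gc_i\,\forall i}(-1)^{\gz(\gx,\gy)}\det(E_\infty[\widehat\gX;\widehat\gY])\big|^2\big] = |\gd|!\cdot\mathfrak{N}_{\underline\gc}$. Second, I would bound $\mathfrak{N}_{\underline\gc}\le C\prod_{i=1}^d(\gc_i+2d)^{4d}$ (a polynomial bound suffices), arguing as in the proof of Lemma \ref{lem:combinatorial-1}: the differences/increments along rows and southwest diagonals of a tableau in $\gL_1(\gd)$ resp.\ $\gL_2(\gd)$ are controlled by the values $\gc_i$ through \eqref{eq:gc_i-gd-ge-0}, the boundary conditions $\gx_{i,1}=i$ for $i\in[d_2]$ pin down the first column, and the sets $\widehat\gX,\widehat\gY$ pin down the remaining entries that matter, so the number of admissible tableaux with prescribed $(\gc_i)$ and prescribed $(\widehat\gX,\widehat\gY)$ is at most a fixed power of $\prod_i\gc_i$. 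Third, I would invoke Lemma \ref{lem:root-cont-0}: on $\cS_\gd^{-\vep}$ we have $|\lambda_i(z)|^{-1}\le 1-\vep_0$ for $i\le d_0$ and $|\lambda_i(z)|\le 1-\vep_0$ for $i>d_0$, so each summand is dominated, uniformly in $z\in\cS_\gd^{-\vep}$, by $(1-\vep_0)^{\sum_i\gc_i}$ times a polynomial in $\underline\gc$. Then, exactly as in \eqref{eq:P-k-cs}, pulling the supremum inside and applying Cauchy--Schwarz over pairs $\underline\gc,\underline\gc'$,
\[
\E\Big[\sup_{z\in\cS_\gd^{-\vep}}\big(|\gP|_\gd^L(z)\big)^2\Big]\le\Big(\sum_{\underline\gc:\max_i\gc_i\le L}\sup_{z\in\cS_\gd^{-\vep}}\Big(\prod_{i=1}^{d_0}|\lambda_i(z)|^{-\gc_i}\prod_{i=d_0+1}^d|\lambda_i(z)|^{\gc_i}\Big)\cdot\big(|\gd|!\,\mathfrak{N}_{\underline\gc}\big)^{1/2}\Big)^2,
\]
and the right-hand side is finite because $\sum_{\gc\ge0}(\gc+2d)^{2d}(1-\vep_0)^{\gc}<\infty$; this gives part (i) with $\widehat C_L$ defined exactly as in Lemma \ref{lem:dominant-tail}(i) (indeed the two bounds are literally the same sum, which is why the statement can reuse the constant $\widehat C_L$). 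For part (ii) the identical computation, but now summing over $\underline\gc$ with $\max_i\gc_i>L_1$ rather than $\le L$, forces $\sum_i\gc_i>L_1$, so the tail of the geometric-times-polynomial series is bounded by $\exp(-\widehat c L_1)$ with the same $\widehat c$ as in Lemma \ref{lem:dominant-tail}(ii); the supremum over $L_2>L_1$ is harmless since all terms are nonnegative and the full series converges.

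The main obstacle is the combinatorial bound on $\mathfrak{N}_{\underline\gc}$, i.e.\ verifying carefully that the map from a tableau $\gx\in\gL_1(\gd)$ (and $\gy\in\gL_2(\gd)$) to the data $(\gc_i(\gx,\gy))_i$ together with $(\widehat\gX,\widehat\gY)$ is finite-to-one with a fibre size polynomial in $\underline\gc$ — one must track through the three-line definition of $\gc_i$ for $i\in[d]\setminus[d_0]$, which telescopes along both rows and southwest diagonals, to see that fixing the $\gc_i$'s constrains all the relevant row-increments. This is precisely the kind of telescoping identity (compare \eqref{eq:x_d+1-telescopic}) that underlies Lemma \ref{lem:combinatorial-1}, so I expect it to go through, but it is the step where the tableaux bookkeeping must be done honestly. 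An acceptable shortcut, given that this lemma is only used downstream of the identification of $\widehat P_{|\gd|}^L$ with $\gP_\gd^L$ established in Section \ref{sec:dom-term}, is to phrase the proof as: the coefficients and index-sets in \eqref{eq:random-fld-abs} are in bijection with those in the definition of $|\widehat P|_{|\gd|}^L$ (with $k=|\gd|$) from Section \ref{sec:identify-non-dom-term}, the orthogonality of the determinantal summands is the same, and hence the bounds of Lemma \ref{lem:dominant-tail} transfer verbatim; this is the route I would take to keep the argument short, deferring the bijection itself to the combinatorial analysis of Section \ref{sec:dom-term}.
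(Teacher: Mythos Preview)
Your shortcut at the end is exactly the paper's proof: the paper states and proves Lemma \ref{lem:zeta-N-gd-L}(i), which establishes that the joint law of $\{(|\widehat P|_{|\gd|}^{L_1}(z),|\widehat P|_{|\gd|}^{L_2}(z))\}_{z\in\cS_\gd}$ coincides with that of $\{(|\gP|_\gd^{L_1}(z),|\gP|_\gd^{L_2}(z))\}_{z\in\cS_\gd}$ for all large $N$, via the explicit affine bijection between the sets $\cX_{|\gd|}\in\gL_{{\bm\ell},|\gd|}$ and the tableaux pairs $(\gx,\gy)\in\gL_1(\gd)\times\gL_2(\gd)$; once that is in hand, Lemma \ref{lem:random-field-abs} is literally Lemma \ref{lem:dominant-tail} restated, and the paper says exactly this in one sentence. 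Note that the bijection lives in Section \ref{sec:limit-tight} (Lemma \ref{lem:zeta-N-gd-L}), not Section \ref{sec:dom-term}.

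Your longer direct route --- redoing the orthogonality and the combinatorial count $\mathfrak{N}_{\underline\gc}$ on the tableaux side --- would also work, but it is redundant: the bijection of Lemma \ref{lem:zeta-N-gd-L} identifies $\mathfrak{N}_{\underline\gc}$ with $\mathfrak{N}_{{\bm\ell},|\gd|}$ (under $\gc_i\leftrightarrow\hat\ell_i$), so the bound you seek is precisely \eqref{eq:N-l-k-bd} with $k=|\gd|$, and the rest of your outline reproduces the proof of Lemma \ref{lem:dominant-tail} line by line. There is no need to reverify the telescoping identities separately on the tableaux side; the affine maps $\gG_N^\pm,\gH_N^\pm$ in the proof of Lemma \ref{lem:zeta-N-gd-L} do this for you.
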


Building on Lemma \ref{lem:random-field-abs} we have the next result.

\begin{corollary}\label{cor:random-field}
Under the same setup as in Lemma \ref{lem:random-field-abs} we have the following:
\begin{enumerate}
\item[(i)] For any $L \in \N$,
\[
 \E \left[ \sup_{z \in \cS_\gd^{-\vep}} \left|\gP_\gd^L(z) \right|^2 \right] \le \widehat C_L,
\]
where $\widehat C_L$ is as in Lemma \ref{lem:dominant-tail}.

\item[(ii)] Fix $L_1\in \N$. Then 
\[
\sup_{L_2: L_2 >L_1}  \E \left[ \sup_{z \in \cS_\gd^{-\vep}} \left|\gP_\gd^{\wc{ L}_1, L_2}(z) \right|^2 \right] \le \exp(- \widehat c L_1),
\]
where $\widehat c$ is as in Lemma \ref{lem:dominant-tail}.

\item[(iii)] Consequently, $\gP_\gd^\infty(\cdot)$ is well defined and $\gP_\gd^L(\cdot) \to \gP_\gd^\infty(\cdot)$ uniformly on $\cS_\gd^{-\vep}$, as $L \to \infty$, on a set of probability one. Moreover, 
\begin{equation}\label{eq:limit-tail}
\lim_{L \to \infty}  \E \left[ \sup_{z \in \cS_\gd^{-\vep}} \left|\gP_\gd^\infty(z) - \gP_\gd^L(z) \right|^2 \right] =0, 
\end{equation}
and
\begin{equation}\label{eq:limit-tight}
 \E \left[ \sup_{z \in \cS_\gd^{-\vep}} \left|\gP_\gd^\infty(z) \right|^2 \right] <\infty. 
\end{equation}
\end{enumerate}
\end{corollary}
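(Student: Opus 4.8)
The plan is to obtain all three parts from Lemma \ref{lem:random-field-abs}, by first dominating the signed fields $\gP_\gd^L$ and their increments $\gP_\gd^{\wc{L}_1,L_2}$ by the nonnegative fields $|\gP|_\gd^L$ and $|\gP|_\gd^{\wc{L}_1,L_2}$ of \eqref{eq:random-fld-abs}, and then running routine monotone-convergence/Fatou arguments on top of those domination bounds. For parts (i) and (ii), I would group the double sum in \eqref{eq:random-fld} according to the value of the vector $(\gc_1(\gx,\gy),\ldots,\gc_d(\gx,\gy))$, recall from Definition \ref{dfn:random fieldnot} that $\gc(\gx,\gy,z)=\prod_{i=1}^{d_0}\lambda_i(z)^{-\gc_i(\gx,\gy)}\prod_{i=d_0+1}^{d}\lambda_i(z)^{\gc_i(\gx,\gy)}$, and apply the triangle inequality to get the pointwise bounds $|\gP_\gd^L(z)|\le|\gP|_\gd^L(z)$ and $|\gP_\gd^{\wc{L}_1,L_2}(z)|\le|\gP|_\gd^{\wc{L}_1,L_2}(z)$ for all $z\in\cS_\gd^{-\vep}$, valid for every $L\in\N$ and every pair $L_1<L_2$. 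Taking $\sup_{z\in\cS_\gd^{-\vep}}$, squaring, taking expectations, and invoking Lemma \ref{lem:random-field-abs}(i)--(ii) then gives (i) and (ii) at once.

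For part (iii) I would first establish the almost sure uniform convergence. By (ii), $\E[\sup_{z\in\cS_\gd^{-\vep}}|\gP_\gd^{L+1}(z)-\gP_\gd^L(z)|^2]\le\exp(-\widehat c L)$ for every $L\in\N$, hence by Jensen's inequality $\sum_{L\ge1}\E[\sup_{z\in\cS_\gd^{-\vep}}|\gP_\gd^{L+1}(z)-\gP_\gd^L(z)|]\le\sum_{L\ge1}\exp(-\widehat c L/2)<\infty$, and so by Tonelli's theorem $\sum_{L\ge1}\sup_{z\in\cS_\gd^{-\vep}}|\gP_\gd^{L+1}(z)-\gP_\gd^L(z)|<\infty$ almost surely. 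Consequently, on a set of probability one, $\{\gP_\gd^L(\cdot)\}_{L\in\N}$ is uniformly Cauchy on $\cS_\gd^{-\vep}$ and therefore converges uniformly there. To identify the uniform limit with $\gP_\gd^\infty(\cdot)$ as given by \eqref{eq:random-fld} with $L=\infty$, I would note that $|\gP|_\gd^L(z)$ is nondecreasing in $L$, so monotone convergence together with Lemma \ref{lem:random-field-abs} (using $|\gP|_\gd^\infty\le|\gP|_\gd^{1}+\sup_{L_2>1}|\gP|_\gd^{\wc{1},L_2}$) gives $\E[\sup_{z\in\cS_\gd^{-\vep}}(\lim_{L\to\infty}|\gP|_\gd^L(z))^2]<\infty$; in particular $\lim_L|\gP|_\gd^L(z)<\infty$ almost surely, so the grouped series defining $\gP_\gd^\infty(z)$ converges absolutely, its value is unambiguous and measurable, and the partial sums $\gP_\gd^L(z)$ converge to it pointwise. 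Hence the almost sure uniform limit of $\{\gP_\gd^L(\cdot)\}$ is exactly $\gP_\gd^\infty(\cdot)$, which proves well-definedness and the uniform convergence claim.

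Finally I would prove the two $L^2$ estimates in (iii). For \eqref{eq:limit-tail}, the triangle inequality gives $|\gP_\gd^\infty(z)-\gP_\gd^L(z)|\le\lim_{L_2\to\infty}|\gP|_\gd^{\wc{L},L_2}(z)=\sup_{L_2>L}|\gP|_\gd^{\wc{L},L_2}(z)$, and since $\sup_{z\in\cS_\gd^{-\vep}}|\gP|_\gd^{\wc{L},L_2}(z)$ is nondecreasing in $L_2$, monotone convergence and Lemma \ref{lem:random-field-abs}(ii) yield
\[
\E\Big[\sup_{z\in\cS_\gd^{-\vep}}|\gP_\gd^\infty(z)-\gP_\gd^L(z)|^2\Big]\le\lim_{L_2\to\infty}\E\Big[\sup_{z\in\cS_\gd^{-\vep}}\big(|\gP|_\gd^{\wc{L},L_2}(z)\big)^2\Big]\le\exp(-\widehat c L),
\]
which tends to $0$ as $L\to\infty$ and establishes \eqref{eq:limit-tail}. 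Then \eqref{eq:limit-tight} follows by writing $\gP_\gd^\infty=\gP_\gd^1+(\gP_\gd^\infty-\gP_\gd^1)$ and using $(a+b)^2\le 2a^2+2b^2$ with part (i) at $L=1$ and the bound just obtained at $L=1$, giving $\E[\sup_{z\in\cS_\gd^{-\vep}}|\gP_\gd^\infty(z)|^2]\le 2\widehat C_1+2\exp(-\widehat c)<\infty$.

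I do not expect a genuine obstacle here, since all the real work is packaged in Lemma \ref{lem:random-field-abs}. The only points demanding care are the identification, in part (iii), of the almost sure uniform limit of the partial sums with the object literally defined by the infinite series \eqref{eq:random-fld} — i.e.\ checking its absolute convergence, unambiguity, and measurability — and the several interchanges of $\sup_z$ with the truncation limit in $L$, all of which involve only monotone sequences and are therefore routine.
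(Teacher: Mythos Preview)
Your proposal is correct and follows the same overall strategy as the paper: parts (i) and (ii) are obtained exactly as in the paper, by the pointwise domination $|\gP_\gd^L(z)|\le|\gP|_\gd^L(z)$ and $|\gP_\gd^{\wc{L}_1,L_2}(z)|\le|\gP|_\gd^{\wc{L}_1,L_2}(z)$ together with Lemma~\ref{lem:random-field-abs}.

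For part (iii) you take a slightly different, and in fact somewhat more direct, route than the paper. The paper groups the increments into dyadic blocks, setting $\mathscr{P}_L:=\max_{L<L'\le 2L}\sup_z|\gP_\gd^{\wc{L},L'}(z)|^2$, bounds $\E[\mathscr{P}_L]$ via part (ii), telescopes over scales $2^iL$ to obtain $\E[\sup_{L'\ge L}\sup_z|\gP_\gd^{\wc{L},L'}(z)|^2]\le\exp(-\widehat c L/4)$, and then invokes Markov plus Borel--Cantelli for the almost sure uniform Cauchy property; this dyadic bound is then reused for \eqref{eq:limit-tail}. You instead use the $L^1$-summability of consecutive differences to get a.s.\ uniform Cauchy, and for \eqref{eq:limit-tail} you exploit directly the monotonicity of $L_2\mapsto|\gP|_\gd^{\wc{L},L_2}(z)$ together with monotone convergence and Lemma~\ref{lem:random-field-abs}(ii). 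Your argument avoids the dyadic machinery entirely and gives the sharper constant $\exp(-\widehat c L)$ in \eqref{eq:limit-tail}; the paper's dyadic bound, on the other hand, controls $\sup_{L'\ge L}|\gP_\gd^{\wc{L},L'}|$ (signed, not absolute-value) in $L^2$, which is a marginally stronger statement but not needed elsewhere. Both approaches are standard and complete.
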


Using Lemma \ref{lem:random-field-abs} let us first provide a proof of Corollary \ref{cor:random-field}. 

\begin{proof}[Proof of Corollary \ref{cor:random-field} {(assuming
  Lemma \ref{lem:random-field-abs})}]
{By the
triangle inequality we have that}
\[\left|\gP_\gd^L(z) \right| \le |\gP|_\gd^L(z) \quad \text{ and } \quad \left|\gP_\gd^{\wc{ L}_1, L_2}(z) \right| \le |\gP|_\gd^{\wc{ L}_1, L_2}(z).
\]
{Thus (i) and (ii) follow
  from Lemma \ref{lem:random-field-abs}. We proceed to prove (iii). For any $L \in \N$, write}
\[
\mathscr{P}_L:=  \max_{L': L < L' \le  2L} \sup_{z \in \cS_\gd^{-\vep}} \left|\gP_\gd^{\wc{ L}, L'}(z) \right|^2.
\]
Note that
\[
\mathscr{P}_L \le \sum_{L'=L+1}^{2L} \sup_{z \in \cS_\gd^{-\vep}} \left|\gP_\gd^{\wc{ L}, L'}(z)\right|^2.
\]
Thus, by part (ii)
\[
\E[\mathscr{P}_L] \le L \cdot \exp(-\widehat c L) \le \exp\left(-\frac{\widehat c L}{2}\right),
\]
for all large $L$. This, upon using the
triangle inequality, further yields that 
\begin{equation}\label{eq:second-mom-sup}
\E\left[\sup_{L' \ge L} \sup_{z \in \cS_\gd^{-\vep}} \left|\gP_\gd^{\wc{ L}, L'}(z)\right|^2\right] \le \sum_{i=0}^\infty \E\left[\mathscr{P}_{2^i L} \right] \le \exp\left(-\frac{\widehat c L}{4}\right),
\end{equation}
for all large $L$. Now the bound \eqref{eq:second-mom-sup} together with Markov's inequality and the
Borel-Cantelli Lemma show
that on a set of probability one the random functions $\{\gP_\gd^L(\cdot)\}$ are uniformly Cauchy on $\cS_\gd^{-\vep}$. So, the limit $\gP_\gd^\infty(\cdot)$, as $L \to \infty$, exists and is 
well defined on that set of probability one. Furthermore, {the convergence
is  uniform} on $\cS_\gd^{-\vep}$.  

Finally, to prove \eqref{eq:limit-tail} we simply note that 
\[
\sup_{z \in \cS_\gd^{-\vep}} |\gP_\gd^\infty(z) - \gP_\gd^L(z)|^2 \le  \sup_{L' \ge L} \sup_{z \in \cS_\gd^{-\vep}} \left|\gP_\gd^{\wc{ L}, L'}(z)\right|^2,
\]
whereas \eqref{eq:limit-tight} follows from \eqref{eq:limit-tail} and part (i) of this corollary. 
This completes the proof of the corollary.  
\end{proof}

The rest of this section will be devoted to the proof of Lemma \ref{lem:random-field-abs}. Recall from Section \ref{sec:identify-non-dom-term} that analogous bounds were proved for the dominant term in the expansion of 
$\widehat \det_N(z)$, see Lemma \ref{lem:dominant-tail}. We will prove Lemma \ref{lem:random-field-abs} by establishing an equality of the laws of the random fields $\{|\widehat P|_\gd^L (z)\}_{z \in \cS_\gd}$ and $\{|\gP|_\gd^L(z)\}_{z \in \cS_\gd}$, for any $L \in \N$ and all large $N$. This is the content of our next result. 

\begin{lemma}\label{lem:zeta-N-gd-L}
Fix $L, L_1, L_2 \in \N$ such that $L_1 <L_2$, and an integer $\gd \ne 0$ such that $-d_2 \le \gd \le d_1$. Then for all large $N$ (depending only on $L$ and $L_2$), we have the following:

\begin{enumerate}

\item[(i)] The joint laws of the random fields $\{(|\widehat P|_{|\gd|}^{L_1}(z), |\widehat P|_{|\gd|}^{L_2}(z))\}_{z\in \cS_\gd}$ and $\{(|\gP|_\gd^{L_1}(z), |\gP|_\gd^{L_2}(z))\}_{z \in \cS_\gd}$ coincide,

\end{enumerate}
and
\begin{enumerate}

\item[(ii)] The laws of the random fields $\{\widehat P_{|\gd|}^L(z)\}_{z\in \cS_\gd}$ and $\{\gP_\gd^L(z)\}_{z \in \cS_\gd}$ coincide.

\end{enumerate}

\end{lemma}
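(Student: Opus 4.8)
The plan is to establish Lemma \ref{lem:zeta-N-gd-L} by exhibiting an explicit bijection between the combinatorial index sets parametrizing $\widehat P_{|\gd|}^L(z)$ and $\gP_\gd^L(z)$, matching up the monomials in $\{\lambda_i(z)\}$ with the $\gc(\gx,\gy)$ factors, the determinants $\det(E_N[\mathbb{X};\mathbb{Y}])$ with $\det(E_\infty[\widehat\gX;\widehat\gY])$, and the signs. First I would revisit the expansion \eqref{eq:dominant-L}: there the sum runs over $\bm\ell$ with $\max_i\hat\ell_i\le L$ and, for each $\bm\ell$, over $\cX_{|\gd|}\in\gL_{\bm\ell,|\gd|}$. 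Recalling from Lemma \ref{lem:combinatorial-1} and its proof that a tuple $\cX_k\in\gL_{\bm\ell,k}$ is determined by the difference array $\{\delta_{i,j}(\cX_k)\}$ together with $X_{d+1}$, and that the constraints \eqref{eq:L-ell-k}, \eqref{eq:gL-ell-k} force $x_{1,k+j}=N+j$ and $x_{d+1,j}=j$ for $j\in[d_2]$, I would reorganize the data $\cX_{|\gd|}$ into a pair of semistandard-type tableaux. The key point is that when $\max_i\hat\ell_i\le L$ and $N$ is large (larger than some function of $L$), the ``boundary at $N$'' never interacts with the ``boundary at $0$'': the entries $x_{i,j}$ that are constrained near $N$ and those constrained near $0$ live in disjoint index ranges, so the combinatorial object splits into an $N$-independent ``upper'' piece (giving $\gx\in\gL_1(\gd)$) and an $N$-independent ``lower'' piece (giving $\gy\in\gL_2(\gd)$), after an affine relabeling $x_{i,j}\mapsto N+d_2+1-x_{i,j}$ on the relevant block.

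Second, I would check that under this relabeling the exponents match. By \eqref{eq:det-prod-decompose} the product of bidiagonal determinants contributes $\prod_{i=1}^{d_0}\lambda_i(z)^{N+d_2-\hat\ell_i}\prod_{i=d_0+1}^d\lambda_i(z)^{\hat\ell_i}$; dividing by $\mathfrak K(z)$ of \eqref{fancyK} removes the $\prod_{i=1}^{d_0}\lambda_i(z)^{N+d_2}$ and the $a_{d_1}^N N^{-\gamma|\gd|}$, leaving exactly $a_{d_1}^{-|\gd|}\prod_{i=1}^{d_0}\lambda_i(z)^{-\hat\ell_i}\prod_{i=d_0+1}^d\lambda_i(z)^{\hat\ell_i}$ as in \eqref{eq:dominant-L}. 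I then need to verify that the formulas \eqref{eq:gc_i-gd-ge-0} (and its $\gd<0$ analogue) for $\gc_i(\gx,\gy)$ are precisely what $\hat\ell_i$ becomes after the relabeling, i.e.\ $\hat\ell_i=\gc_i(\gx,\gy)$; this is a telescoping identity of the same flavor as \eqref{eq:x_d+1-telescopic}, carried out separately on the indices $i\le d_0$ (where $\hat\ell_i=\sum_j(x_{i,j}-x_{i+1,j}+1)$) and $i>d_0$. Likewise, the row-set $\mathbb X=X_1\cap[N]$ and column-set $\mathbb Y=(X_{d+1}-d_2)\cap[N]$ of the $E_N$-minor correspond, via the relabeling, to $\widehat\gX$ and $\widehat\gY$ defined from the first columns (resp.\ last columns) of $\gx,\gy$ in Definition \ref{dfn:random fieldnot}; and the sign $(-1)^{\sgn(\sigma_{\mathbb X})\sgn(\sigma_{\mathbb Y})}$ matches $(-1)^{\gz(\gx,\gy)}=(-1)^{\widehat\sgn(\widehat\gX)\widehat\sgn(\widehat\gY)}$ because $\sigma_Z$ and the permutation defining $\widehat\sgn$ differ by a fixed (sign-one, for $N$ large) rearrangement of the ``far'' indices. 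Since $E_N[\mathbb X;\mathbb Y]$ and $E_\infty[\widehat\gX;\widehat\gY]$ are sub-matrices of equal size with i.i.d.\ entries of the same law (and, crucially, the index sets produced as $\cX_{|\gd|}$ ranges over $\gL_{\bm\ell,|\gd|}$ with $\max\hat\ell_i\le L$ all lie in a bounded window independent of $N$, so no two terms ever reuse the same matrix entry in a spurious way), the joint law over all admissible $(\gx,\gy)$ of the collection $\{\det(E_N[\mathbb X;\mathbb Y])\}$ equals that of $\{\det(E_\infty[\widehat\gX;\widehat\gY])\}$. This gives (ii); part (i) follows the same way, keeping track of two truncation levels $L_1<L_2$ simultaneously, which is automatic since the bijection does not depend on the truncation.

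The main obstacle, I expect, is bookkeeping the bijection precisely enough in the case $\gd<0$, where the shapes ${\bm\mu}_1,{\bm\mu}_2$ acquire the extra block of $-\gd$ rows of length $d+1$ and the tableaux record the ``wrap-around'' indices $\gx_{i,d+1},\gy_{i,d+1}$; here one must be careful that the relabeling sends the block of $x_{i,j}$'s near $N$ to the long rows of the tableaux and that the southwest-diagonal strict-increase condition in Definition \ref{dfn:young-restrict} is exactly the translate of the inequalities $x_{i+1,j}\le x_{i,j}<x_{i+1,j+1}$ in \eqref{eq:L-ell-k}. A secondary point to get right is the quantitative threshold: one must choose $N$ large enough (depending on $L$, or on $L_2$ in part (i)) that the window of indices touched by any term with $\max_i\hat\ell_i\le L$ has size $<N$, so that the ``upper'' and ``lower'' blocks genuinely decouple and the finite sub-matrix of $E_N$ can be identified with one of $E_\infty$; this is where all $N$-dependence disappears. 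Once the bijection and these matchings are in place, Lemma \ref{lem:random-field-abs} is immediate by transporting Lemma \ref{lem:dominant-tail} through the equality of laws, and no further estimation is needed.
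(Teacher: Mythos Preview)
Your proposal is correct and follows essentially the same route as the paper: construct an explicit affine bijection $\cX_{|\gd|}\mapsto(\gx,\gy)$ (the paper writes it as the pair of maps $\gG_N^\pm,\gH_N^\pm$, with the same relabeling $x_{i,j}\mapsto N+d_2+1-x_{i,j}$ on the ``upper'' block that you describe), then verify that the tableau conditions in $\gL_1(\gd),\gL_2(\gd)$ translate the inequalities in $L_{\bm\ell,|\gd|}$, that $\gc_i(\gx,\gy)=\hat\ell_i$ by a telescoping computation, that $(\widehat\gX,\widehat\gY)=(N+d_2-\mathbb X,\,\mathbb Y+d_2)$ so the signs and minors match, and finally invoke the i.i.d.\ structure for the law equality. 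The paper in fact proves (i) first and declares (ii) similar (you did the reverse), but since the bijection is truncation-independent this is immaterial; your identification of the $\gd<0$ bookkeeping and the $N$-threshold as the only delicate points is accurate.
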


We note that equipped with Lemma \ref{lem:zeta-N-gd-L}(i) the proof of Lemma \ref{lem:random-field-abs} is immediate from Lemma \ref{lem:dominant-tail}. Further details are omitted. Thus, it only remains to prove Lemma \ref{lem:zeta-N-gd-L}.

Before presenting the proof,  we recall all relevant notation and provide a sketch. For brevity we sketch the proof of Lemma \ref{lem:zeta-N-gd-L}(ii). The idea behind the proof of the first part is the same. 

From \eqref{eq:dominant-L} we have that
\begin{equation}\label{eq:dom-L-redfn}
\widehat P_{|\gd|}^L(z) := \sum_{{\bm \ell}: \, \max \hat \ell_i \le L} \prod_{i=1}^{d_1-\gd} \lambda_i(z)^{-\hat \ell_i} \cdot \prod_{i=d_1-\gd+1}^{d} \lambda_i(z)^{\hat \ell_i} 
\cdot \sum_{\cX_{|\gd|} \in \gL_{{\bm \ell},|\gd|}} (-1)^{\sgn(\sigma_{\mathbb{X}}) \sgn(\sigma_{\mathbb{Y}})}\det(E_N[\mathbb{X}; \mathbb{Y}]),
\end{equation}
where  $\cX_{d}:= (X_1,X_2,\ldots, X_{d+1})$, {$d_0=d_1-\gd$,}
\[
X_i:=\{x_{i,1} < x_{i,2} < \cdots < x_{i, |\gd|+d_2}\}, \qquad i \in [d+1],
\]
\begin{equation}\label{eq:gL-redfn}
\gL_{{\bm \ell},|\gd|}:= \{\cX_{|\gd|} \in L_{{\bm \ell}, |\gd|}: x_{1,|\gd|+j}=N+j; \ j  \in [d_2] \quad \text{ and } \quad x_{d+1,j}=j; \ j \in [d_2]\},
\end{equation}
\begin{multline}
L_{{\bm \ell},|\gd|}:= \{ \cX_{|\gd|}:  \, 1 \le x_{i+1,1} \le x_{i,1} < x_{i+1,2} \le x_{i,2} < \cdots < x_{i+1,|\gd|+d_2} \le x_{i,|\gd|+d_2} \le N+d_2,\\
 \, \sum_{j=1}^{|\gd|+d_2}(x_{i,j}-x_{i+1,j}+1)   =\hat \ell_i; \ i=1,+2,\ldots,d_0,\\
\text{ and }  \, x_{i+1,1}+ \sum_{j=2}^{|\gd|+d_2}(x_{i+1,j}-x_{i,j-1})+ (N+d_2-x_{i,|\gd|+d_2}) =\hat \ell_i+|\gd|+d_2; \ i=d_0+1,d_0+2,\ldots,d\}, \label{eq:L-redfn}
\end{multline}

\begin{equation}
\hat \ell_i := \left\{\begin{array}{ll} \ell_i & \mbox{ if } i > d_0\\
N+d_2- \ell_i & \mbox{ if } i \le d_0
\end{array}
\right.,\notag
\end{equation}
 and
\begin{equation}\label{eq:redef-bX}
 \mathbb{X}:=\mathbb{X}(X_1):= X_1 \cap [N] \qquad \text{ and } \qquad \mathbb{Y}:= \mathbb{Y}(X_{d+1}) := (X_{d+1} - d_2) \cap [N], 
\end{equation}
see Definitions \ref{dfn:young-restrict}, \ref{dfn:random fieldnot} and Equations
 \eqref{eq-Lnew}, \eqref{eq:L-ell-k}
and \eqref{eq:hat-ell}.
Since $\widehat P_{|\gd|}^L(z)$ involves entries of the noise matrix $E_N$, it is not apriori clear that the distribution of the random field $\{\widehat P_{|\gd|}^L(z)\}_{z \in \cS_\gd}$ is free of $N$, for all large $N$. We find affine maps
that map bijectively the relevant subset of $\cX_{|\gd|}$ 
to that of $(\gx, \gy)$, for all large $N$, where $\gx \in \gL_1(\gd)$ and $\gy \in \gL_2(\gd)$ 
with $\gL_1(\gd)$ and $\gL_2(\gd)$ as in Definition \ref{dfn:random fieldnot}. The rational behind the existence of 
such affine maps is that as $\cX_{|\gd|} \in \gL_{{\bm \ell}, \gd}$, the restriction $\max \hat \ell_i \le L$ ensures that for all large $N$, a sub-collection of the array of integers $\{x_{i,j}\}_{i \in [d+1], j \in [d_2 +|\gd|]}$ is $O(L)$, whereas the rest are $N -O(L)$. This induces the affine transformations. This observation further leads to a partition of $\cX_{|\gd|}$ which then gives the shapes of the tableaux appearing in Definition \ref{dfn:random fieldnot}. See Figure \ref{fig:3} for a pictorial description of these observations.

To complete the argument we then confirm that 
\begin{equation}\label{eq:sign-c}
\gc_i(\gx,\gy)=\hat \ell_i \text{ for all } i \in [d] \quad \text{ and } \quad (-1)^{\sgn(\sigma_{\mathbb{X}}) \sgn(\sigma_{\mathbb{Y}})} = (-1)^{\gz(\gx,\gy)},
\end{equation}
under those maps, where $\{\gc_i(\gx,\gy)\}_{i \in [d]}$ and $\gz(\gx,\gy)$ are as in Definition \ref{dfn:random fieldnot}. 

The above mentioned maps also induce mappings between the entries of $E_N$ 
and those of $E_\infty$. Since all maps are bijections, using the fact that the entries of $E_N$ are i.i.d., it follows that joint law of the random variables under the summation in the \abbr{RHS} of \eqref{eq:dom-L-redfn} 
is the same as that of \eqref{eq:random-fld}. This establishes the equality in the distributions of the random fields $\{\widehat P_{|\gd|}^L(z)\}_{z \in \cS_\gd}$ and $\{\gP_\gd^L(z)\}_{z \in \cS_\gd}$. Below we carry out in detail these steps.

\begin{figure}[h]
 \captionsetup{singlelinecheck=false}
\centering	
\ytableausetup{centertableaux, boxsize=2em}
\begin{ytableau}
   \none & \none & x_{1,1} & x_{1,2} & x_{1,3} & x_{1,4} \\
 \none & \none& x_{2,1} & x_{2,2} & x_{2,3} & x_{2,4} \\
 \none & \none & x_{3,1} & x_{3,2} & x_{3,3} & x_{3,4} \\
 \none & x_{4,1} & \phantom{} & x_{4,2} & x_{4,3} & x_{4,4} \\
 \none & x_{5,1} & x_{5,2} & \phantom{} & x_{5,3} & x_{5,4} \\
  \none & x_{6,1} & x_{6,2} & x_{6,3} & \phantom{} & x_{6,4}\\
   \none & x_{7,1} & x_{7,2} & x_{7,3} & x_{7,4} & \phantom{}
\end{ytableau}
\qquad\qquad\qquad\qquad\qquad
\begin{ytableau}
         \none & x_{1,1} & \phantom{} & x_{1,2} & x_{1,3} & x_{1,4} \\
 \none & x_{2,1} & \phantom{} & x_{2,2}  & x_{2,3} & x_{2,4}  \\
\none & x_{3,1} & \phantom{} & x_{3,2}  & x_{3,3} & x_{3,4}  \\
\none & x_{4,1} & \phantom{} & x_{4,2}  & x_{4,3} & x_{4,4}  \\
\none & x_{5,1} & \phantom{} & x_{5,2}  & x_{5,3} & x_{5,4}  \\
 \none & x_{6,1} & x_{6,2} & \phantom{} & x_{6,3} & x_{6,4} \\
 \none & x_{7,1} & x_{7,2} & x_{7,3} & \phantom{} & x_{7,4} 
\end{ytableau}
\caption[]{A schematic representation of the entries of the set $\cX_{|\gd|} \in \gL_{{\bm \ell}, |\gd|}$, for $d_1=d_2=3$ and $|\gd|=1$. The condition $\max \hat \ell_i \le L$ induces a partition illustrated by the empty boxes, for $\gd =1$ (\texttt{left panel}) and $\gd =-1$ (\texttt{right panel}). For all large $N$, in both panels the entries in the left block (demarcated by the empty boxes) are $O(L)$, whereas the entries in the other block are $N - O(L)$. Furthermore, rotating the left blocks in both panels clockwise and the right blocks anti-clockwise we note that the shapes of tableaux thus produced matches with those appearing in Figure \ref{fig:2}.
}
\label{fig:3}
\end{figure}

\begin{proof}[Proof of Lemma \ref{lem:zeta-N-gd-L}]
  We will only prove (i). The proof of (ii), {being similar},
  will be omitted. Fix $L_1, L_2 \in \N$ such that $L_1 <L_2$.

As indicated by Figure \ref{fig:3} the forms of the maps differ for $\gd >0$ and $\gd <0$. For $\gd >0$,  denote
\[
(\gG_N^+(\cX_{|\gd|}))_{i,j}:= N+d_2+1 - x_{j,\gd+d_2-i+1}, \quad j \le d+1 -i \text{ and } i = 1,2, \ldots, \gd + d_2,
\]
and
\[
(\gH_N^+(\cX_{|\gd|}))_{i,j}:= x_{d+2-j,i}, \quad j \le d-d_0+1-i \text{ and } i = 1,2, \ldots,  \gd+d_2.
\]
For $\gd <0$, denote
\[
(\gG_N^-(\cX_{|\gd|}))_{i,j}:= N+d_2+1 - x_{j,d_2-\gd-i+1}, \quad j \le \min\{ d+ 1 - \gd -i, d+1\} \text{ and } i = 1,2, \ldots,  d_2
\]
and
\[
(\gH_N^-(\cX_{|\gd|}))_{i,j}:= x_{d+2-j,i},
\]
for  $j \le d+1, \,  i = 1,2, \ldots,  -\gd$ and $j \le d_2+1 - i$, $i= -\gd+1, -\gd+2,\ldots, d_2$.

Consider the case $\gd >0$. It is clear from their definitions that the shapes of the tableaux induced by $\gG^+(\cX_{|\gd|})$ and $\gH_N^+(\cX_{|\gd|})$ are given by ${\bm \mu}_1$ and ${\bm \mu}_2$, where ${\bm \mu}_1$ and ${\bm \mu}_2$ are as in Definition \ref{dfn:random fieldnot}. Using \eqref{eq:gL-redfn} it is immediate that if $\cX_{|\gd|} \in \gL_{{\bm \ell}, |\gd|}$ then 
\[
(\gG_N^+(\cX_{|\gd|}))_{i,1}= (\gH_N^+(\cX_{|\gd|}))_{i,1}=i, \quad \text{ for } i \in [d_2].
\]
To show that $\gG_N^+(\cX_{|\gd|}) \in \gL_1(\gd)$ and $\gH_N^+(\cX_{|\gd|}) \in \gL_2(\gd)$ we need to prove that they are weakly increasing in every row, and strictly increasing in every column and along the southwest diagonals. As $\cX_{|\gd|} \in L_{{\bm \ell}, \gd}$, upon recalling the definition of $L_{{\bm \ell}, |\gd|}$ from \eqref{eq:L-redfn} these are also immediate. Now we check \eqref{eq:sign-c}. Recalling the definitions of $\{\gc_i(\gx,\gy)\}_{i=1}^d$ from Definition \ref{dfn:random fieldnot} we find that for $i \in [d_0]$
\begin{align*}
\gc_i(\gG_N^+(\cX_{|\gd|}, \gH_N^+(\cX_{|\gd|})& = \sum_{j=1}^{d-d_0} \left( (\gG_N^+(\cX_{|\gd|}))_{j, i+1} - (\gG_N^+(\cX_{|\gd|}))_{j, i}+1\right) = \sum_{j=1}^{d-d_0} (x_{i, \gd+d_2-j+1}- x_{i+1, \gd+d_2-j+1} +1) \\
&=\sum_{j=1}^{\gd+d_2} (x_{i, j}- x_{i+1, j} +1) =\hat \ell_i,
\end{align*}
where we have used the fact that $\gd + d_2 = d_1 -d_0$. Similarly for $i \in [d]\setminus [d_0]$, recalling that
$d_0=d_1 -\gd$ we obtain
\begin{multline*}
\gc_i(\gG_N^+(\cX_{|\gd|}, \gH_N^+(\cX_{|\gd|}) = (\gH_N^+(\cX_{|\gd|}))_{1,d+1-i} + (\gG_N^+(\cX_{|\gd|}))_{1,i} -1 + \sum_{j=2}^{i-d_0} \left( (\gH_N^+(\cX_{|\gd|}))_{j,d+1-i} - (\gH_N^+(\cX_{|\gd|}))_{j-1,d+2-i}\right) \\
+ \sum_{j=2}^{d + 1 -i} \left( (\gG_N^+(\cX_{|\gd|}))_{j,i} - (\gG_N^+(\cX_{|\gd|}))_{j-1,i+1}\right) \corAB{-(\gd+d_2)}\\
= x_{i+1,1} + (N+d_2 - x_{i, \gd +d_2}) + \sum_{j=2}^{i-d_0} \left( x_{i+1,j} - x_{i,j-1}\right) + \sum_{j=2}^{d + 1 -i} \left( x_{i+1, \gd+d_2 -j +2} - x_{i, \gd+d_2- j+1}\right)\corAB{-(\gd+d_2)}\\
= x_{i+1,1} + \sum_{j=2}^{\gd+d_2}  \left( x_{i+1,j} - x_{i,j-1}\right) + (N+d_2 - x_{i, \gd +d_2}) \corAB{-(\gd+d_2)}= \hat \ell_i.
\end{multline*}
Now we proceed to show that $\gz(\gG_N^+(\cX_{|\gd|}), \gH^+(\cX_{|\gd|})) =\sgn(\sigma_{\mathbb{X}}) \sgn(\sigma_{\mathbb{Y}})$. To this end, recalling Definition \ref{dfn:random fieldnot} again, from \eqref{eq:redef-bX}, we find that
\begin{equation}\label{eq:redef-bX-1}
\widehat \gX(\gG_N^+(\cX_{|\gd|}), \gH_N^+(\cX_{|\gd|})) = N+d_2 - \mathbb X \qquad \text{ and } \qquad \widehat \gY(\gG_N^+(\cX_{|\gd|}), \gH_N^+(\cX_{|\gd|})) = \mathbb Y  +d_2.
\end{equation}
Since $\cX_{|\gd|} \in L_{{\bm \ell}, |\gd|}$ (recall \eqref{eq:L-redfn}), we find that for $j \in [\gd]$,
\[
N+d_2 - x_{1,j} \le N+d_2 -  x_{d_0+1,j} =  \sum_{i=d_0+1}^{d-j} (x_{i+1,j+d_0+2-i} - x_{i,j+d_0+1-i}) + (N+d_2 - x_{,\gd+d_2}) \le d L_2,
\]
where the last step follows under the assumption that $\max \hat \ell_i \le L$. Thus {$\mathbb X \subset [N]\setminus [N - d L_2]$.} Therefore, while computing $\sgn(\sigma_{\mathbb X})$ one can view $\mathbb X$ as a subset of 
$[N] \setminus [N- dL_2]$ for all large $N$, which in particular shows that $\sgn(\sigma_{\mathbb X})$ is free of $N$. Therefore, together with \eqref{eq:redef-bX-1} we derive that 
$\sgn(\sigma_{\mathbb X}) = \widehat \sgn (\widehat \gX(\gG_N^+(\cX_{|\gd|}), \gH_N^+(\cX_{|\gd|})))$ for all large $N$. A similar argument shows that $\sgn(\sigma_{\mathbb Y}) = \widehat \sgn (\widehat \gY(\gG_N^+(\cX_{|\gd|}), \gH_N^+(\cX_{|\gd|})))$ for all large $N$. Hence the map has all the desired properties.

Finally to complete the proof we further note that
 the map $\cX_{|\gd|} \mapsto (\gx, \gy)$  is a non-singular linear transformation
and therefore a bijection. Therefore, we deduce that the joint law of the random variables 
\begin{equation}\label{eq:jt-law-1}
\left\{\prod_{i=1}^{d_1-\gd} \lambda_i(z)^{-\hat \ell_i} \cdot \prod_{i=d_1-\gd+1}^{d} \lambda_i(z)^{\hat \ell_i} 
\cdot \sum_{\cX_{|\gd|} \in \gL_{{\bm \ell},|\gd|}} (-1)^{\sgn(\sigma_{\mathbb{X}}) \sgn(\sigma_{\mathbb{Y}})}\det(E_N[\mathbb{X}; \mathbb{Y}])\right\}_{\max \hat \ell_i \le L_2, z \in \cS_\gd}
\end{equation}
is equal to that of 
\begin{equation}\label{eq:jt-law-2}
 \left\{ \gc(\gx, \gy) \cdot (-1)^{\gz(\gx,\gy)}\det(E_\infty[\widehat \gX; \widehat \gY])\right\}_{ \gx \in \gL_1(\gd), \gy \in \gL_2(\gd), \max_i \gc_i(\gx, \gy) \le L_2, z \in \cS_\gd}. 
 \end{equation}
The proof for the case $\gd <0$ {is similar} and hence omitted. 

To finish the proof we now note that the bivariate random field $\{(|\widehat P|_\gd^{L_1}(z), |\widehat P|_\gd^{L_2}(z))\}_{z \in \cS_\gd}$ is some function of the random variables in \eqref{eq:jt-law-1}, and the bivariate random field $\{(|\gP|_\gd^{L_1}(z), |\gP|_\gd^{L_2}(z))\}_{z \in \cS_\gd}$ is the same function of the random variables in \eqref{eq:jt-law-2}. Thus these two bivariate random fields are indeed equal in distribution. 
This completes the proof of the lemma.
\end{proof}

\section{Anti-concentration bounds}\label{sec:anti-concentration}
In Section \ref{sec:limit-tight} we have already identified the limiting random field and derived its tightness. We recall from Section \ref{sec:prelim} that to prove Theorem \ref{thm:outlier-law} we need to establish that the limiting random field is not identically zero on a set of probability one. To do this we will require the anti-concentration property of the entries of the noise matrix as given by Assumption \ref{ass:entry}. The bound on the L\'{e}vy concentration function on the entries of the noise matrix will be used to derive an appropriate anti-concentration bound on the dominant term in the expansion of $\widehat \det_N(z)$ (see \eqref{eq:hat-det}) which will be later utilized to obtain the desired conclusion for the limiting random field.

We begin by providing the following general
anti-concentration bound for polynomials of independent real or complex-valued random variables, satisfying a bound on their L\'{e}vy concentration function given by Assumption \ref{ass:entry}, such that the degree of every variable is at most one. 

\begin{proposition}\label{prop:anti-conc-complex}
Fix $k, n \in \N$ and let $\{U_i\}_{i=1}^n$ be a sequence of
independent real or complex-valued random variables,
whose L\'{e}vy concentration functions satisfy the bound \eqref{eq:levy-bd}. Let $Q_k(U_1,U_2,\ldots,U_n)$ be a 
homogenous polynomial of degree $k$ such that the degree of 
each variable is at most one. That is,
\[
Q_k(U_1,U_2,\ldots,U_n) := \sum_{\cI \in \binom{[n]}{k}} b(\cI) \prod_{i \in \cI} U_{i},
\]
for some collection of complex-valued coefficients $\{b(\cI); \, \cI \in \binom{[n]}{k}\}$, where $\binom{[n]}{k}$ denotes the set of all $k$ distinct elements of $[n]$.

Assume that there exists an 
$\cI_0 \in \binom{[n]}{k}$ such that $|b(\cI_0)| \ge c_\star$ for some absolute constant $c_\star>0$. Then for any $\vep \in (0, {e^{-1}}]$ we have
\[
\P\left( |Q_k(U_1,U_2,\ldots,U_n)| \le \vep\right) \le \bar C \cdot (c_\star \wedge 1)^{-\upeta}\vep^{\upeta} \left(\log\left(\frac{1}{\vep}\right)\right)^{k-1},
\]
where $\upeta \in (0, {2}]$ is as in \eqref{eq:levy-bd} and $\bar C < \infty$ is some large absolute constant.
\end{proposition}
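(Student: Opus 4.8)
The plan is to prove Proposition \ref{prop:anti-conc-complex} by induction on the degree $k$, conditioning on all but one carefully chosen variable and reducing the problem to a linear (degree-one) anti-concentration estimate together with a tensorization/averaging argument.

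\textbf{Base case and linear step.} For $k=1$, $Q_1 = \sum_i b(i) U_i$ with $|b(i_0)| \ge c_\star$ for some $i_0$. Writing $Q_1 = b(i_0) U_{i_0} + W$, where $W$ is independent of $U_{i_0}$, we condition on $W$: for every value $w$,
\[
\P\big(|b(i_0) U_{i_0} + w| \le \vep\big) = \P\big(|U_{i_0} - (-w/b(i_0))| \le \vep/|b(i_0)|\big) \le \cL(U_{i_0}, \vep/|b(i_0)|) \le C (\vep/c_\star)^{\upeta},
\]
using \eqref{eq:levy-bd} and $|b(i_0)| \ge c_\star$ (here one needs $\vep/c_\star$ small enough, which can be arranged by first handling $\vep \ge$ constant trivially, or by inflating $\bar C$). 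Averaging over $w$ gives the $k=1$ bound with no logarithmic factor.

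\textbf{Inductive step.} Suppose the claim holds for degree $k-1$. Fix $\cI_0 = \{i_1 < \cdots < i_k\}$ with $|b(\cI_0)| \ge c_\star$, and single out the variable $U_{i_k}$. Group the monomials of $Q_k$ according to whether they contain $U_{i_k}$:
\[
Q_k = U_{i_k} \cdot R_{k-1}(U_j : j \ne i_k) + S_k(U_j : j \ne i_k),
\]
where $R_{k-1}$ is homogeneous of degree $k-1$ in the variables other than $U_{i_k}$, with each variable appearing to degree at most one, and its coefficient on $\cI_0 \setminus \{i_k\}$ equals $b(\cI_0)$, hence has modulus $\ge c_\star$. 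Condition on $U_{i_k}$: by the $k=1$ argument above (conditionally, with $S_k$ playing the role of the independent remainder), for any fixed realization of $R_{k-1}$ and $S_k$ the conditional probability that $|Q_k| \le \vep$ given $\{U_j\}_{j\ne i_k}$ is at most $C(\vep/\rho)^{\upeta}$ where $\rho := |R_{k-1}(U_j : j\ne i_k)|$, \emph{provided} $\rho > 0$. The remaining work is to show that $\rho$ is not too small with high probability: introduce a threshold $t \in (0,1)$ and split
\[
\P(|Q_k| \le \vep) \le \P(\rho \le t) + \E\!\left[ \mathbf{1}_{\{\rho > t\}} \cdot C (\vep/\rho)^{\upeta} \right] \le \P(\rho \le t) + C (\vep/t)^{\upeta}.
\]
Now apply the inductive hypothesis to $R_{k-1}$ (which has a coefficient of modulus $\ge c_\star$): $\P(\rho \le t) \le \bar C (c_\star\wedge 1)^{-\upeta} t^{\upeta} (\log(1/t))^{k-2}$. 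Optimizing over $t$ — the natural choice is $t$ a small fixed power of $\vep$, e.g. $t = \vep^{1/2}$ or $t = \vep/\log(1/\vep)$ tuned so the two terms balance up to the logarithmic factor — yields $\P(|Q_k|\le\vep) \le \bar C' (c_\star \wedge 1)^{-\upeta} \vep^{\upeta}(\log(1/\vep))^{k-1}$, which is the desired bound after adjusting constants.

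\textbf{Main obstacle.} The delicate point is propagating the quantitative lower bound on one coefficient through the induction: when we pass from $Q_k$ to $R_{k-1}$ we keep the same coefficient $b(\cI_0)$, so $c_\star$ does not degrade, but we must be careful that the conditional linear estimate is applied with the \emph{random} scale $\rho = |R_{k-1}|$ rather than a deterministic one, which is exactly why the threshold split and the extra $\log(1/\vep)$ factor per level of recursion appear. One must also handle the regime where $\vep$ is not small (absorb into $\bar C$), ensure $\upeta \le 2$ is used only through \eqref{eq:levy-bd}, and verify uniformity of $C$ in the conditioning — all routine once the recursion is set up. I expect the bookkeeping of constants and the choice of threshold $t$ to be the only real points requiring care; the structural argument is the standard degree-reduction scheme for anti-concentration of low-degree polynomials.
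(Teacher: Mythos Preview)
Your inductive framework---peeling off one variable from $\cI_0$ at a time and writing $Q_k = U_{i_k} R_{k-1} + S_k$---is exactly the approach the paper takes, and the base case and the conditional linear estimate are correct. The gap is in the treatment of the term $\E\big[\mathbf{1}_{\{\rho>t\}}\,C(\vep/\rho)^{\upeta}\big]$. You bound this crudely by $C(\vep/t)^{\upeta}$, i.e.\ you replace $\rho^{-\upeta}$ by $t^{-\upeta}$ on the event $\{\rho>t\}$, and then claim that optimizing in $t$ gives $\vep^{\upeta}(\log(1/\vep))^{k-1}$. It does not: minimizing $A\,t^{\upeta}(\log(1/t))^{k-2} + C(\vep/t)^{\upeta}$ over $t$ forces $t\asymp \vep^{1/2}$ (up to logs), and both terms are then of order $\vep^{\upeta/2}$, not $\vep^{\upeta}$. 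Equivalently, to make the second term $\le \vep^{\upeta}(\log(1/\vep))^{k-1}$ you would need $t\gtrsim (\log(1/\vep))^{-(k-1)/\upeta}$, but with such a (merely polylogarithmically small) $t$ the first term $\P(\rho\le t)$ is far larger than any power of $\vep$. So the crude threshold split cannot close the induction.

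The fix, which is what the paper does, is to use the full distributional information on $\rho$ rather than a single threshold: write
\[
\E\big[\rho^{-\upeta}\mathbf{1}_{\{\rho\ge\delta\}}\big] \le e^{\upeta} + \int_{\delta}^{e^{-1}} s^{-\upeta}\,d\mu_\rho(s)
= 2e^{\upeta} + \upeta\int_{\delta}^{e^{-1}} \frac{\P(\rho\le s)}{s^{1+\upeta}}\,ds,
\]
and then plug the inductive bound $\P(\rho\le s)\lesssim s^{\upeta}(\log(1/s))^{k-2}$ \emph{inside} the integral. The $s^{\upeta}$ cancels, leaving $\int_\delta^{e^{-1}} s^{-1}(\log(1/s))^{k-2}\,ds \lesssim (\log(1/\delta))^{k-1}$. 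This yields
\[
\P(|Q_k|\le\vep)\le \P(\rho\le\delta) + C\vep^{\upeta}\big(1+(\log(1/\delta))^{k-1}\big),
\]
and now setting $\delta=\vep$ makes both terms of order $\vep^{\upeta}(\log(1/\vep))^{k-1}$. In short: your recursion is right, but you must integrate by parts (or use a layer-cake decomposition) against the inductive tail bound rather than a single cutoff; that is precisely where the extra $\log(1/\vep)$ per level---and no loss in the power of $\vep$---comes from.
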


When $\{U_i\}_{i=1}^n$ are independent real valued random variables and have uniformly bounded densities with respect to the Lebesgue measure, an anti-concentration bound analogous to the above was obtained in \cite{BPZ-non-triang} (see Proposition 4.5 there), with $\upeta=1$. The proof of Proposition \ref{prop:anti-conc-complex} follows from a simple modification of the proof of \cite[Proposition 4.5]{BPZ-non-triang}. We include it for completeness. 

\begin{proof}
Since
\[
\cL(U_i, \vep) = \sup_{w \in \C} \P (|U_i -w| \le \vep) \le C \vep^{\upeta}, \qquad i \in [n],
\]
where $C$ and $\upeta$ are as in \eqref{eq:levy-bd}, using the joint independence of $\{U_i\}_{i \in [n]}$ the desired anti-concentration property is immediate for $k=1$. 
To prove the general case we proceed by induction. 

The idea behind the proof is that \corAB{$Q_k$ being a polynomial such that} the degree of each $U_i$ is at most one, we note that for $i_0 \in \cI_0$ one can write $Q_k= \mathcal{Q} \cdot U_{i_0} + \widetilde{\mathcal{Q}}$, for some $\mathcal{Q}, \widetilde{\mathcal{Q}}$ independent of $U_{i_0}$. Thus, the anti-concentration bound of $Q_k$ depends on \corAB{that of} $\mathcal{Q}$. \corAB{The advantage of this decomposition} is that the degree of $\mathcal{Q}$ is $(k-1)$. So one can iterate the above argument to obtain the desired anti-concentration bound for $Q_k$.

To formulate this idea we introduce 
some notation. Order the elements of $\cI_0$
and denote them by $i_1^0, i_2^0, \ldots, i_k^0$. For $j \le k$,
define $\cI^0_j:=\{i_j^0, i_{j+1}^0, \ldots, i_k^0\}$. Set 
\[
Q_k^0:=Q_k^0(U_i; i \notin \cI_k^0):=  \sum_{\cI: \cI \supset \cI_k^0} b(\cI) \prod_{\ell \in \cI \setminus \cI_k^0} U_{\ell}  \quad \text{ and } \quad 
Q_k^1:=Q_k^1(U_i; i \notin \cI_k^0):=\quad \sum_{\cI: \cI\cap \cI_k^0= \emptyset} b(\cI) \prod_{\ell \in \cI} U_{\ell}.
\]
For $1 \le j \le k-1$, we iteratively define 
\[
Q_j^0:=Q_j^0(U_i; i \notin \cI_j^0):=  \sum_{\cI: \cI \supset \cI_j^0} b(\cI) \prod_{\ell \in \cI \setminus \cI_j^0} U_{\ell}  \quad \text{ and } \quad 
Q_j^1:=Q_j^1(U_i, i \notin \cI_j^0):=\quad \sum_{\substack{\cI: \cI \supset \cI_{j+1}^0\\  i_j^0\notin \cI }} b(\cI) \prod_{\ell \in \cI \setminus \cI_{j+1}^0}U_{\ell}.
\]
Equipped with the above notation we see that
\[
Q_k(U_1,U_2, \ldots, U_n) =:Q_{k+1}^0 = U_{i^0_k} \cdot Q_k^0 + Q_k^1, \qquad 
Q_{j+1}^0 = U_{i_{j}^0} \cdot Q_{j}^0 + Q_{j}^1, \ j=1,2,\ldots, k-1,
\]
and
\(
Q_1^0= a(\cI_0) 
\). We will prove inductively that
\begin{equation}\label{eq:anti-conc-induction}
\P\left( |Q_j^0| \le \vep \right) \le {(3C e^{\upeta})}^{j-1} (c_\star \wedge 1)^{-\upeta}\vep^{\upeta} \left(\log\left(\frac{1}{\vep}\right)\right)^{j-2}, \quad j=2,3,\ldots, k+1,
\end{equation}
from which the desired anti-concentration bound follows by taking $j=k+1$. 
Hence, it only remains to prove \eqref{eq:anti-conc-induction}. 

For $j=2$, $Q_j^0$ is a homogeneous
polynomial of degree $1$ in the variables $U_i$, and
\eqref{eq:anti-conc-induction}  follows from the assumptions 
on $\{U_\ell\}_{\ell=1}^n$ and the fact that $|b(\cI_0)| \ge c_\star$. 
Assuming that \eqref{eq:anti-conc-induction} holds for $j=j_*$ and fixing $\delta \in (0,1)$, 
we have that with  $C_j:= {(3C e^{\upeta})}^{j-1} (c_\star \wedge 1)^{-\upeta}$,
\begin{align}\label{eq:anti-conc-split}
\P\left( \left|Q_{j_*+1}^0 \right| \le \vep\right)  & \le \P \left( \left| Q_{j_*}^0\right| \le \delta \right) + \E\left[ \P\left(  \left| U_{i_{j_*}^0} + \frac{Q_{j_*}^1 }{Q_{j_*}^0 }\right| \le \frac{\vep}{|Q_{j_*}^0 |} \bigg| \, U_i, i \notin \cI_{j_*}^0\right)\cdot {\bm 1} \left( \left|Q_{j_*}^0\right| \ge \delta\right) \right] \notag\\
& \le C_{j_*} \delta^{\upeta} \left(\log \left( \frac{1}{\delta}\right)\right)^{j_*-2} + C {\vep^{\upeta}} \cdot \E \left[ |Q_{j_*}^0|^{-\upeta} {\bm 1} \left( \left|Q_{j_*}^0\right| \ge \delta\right) \right],
\end{align}
where we have used the fact that $Q_{j_*}^1$ and $Q_{j_*}^0$ are independent of $U_{i_{j_*}}^0$, and the bound on the L\'{e}vy concentration function (i.e.~the bound \eqref{eq:levy-bd}) for the latter.
Using integration by parts, for any probability measure $\mu$ supported on $[0,\infty)$ we have that 
\[
\int_\delta^{{e^{-1}}} x^{-\upeta}  d\mu(x) = {e^{\upeta}} \mu([\delta, 1])  + \upeta \int_\delta^{{e^{-1}}} \frac{\mu([\delta,t])}{t^{1+\upeta}} dt.
\]
Therefore, using the induction hypothesis and the fact that $\upeta \in (0,2]$, we have
\begin{multline*}
\E \left[ |Q_{j_*}^0|^{-\upeta} {\bm 1} \left( \left|Q_{j_*}^0\right| \ge \delta\right) \right] \le {e^{\upeta}}+ \E \left[ |Q_{j_*}^0|^{-\upeta} {\bm 1} \left( \left|Q_{j_*}^0\right| \in [\delta,{e^{-1}}]\right) \right]  \\ \le 2{e^{\upeta}} + 2 \int_\delta^{{e^{-1}}} \frac{ \P(|Q_{j_*}^0| \le t)}{t^{1+\eta}} dt  \le 2 {e^{\upeta}}+  2 C_{j_*} \int_\delta^{{e^{-1}}} t^{-1} \left(\log\left(\frac{1}{t}\right)\right)^{j_*-2} dt \\
 \le 2 {e}^{\upeta}+ \frac{2 C_{j_*}}{j_*-1} \left(\log\left(\frac{1}{\delta}\right)\right)^{j_*-1}.
\end{multline*}
{Since for $\delta \le e^{-1}$ we have that $\log(1/\delta) \ge 1$}, combining the above with \eqref{eq:anti-conc-split} and setting $\delta=\vep$ we establish \eqref{eq:anti-conc-induction} for $j=j_*+1$. This completes the proof.
\end{proof}
Using Proposition \ref{prop:anti-conc-complex}, we now derive the following corollary which yields an appropriate lower bound on the dominant term.

\begin{corollary}\label{cor:anti-conc-complex-dom}
Fix $\vep>0$ and $\gd \ne 0$ an integer such that $-d_2 \le \gd \le d_1$. Let the entries of $E_N$ satisfy the bound \eqref{eq:levy-bd}. Then there exists a constant $C_\star:=C_\star(\vep,{\bm a})$  so that,
for any $z \in \cS_\gd^{-\vep}$ and $\vep_0 \in (0,e^{-1}]$,
\[
\P\left({|\widehat P_{|\gd|}(z)|}\le \vep_0 \right) \le C_\star  \vep_0^{\upeta} \left(\log\left(\frac{1}{\vep_0}\right)\right)^{|\gd|-1},
\]
where $\{-\lambda_\ell(z)\}_{\ell=1}^d$ are the roots of the equation $P_{z,{\bm a}}(\lambda)=\lambda^{d_2} ({\bm a}(\lambda) -z)=0$ arranged in the non-increasing order of their moduli.
\end{corollary}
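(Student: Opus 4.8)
The plan is to reduce the statement to an application of Proposition \ref{prop:anti-conc-complex} to a suitable polynomial in the entries of $E_N$, the decisive point being the identification of a single coefficient bounded below uniformly on $\cS_\gd^{-\vep}$. Recall from \eqref{eq:P-k-decompose-0} that
\[
\widehat P_{|\gd|}(z)= a_{d_1}^{-|\gd|} \sum_{{\bm \ell}} \prod_{i=1}^{d_0} \lambda_i(z)^{-\hat \ell_i}\prod_{i=d_0+1}^{d} \lambda_i(z)^{\hat \ell_i}\cdot \widehat Q_{{\bm \ell}, |\gd|},
\]
and each $\widehat Q_{{\bm \ell}, |\gd|}$, by \eqref{eq:hat-Q-ell-k}, is a signed sum of determinants $\det(E_N[\mathbb X;\mathbb Y])$ over $\cX_{|\gd|}\in\gL_{{\bm \ell},|\gd|}$, hence itself a homogeneous polynomial of degree $|\gd|$ in the entries of $E_N$ with each entry appearing to degree at most one; moreover distinct $(\mathbb X,\mathbb Y)$ contribute distinct monomials. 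Regrouping, $\widehat P_{|\gd|}(z)$ is of the form $\sum_{\cI\in\binom{[N^2]}{|\gd|}} b(\cI,z)\prod_{e\in\cI} E_N(e)$ after relabelling matrix positions, i.e.\ exactly the shape required by Proposition \ref{prop:anti-conc-complex} with $k=|\gd|$, $n=N^2$, and complex coefficients $b(\cI,z)$ depending on $z$ through the $\{\lambda_i(z)\}$.

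So the core task is: \textbf{exhibit one index set $\cI_0$ with $|b(\cI_0,z)|\ge c_\star$ for all $z\in\cS_\gd^{-\vep}$, with $c_\star=c_\star(\vep,{\bm a})>0$.} I would take $\cI_0$ to be the position set of the ``minimal'' configuration, namely the one corresponding to the unique $\cX_{|\gd|}\in\gL_{{\bm \ell}_0,|\gd|}$ in which all the small coordinates take their smallest admissible values (the tableaux $\gx,\gy$ of Definition \ref{dfn:random fieldnot} with entries as small as possible); this is the configuration realizing the smallest value of $\max_i\hat\ell_i$. Two things must then be checked. First, that this monomial is \emph{not cancelled}: because the map $\cX_{|\gd|}\mapsto(\mathbb X,\mathbb Y)$ of the previous sections shows distinct configurations give distinct $(\mathbb X,\mathbb Y)$, the corresponding monomial $\prod_{e\in\cI_0}E_N(e)$ appears in $\widehat Q_{{\bm \ell}_0,|\gd|}$ (and in no other $\widehat Q_{{\bm \ell},|\gd|}$) with a single $\pm1$ sign, so its coefficient in $\widehat P_{|\gd|}(z)$ is exactly $\pm a_{d_1}^{-|\gd|}\prod_{i=1}^{d_0}\lambda_i(z)^{-\hat\ell_i^{(0)}}\prod_{i=d_0+1}^{d}\lambda_i(z)^{\hat\ell_i^{(0)}}$ for the fixed exponent vector ${\bm \ell}_0$. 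Second, that this quantity is bounded below on $\cS_\gd^{-\vep}$: since ${\bm \ell}_0$ is fixed and finite (independent of $N$), all exponents $\hat\ell_i^{(0)}$ are bounded by a constant depending only on ${\bm a}$; by Lemma \ref{lem:root-cont-0} we have $1-\vep_0\ge|\lambda_i(z)|\ge(1-\vep_0)$ appropriately, i.e.\ $|\lambda_i(z)|\le 1-\vep_0$ for $i>d_0$ and $|\lambda_i(z)|^{-1}\le 1-\vep_0$ for $i\le d_0$, while also the $\lambda_i(z)$ are roots of a degree-$d$ polynomial with bounded coefficients on a bounded region, hence $|\lambda_i(z)|$ is bounded \emph{above and below} away from $0$ and $\infty$ uniformly on $\cS_\gd^{-\vep}$ (the lower bound for $i>d_0$ coming from $\prod_i|\lambda_i(z)|=|a_{-d_2}/a_{d_1}|$ together with the upper bounds on the other moduli). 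Combining, $|b(\cI_0,z)|$ lies in a fixed positive compact interval, giving $c_\star$.

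With $\cI_0$ in hand, I would simply invoke Proposition \ref{prop:anti-conc-complex} with $k=|\gd|$, $c_\star$ as above, and $\vep_0$ in place of $\vep$: this yields
\[
\P\bigl(|\widehat P_{|\gd|}(z)|\le\vep_0\bigr)\le \bar C\,(c_\star\wedge 1)^{-\upeta}\vep_0^{\upeta}\Bigl(\log\tfrac{1}{\vep_0}\Bigr)^{|\gd|-1},
\]
and setting $C_\star:=\bar C\,(c_\star\wedge1)^{-\upeta}$, which depends only on $\vep$ and ${\bm a}$, gives exactly the claimed bound. One should note that Proposition \ref{prop:anti-conc-complex} requires $\vep_0\in(0,e^{-1}]$, which is precisely the hypothesis made, and that the entries of $E_N$ satisfy \eqref{eq:levy-bd} by assumption.

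The main obstacle is the uniform lower bound $|b(\cI_0,z)|\ge c_\star$ on all of $\cS_\gd^{-\vep}$ — specifically, ruling out that $\lambda_i(z)$ for some $i>d_0$ approaches $0$, which would blow up a negative power or kill a positive one. This is handled by the product formula $\prod_{i=1}^d|\lambda_i(z)|=|a_{-d_2}|/|a_{d_1}|$ (a constant) combined with Lemma \ref{lem:root-cont-0}: if $|\lambda_d(z)|$ were small then, since $|\lambda_i(z)|\le 1-\vep_0<1$ for all $i>d_0$ and $|\lambda_i(z)|^{-1}\le 1-\vep_0$ forces $|\lambda_i(z)|\ge(1-\vep_0)^{-1}>1$ for $i\le d_0$ but these are also bounded above (roots of a polynomial with bounded coefficients in a bounded region), the product would be forced away from the fixed constant $|a_{-d_2}|/|a_{d_1}|$ — a contradiction. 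Hence every $|\lambda_i(z)|$ is pinched into a fixed compact subset of $(0,\infty)$, and the finitely many bounded exponents in ${\bm \ell}_0$ then give the desired two-sided bound on $|b(\cI_0,z)|$. The combinatorial bookkeeping — verifying that the minimal configuration really is admissible (lies in $\gL_{{\bm \ell}_0,|\gd|}$) and that its monomial survives uncancelled — is routine given the bijections established in Section \ref{sec:limit-tight}.
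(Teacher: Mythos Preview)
Your overall strategy --- apply Proposition \ref{prop:anti-conc-complex} after exhibiting a single coefficient bounded below --- is exactly right and matches the paper. The gap is in how you identify that coefficient.

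You claim the distinguished monomial appears in $\widehat Q_{{\bm \ell}_0,|\gd|}$ and in no other $\widehat Q_{{\bm \ell},|\gd|}$, so that its coefficient in $\widehat P_{|\gd|}(z)$ is the single product $\pm a_{d_1}^{-|\gd|}\prod_i\lambda_i(z)^{\pm\hat\ell_i^{(0)}}$. This is false once $d\ge 2$. The map $\cX_{|\gd|}\mapsto(\mathbb X,\mathbb Y)$ records only $X_1$ and $X_{d+1}$; the intermediate sets $X_2,\dots,X_d$ are free to vary subject to the interlacing constraints, and each choice produces a different ${\bm \ell}$. Concretely, for $d_1=2$, $d_2=0$, $\gd=1$, your ``minimal'' pair is $(\mathbb X,\mathbb Y)=(\{N\},\{1\})$, but the monomial $E_N(N,1)$ receives a contribution from every $x_{2,1}\in[N]$, and its coefficient is the geometric-type sum $a_{d_1}^{-1}\lambda_1(z)^{-1}\sum_{j=0}^{N-1}(\lambda_2(z)/\lambda_1(z))^j$, not a single term. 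The bijections of Section \ref{sec:limit-tight} are between $\cX_{|\gd|}$ and pairs of tableaux $(\gx,\gy)$; they say nothing about injectivity of $\cX_{|\gd|}\mapsto(\mathbb X,\mathbb Y)$.

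Going back to \eqref{eq:P_k-z}, the coefficient of $\det(E_N[X;Y])$ in $\widehat P_{|\gd|}(z)$ is always $\pm\det(T_N({\bm a}(z))[X^c;Y^c])/\mathfrak K(z)$ --- a normalized Toeplitz minor, not a monomial in the $\lambda_i(z)$. The paper therefore works directly from \eqref{eq:P_k-z}: it chooses $X_\star,Y_\star$ to be corner blocks (e.g.\ $X_\star=[N]\setminus[N-\gd]$, $Y_\star=[\gd]$ for $\gd>0$), so that $T_N({\bm a}(z))[X_\star^c;Y_\star^c]$ is itself a Toeplitz matrix with shifted symbol having exactly $d_1-\gd$ roots outside the unit circle. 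The uniform lower bound on this minor is Lemma \ref{lem:toep-det-gd-bd}, proved by the Schur-polynomial/Widom analysis of Lemma \ref{lem:toep-det-unif-lbd}. This is the substantive step your argument is missing; once it is in place, Proposition \ref{prop:anti-conc-complex} applies exactly as you describe.
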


To prove Corollary \ref{cor:anti-conc-complex-dom} we will need the following
lemma. Its proof is deferred to Section \ref{sec:proof-main-1}.

\begin{lemma}\label{lem:toep-det-gd-bd}
Fix $\vep>0$ and $\gd\neq 0$ an integer such that $-d_2 \le \gd \le d_1$. For $\gd > 0$ set $X_\star:= [N]\setminus [N-\gd]$ and
$Y_\star:=[\gd]$. For $\gd <0$ set $X_\star:=[-\gd]$ and $Y_\star:=[N]\setminus [N+\gd]$. Then, there exists a  constant $c_0':=c_0'(\vep,{\bm a})>0$ so that
\[
\inf_{z \in \cS_\gd^{-\vep}}\left|\frac{\det(T_N({\bm a}(z))[X_\star^c, Y_\star^c])}{a_{d_1}^{N-|\gd|}\prod_{\ell=1}^{d_1 -\gd} \lambda_\ell(z)^{N+d_2}}\right| \ge c_0', \qquad \text{ for all large } N,
\]
 where $\{\lambda_\ell(z)\}_{\ell=1}^d$ are as in Corollary \ref{cor:anti-conc-complex-dom}. 
\end{lemma}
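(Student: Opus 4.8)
The plan is to compute the determinant $\det(T_N({\bm a}(z))[X_\star^c; Y_\star^c])$ explicitly as a ratio of Schur polynomials in the roots $\{-\lambda_\ell(z)\}_{\ell=1}^d$, following the Bump--Diaconis formalism \cite{BD} already invoked in the outline of the proof of Theorem \ref{thm:no-outlier}, and then to extract from that formula the dominant factor $a_{d_1}^{N-|\gd|}\prod_{\ell=1}^{d_1-\gd}\lambda_\ell(z)^{N+d_2}$ and bound the remaining quotient uniformly from below on $\cS_\gd^{-\vep}$. First I would observe that, just as $T_N({\bm a}(z))=T_{N+d_2}({\bm a},z;d)[[N];[N+d_2]\setminus[d_2]]$ with $T_{N+d_2}({\bm a},z;d)$ upper triangular and equal to $a_{d_1}\prod_{\ell=1}^d (J_{N+d_2}+\lambda_\ell(z)\Id_{N+d_2})$, the minor $\det(T_N({\bm a}(z))[X_\star^c; Y_\star^c])$ is itself a minor of $T_{N+d_2}({\bm a},z;d)$ of size $N-|\gd|$; by Cauchy--Binet it decomposes, exactly as in \eqref{eq:det-decompose-1}--\eqref{eq:det-bidiagonal}, into a sum over nested index sets of products of powers of the $\lambda_\ell(z)$, i.e. a (skew-)Schur polynomial evaluation. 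The point of the particular choice $X_\star=[N]\setminus[N-\gd]$, $Y_\star=[\gd]$ (and the mirror choice for $\gd<0$) is that the resulting minor is exactly the one whose Bump--Diaconis expression has leading term $a_{d_1}^{N-|\gd|}\prod_{\ell=1}^{d_1-\gd}\lambda_\ell(z)^{N+d_2}$ with a strictly positive coefficient; equivalently, after the normalization in the statement, the quotient is a convergent power series in the ratios $\lambda_i(z)/\lambda_j(z)$ ($i\le d_0<j$) and in $\lambda_j(z)$ ($j>d_0$) and $\lambda_i(z)^{-1}$ ($i\le d_0$), with leading coefficient $1$.

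The key analytic input is Lemma \ref{lem:root-cont-0}: on $\cS_\gd^{-\vep}$ we have $|\lambda_i(z)|\ge (1-\vep_0)^{-1}$ for $i\le d_0=d_1-\gd$ and $|\lambda_j(z)|\le 1-\vep_0$ for $j>d_0$, and moreover the roots stay in a fixed compact subset of $\C\setminus\{0\}$ since $z$ ranges over the bounded set $\cS_\gd^{-\vep}$ and the polynomial $P_{z,{\bm a}}$ has fixed leading and trailing coefficients $a_{d_1}\neq 0$, $a_{-d_2}\neq 0$. Consequently all the "small parameters" governing the tail of the Schur-polynomial series are bounded away from $1$ uniformly in $z$, so the series converges uniformly and its modulus is $1+O(\vep_0)$, hence bounded below by some $c_0'>0$ once $\vep_0$ is small (which we may arrange by taking $N$ large in Lemma \ref{lem:root-cont-0}, or simply by the fixed $\vep_0=\vep_0(\vep,{\bm a})$ already produced there). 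I would also need that the finite-$N$ truncation error — the difference between the true minor and its leading term — is $o(1)$ uniformly; this follows from exactly the same geometric-series estimates used repeatedly in Section \ref{sec:identify-non-dom-term} (compare the bounds leading to Lemma \ref{lem:dominant-tail}), since the discarded terms carry at least one extra factor of a quantity of modulus $\le 1-\vep_0$.

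The main obstacle I anticipate is purely bookkeeping: writing down cleanly the exact Bump--Diaconis / Widom--Trench formula for the specific minor $[X_\star^c;Y_\star^c]$ (rather than the full determinant $\det T_N({\bm a}(z))$), identifying its leading monomial, and verifying that the leading coefficient is exactly $1$ after the normalization in the statement — in particular making sure no spurious vanishing or sign cancellation occurs in the top-degree term. Once the leading term is correctly pinned down, the lower bound is immediate from Lemma \ref{lem:root-cont-0} and the uniform convergence of the correction series. An alternative, which avoids invoking the full strength of \cite{BD}, is to argue directly from the Cauchy--Binet decomposition \eqref{eq:det-decompose-1}--\eqref{eq:det-bidiagonal} applied to the minor: there is a unique choice of the nested sets $\{X_i\}$ compatible with $X_\star,Y_\star$ that produces the maximal power of each $\lambda_\ell(z)$ for $\ell\le d_0$ and the minimal power for $\ell>d_0$, it contributes $\pm a_{d_1}^{N-|\gd|}\prod_{\ell\le d_0}\lambda_\ell(z)^{N+d_2}$, and every other term is smaller by a factor bounded by $(1-\vep_0)$ to a positive power; summing the geometric series in $\vep_0$ then gives the claimed $c_0'$. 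I would carry out whichever of these two routes is shorter given the conventions already fixed in Section \ref{sec:identify-non-dom-term}.
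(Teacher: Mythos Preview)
Your approach is correct in spirit but misses the one-line reduction the paper uses. The crucial observation you do not make is that, for the particular choice of $X_\star,Y_\star$, the minor is \emph{itself} a Toeplitz matrix with a shifted symbol: using Definition \ref{dfn:toep-shifted},
\[
T_N({\bm a}(z))[X_\star^c;Y_\star^c]=T_{N-\gd}({\bm a},z;d_1-\gd)\qquad(\gd>0),
\]
and similarly for $\gd<0$. The associated polynomial is still $P_{z,{\bm a}}$, so the roots are unchanged, but the upper bandwidth is now $d_1-\gd=d_0$, which on $\cS_\gd^{-\vep}$ is exactly the number of roots of modulus $>1$. This places you squarely in the hypotheses of Lemma \ref{lem:toep-det-unif-lbd} (with $d_1$ there replaced by $d_0$ and $N$ by $N-|\gd|$), so the Bump--Diaconis/Widom analysis already carried out in the proof of Lemma \ref{lem:toep-det-unif-lbd-1} applies verbatim. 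One then only has to note that $\cS_\gd^{-\vep}\subset{\rm spec}\,T({\bm a})$ is bounded for $\gd\ne 0$, so no restriction to $B_\C(0,N^{1/2})$ is needed, and to adjust the exponent from $N-|\gd|$ to $N+d_2$ using \eqref{eq:root-bd-in-z}. What this buys over your plan is that all the ``bookkeeping'' you flag as the main obstacle --- identifying the leading monomial and ruling out cancellation in the Schur expansion of the specific minor --- has already been done once, for the full Toeplitz determinant, and need not be redone.

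Two minor corrections to your sketch. First, the normalized quotient is not ``$1+O(\vep_0)$'': the leading term in the Laplace expansion \eqref{eq:dom-term} carries a factor $\prod_{\ell\le d_0}\prod_{\ell'>d_0}(1-\lambda_{\ell'}/\lambda_\ell)^{-1}$, which is bounded below by $2^{-d^2}$ but need not be close to $1$; and the correction is $o(1)$ as $N\to\infty$ (it is $O((1-\vep_0)^N)$ times polynomial factors), not $O(\vep_0)$. Second, in your alternative Cauchy--Binet route, the claim of a ``unique choice of nested sets producing the maximal monomial'' is not literally true --- distinct choices of $\{X_i\}$ can produce the same monomial and cancel --- so you would still need to pass through the determinantal (Schur-polynomial) organization to isolate the dominant term cleanly, which is precisely what the proof of Lemma \ref{lem:toep-det-unif-lbd-1} does.
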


\begin{proof}[Proof of Corollary \ref{cor:anti-conc-complex-dom}
 {(assuming Lemma \ref{lem:toep-det-gd-bd})}]
  We remind the reader that
\begin{equation}\label{eq:widehat-P}
\widehat P_{|\gd|}(z):= \frac{P_{|\gd|}(z)}{N^{-\gamma |\gd|} a_{d_1}^{N} \prod_{i=1}^{d_1-\gd} \lambda_i(z)^{N+d_2}},
\end{equation}
{see \eqref{fancyK} and \eqref{eq:hatP-k}.}
Recalling \eqref{eq:P_k-z} we note that $\widehat P_{|\gd|}(z)$ is a homogeneous polynomial of degree $|\gd|$ in the entries of the noise matrix $E_N$ such that the degree of each entry is one. By  Lemma \ref{lem:toep-det-gd-bd},
there exist $X, Y \subset [N]$ with $|X|=|Y|=|\gd|$ such that 
$\det (T_N(z)[X^c;Y^c])$ is uniformly bounded below for 
$z \in \cS_\gd^{-\vep}$.
Thus, using \eqref{eq:P_k-z} again,
we may apply 
Proposition \ref{prop:anti-conc-complex} to deduce that
 \[
 \P\left( \left|\widehat P_{|\gd|}(z)\right| \le \vep_0 \right) \le \bar C (c_0')^{-\upeta} \vep_0^{\upeta} \left(\log\left(\frac{1}{\vep_0}\right)\right)^{|\gd|-1}.
 \]
 This completes the proof. 
\end{proof}

\section{Proof of Theorem \ref{thm:outlier-law}}\label{sec:dom-term}
Using results from Sections \ref{sec:identify-non-dom-term}-\ref{sec:anti-concentration} in this section we finally complete the proof of our main result Theorem \ref{thm:outlier-law}. We begin with the proof of Lemma \ref{lem:limit}. Turning to do this task, in the result below we derive analyticity of the random fields $\{\gP_\gd^L(\cdot)\}$.


\begin{lemma}\label{lem:analyticity}
Fix $L \in \N$, $\gd \ne 0$ such that $-d_2 \ge \gd \le d_1$, and $\vep >0$. Then the maps $z \mapsto \widehat P_{|\gd|}^L(z)$ and $z \mapsto \gP_{\gd}^L(z)$ are analytic on $\cS_\gd^{-\vep}$.
\end{lemma}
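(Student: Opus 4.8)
The plan is to prove analyticity of $z\mapsto\widehat P_{|\gd|}^L(z)$ on $\cS_\gd^{-\vep}$; the claim for $z\mapsto\gP_\gd^L(z)$ then follows by running the same argument on the Young--tableaux expression \eqref{eq:random-fld} (or, since by Lemma \ref{lem:zeta-N-gd-L}(ii) the two fields are the same deterministic function of $z$ and of i.i.d.\ noise variables up to relabeling, and analyticity in $z$ is a pathwise property). Collecting the terms of \eqref{eq:dominant-L} by exponent vector, one may write, for every realization of $E_N$,
\begin{equation*}
\widehat P_{|\gd|}^L(z)=a_{d_1}^{-|\gd|}\sum_{{\bm\ell}:\,\max_i\hat\ell_i\le L}\Bigl(\prod_{i=1}^{d_0}\lambda_i(z)^{-\hat\ell_i}\prod_{i=d_0+1}^{d}\lambda_i(z)^{\hat\ell_i}\Bigr)\,b_{{\bm\ell}},\qquad b_{{\bm\ell}}:=\sum_{\cX_{|\gd|}\in\gL_{{\bm\ell},|\gd|}}(-1)^{\sgn(\sigma_{\mathbb X})\sgn(\sigma_{\mathbb Y})}\det\bigl(E_N[\mathbb X;\mathbb Y]\bigr),
\end{equation*}
a \emph{finite} sum whose coefficients $b_{{\bm\ell}}$ do not depend on $z$.

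First I would assemble the analytic building blocks. By Lemma \ref{lem:root-cont-0} the unstable roots $\{\lambda_i(z)\}_{i=1}^{d_0}$ and the stable roots $\{\lambda_i(z)\}_{i=d_0+1}^{d}$ are uniformly separated by the unit circle on $\cS_\gd^{-\vep}$, so on a neighbourhood of $\cS_\gd^{-\vep}$ the polynomial $P_{z,{\bm a}}(\lambda)$ factors as $a_{d_1}Q^+_z(\lambda)Q^-_z(\lambda)$ with $Q^\pm_z$ monic carrying the unstable, resp.\ stable, roots. Since $P_{z,{\bm a}}$ is a polynomial in $z$ with no root on $|\lambda|=1$ for such $z$, a standard Riesz-projection (equivalently contour-integral) argument shows that the coefficients of $Q^\pm_z$ — hence all elementary symmetric functions $e_j(\lambda_1(z),\dots,\lambda_{d_0}(z))$ and $e_j(\lambda_{d_0+1}(z),\dots,\lambda_d(z))$ — are analytic in $z$ there; moreover $\prod_{i=1}^{d_0}\lambda_i(z)$, being the constant term of $Q^+_z$, is analytic and bounded away from $0$ on $\cS_\gd^{-\vep}$. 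By the fundamental theorem of symmetric functions, any Laurent polynomial in $\lambda_1(z),\dots,\lambda_{d_0}(z)$ symmetric in those variables is a polynomial in the $e_j$'s and in $(\prod_{i=1}^{d_0}\lambda_i(z))^{-1}$, and likewise any polynomial symmetric in $\lambda_{d_0+1}(z),\dots,\lambda_d(z)$ is a polynomial in the corresponding $e_j$'s; every such expression is therefore analytic in $z$ on $\cS_\gd^{-\vep}$.

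The main step is to show that $\widehat P_{|\gd|}^L(z)$ is invariant under the group $\mathfrak{S}_{d_0}\times\mathfrak{S}_{d-d_0}$ permuting $\{\lambda_i(z)\}_{i\le d_0}$ among themselves and $\{\lambda_i(z)\}_{i>d_0}$ among themselves — equivalently, that $b_{\pi\cdot{\bm\ell}}=b_{{\bm\ell}}$ whenever $\pi$ permutes the first $d_0$ or the last $d-d_0$ coordinates of ${\bm\ell}$, so that the displayed sum is symmetric in each root block. This is precisely the (skew-)Schur-polynomial structure underlying Section \ref{sec:limit-tight}: the constraints defining $\gL_{{\bm\ell},|\gd|}$ are of Lindström--Gessel--Viennot type, and a lattice-path bijection of the kind used in the proof of Lemma \ref{lem:combinatorial-1} — tracking how the increments $\{\delta_{i,j}(\cX_{|\gd|})\}$ and the signs $(-1)^{\sgn(\sigma_{\mathbb X})\sgn(\sigma_{\mathbb Y})}$ transform — should show that the relevant signed counts are unchanged under permutations of rows within the unstable block or within the stable block. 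Granting this, $\widehat P_{|\gd|}^L$ is a symmetric Laurent polynomial in the unstable roots and a symmetric polynomial in the stable roots, hence analytic on $\cS_\gd^{-\vep}$ by the previous paragraph. (Equivalently: the symmetry makes $\widehat P_{|\gd|}^L$ a genuine single-valued function on $\cS_\gd^{-\vep}$ — the only apparent multivaluedness being the ordering of roots within a block — which is analytic off the discrete set where two roots of the same block coincide and locally bounded there by Lemma \ref{lem:root-cont-0}, so Riemann's removable-singularity theorem completes the argument.)

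The main obstacle is exactly this $\mathfrak{S}_{d_0}\times\mathfrak{S}_{d-d_0}$-invariance: the boundary prescriptions $x_{1,|\gd|+j}=N+j$ and $x_{d+1,j}=j$ entering the definition of $\gL_{{\bm\ell},|\gd|}$ break the naive symmetry of the underlying tableaux, and one must verify that it is restored exactly after normalizing by $\mathfrak{K}(z)$ and truncating at level $L$ — which is where the combinatorial analysis of Section \ref{sec:limit-tight} does the real work.
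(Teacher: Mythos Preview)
Your overall strategy is correct and essentially matches the paper's: establish the $\mathfrak{S}_{d_0}\times\mathfrak{S}_{d-d_0}$-invariance of $\widehat P_{|\gd|}^L$ in the root variables, then deduce analyticity either via symmetric functions of the two root blocks (your Riesz-projection route) or via analyticity of the individual roots off the finite double-root set $\bar\cN$ together with Riemann's removable-singularity theorem (the paper's route, supplemented by Lemma~\ref{lem:mathfrak-f} for continuity). These two endgames are interchangeable.

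Where you go wrong is in treating the block symmetry as ``the main obstacle'' requiring a Lindstr\"om--Gessel--Viennot bijection and a sign analysis; you leave this as a gap. In fact the symmetry is immediate from the product representation \eqref{eq:det-decompose-1}: the factors $(J_{N+d_2}+\lambda_i(z)\Id_{N+d_2})$ commute, being polynomials in $J_{N+d_2}$, so $P_{|\gd|}(z)$ is fully $\mathfrak{S}_d$-symmetric in the $\lambda_i$'s --- the boundary constraints on $X_1$ and $X_{d+1}$ sit \emph{outside} the product over $i$ and are untouched by permuting the factors. The truncation $\max_i\hat\ell_i\le L$ then breaks this down precisely to $\mathfrak{S}_{d_0}\times\mathfrak{S}_{d-d_0}$, because the definition \eqref{eq:hat-ell} of $\hat\ell_i$ distinguishes the blocks $i\le d_0$ and $i>d_0$ but not indices within a block; and the normalization by $\mathfrak K(z)$, being itself block-symmetric, preserves the invariance. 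No tableau combinatorics is needed at this step.
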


As a first step we will argue that the map $z \mapsto \widehat P_{|\gd|}^L(z)$ is continuous and then use Riemann's removable singularity lemma to derive its analyticity. To this end, we have the following lemma.

\begin{lemma}\label{lem:mathfrak-f}
Let $\vep >0$ and $\gd$ be an integer such that $-d_2 \le \gd \le d_1$. Let $\mathfrak{f} : \C^d \mapsto \C$ be a continuous map such that 
\begin{equation}\label{eq:mathfrak-f}
\mathfrak{f}(\lambda_1,\lambda_2,\ldots,\lambda_d)=\mathfrak{f}(\lambda_{\pi(1)}, \lambda_{\pi(2)},\ldots,\lambda_{\pi(d)}),
\end{equation}
for all permutations $\pi$ on $[d]$ for which $\pi([d_1-\gd])=[d_1-\gd]$. Then the map $z \mapsto \mathfrak{f}(\lambda_1(z),\lambda_2(z),\ldots,\lambda_d(z))$ is continuous on $\cS_\gd^{-\vep}$.  
\end{lemma}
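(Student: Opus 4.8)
The goal is to show that $z \mapsto \mathfrak{f}(\lambda_1(z),\ldots,\lambda_d(z))$ is continuous on $\cS_\gd^{-\vep}$, where $\{-\lambda_\ell(z)\}_{\ell=1}^d$ are the roots of $P_{z,{\bm a}}(\lambda)=0$ arranged in non-increasing order of moduli. The subtlety is that individual $\lambda_\ell(z)$ need not be continuous functions of $z$: when two roots of the same modulus collide or swap, the ordering can jump. What \emph{is} continuous is the unordered tuple of roots, i.e.\ the image in the symmetric product $\C^d/\gS_d$; this follows from continuity of the roots of a polynomial in its coefficients (the coefficients of $P_{z,{\bm a}}(\lambda)$ depend polynomially on $z$), which is exactly the statement in \cite[Appendix 5, Theorem 4A]{whitney} already invoked in the paper. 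The hypothesis \eqref{eq:mathfrak-f} says $\mathfrak{f}$ is invariant under the \emph{subgroup} $\gS_{[d_1-\gd]}\times\gS_{[d]\setminus[d_1-\gd]}$ of permutations preserving the block $[d_1-\gd]$ (and hence its complement). So $\mathfrak{f}$ descends to a continuous function on $(\C^{d_1-\gd}/\gS_{d_1-\gd})\times(\C^{d_2+\gd}/\gS_{d_2+\gd})$, and the claim reduces to showing that the map sending $z$ to the \emph{pair} (unordered set of the $d_1-\gd$ large-modulus roots, unordered set of the $d_2+\gd$ small-modulus roots) is continuous on $\cS_\gd^{-\vep}$.

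\textbf{Key steps.} First I would recall from Lemma \ref{lem:root-cont-0} that on $\cS_\gd^{-\vep}$ there is a uniform spectral gap: $|\lambda_i(z)|\ge (1-\vep_0)^{-1}>1$ for $i\le d_0=d_1-\gd$ and $|\lambda_i(z)|\le 1-\vep_0<1$ for $i>d_0$. In particular, for every $z\in\cS_\gd^{-\vep}$ the circle $\{|\lambda|=1\}$ separates the two groups of roots, and no root lies within distance $\vep_0$ of this circle. Second, fix $z_0\in\cS_\gd^{-\vep}$ and a sequence $z_n\to z_0$ in $\cS_\gd^{-\vep}$. By continuity of the unordered root tuple in $\C^d/\gS_d$, we can choose labelings (depending on $n$) so that the multiset $\{\lambda_1(z_n),\ldots,\lambda_d(z_n)\}$ converges to $\{\lambda_1(z_0),\ldots,\lambda_d(z_0)\}$ as multisets; concretely, there exist permutations $\pi_n$ with $\lambda_{\pi_n(\ell)}(z_n)\to\lambda_\ell(z_0)$ for all $\ell$. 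Third, by the uniform gap, for $n$ large each such $\pi_n$ must map $\{1,\ldots,d_0\}$ to itself (a root converging to one of modulus $>1+\delta$ cannot itself have modulus $<1$, and vice versa, once we are close enough), i.e.\ $\pi_n\in\gS_{[d_0]}\times\gS_{[d]\setminus[d_0]}$. Fourth, apply the invariance \eqref{eq:mathfrak-f}: $\mathfrak{f}(\lambda_1(z_n),\ldots,\lambda_d(z_n))=\mathfrak{f}(\lambda_{\pi_n(1)}(z_n),\ldots,\lambda_{\pi_n(d)}(z_n))\to\mathfrak{f}(\lambda_1(z_0),\ldots,\lambda_d(z_0))$ by continuity of $\mathfrak{f}$. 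Since $z_n\to z_0$ was arbitrary, continuity at $z_0$ follows, and since $z_0$ was arbitrary, we are done.

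\textbf{Anticipated obstacle.} The only genuinely delicate point is step three: justifying that for large $n$ the labeling permutation respects the large/small block decomposition. This is where the uniform separation of moduli from $1$ on $\cS_\gd^{-\vep}$ (Lemma \ref{lem:root-cont-0}) is essential — without the $-\vep$ shrinking, roots could approach the unit circle and the block structure of the ordering could be genuinely discontinuous. One should be slightly careful phrasing the convergence ``as multisets'': the cleanest way is to note that convergence in $\C^d/\gS_d$ with the quotient metric $\mathrm{dist}(\{a_i\},\{b_i\})=\min_{\pi}\max_\ell|a_\ell-b_{\pi(\ell)}|$ is precisely the statement that some relabeling converges coordinatewise, which is the classical continuity-of-roots fact. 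Everything else — descending $\mathfrak{f}$ to the product of symmetric products, the final application of continuity — is routine. As a concluding remark one can note that this continuity statement is the input needed, via Riemann's removable singularity theorem, to upgrade the already-established analyticity of $z\mapsto\widehat P_{|\gd|}^L(z)$ and $z\mapsto\gP_\gd^L(z)$ off the (locally finite) bad set to analyticity on all of $\cS_\gd^{-\vep}$ in Lemma \ref{lem:analyticity}.
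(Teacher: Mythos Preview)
Your proposal is correct and follows essentially the same approach as the paper's proof: both use continuity of the unordered root tuple in the symmetric product (via \cite[Appendix 5, Theorem 4A]{whitney}), invoke the uniform spectral gap from Lemma~\ref{lem:root-cont-0} to show that the relabeling permutation preserves the block $[d_1-\gd]$ for large $n$, and then apply the invariance hypothesis~\eqref{eq:mathfrak-f} together with continuity of $\mathfrak{f}$. The paper's version phrases step three as a contradiction (choosing the optimal permutation and bounding the symmetric-product distance from below), but the content is identical to your argument.
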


\begin{proof}
To prove the lemma we need to use continuity properties of the roots of the equation $P_{z,{\bm a}}(\lambda)=0$. This requires some notation. Let $\C_{\rm sym}^d$, the symmetric $d$-th power of $\C$, denote the set of equivalent classes in $\C^d$, where two points in $\C^d$ are set to be equivalent if one can be obtained by permuting the coordinates of the other. Given any two points $\langle \lambda_1,\lambda_2,\ldots, \lambda_d\rangle, \langle \mu_1,\mu_2,\ldots, \mu_d\rangle \in \C^d_{\rm sym}$  we define
\[
{\rm dist}(\langle \lambda_1,\lambda_2,\ldots, \lambda_d\rangle, \langle \mu_1,\mu_2,\ldots, \mu_d\rangle):= \inf_\pi \sup_\ell |\lambda_\ell - \mu_{\pi(\ell)}|,
\]
where the infimum is taken over all permutations $\pi$ of $[d]$. This induces a metric on $\C_{\rm sym}^d$. 

Let $\uptau: \C \mapsto \C_{\rm sym}^d$ be the map given by $\uptau(z)= \langle \lambda_1(z),\lambda_2(z),\ldots, \lambda_d(z)\rangle$, where $\{-\lambda_i(z)\}_{i=1}^d$ are the roots of the equation $P_{z,{\bm a}}(\lambda)=0$. It is well known that the map $\uptau(\cdot)$ is continuous (see \cite[Appendix 5, Theorem 4A]{whitney}).   

Using this we now establish the continuity of the function $\mathfrak{f}(\lambda_1(z),\ldots,\lambda_d(z))$.   
Consider any sequence $\{z_n\}$ such that $z_n \to z \in \cS_\gd^{-\vep}$, as $n \to \infty$. Let $\pi_n^\star$ be the permutation such that 
\[
{\rm dist}(\uptau(z), \uptau(z_n)) = \sup_{\ell} |\lambda_\ell(z) - \lambda_{\pi_n^\star(\ell)}(z_n)|.
\]
We claim that $\pi_n^\star([d_1-\gd]) =[d_1-\gd]$ for all large $n$, i.e. $\pi_n^\star$ maps $[d_1-\gd]$ to $[d_1-\gd]$. If not, then there exists $\ell_n \in [d_1-\gd]$ and $\ell_n' \in [d]\setminus[d_1-\gd]$ such that $\pi_n^\star(\ell_n)=\ell'_n$. On the other hand $z \in \cS_\gd^{-\vep}$ implies that $z_n \in \cS_{\gd}^{-\vep/2}$, for all large $n$. Therefore, the last two observations together with Lemma \ref{lem:root-cont-0} imply that
\begin{equation}\label{eq:dist-sym-lbd}
{\rm dist}(\uptau(z), \uptau(z_n)) \ge  |\lambda_{\ell_n}(z) - \lambda_{\ell_n'}(z_n)| \ge   |\lambda_{\ell_n}(z)| - |\lambda_{\ell_n'}(z_n)|  \ge  |\lambda_{d_1-\gd}(z)| - |\lambda_{d_1-\gd+1}(z_n)| \ge \vep_0',
\end{equation}
for some $\vep_0'>0$. Since $\uptau(z_n) \to \uptau(z)$ the inequality \eqref{eq:dist-sym-lbd} yields a contradiction. Hence, $\pi_n^\star([d_1-\gd]) =[d_1-\gd]$ for all large $n$, as claimed above. As $\mathfrak{f}(\cdot)$ satisfies \eqref{eq:mathfrak-f} this further implies that
\[
\mathfrak{f}(\lambda_1(z_n),\lambda_2(z_n),\ldots,\lambda_d(z_n)) = \mathfrak{f}(\lambda_{\pi_n^\star(1)}(z_n),\lambda_{\pi_n^\star(2)}(z),\ldots,\lambda_{\pi_n^\star(d)}(z)),
\]
for all large $n$. Since ${\rm dist}(\uptau(z_n),\uptau(z))\to 0$, as $n \to \infty$, the desired continuity of the map $z \mapsto \mathfrak{f}(\lambda_1(z),\ldots,\lambda_d(z))$ is immediate. This completes the proof of the lemma. 
\end{proof}

Building on Lemma \ref{lem:mathfrak-f} we now prove that the maps $z \mapsto \widehat P_{|\gd|}^L(z)$ and $z \mapsto  \gP_{\gd}^L(z)$ are analytic on $\cS_\gd^{-\vep}$.

\begin{proof}[Proof of Lemma \ref{lem:analyticity}]
Denote
\[
P_{|\gd|}^L(z) := \widehat{P}_{|\gd|}^L(z) \cdot a_{d_1}^{N-|\gd|} \cdot N^{-\gamma |\gd|} \cdot \prod_{i=1}^{d_1-\gd} \lambda_i(z)^{N+d_2}.
\]
Recalling \eqref{eq:det-decompose-1}, \eqref{eq:det-prod-decompose}, and  \eqref{eq:P-k-decompose}, and the definition of  $\hat \ell_i$ from \eqref{eq:hat-ell} we therefore note that $P_{|\gd|}^L(z)$ is the sum of the sets $(X_1,X_2,\ldots, X_{d+1})$ such that $\ell_i \ge N+d_2 - L$ for all $i =1,2,\ldots, d_1 -\gd$, and $\ell_j \le L$ for all $j=d_1 -\gd+1, \ldots, d$. 
Since for any permutation $\pi$ on $[d]$ 
\[
\prod_{i=1}^d (J_{N+d_2} + \lambda_i(z) \Id_{N+d_2}) = \prod_{i=1}^d (J_{N+d_2} + \lambda_{\pi_i}(z) )\Id_{N+d_2}),
\]
the representation \eqref{eq:det-decompose-1} of $P_{|\gd|}(z)$ further implies that $P_{|\gd|}^L(z)$ is invariant under any permutation $\pi$ on $[d]$ for which $\pi([d_1-\gd])=[d_1-\gd]$. Hence, by Lemma \ref{lem:mathfrak-f} the map $z \mapsto  P_{|\gd|}^L(z)$ is continuous on $\cS_\gd^{-\vep}$. 
The same lemma shows that the map $z \mapsto \prod_{i=1}^{d_1-\gd} \lambda_i(z)^N$ is continuous on $\cS_\gd^{-\vep}$. Hence, so is the map $z \mapsto \widehat P_{|\gd|}^L(z)$.

Next to show the analyticity of $\widehat P_{|\gd|}^L(\cdot)$ we apply Riemann's removable singularity theorem. For that it needs to be shown that except on a collection of isolated points $\widehat P_{|\gd|}^L(\cdot)$ is a holomorphic function. 

Recall \eqref{eq:dom-L-redfn}:
\[
\widehat P_{|\gd|}^L(z) =  \sum_{{\bm \ell}: \max_i \hat \ell_i \le L} \prod_{i=1}^{d_0}  \lambda_i(z)^{-\hat \ell_i} \cdot \prod_{i=d_0+1}^{d}  \lambda_i(z)^{\hat \ell_i} 
\cdot \sum_{\cX_{|\gd|} \in \gL_{{\bm \ell},|\gd|}} (-1)^{\sgn(\sigma_{\mathbb{X}}) \sgn(\sigma_{\mathbb{Y}})}\det(E_N[\mathbb{X}; \mathbb{Y}]).
\]
Let $\bar {\mathcal N}$ is the collection of $z$’s for which $P_{z,{\bm a}}(\cdot)=0$ has double roots. By \cite[Lemma 11.4]{bottcher-finite-band}, the cardinality of $\bar {\mathcal N}$ is finite, and thus all its elements are isolated. Using the implicit function theorem it follows that for $z \in \cS_\gd^{-\vep}\setminus \bar\cN$ the roots of $P_{z,{\bm a}}(\cdot)=0$ are analytic in $z$ (for a proof the reader is referred to \cite{brillinger}). Therefore there exists a reordering of the indices of the roots $\{\lambda_i(z)\}_{i=1}^d$, denoted hereafter by $\{\widehat \lambda_i(z)\}_{i=1}^d$, such that the maps $z \mapsto \widehat \lambda_i(z)$ are holomorphic on $\cS_\gd^{-\vep}\setminus \bar\cN$. From its definition it further follows that, for all $z \in \cS_\gd^{-\vep}$, among $\{\widehat \lambda_i(z)\}_{i=1}^d$ there are exactly $(d_1-\gd)$ roots that are strictly greater than one in moduli. So reusing the fact that $\widehat P_{|\gd|}^L(z)$ is invariant under any permutation of $\{ \lambda_i(z)\}_{i=1}^{d_1-\gd}$ and any permutation of the rest of the $\lambda_i(z)$'s, without loss of generality we may write 
\begin{equation}\label{eq:widehat-P-holom}
\widehat P_{|\gd|}(z) =  \sum_{{\bm \ell}: \max_i \hat \ell_i \le L} \prod_{i=1}^{d_0} \widehat \lambda_i(z)^{-\hat \ell_i} \cdot \prod_{i=d_0+1}^{d} \widehat \lambda_i(z)^{\hat \ell_i} 
\cdot \sum_{\cX_{|\gd|} \in \gL_{{\bm \ell},|\gd|}} (-1)^{\sgn(\sigma_{\mathbb{X}}) \sgn(\sigma_{\mathbb{Y}})}\det(E_N[\mathbb{X}; \mathbb{Y}]), \quad z \in \cS_\gd^{-\vep}\setminus \bar\cN.
\end{equation}
This indeed shows that $\widehat P_{|\gd|}^L(z)$ is a holomorphic function on $\cS_\gd^{-\vep}\setminus \bar\cN$. To apply Riemann's removable singularity theorem we need to show that it is bounded in a neighborhood of $\bar\cN$. This is immediate, as from the definition of the polynomial $P_{z,{\bm a}}(\lambda)=0$ we have that for any $R<\infty$,
\[
\sup_{z \in B_\C(0,R)} \max_{i=1}^d |\lambda_i(z)| =O(1).
\]
This yields that $\widehat P_{|\gd|}^L(z)$ is analytically extendable to the whole of $\cS_\gd^{-\vep}$. Finally, since  the function $\widehat P_{|\gd|}^L(z)$ is
 continuous at $z \in \bar\cN \cap \cS_\gd^{-\vep}$, we conclude that   its analytic extension to $\cS_\gd^{-\vep}$ is the  function itself. That is, the equality \eqref{eq:widehat-P-holom} continues to hold for $z \in \bar\cN \cap \cS_\gd^{-\vep}$, where by a slight abuse of notation we use $\{-\widehat \lambda_i(z)\}_{i=1}^d$ to denote the analytic extensions of the analytic parametrization of the roots of $P_{z,{\bm a}}(\lambda)=0$. Thus, the map $z \mapsto \widehat P_{|\gd|}^L(z)$ is indeed an analytic function. 
 
 Turning to prove the analyticity of the map $z \mapsto \gP_\gd^L(z)$ we recall that the proof of Lemma \ref{lem:zeta-N-gd-L} shows that the maps and $\widehat P_{|\gd|}^L(z)$ and $\gP_\gd^L(z)$, when viewed as functions of $z$ are the same map, albeit the entries of $E_N$ gets replaced by that of $E_\infty$ by the affine function defined there. This shows that $\{\gP_\gd^L\}_{L \in \N}$ are random analytic functions as well, thereby completing the proof of this lemma. 
 \end{proof}


Equipped with the
necessary ingredients, we now proceed to the proof of Lemma \ref{lem:limit}. 

\begin{proof}[Proof of Lemma \ref{lem:limit}]
We begin by noting that part (i) is a consequence of Lemma \ref{lem:analyticity}, whereas part (ii) is immediate from Corollary \ref{cor:random-field}(iii). 

So it only remains to prove that $\gP^\infty_\gd(\cdot)$ is not identically zero on a set of probability one. Without loss of generality we may assume that $\cS_\gd$ is non-empty. Fix any $\vep >0$ and pick any $z_\star \in \cS_\gd^{-\vep}$. Note that, as $\cS_\gd^{-\vep} \uparrow \cS_\gd \ne \emptyset$, as $\vep \downarrow 0$, for $\vep$ sufficiently small the sets $\cS_\gd^{-\vep}$ are non-empty, and hence a choice of $z_\star$ is feasible for any sufficiently small $\vep$. Fix this choice of $\vep$ for the remainder of the proof. 

Now fix any $\delta >0$. From Corollaries \ref{cor:dominant-tail} and \ref{cor:random-field}(iii), upon using Markov's inequality it follows that there exists an $L$ such that 
\[
\max\left\{\P\left(  |\widehat P_{|\gd|}^{\wc{ L}}(z_\star)| \ge \delta \right),  \P\left(  | \gP_{\gd}^{\wc{ L}}(z_\star)| \ge \delta \right)\right\} \le \delta,
\]
where for brevity we set
\[
\gP_\gd^{\wc{ L}}(\cdot):= \gP_\gd^\infty(\cdot) - \gP_\gd^L(\cdot).
\]
Thus, using the triangle inequality we find that
\begin{align*}
\P(|\gP_\gd^\infty(z_\star)| \le \delta) \le \P(|\gP_\gd^L(z_\star)| \le \delta +|\gP_\gd^{\wc{ L}}(z_\star)| ) & \le \P(|\gP_\gd^L(z_\star)| \le 2\delta) + \delta \\
 & =  \P(|\widehat{P}_{|\gd|}^L(z_\star)| \le 2\delta) + \delta  \le \P(|\widehat{P}_{|\gd|}(z_\star)| \le 3\delta) + 2\delta,
\end{align*} 
where the equality above follows from Lemma \ref{lem:zeta-N-gd-L}. Now, by Corollary \ref{cor:anti-conc-complex-dom}, we further deduce from above that
\[
\P(|\gP_\gd^\infty(z_\star)| \le \delta) \le C_\star (3\delta)^\upeta \left(\log \left(\frac{1}{3\delta}\right)\right)^{|\gd|-1}+ 2\delta. 
\]
As $\delta >0$ is arbitrary sending $\delta \downarrow 0$ we derive that 
\begin{equation}\label{eq:limit-non-zero}
\P(\gP_\gd^\infty(\cdot) \equiv 0 \text{ on } \cS_\gd) \le \P\left(\sup_{z \in \cS_\gd^{-\vep}} |\gP_\gd^\infty(z)|  =0\right) \le \limsup_{\delta \downarrow 0} \P(|\gP_\gd^\infty(z_\star)| \le \delta) = 0. \notag
\end{equation}
The proof of the lemma is now complete. 
\end{proof}

\begin{proof}[Proof of Theorem \ref{thm:outlier-law}]
We will use the following proposition from \cite{Sh}.
\begin{proposition}\cite[Proposition 2.3]{Sh}
  \label{prop-shirai}
  Suppose that a sequence of random analytic functions $\{X_N\}$
converges in law to (an analytic random function) X. Then, the zero process of $X_N$  converges in law to 
the zero process of $X$  provided that
$X\not\equiv 0$ almost surely.
\end{proposition}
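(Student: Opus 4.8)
The plan is to deduce this cited proposition from a quantitative form of Hurwitz's theorem in complex analysis, after passing to a probability space on which the convergence $X_N\to X$ holds almost surely. Throughout, working on each connected component of $\D$ separately, I may assume $\D$ is connected, and I count zeros with multiplicity, consistently with the conventions of Definition~\ref{dfn:conv} (generically all zeros will be simple, so this is harmless).

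First I would record the topological preliminary. The space $\mathcal{H}(\D)$ of analytic functions on the open set $\D\subset\C$, equipped with the topology of local uniform convergence, is Polish: it is a closed subspace of $C(\D,\C)$ with the compact-open topology, and the latter is a Polish space because $\D$ is a countable increasing union of compact sets. Hence, since $X_N\Rightarrow X$ in $\mathcal{H}(\D)$, the Skorokhod representation theorem furnishes, on some probability space, random analytic functions $\widetilde X_N,\widetilde X$ with $\widetilde X_N\overset{\mathcal L}{=}X_N$, $\widetilde X\overset{\mathcal L}{=}X$, and $\widetilde X_N\to\widetilde X$ locally uniformly on $\D$ almost surely. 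Since $\P(X\equiv 0)=0$, after discarding a null set I may assume throughout that $\widetilde X\not\equiv 0$; then the zero set of $\widetilde X$ is discrete in $\D$, so the point process $\widetilde\zeta:=\sum_{z:\widetilde X(z)=0}\delta_z$ is a well-defined Radon measure, and likewise for $\widetilde\zeta_N$ associated to $\widetilde X_N$. By Definition~\ref{dfn:conv} it suffices to prove that $\int f\,d\widetilde\zeta_N\to\int f\,d\widetilde\zeta$ almost surely for every continuous compactly supported $f:\D\to\R$.

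Next, fix such an $f$, with ${\rm supp}\,f\subset K$ for a compact $K\subset\D$, and argue on the probability-one event where $\widetilde X_N\to\widetilde X$ locally uniformly and $\widetilde X\not\equiv 0$. Choose a compact $K'$ with $K\subset{\rm int}(K')\subset K'\subset\D$, and let $w_1,\dots,w_m$ be the (finitely many) zeros of $\widetilde X$ in $K'$, with multiplicities $k_1,\dots,k_m$. Given $\eta>0$, pick $r>0$ so small that the closed disks $\bar B(w_i,r)$ are pairwise disjoint, lie in ${\rm int}(K')$, contain no other zero of $\widetilde X$, satisfy $r<{\rm dist}(w_i,K)$ for those $w_i\notin K$, and obey $\sum_i k_i\,\omega_f(r)<\eta$, where $\omega_f$ is the modulus of continuity of $f$. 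On the compact set $K'\setminus\bigcup_i B(w_i,r)$ the function $\widetilde X$ is bounded away from zero, and $\inf_{\partial B(w_i,r)}|\widetilde X|>0$ for each $i$; hence by local uniform convergence and Rouché's theorem, for all large $N$ the function $\widetilde X_N$ has exactly $k_i$ zeros (with multiplicity) in each $B(w_i,r)$ and no zero in $K'\setminus\bigcup_i B(w_i,r)$. Since ${\rm supp}\,f\subset{\rm int}(K')$, any zero of $\widetilde X_N$ outside $K'$ lies outside ${\rm supp}\,f$; and for $w_i\notin K$ the choice $r<{\rm dist}(w_i,K)$ forces $B(w_i,r)\cap K=\emptyset$, so $f$ and $f(w_i)$ both vanish there. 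Therefore, for all large $N$,
\[
\Bigl|\int f\,d\widetilde\zeta_N-\int f\,d\widetilde\zeta\Bigr|
=\Bigl|\sum_i\ \sum_{\substack{z\in B(w_i,r)\\ \widetilde X_N(z)=0}}\bigl(f(z)-f(w_i)\bigr)\Bigr|
\le\sum_i k_i\,\omega_f(r)<\eta .
\]
As $\eta>0$ is arbitrary, $\int f\,d\widetilde\zeta_N\to\int f\,d\widetilde\zeta$ almost surely, hence in distribution, proving the proposition.

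I expect the only genuinely delicate points to be the topological preliminaries — verifying that $\mathcal{H}(\D)$ is Polish so that Skorokhod applies — and the bookkeeping near $\partial({\rm supp}\,f)$, namely ensuring that the clusters of $\widetilde X_N$-zeros emanating from zeros of $\widetilde X$ lying just outside ${\rm supp}\,f$ do not spuriously enter ${\rm supp}\,f$; this is handled by shrinking $r$ below ${\rm dist}(w_i,K)$ for the finitely many such $w_i$. The hypothesis $X\not\equiv 0$ enters essentially: it is exactly what makes the zero set discrete (so that $\zeta$ is a bona fide point process) and what guarantees that only finitely many zeros lie in $K'$, so that the Rouché localization is a finite procedure.
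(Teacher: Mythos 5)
The paper does not actually prove this statement---it is quoted from Shirai \cite[Proposition 2.3]{Sh}---so there is no internal proof to compare against. Your argument is correct and is the standard one: it is essentially how the cited reference establishes the result (there phrased as a continuous-mapping argument using the continuity of $f\mapsto\zeta_f$ on $\{f\in\mathcal{H}(\D):f\not\equiv 0\}$, which is exactly your Hurwitz/Rouch\'e step; your Skorokhod reduction is an equivalent packaging). The only literal defect in your write-up is that you enumerate the zeros $w_1,\dots,w_m$ of $\widetilde X$ in all of $K'$ and then demand $\bar B(w_i,r)\subset \mathrm{int}(K')$, which is impossible if some $w_i\in\partial K'$. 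This is cosmetic: on the almost-sure event the zero set of $\widetilde X$ is discrete, so one may choose $K'$ to be a fattening $K^{\delta}$ of $K$ for a $\delta$ whose boundary circle of radii avoids the at most countably many bad values, or simply enumerate the zeros of $\widetilde X$ in a slightly larger compact set containing $K'$ in its interior. With that adjustment the proof is complete; the hypothesis $\P(X\equiv 0)=0$ is used exactly where you say it is.
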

In our case, we will use  
$\widehat \det_N(z)$, see
\eqref{eq:hat-det}, for $X_N$, and $\gP_\gd^\infty(z)$, see Definition
\ref{dfn:random field}
for $X$. (Note that by Lemma \ref{lem:limit}, 
the latter is almost surely analytic.)
Thus, the proof of Theorem \ref{thm:outlier-law} boils down to checking the conditions of Proposition \ref{prop-shirai}, that is to checking the following.\\
(i) $\widehat \det_N(\cdot)$ converges in law to $\gP_\gd^\infty(\cdot)$ in $\cS_\gd^{-\vep}$ .\\
(ii) $\gP_\gd^\infty(\cdot)\not \equiv 0$ in $\cS_\gd^{-\vep}$.

To see (i), we recall the random functions
$\widehat P_{|\gd|}(z)$ and 
$\widehat P_{|\gd|}^L(z)$,
see \eqref{eq:hatP-k} and \eqref{eq:dominant-L}.
By Corollary \ref{cor:non-dominant}, it is enough to prove (i) with
$\widehat \det_N(z)$ replaced by $\widehat P_{|\gd|}(z)$. 
By \eqref{eq:dominant-L-bar} and
Corollary \ref{cor:dominant-tail}, for (i) it is then enough to
prove that the law of $\widehat P_{|\gd|}^L(\cdot)$ converges, 
as first $N\to\infty$ and then $L\to\infty$, to the law of 
$\gP_\gd^\infty(\cdot)$. By Lemma  \ref{lem:zeta-N-gd-L}(iii),
the law of $\widehat P_{|\gd|}^L(\cdot)$ 
coincides, for $N$ large,  with that of $\gP_\gd^L(\cdot)$, 
and the latter law is independent
of $N$. One now concludes (i) by noting that
by Lemma \ref{lem:limit}(ii),  $\gP_\gd^L(\cdot)$
converges uniformly in $\cS_\gd^{-\vep}$, as $L\to\infty$, 
to $\gP_{\gd}^\infty(\cdot)$ 

The point (ii) is a consequence of part (iii) of Lemma \ref{lem:limit}.
This completes the proof of the theorem.
\end{proof}

\section{Proof of Theorem \ref{thm:no-outlier}}\label{sec:proof-main-1}

Next we proceed to the proof of Theorem \ref{thm:no-outlier}, i.e.~we aim to show that there are no outliers outside the spectrum of the Toeplitz operator $T({\bm a})$. In the set-up of Theorem \ref{thm:no-outlier},
Lemma \ref{lem:rouche-multi-gr-d_0} yields the desired upper bound on the non-dominant terms. In this set-up the dominant term is the non-random unperturbed Toeplitz matrix $T_N({\bm a}(z))$. Hence to complete the proof we need a uniform lower bound on the {latter}. 

\begin{lemma}\label{lem:toep-det-unif-lbd}
Let ${\bm a}$ be a Laurent polynomial given by 
\[
{\bm a}(\lambda):= \sum_{\ell=-d_2}^{d_1} a_\ell \lambda^\ell, \qquad \lambda \in \C,
\]
for some $d_1,d_2 \in \N$. Fix $\vep>0$. Then, there exists a positive constant $c_0>0$ such that
\[
\inf_{z \in \cS_0^{-\vep}\cap B_\C(0,N^{1/2})}\left|\frac{\det(T_N({\bm a}(z)))}{a_{d_1}^N\prod_{\ell=1}^{d_1} \lambda_\ell(z)^N}\right| \ge c_0, \qquad \text{ for all large } N,
\]
where $\{-\lambda_\ell(z)\}_{\ell=1}^d$ are the roots of the equation $P_{z,{\bm a}}(\lambda)=\lambda^{d_2} ({\bm a}(\lambda) -z)=0$ arranged in the non-increasing order of their moduli and $d=d_1+d_2$.
\end{lemma}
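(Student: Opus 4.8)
The plan is to express $\det(T_N({\bm a}(z)))$ via the Bump--Diaconis formula \cite{BD}, which writes the determinant of a finitely banded Toeplitz matrix as a Schur polynomial (or a ratio of Schur polynomials) evaluated at the roots $\{-\lambda_\ell(z)\}_{\ell=1}^d$ of $P_{z,{\bm a}}(\lambda)=0$. Concretely, in the upper-triangular extension discussed in Section \ref{sec:identify-non-dom-term}, one has $T_N({\bm a}(z)) = a_{d_1}\prod_{\ell=1}^d (J_{N+d_2}+\lambda_\ell(z)\Id_{N+d_2})$ restricted to an appropriate submatrix, and Cauchy--Binet expresses its determinant as $a_{d_1}^N$ times a sum of monomials in $\{\lambda_\ell(z)\}$ indexed by semistandard Young tableaux of a rectangular shape $d_2\times N$ (this is essentially formula \eqref{eq:det-decompose-1}--\eqref{eq:det-prod-decompose} specialized to $k=0$, $\gd=0$, and matches the Bump--Diaconis Schur-polynomial identity). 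After factoring out the dominant scaling $a_{d_1}^N\prod_{\ell=1}^{d_1}\lambda_\ell(z)^N$, I would obtain $\det(T_N({\bm a}(z)))/(a_{d_1}^N\prod_{\ell=1}^{d_1}\lambda_\ell(z)^N)$ as a Schur polynomial in the \emph{ratios} $\lambda_i(z)/\lambda_j(z)$, $1\le i\le d_1 < j\le d$, which are all bounded strictly inside the unit disk by a separation $1-\vep_0$ on $\cS_0^{-\vep}$ (Lemma \ref{lem:root-cont-0}).

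The key point is then to show that this normalized Schur expression converges, as $N\to\infty$, uniformly in $z\in\cS_0^{-\vep}\cap B_\C(0,N^{1/2})$, to a nonzero limit that is moreover bounded away from zero. First I would fix the case of distinct roots: there Widom's formula \cite[Theorem 2.8]{bottcher-finite-band} gives $\det(T_N({\bm a}(z))) = \sum_{S} C_S(z)\prod_{\ell\in S}\lambda_\ell(z)^N$ where $S$ ranges over $d_1$-subsets of $[d]$ and $C_S(z)$ is an explicit Vandermonde-type ratio; the dominant $S=\{1,\ldots,d_1\}$ term has coefficient $C_{\{1,\ldots,d_1\}}(z)$ which is a continuous non-vanishing function of $z$ on $\cS_0$ (non-vanishing because the roots split strictly across the unit circle, so $\lambda_i\ne\lambda_j$ for $i\le d_1<j$ exactly when there are no double roots straddling $\mathbb{S}^1$; one checks the Vandermonde factors in the numerator/denominator do not vanish). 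The remaining terms are smaller by a factor $(1-\vep_0)^N$ in modulus, up to polynomial-in-$N$ prefactors in the $C_S$ — and here the restriction to $B_\C(0,N^{1/2})$ is used, since on this ball $\max_\ell|\lambda_\ell(z)| = O(\mathrm{poly}(N))$ and the coefficients $C_S(z)$, being continuous on the compact-after-normalization region, do not blow up fast enough to beat $(1-\vep_0)^N$. Then a compactness/continuity argument (using that $\cS_0^{-\vep}$ intersected with any fixed large ball is compact, and that $C_{\{1,\ldots,d_1\}}$ extends continuously, by the symmetric-product continuity of roots cited via \cite{whitney}, hence is bounded below on such a compact set) gives the uniform lower bound $c_0$ for $N$ large.

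For the case of double (or higher multiplicity) roots, I would invoke Trench's formula \cite[Theorem 2.10]{bottcher-finite-band}, \cite{trench}, or simply note — as the authors suggest in the outline — that the Bump--Diaconis Schur-polynomial formulation is valid \emph{uniformly}, regardless of multiplicities, so that the normalized determinant is a single Schur polynomial $s_{\lambda}(\ldots)$ in the ratio variables with no denominators, depending continuously on $z$ through the symmetric functions of the roots. Since the $d_1\times d_1$-block-versus-the-rest split is preserved on all of $\cS_0^{-\vep}$, the ``leading'' monomial (the all-ones tableau contribution, i.e. the constant term $1$ after normalization) dominates and the tail is $O(\mathrm{poly}(N)\cdot(1-\vep_0)^N)$ on $B_\C(0,N^{1/2})$; the same continuity-plus-compactness argument closes the bound.

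The main obstacle will be establishing the \emph{uniformity} of the lower bound over the growing ball $B_\C(0,N^{1/2})$ rather than over a fixed compact set: one must control how fast the sub-dominant Schur/Widom coefficients and the root moduli can grow with $|z|$, and verify this growth is only polynomial in $N$ so that it is defeated by the geometric factor $(1-\vep_0)^N$ coming from Lemma \ref{lem:root-cont-0}. This requires a quantitative estimate on the roots of $P_{z,{\bm a}}(\lambda)=0$ for $|z|\le N^{1/2}$ (they grow at most like $|z|^{1/d_1}$ for the large ones and shrink at worst like $|z|^{-1/d_2}$ for the small ones, by the Newton-polygon / dominant-balance structure of $\lambda^{d_2}({\bm a}(\lambda)-z)=0$), together with a matching bound on the combinatorial coefficients, which I expect to be the technically delicate part of the argument.
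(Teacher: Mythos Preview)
Your overall strategy matches the paper's: express the determinant via the Bump--Diaconis Schur-polynomial identity, use the Laplace/Widom expansion over $d_1$-subsets $M\subset[d]$, isolate $M=[d_1]$ as dominant, and show the remaining terms are $O((1-\vep_0)^N)$ relative to it, with the restriction $|z|\le N^{1/2}$ controlling polynomial growth of the root moduli. Two technical points in your sketch need correction, and the paper handles them differently than you propose.

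First, the dominant coefficient $C_{[d_1]}(z)$ is \emph{not} the constant $1$ after normalization; it equals $\prod_{\ell\le d_1<\ell'}(\lambda_\ell(z)-\lambda_{\ell'}(z))^{-1}$ times $\prod_{\ell\le d_1}\lambda_\ell(z)^{d_2}$. Your compactness argument for a lower bound fails because the domain $\cS_0^{-\vep}\cap B_\C(0,N^{1/2})$ grows with $N$. The paper instead bounds this explicitly and uniformly: writing $|\lambda_\ell-\lambda_{\ell'}|=|\lambda_\ell|\,|1-\lambda_{\ell'}/\lambda_\ell|\le 2|\lambda_\ell|$ for $\ell\le d_1<\ell'$, the $d_2$ factors of $|\lambda_\ell|$ cancel against $|\lambda_\ell|^{d_2}$ and one gets $|C_{[d_1]}(z)|\ge 2^{-d_1 d_2}\ge 2^{-d^2}$ for every $z\in\cS_0^{-\vep}$, no compactness needed.

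Second, for $M\ne[d_1]$ the individual Widom coefficients $C_M(z)$ contain $(\lambda_\ell-\lambda_{\ell'})^{-1}$ with $\ell,\ell'$ possibly both in $[d_1]$ or both in $[d]\setminus[d_1]$, and these \emph{can} blow up near double roots (the finite set $\bar\cN$ where $P_{z,\bm a}$ has repeated roots may meet $\cS_0^{-\vep}$). Your appeal to the denominator-free SSYT form of the Schur polynomial does not directly help once you have divided by $\prod_{\ell\le d_1}\lambda_\ell^N$. The paper's fix is a factorization argument: one shows that $\sum_{M\ne[d_1]}(-1)^{\sum_{i\in M}i}\mathfrak{V}_{\mathfrak m}(z,M)$ is divisible by $\prod_{\ell<\ell'\le d_1}(\lambda_\ell-\lambda_{\ell'})\cdot\prod_{d_1<\ell<\ell'}(\lambda_\ell-\lambda_{\ell'})$ (by pairing $M$ with the set obtained by swapping two indices within a block and checking the paired sum vanishes when the corresponding roots coincide). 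After dividing by the full Vandermonde, only the cross factors $(\lambda_\ell-\lambda_{\ell'})^{-1}$ with $\ell\le d_1<\ell'$ remain, and those are uniformly bounded on $\cS_0^{-\vep}$ by Lemma \ref{lem:root-cont-0}. This gives the estimate for $z\notin\bar\cN$; the extension to $z\in\bar\cN$ is then a one-line continuity argument (Lemma \ref{lem:root-cont}), since both numerator and the symmetric function $\prod_{\ell\le d_1}|\lambda_\ell|^N$ are continuous in $z$. Your Newton-polygon root estimate is sharper than needed; the paper uses only the crude bound $\max_\ell|\lambda_\ell(z)|\le\widetilde C|z|$, which already gives polynomial-in-$N$ growth on $B_\C(0,N^{1/2})$.
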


{We will later check, see \eqref{eq:no-outlier-22}, that
all eigenvalues of $T_N({\bm a})+\Delta_N$ are}
contained in $B_\C(0,N^{1/2})$ with high probability. Thus the uniform lower bound of Lemma \ref{lem:toep-det-unif-lbd} is sufficient to complete the proof of Theorem \ref{thm:no-outlier}. For any $z \in \C$ the expression for the determinant of $T_N({\bm a}(z))$ is well known:~it follows from Widom's formula (see \cite[Theorem 2.8]{bottcher-finite-band}) when the roots of roots of $P_{z,{\bm a}}(\cdot)=0$ are distinct, while in the other case one can use Trench's 
formula \cite[Theorem 2.10]{bottcher-finite-band}. As Lemma \ref{lem:toep-det-unif-lbd} requires a uniform bound on the determinant for $z \in \cS_0^{-\vep}\cap B_\C(0,N^{1/2})$, we refrain from using \cite[Theorems 2.8 and 2.10]{bottcher-finite-band} and instead we use the observation by Bump and Diaconis \cite{BD}, where they noted that 
irrespective of whether $P_{z,{\bm a}}(\cdot)=0$ has double roots or not, the determinant of a finitely banded Toeplitz determinant can be expressed as a certain Schur polynomial. 

Before proceeding to the proof of Lemma \ref{lem:toep-det-unif-lbd} we recall
the definition of the
Schur polynomials. Given any partition ${\bm \nu}:= (\nu_1,\nu_2,\ldots,\nu_d)$ with $\nu_1\ge \nu_2\ge \cdots \ge \nu_d \ge 0$ we define Schur polynomial $S_{{\bm \nu}}$ by
\begin{equation}\label{eq:schur}
S_{\bm \nu}(\lambda_1,\lambda_2, \ldots,\lambda_d):= \frac{\det V_{\bm \nu}(\lambda_1,\lambda_2, \ldots,\lambda_d)}{\det V_{\bm 0}(\lambda_1,\lambda_2, \ldots,\lambda_d)},
\end{equation}
where for any partition ${\bm \alpha}:=(\alpha_1,\alpha_2,\ldots,\alpha_d)$
\[
V_{\bm \alpha}(\lambda_1,\lambda_2, \ldots,\lambda_d) = \begin{bmatrix} \lambda_1^{\alpha_1+d-1} & \lambda_2^{\alpha_1+d-1} & \cdots & \lambda_d^{\alpha_1+d-1}\\ \lambda_1^{\alpha_2+d-2} & \lambda_2^{\alpha_2+d-2} & \cdots & \lambda_d^{\alpha_2+d-2}\\ \vdots & \vdots & \ddots & \vdots \\\lambda_1^{\alpha_d} & \lambda_2^{\alpha_d} & \cdots & \lambda_d^{\alpha_d} \end{bmatrix},
\] and ${\bm 0}:=(0,0,\ldots,0)$ denotes the zero partition. If $\{\lambda_\ell\}_{\ell=1}^d$ are not all distinct then both the numerator and the denominator of \eqref{eq:schur} are zero. In that case, the quotient needs to be evaluated using L'H\^opital's rule. Therefore the proof of Lemma \ref{lem:toep-det-unif-lbd} also splits into two parts: $z \notin \bar\cN$ and $z \in \bar\cN$, where we 
remind the reader that $\bar\cN$ is the collection of $z$'s for which $\{\lambda_\ell(z)\}_{\ell=1}^d$ are not all distinct and it is a set of finite cardinality. The first case is handled in the following lemma. 

\begin{lemma}\label{lem:toep-det-unif-lbd-1}
Under the same set-up as in Lemma \ref{lem:toep-det-unif-lbd},
{and in particular with the same $c_0$,}
\[
\inf_{z \in \cS_0^{-\vep}\cap B_\C(0,N^{1/2}) \setminus \bar\cN}\left|\frac{\det(T_N({\bm a}(z)))}{a_{d_1}^N\prod_{\ell=1}^{d_1} \lambda_\ell(z)^N}\right| \ge c_0, \qquad \text{ for all large } N.
\]
\end{lemma}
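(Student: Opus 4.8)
Write
\[
f_N(z):=\frac{\det\big(T_N({\bm a}(z))\big)}{a_{d_1}^{N}\prod_{\ell=1}^{d_1}\lambda_\ell(z)^{N}},
\]
so the claim is that $|f_N|$ is bounded below by an absolute constant on $\cS_0^{-\vep}\cap B_\C(0,N^{1/2})\setminus\bar\cN$ for all large $N$. For $z\notin\bar\cN$ the $d$ roots $\{-\lambda_\ell(z)\}$ of $P_{z,{\bm a}}$ are distinct, and I would invoke Widom's formula \cite[Theorem 2.8]{bottcher-finite-band} (equivalently, the bialternant expansion of the Bump--Diaconis Schur-polynomial expression for the determinant \cite{BD}): up to $N$-independent signs,
\[
\det\big(T_N({\bm a}(z))\big)=a_{d_1}^{N}\sum_{\substack{M\subseteq[d]\\ |M|=d_1}}c_M(z)\Big(\prod_{\ell\in M}\lambda_\ell(z)\Big)^{N},\qquad
|c_M(z)|=\prod_{\substack{\ell\in M\\ \ell'\notin M}}\frac{|\lambda_\ell(z)|}{|\lambda_\ell(z)-\lambda_{\ell'}(z)|}.
\]
Dividing by $a_{d_1}^{N}\prod_{\ell\le d_1}\lambda_\ell(z)^{N}$, the term $M=\{1,\dots,d_1\}$ (the $d_1$ roots of largest modulus) contributes exactly $c_{\{1,\dots,d_1\}}(z)$, and every other $M$ contributes $c_M(z)\prod_{\ell\in M}\lambda_\ell(z)^N/\prod_{\ell\le d_1}\lambda_\ell(z)^N$.

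The main point is that the dominant term is uniformly bounded away from $0$ (and $\infty$) on all of $\cS_0^{-\vep}$. By Lemma \ref{lem:root-cont-0} with $\gd=0$ (so $d_0=d_1$) we have $|\lambda_\ell(z)|^{-1}\le 1-\vep_0$ for $\ell\le d_1$ and $|\lambda_{\ell'}(z)|\le 1-\vep_0$ for $\ell'>d_1$, uniformly on $\cS_0^{-\vep}$; hence for $\ell\le d_1<\ell'$,
\[
|\lambda_\ell(z)|\big(1-(1-\vep_0)^2\big)\le |\lambda_\ell(z)|-|\lambda_{\ell'}(z)|\le|\lambda_\ell(z)-\lambda_{\ell'}(z)|\le |\lambda_\ell(z)|+|\lambda_{\ell'}(z)|\le |\lambda_\ell(z)|\big(1+(1-\vep_0)^2\big),
\]
so $c_{\{1,\dots,d_1\}}(z)$ has modulus in $[c_0',C_0']$ with $c_0':=(1+(1-\vep_0)^2)^{-d_1d_2}$ and $C_0':=(1-(1-\vep_0)^2)^{-d_1d_2}$, uniformly on $\cS_0^{-\vep}$. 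I would then set $f_\infty(z):=\pm\prod_{\ell\le d_1<\ell'}\lambda_\ell(z)/(\lambda_\ell(z)-\lambda_{\ell'}(z))$; since $\prod_{\ell\le d_1<\ell'}(\lambda_\ell-\lambda_{\ell'})$ and $\prod_{\ell\le d_1}\lambda_\ell$ are, up to signs, the resultant- and constant-term data of the factorization $a_{d_1}^{-1}P_{z,{\bm a}}(\lambda)=p_+(z,\lambda)p_-(z,\lambda)$ of $P_{z,{\bm a}}$ into the factor $p_+$ with all roots of modulus $>1$ and the factor $p_-$ with all roots of modulus $<1$, and since the coefficients of $p_\pm$ are analytic in $z$ on $\cS_0^{-\vep}$ (there are no roots on $\mathbb S^1$ there), $f_\infty$ is analytic on $\cS_0^{-\vep}$, never vanishing (the colliding roots on $\cS_0^{-\vep}$ are either both of modulus $>1$ or both of modulus $<1$, so no factor $\lambda_\ell-\lambda_{\ell'}$ with $\ell\le d_1<\ell'$ degenerates), with $c_0'\le |f_\infty|\le C_0'$ everywhere.

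Next I would show the subdominant terms are negligible once $z$ stays away from the (finite, by \cite[Lemma 11.4]{bottcher-finite-band}) set $\bar\cN$. Any $M\neq\{1,\dots,d_1\}$ trades $r\ge 1$ indices of large roots for $r$ indices of small ones, so $|\prod_{\ell\in M}\lambda_\ell(z)^N/\prod_{\ell\le d_1}\lambda_\ell(z)^N|\le(1-\vep_0)^{2rN}\le(1-\vep_0)^{2N}$ by Lemma \ref{lem:root-cont-0}, while $|c_M(z)|$ is bounded whenever the roots stay uniformly separated relative to their scale. On compact subsets of $\cS_0^{-\vep}$ disjoint from $\bar\cN$ this is immediate; on $\{|z|\ge R\}$ for $R$ large it follows from elementary root–location estimates for $P_{z,{\bm a}}$ (the $d_1$ large roots lie in an annulus $c|z|^{1/d_1}\le|\lambda|\le C|z|^{1/d_1}$ with pairwise distances $\gtrsim|z|^{1/d_1}$, the $d_2$ small roots lie in $c|z|^{-1/d_2}\le|\lambda|\le C|z|^{-1/d_2}$ with pairwise distances $\gtrsim|z|^{-1/d_2}$, and each factor $|\lambda_\ell|/|\lambda_\ell-\lambda_{\ell'}|$ is then $O(1)$). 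Fixing a small $\rho>0$, on $\big(\cS_0^{-\vep}\cap B_\C(0,N^{1/2})\big)\setminus\bigcup_{w\in\bar\cN}B_\C(w,\rho)$ the subdominant terms contribute $O\big((1-\vep_0)^{2N}\big)\to0$ uniformly, hence $|f_N(z)|\ge c_0'-o(1)\ge c_0'/2$ for large $N$, and moreover $f_N(z)\to f_\infty(z)$ at each such $z$.

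It remains to handle neighborhoods of the finitely many points of $\bar\cN$, which is the only delicate step, because there Widom's formula has individually blowing-up terms and the crude estimates above fail for $z$ exponentially close to $\bar\cN$. Fix $w\in\bar\cN\cap\cS_0^{-\vep}$ and $\rho$ so small that $\overline{B_\C(w,\rho)}\subset\cS_0^{-\vep/2}$ meets $\bar\cN$ only at $w$. The map $z\mapsto\prod_{\ell\le d_1}\lambda_\ell(z)=p_+(z,0)$ is analytic and nonvanishing on $\cS_0^{-\vep/2}$ and $\det(T_N({\bm a}(z)))$ is entire, so $f_N$ is analytic on $\cS_0^{-\vep/2}$; by the previous paragraph applied on the circle $\partial B_\C(w,\rho)$ (which misses $\bar\cN$), $\sup_N\sup_{\partial B_\C(w,\rho)}|f_N|<\infty$, whence $\sup_N\sup_{\overline{B_\C(w,\rho)}}|f_N|<\infty$ by the maximum modulus principle. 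Thus $(f_N)$ is a normal family on $B_\C(w,\rho)$ converging pointwise on the dense set $B_\C(w,\rho)\setminus\{w\}$ to $f_\infty$, so by Vitali's theorem $f_N\to f_\infty$ uniformly on $\overline{B_\C(w,\rho/2)}$; since $|f_\infty|\ge c_0'$, we get $|f_N|\ge c_0'/2$ on $B_\C(w,\rho/2)$ for all large $N$. Combining this over the finitely many such $w$ with the bound on the complement from the previous paragraph, and setting $c_0:=c_0'/2$, gives the assertion. (The restriction $|z|\le N^{1/2}$ is inherited from the application where the lemma is used and is not essential to the argument.)
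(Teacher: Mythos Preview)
Your approach is correct and takes a genuinely different route from the paper's. Both proofs start from Widom's formula (equivalently the bialternant form of the Bump--Diaconis Schur expression) and isolate the dominant term $M=[d_1]$, bounding it below via Lemma~\ref{lem:root-cont-0}. The divergence is in controlling the subdominant terms uniformly in $z$, particularly as $z$ approaches $\bar\cN$ where individual Widom coefficients $c_M(z)$ blow up. The paper proceeds \emph{algebraically}: it shows (see \eqref{eq:factorize}) that the \emph{sum} of the subdominant terms in the numerator carries the factor $\prod_{\ell<\ell'\in[d_1]}(\lambda_\ell-\lambda_{\ell'})\prod_{\ell<\ell'\in[d]\setminus[d_1]}(\lambda_\ell-\lambda_{\ell'})$, which cancels the dangerous part of the Vandermonde denominator and leaves only the harmless cross factors $(\lambda_\ell-\lambda_{\ell'})^{-1}$ with $\ell\le d_1<\ell'$; the remaining polynomial $\mathfrak P$ is then bounded crudely term-by-term using $\max_\ell|\lambda_\ell(z)|\le\widetilde C|z|$ together with the restriction $|z|\le N^{1/2}$. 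You instead argue \emph{softly}: away from $\bar\cN$ (and for large $|z|$ via asymptotic root separation for $P_{z,{\bm a}}$) you bound each $c_M(z)$ directly, while near each $w\in\bar\cN$ you use that $f_N$ is analytic, uniformly bounded on $\partial B_\C(w,\rho)$, and pointwise convergent off $\bar\cN$ to the nonvanishing $f_\infty$, so Vitali's theorem upgrades this to locally uniform convergence and hence a uniform lower bound. Your route sidesteps the factorization trick and, as you note, shows that the hypothesis $|z|\le N^{1/2}$ is actually inessential; the paper's route is shorter and handles all $z$ in one stroke without any case analysis, but genuinely uses $|z|\le N^{1/2}$ to absorb the polynomial growth of $\mathfrak P$. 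One bookkeeping point: as written, your covering misses the annuli $\rho/2<|z-w|<\rho$, and you should also treat $w\in\bar\cN$ lying just outside $\cS_0^{-\vep}$; both are fixed by a trivial adjustment of radii (e.g.\ remove balls of radius $\rho/2$ in the first step, or run the normal-families argument on a ball contained in $\cS_0^{-\vep/2}$ for every $w\in\bar\cN$ close to $\cS_0^{-\vep}$).
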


We use the following continuity properties to derive Lemma \ref{lem:toep-det-unif-lbd} from Lemma \ref{lem:toep-det-unif-lbd-1}.

\begin{lemma}\label{lem:root-cont}
Fix $\vep >0$. For any $z \in  \cS_0^{-\vep}$, the maps $z \mapsto \det(T_N({\bm a}(z)))$ and $z \mapsto \prod_{\ell=1}^{d_1} |\lambda_\ell(z)|^N$ are continuous, where $\{\lambda_\ell(z)\}_{\ell=1}^d$ are as in Lemma \ref{lem:toep-det-unif-lbd}.  
\end{lemma}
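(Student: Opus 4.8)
\textbf{Proof proposal for Lemma \ref{lem:root-cont}.}
The two maps are handled separately, and both statements are essentially immediate given the machinery already in place. For the first map, I would simply observe that $T_N({\bm a}(z)) = T_N({\bm a}) - z\,\Id_N$, so each entry of this matrix is an affine function of $z$; hence $z \mapsto \det(T_N({\bm a}(z)))$ is a polynomial in $z$ (of degree $N$, up to sign equal to the characteristic polynomial of $T_N({\bm a})$ evaluated at $z$), and in particular it is continuous on all of $\C$, a fortiori on $\cS_0^{-\vep}$.

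For the second map, the plan is to invoke Lemma \ref{lem:mathfrak-f} with $\gd = 0$. Recall that for $\gd = 0$ one has $d_1 - \gd = d_1$, that $\cS_0 \subset ({\bm a}(\mathbb{S}^1))^c$ by Definition \ref{dfn:cS-gd}, and that Lemma \ref{lem:root-cont-0} (valid for every $\gd$ with $-d_2 \le \gd \le d_1$, including $\gd=0$) guarantees that on $\cS_0^{-\vep}$ exactly $d_1$ of the roots $\{-\lambda_i(z)\}$ have modulus at least $(1-\vep_0)^{-1} > 1$ and the remaining $d_2$ have modulus at most $1-\vep_0 < 1$. I would apply Lemma \ref{lem:mathfrak-f} to the function $\mathfrak{f}(\lambda_1,\ldots,\lambda_d) := \prod_{i=1}^{d_1} |\lambda_i|^N$, which is continuous on $\C^d$ and is invariant under any permutation $\pi$ of $[d]$ satisfying $\pi([d_1]) = [d_1]$ (it is symmetric in its first $d_1$ arguments and does not depend on the remaining $d_2$). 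Lemma \ref{lem:mathfrak-f} then yields that $z \mapsto \mathfrak{f}(\lambda_1(z),\ldots,\lambda_d(z)) = \prod_{i=1}^{d_1} |\lambda_i(z)|^N$ is continuous on $\cS_0^{-\vep}$, which is the claim.

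There is no real obstacle here: the only point requiring care is that the labelling $\ell \mapsto \lambda_\ell(z)$ by non-increasing modulus need not be individually continuous when moduli collide, but since $\mathfrak{f}$ depends only on the (unordered) set of the $d_1$ largest roots, and since Lemma \ref{lem:root-cont-0} prevents any root from crossing the unit circle within $\cS_0^{-\vep}$, the argument inside the proof of Lemma \ref{lem:mathfrak-f} (using the symmetric-product metric and the continuity of the root map $\uptau$ from \cite[Appendix 5, Theorem 4A]{whitney}) already absorbs this subtlety. Consequently I would keep the proof to a couple of lines: cite the polynomial structure for the determinant, and cite Lemma \ref{lem:mathfrak-f} (with the above choice of $\mathfrak{f}$ and $\gd=0$) for the product of moduli.
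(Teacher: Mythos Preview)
Your proposal is correct and matches the paper's approach exactly: the paper also dismisses the continuity of $z \mapsto \det(T_N({\bm a}(z)))$ as obvious and obtains the continuity of $z \mapsto \prod_{\ell=1}^{d_1} |\lambda_\ell(z)|^N$ by applying Lemma~\ref{lem:mathfrak-f} with $\gd=0$. Your write-up in fact supplies more detail than the paper (the explicit choice of $\mathfrak{f}$ and the verification of its invariance), so nothing needs to change.
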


It is obvious that Lemma \ref{lem:toep-det-unif-lbd} follows from Lemmas \ref{lem:toep-det-unif-lbd-1} and \ref{lem:root-cont}. To prove Lemma \ref{lem:root-cont} we see that the continuity of the map $z \mapsto \det(T_N({\bm a}(z)))$ is obvious. The continuity of the other map is a consequence of Lemma \ref{lem:mathfrak-f} (applied with $\gd=0$). We next prove Lemma \ref{lem:toep-det-unif-lbd-1}.

\begin{proof}[Proof of Lemma \ref{lem:toep-det-unif-lbd-1}]
It was noted in \cite[proof of Theorem 1]{BD} that 
\begin{equation}\label{eq:bd-schur}
\det (T_N({\bm a}(z))) = (-1)^{Nd_1} a_{d_1}^N \cdot S_{\mathfrak{m}}(\lambda_1(z), \lambda_2(z),\ldots,\lambda_d(z)),
\end{equation}
where the partition $\mathfrak{m}$ is given by
\[
\mathfrak{m}:= (\underbrace{N,N,\ldots,N}_{d_1}, \underbrace{0,0,\ldots,0}_{d_2}).
\]
To evaluate the \abbr{RHS} of \eqref{eq:bd-schur} we use the representation \eqref{eq:schur}. The denominator of \eqref{eq:schur} is the determinant of the standard Vandermonde matrix. Hence, to complete the proof we expand the determinant in the numerator using Laplace's expansion, find the dominant term, and show that the sum of the 
{other}  terms is of smaller order. 

Fix $z \notin \bar\cN$, implying that
the roots $\{\lambda_\ell(z)\}_{\ell=1}^d$ of the polynomial equation $P_{z,{\bm a}}(\cdot)=0$ are all distinct. Now applying Laplace's expansion of the determinant we find that 
\begin{equation}
\det V_{\mathfrak{m}} (z) = (-1)^{\frac{d_1(d_1+1)}{2}}\sum_{M \in \binom{[d]}{d_1}} (-1)^{(\sum_{i \in M} i )}\det (V_{\mathfrak m}(z)[[d_1]; M]) \cdot \det (V_{\mathfrak m}(z)[[d]\setminus [d_1]; \bar M]),
\end{equation}
where we denote $V_{\mathfrak m}(z):= V_{\mathfrak m}(\lambda_1(z),\lambda_2(z),\ldots, \lambda_d(z))$ and $\bar M:= [d]\setminus M$. Recalling the definition of $V_{\mathfrak m}(z)$ it follows 
that for any $M \in \binom{[d]}{d_1}$,
\begin{align}\label{eq:vandermonde-1}
{\mathfrak V}_{\mathfrak{m}}(z,M)  := \, & \det (V_{\mathfrak m}(z)[[d_1]; M]) \cdot \det (V_{\mathfrak m}(z)[[d]\setminus [d_1]; \bar M]) \notag\\
  = \, & \prod_{\ell\in M} \lambda_\ell(z)^{N+d_2} \cdot \prod_{\ell < \ell' \in M} (\lambda_\ell(z) - \lambda_{\ell'}(z)) \cdot  \prod_{\ell < \ell' \in \bar M} (\lambda_\ell(z) - \lambda_{\ell'}(z)).
\end{align}
Moreover,
\begin{equation}\label{eq:vandermonde-2}
\det V_{\bm 0}(\lambda_1(z),\lambda_2(z),\ldots,\lambda_d(z)) = \prod_{\ell < \ell' \in [d]} (\lambda_\ell(z) - \lambda_{\ell'}(z)).
\end{equation}

Setting now $M=[d_1]$ and using \eqref{eq:vandermonde-1}-\eqref{eq:vandermonde-2} together with
\eqref{eq:root-bd-in-z}, we get
\begin{multline}\label{eq:dom-term}
 \left|\frac{\det (V_{\mathfrak m}(z)[[d_1]; [d_1]]) \cdot \det (V_{\mathfrak m}(z)[[d]\setminus [d_1]; [d]\setminus [d_1]])}{V_{\bm 0}(\lambda_1(z),\lambda_2(z),\ldots,\lambda_d(z))} \right| \\  =   \prod_{\ell \in [d_1]} |\lambda_\ell(z)|^{N+d_2} \cdot \prod_{\ell \in [d_1]} \prod_{\ell' \in [d]\setminus[d_1]} {|\lambda_\ell(z) - \lambda_{\ell'}(z)|}^{-1} 
 \\
\ge  \prod_{\ell \in [d_1]} |\lambda_\ell(z)|^{N} \cdot \prod_{\ell \in [d_1]} \prod_{\ell' \in [d]\setminus[d_1]} {\left|1 - \frac{\lambda_{\ell'}(z)}{ \lambda_\ell(z)}\right|}^{-1} \ge 2^{-d^2} \cdot \prod_{\ell \in [d_1]} |\lambda_\ell(z)|^N,
\end{multline}
uniformly on $z \in \cS_0^{-\vep}\setminus \bar\cN$, for some $c>0$. Hence, in light of \eqref{eq:schur} and \eqref{eq:bd-schur}, to obtain a uniform lower bound on $\det T_N({\bm a}(z))$, for $z \in \cS_0^{-\vep}\cap B_\C(0,N^{1/2})\setminus \bar\cN$, it suffices to show that  
\begin{equation}\label{eq:schur-rest}
\frac{\sum_{M \in \binom{[d]}{d_1}\setminus \{[d_1]\}} (-1)^{(\sum_{i \in M} i)} {\mathfrak V}_{\mathfrak m}(z,M)}{\det V_{\bm 0}(\lambda_1(z),\lambda_2(z),\ldots,\lambda_d(z))} = o \left( \prod_{\ell=1}^{d_1} |\lambda_\ell(z)|^N\right),
\end{equation}
uniformly over $z \in \cS_0^{-\vep}\cap B_\C(0,N^{1/2})\setminus \bar\cN$. Using \eqref{eq:vandermonde-1}-\eqref{eq:vandermonde-2} one may try to individually bound each of the terms in the \abbr{LHS} of \eqref{eq:schur-rest}. However, it can be seen that because of the division by the determinant of the Vandermonde matrix and due to the presence of double roots for $z \in \bar\cN$ some of the terms in \abbr{LHS} of \eqref{eq:schur-rest} blow up as $z$ approaches the set $\bar\cN$. 

To overcome this issue we claim that for any $z \notin \bar\cN$, the numerator of the \abbr{LHS} of \eqref{eq:schur-rest} contains a factor 
\begin{equation}\label{eq:factorize}
\prod_{\ell < \ell' \in [d_1]} (\lambda_\ell(z) - \lambda_{\ell'}(z)) \cdot \prod_{\ell < \ell' \in [d]\setminus [d_1]} (\lambda_\ell(z) - \lambda_{\ell'}(z)).
\end{equation}
Turning to prove this claim, fixing any $\ell_0 < \ell_1 \in [d_1]$ we show that $(\lambda_{\ell_0}(z) - \lambda_{\ell_1}(z))$ is a factor of the numerator of the \abbr{LHS} of \eqref{eq:schur-rest}. Then repeating the same argument one can show that the same holds for $\ell_0 < \ell_1 \in [d]\setminus [d_1]$. This gives the claim \eqref{eq:factorize}. 

To show that $(\lambda_{\ell_0}(z)-\lambda_{\ell_1}(z))$ is a factor we fix any $M \in \binom{[d]}{d_1}\setminus \{[d_1]\}$. If $M$ is such that $\ell_0, \ell_1 \in M$ then by \eqref{eq:vandermonde-1} it follows that $(\lambda_{\ell_0}(z) - \lambda_{\ell_1}(z))$ is indeed a factor. Same holds if $\ell_0, \ell_1 \notin M$. So it boils down to showing that $(\lambda_{\ell_0}(z)-\lambda_{\ell_1}(z))$ is a factor of the sum
\[
\sum (-1)^{(\sum_{i \in M} i)} {\mathfrak V}_{\mathfrak m}(z,M),
\]
where the sum is taken over all $M \in \binom{[d]}{d_1}\setminus \{[d_1]\}$ such that exactly one of $\ell_0$ and $\ell_1$ are in $M$. Pick any such $M$ and without loss of generality assume $\ell_0 \in M$. Then we define $M_1$ by replacing $\ell_0$ by $\ell_1$ and keeping the other elements as is. Note that this map is a bijection and moreover $M_1 \ne [d_1]$. We will show that for any $M$ and $M_1$ chosen and defined as above $(\lambda_{\ell_0}(z) - \lambda_{\ell_1}(z))$ is a factor of 
\[
{\mathfrak f}(\lambda_1(z),\lambda_2(z),\ldots,\lambda_d(z)):=(-1)^{(\sum_{i \in M} i)} {\mathfrak V}_{\mathfrak m}(z,M) + (-1)^{(\sum_{i \in M_1} i)} {\mathfrak V}_{\mathfrak m}(z,M_1).
\]
This will prove that $(\lambda_{\ell_0}(z) - \lambda_{\ell_1}(z))$ is a factor of the \abbr{LHS} of \eqref{eq:schur-rest}.  

Now to prove the last claim it suffices to show that if $\lambda_{\ell_0}(z) =\lambda_{\ell_1}(z)$ then ${\mathfrak f}=0$. This follows from the following two observations. First, from the definition of $M_1$ and \eqref{eq:vandermonde-1} we observe that ${\mathfrak V}(z,M)$ equals ${\mathfrak V}(z,M_1)$, upto a change in sign, when $\lambda_{\ell_0}(z)$ is replaced by $\lambda_{\ell_1}(z)$. Second, from \eqref{eq:vandermonde-1} we further note that the change in sign is $\ell_1 - \ell_0-1$ which is the sum total of the number of elements in $M$ and $\bar M$ between $\ell_0$ and $\ell_1$. Thus we have that ${\mathfrak f}=0$ when $\lambda_{\ell_0}(z) = \lambda_{\ell_1}(z)$. So we now have \eqref{eq:factorize}.

From \eqref{eq:vandermonde-1} we have that for every $M \in \binom{[d]}{d_1}$ the determinant ${\mathfrak V}_{\mathfrak m}(z,M)$ is a multivariate polynomial in $\{\lambda_\ell(z)\}_{\ell=1}^d$ with coefficients free of $N$ (only the exponents may depend on $N$). Thus, equipped with \eqref{eq:factorize} and using \eqref{eq:vandermonde-2} we next obtain that
\begin{equation}\label{eq:schur-rest-1}
\frac{\sum_{M \in \binom{[d]}{d_1}\setminus \{[d_1]\}} (-1)^{(\sum_{i \in M} i)} {\mathfrak V}_{\mathfrak m}(z,M)}{\det V_{\bm 0}(\lambda_1(z),\lambda_2(z),\ldots,\lambda_d(z))} = {\mathfrak P}(\lambda_1(z),\lambda_2(z), \ldots, \lambda_d(z)) \cdot \prod_{\ell \in [d_1]} \prod_{\ell \in [d]\setminus [d_1]} (\lambda_\ell(z) - \lambda_{\ell'}(z))^{-1},
\end{equation}
for some multivariate polynomial ${\mathfrak P}(\cdot)$ with coefficients free of $N$. Since the sum in \eqref{eq:schur-rest-1} is taken over $M \ne [d_1]$, using \eqref{eq:vandermonde-1} once more we find that each of the terms in the polynomial ${\mathfrak P}(\cdot)$ is bounded by
\begin{equation}\label{eq:coeff-bd-1}
C \prod_{\ell=1}^{d_1-1} |\lambda_\ell(z)|^{N+d_2} \cdot |\lambda_{d_1+1}(z)|^{N+d_2} \cdot \prod_{\ell=1}^d |\lambda_\ell(z)|^d \le C (1-\vep_0)^{2N} \prod_{\ell=1}^{d_1} |\lambda_\ell(z)|^N\cdot \prod_{\ell=1}^d |\lambda_\ell(z)|^{2d},
\end{equation}
for some constant $C< \infty$ and $\vep_0>0$, where the last step follows from Lemma \ref{lem:root-cont-0}. 

Next we claim that, for some constant $\widetilde C < \infty$ and $|z| >1$,
\begin{equation}\label{eq:root-bd-in-z}
\max_\ell |\lambda_\ell(z)| \le \widetilde C |z|.
\end{equation}
Indeed, the claim is immediate if $d_1=0$, for then $\max_\ell |\lambda_\ell(z)|=O(1/z)$.
Assume therefore $d_1>0$.
Consider any root $\lambda$ of the polynomial equation $P_{z, {\bm a}}(\lambda)=0$. Then,
\[
a_{d_1} \lambda^d = - \corOZ{\sum_{i=-d_2}^{d_1 - 1}} (a_i -z \delta_{i,0}) \lambda^i. 
\]
Therefore, assuming without loss of generality that $|\lambda| \ge 1$ and
using the triangle inequality, we find that 
\[
|a_{d_1}| |\lambda|^d \le C' |z| \cdot |\lambda|^{d-1},
\]
for some $C' < \infty$, yielding the claim 
\eqref{eq:root-bd-in-z} \corOZ{also in case $d_1>0$}. This, in particular implies that the \abbr{RHS} of \eqref{eq:coeff-bd-1} is upper bounded by 
\begin{equation}\label{eq:coeff-bd-2}
C (1-\vep_0)^{2N} \prod_{\ell=1}^{d_1} |\lambda_\ell(z)|^N\cdot (\widetilde C |z|)^{2d^2}.
\end{equation}
Since there are at most $O(N^d)$ terms in the polynomial ${\mathfrak P}(\cdot)$ using Lemma \ref{lem:root-cont-0} again, from \eqref{eq:schur-rest-1} and the bound in \eqref{eq:coeff-bd-2} we deduce that
\[
\left|\frac{\sum_{M \in \binom{[d]}{d_1}\setminus \{[d_1]\}} (-1)^{(\sum_{i \in M} i)} {\mathfrak V}_{\mathfrak m}(z,M)}{\det V_{\bm 0}(\lambda_1(z),\lambda_2(z),\ldots,\lambda_d(z))} \right| \le (1-\vep_0)^N \cdot \prod_{\ell=1}^{d_1} |\lambda_\ell(z)|^N, 
\]
uniformly over $z \in \cS_0^{-\vep}\cap B_\C(0,N^{1/2}) \setminus \bar\cN$, for all large $N$. This yields \eqref{eq:schur-rest} and thus the proof is complete.
\end{proof}
Before proving Theorem \ref{thm:no-outlier}, 
{we sketch}
the proof of Lemma \ref{lem:toep-det-gd-bd}, 
{which is similar to that}
of Lemma \ref{lem:toep-det-unif-lbd}. 

\begin{proof}[Proof of Lemma \ref{lem:toep-det-gd-bd}]
Consider the case $\gd >0$. Recalling Definition \ref{dfn:toep-shifted} we find that 
\[
T_N({\bm a }(z))[X_\star^c;Y_\star^c]= T_{N-\gd}({\bm a}, z; d_1-\gd).
\]
It also follows from there that the polynomial associated with the symbol of the Toeplitz matrix $T_{N-\gd}({\bm a}, z; d_1-\gd)$ is $P_{z,{\bm a}}(\cdot)$. Therefore, for $z \notin \bar\cN$ it does not have any double roots. Moreover, for $z \in \cS_\gd^{-\vep}$ the number of roots of $P_{z,{\bm a}}(\lambda)=0$ that are greater than one in moduli is $d_1 -\gd$ and it equals the maximal positive degree of the Laurent polynomial associated with the Toeplitz matrix $T_{N-\gd}({\bm a}, z; d_1-\gd)$. So we are in the set up of Lemma \ref{lem:toep-det-unif-lbd}.
Therefore proceeding as in the proof of Lemma \ref{lem:toep-det-unif-lbd} we deduce that
\[
\inf_{z \in \cS_\gd^{-\vep}\cap B_\C(0,R) \setminus \bar\cN}\left|\frac{\det(T_N({\bm a}(z))[X_\star^c;Y_\star^c])}{a_{d_1}^{N-|\gd|}\prod_{\ell=1}^{d_1-\gd} \lambda_\ell(z)^{N-|\gd|}}\right| \ge c_0^\star, \qquad \text{ for all large } N,
\]
and some constant $c_0^\star>0$. 

{We claim that for
$\gd \ne 0$, the set $\cS_\gd^{-\vep}$ is a bounded set.
Indeed,
from the definition of $\{\cS_\gd\}_{\gd=-d_2}^{d_1}$ we have that $\cup_{\gd=-d_2}^{d_1}\cS_\gd = \C \setminus {\bm a}(\mathbb{S}^1)$. 
Hence, recalling \eqref{eq:cS_0-spec}, we deduce that 
$\cS_\gd \subset {\rm spec} \, T({\bm a})$, for any $\gd \ne 0$. As the spectrum of $T({\bm a})$ is contained in a disk of radius at most $\|T({\bm a})\|$ centered at zero, the claim follows.}

The last claim
together with \eqref{eq:root-bd-in-z} we therefore have that 
\[
\inf_{z \in \cS_\gd^{-\vep} \setminus \bar\cN}\left|\frac{\det(T_N({\bm a}(z))[X_\star^c;Y_\star^c])}{a_{d_1}^{N-|\gd|}\prod_{\ell=1}^{d_1 -\gd} \lambda_\ell(z)^{N+d_2}}\right| \ge \bar c_0^\star, \qquad \text{ for all large } N,
\]
and some other constant $\bar c_0^\star >0$. 

A similar reasoning as in the proof of Lemma \ref{lem:root-cont} shows that the map $z \mapsto \prod_{\ell=1}^{d_1-\gd} |\lambda_\ell(z)|^{N-|\gd|}$ is continuous on $\cS_\gd^{-\vep}$. Hence combining the last two observations we derive the desired uniform lower bound for $\gd >0$. The proof for the 
case $\gd <0$ is similar.
\end{proof}

We finally prove Theorem \ref{thm:no-outlier}.

\begin{proof}[Proof of Theorem \ref{thm:no-outlier}]
We begin by recalling \eqref{eq:cS_0-spec}, which implies that 
\[
(({\rm spec} \ T({\bm a}))^c)^{-\vep}  = \cS_0^{-\vep}.
\] 
We write
\[
\widetilde P_k(z):= \frac{P_k(z)}{a_{d_1}^N \prod_{\ell=1}^{d_1}\lambda_\ell(z)^N},
\]
for $k=1,2,\ldots, N$, where we recall the definition of $P_k(z)$ from \eqref{eq:P_k-z}. Recalling \eqref{eq:hatP-k} we note that 
\[
|\widetilde P_k(z)| \le |\widehat P_k(z)| \cdot \prod_{i=1}^{d_0} |\lambda_i(z)|^{k+d_2},
\]
for any $k=1,2,\ldots, N$ and $z \in \cS_0$. Thus, by Lemma \ref{lem:rouche-multi-gr-d_0} and the Cauchy-Schwarz inequality, and upon proceeding similarly to \eqref{eq:hat-det-decompose} it follows that 
\[
\E \left[ \sup_{z \in \cS_0^{-\vep}} \left|\sum_{k=1}^N \widetilde P_k(z)\right|^2 \right] \le \left( \sum_{k=1}^N \left\{ \E\left[\sup_{z \in \cS_0^{-\vep}} |\widetilde P_k(z) |^2 \right]\right\}^{1/2} \right)^2 \le N^{-\eta_0/2},
\]
for all large $N$, where $\eta_0 >0$ is as in Lemma \ref{lem:rouche-multi-gr-d_0}. Hence, by Markov's inequality
\[
\P\left( \sup_{z \in \cS_0^{-\vep}} \left|\frac{\sum_{k=1}^N P_k(z)}{a_{d_1}^N \prod_{\ell=1}^{d_1} \lambda_\ell(z)^N}\right| \le N^{-\eta_0/8} \right) \ge 1 - N^{-\eta_0/4},
\]
for all large $N$. Therefore, recalling $P_0(z)= \det (T_N({\bm a}(z)))$ and using Lemma \ref{lem:toep-det-unif-lbd} we derive that on an event of probability at least $1- N^{-\eta_0/4}$, we have
\[
|P_0(z)| \ge 2 \left|\sum_{k=1}^N P_k(z)\right|, \qquad \text{ for all } z  \in (({\rm spec} \ T({\bm a}))^c)^{-\vep}\cap B_\C(0,N^{1/2}),
\]
for all large $N$. As the maps $z \mapsto \det(T_N({\bm a}(z))+\Delta_N)$ and $P_0(z)$ \corAB{are analytic}, an application of Rouch\'e's theorem further yields that on the same event, the number of roots of $\det (T_N({\bm a}(z))+\Delta_N)=0$ and $P_0(z)=\det (T_N({\bm a}(z)))=0$ are same on the interior of the bounded set $(({\rm spec} \ T({\bm a}))^c)^{-\vep} \cap B_\C(0,N^{1/2})$. Since $|\lambda_1(z)| \ge |\lambda_2(z)| \ge \cdots \ge |\lambda_{d_1}(z)| >1$ on $\cS_0$ Lemma \ref{lem:toep-det-unif-lbd} implies that there are no roots of the equation $P_0(z)=0$ in $\cS_0$. So 
\begin{equation}\label{eq:no-outlier-21}
\P\left( L_N\left(\left( \left({\rm spec} \ T({\bm a})\right)^c\right)^{-\vep}\cap (B_\C(0,N^{1/2}))^0 \right) =0\right) =1 -o(1).
\end{equation}
To complete the proof we recall the spectral radius (i.e.~the maximum modulus eigenvalue) of a matrix bounded by it operator norm. Therefore using the triangle inequality we find that
\[
\E \| T_N({\bm a}) + \Delta_N\| \le \|T_N({\bm a})\| + N^{-\gamma} \E \|E_N\| \le \|T_N({\bm a})\| + N^{-\gamma} \E \|E_N\|_{{\rm HS}} \le C'(1+N^{-\gamma +1}) \le N^{1/2 -\vep'},
\]
for some $\vep' >0$, all large $N$, and any $\gamma >\frac12$, where $\| \cdot \|$ and $\| \cdot\|_{{\rm HS}}$ denotes the operator norm and the Hilbert-Schmidt norm respectively. So by Markov's inequality
\begin{equation}\label{eq:no-outlier-22}
\P\left(L_N\left(B_\C(0, N^{\frac12-\frac{\vep'}{2}})^c\right) >0\right) \le \P\left(\|T_N({\bm a}) +\Delta_N\| \ge N^{\frac12-\frac{\vep'}{2}} \right) \le N^{-\frac{\vep'}{2}}.
\end{equation}
Combining \eqref{eq:no-outlier-21}-\eqref{eq:no-outlier-22} the proof is now complete.
\end{proof}

\appendix

\section{The spectral radius of $E_N$}\label{sec:spec-rad}
In this short section we show that the decomposition \eqref{eq;spec-rad-new} used in the proofs of Theorems \ref{thm:no-outlier} and \ref{thm:outlier-law} can be adapted to prove the following. 
\begin{proposition}
\label{prop-specrad}
Let $\{E_N\}_{N \in \N}$ be a sequence of $N \times N$ random matrices with independent complex-valued entries of mean zero and unit variance. Denote $\varrho_N$ to be the spectral radius of $N^{-1/2} E_N$, i.e.~the maximum modulus eigenvalue of $N^{-1/2} E_N$. Then the sequence $\{\varrho_N\}_{N \in \N}$ is tight.
\end{proposition}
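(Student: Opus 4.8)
The plan is to mimic the proof of Theorem \ref{thm:no-outlier}, but now with the ``symbol part'' $T_N({\bm a})$ absent. Concretely, I would take ${\bm a}(\lambda) \equiv 0$, so that $T_N({\bm a}(z)) = -z\,\Id_N$ and the eigenvalues of $N^{-1/2}E_N$ are exactly the roots of $\det(N^{-1/2}E_N - z\Id_N) = 0$, equivalently (for $z\neq 0$) the roots of $\det(\Id_N - z^{-1} N^{-1/2} E_N)=0$. Unpacking the decomposition \eqref{eq;spec-rad-new}--\eqref{eq:P_k-z} in this degenerate case, $T_N({\bm a}(z))$ is diagonal, so $\det(T_N({\bm a}(z))[X^c;Y^c])$ vanishes unless $X=Y$, and one gets
\[
\det(N^{-1/2}E_N - z\Id_N) = \sum_{k=0}^N (-z)^{N-k} N^{-k/2} \sum_{\substack{X\subset[N]\\ |X|=k}} \det(E_N[X;X]),
\]
i.e.~the classical expansion of the characteristic polynomial in terms of principal minors. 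Writing $\widetilde P_k := N^{-k/2}\sum_{|X|=k}\det(E_N[X;X])$, we are asking whether $\sum_{k=0}^N (-z)^{N-k}\widetilde P_k$ has all its roots in a bounded region with high probability.

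The key estimate is a second–moment bound on $\widetilde P_k$: since the entries of $E_N$ are independent, mean zero, unit variance, $\E[\det(E_N[X;X])\overline{\det(E_N[X';X'])}] = k!\cdot {\bf 1}_{X=X'}$ (the same computation as in \eqref{eq:var-bound-multi}), hence $\E[|\widetilde P_k|^2] = N^{-k}\binom{N}{k} k! = N^{-k} \cdot N!/(N-k)! \le 1$. Therefore, by Markov's inequality, with probability $1-o(1)$ we have $|\widetilde P_k| \le N^2$ for all $k\in\{0,1,\ldots,N\}$ simultaneously, while $|\widetilde P_0| = 1$. Consequently, for $|z| = R$ with $R$ a large absolute constant,
\[
\left| \sum_{k=1}^N (-z)^{N-k}\widetilde P_k \right| \le N^2 \sum_{k=1}^N R^{N-k} = N^2 R^{N-1}\cdot \frac{1 - R^{-N}}{1 - R^{-1}} \le \frac{N^2 R^{N-1}}{1-R^{-1}},
\]
which is strictly smaller than $|(-z)^N \widetilde P_0| = R^N$ once $R$ is chosen so that $R > N^2/(1-R^{-1})$ — but this requires $R$ to grow with $N$. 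To get a genuinely $N$-independent bound one must do the estimate more carefully: concentrate on $k$ small (where $\E|\widetilde P_k|^2\le 1$ with no loss) versus $k$ large. For $k$ large the naive bound $\binom{N}{k}k! N^{-k}$ is still $\le 1$, so in fact $\E[\,\max_k |\widetilde P_k|\,] \le \sum_k \sqrt{\E|\widetilde P_k|^2} \le N$ is too crude; instead I would use that $\E|\widetilde P_k|^2 \le N^{-k}\binom{N}{k}k! = \prod_{j=0}^{k-1}(1-j/N) \le e^{-k(k-1)/2N}$, so $\sum_{k\ge 1} R^{N-k}\sqrt{\E|\widetilde P_k|^2}$, after pulling out $R^N$, is $R^N \sum_{k\ge 1} R^{-k} e^{-k(k-1)/4N}$, and splitting at $k \le \sqrt N$ versus $k > \sqrt N$ shows this tail is $\le R^N \cdot C/R$ for $R$ a large absolute constant (the $k>\sqrt N$ part is super-exponentially small, the $k\le \sqrt N$ part is a geometric series with ratio $\le 1/R$). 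Thus $\E[\sup_{|z|\ge R}|R^{-N}\sum_{k=1}^N (-z)^{N-k}\widetilde P_k \cdot (z/R)^{-N}|] $ — more precisely $\E[|\sum_{k=1}^N R^{k-N}\widetilde P_k|]\le C/R$ — is small, so by Markov, with probability $\ge 1 - C/(R^{1/2})$, $|\sum_{k=1}^N(-z)^{N-k}\widetilde P_k| < R^N = |(-z)^N\widetilde P_0|$ uniformly on $|z|=R$.

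Given that, Rouché's theorem applied on the disk $\{|z|\le R\}$ to the two analytic (in fact polynomial) functions $z\mapsto (-z)^N\widetilde P_0 = (-z)^N$ and $z\mapsto \det(N^{-1/2}E_N - z\Id_N) = \sum_{k=0}^N (-z)^{N-k}\widetilde P_k$ shows they have the same number of zeros, namely $N$, inside $\{|z|<R\}$; hence $\varrho_N = \max_i |\lambda_i(N^{-1/2}E_N)| < R$ on this event. Since $R$ does not depend on $N$ and the exceptional probability is $\le C/\sqrt R$, uniformly in $N$, this gives tightness of $\{\varrho_N\}$: given $\eta>0$, choose $R = R(\eta)$ with $C/\sqrt R < \eta$, and then $\P(\varrho_N > R) < \eta$ for all large $N$ (and, enlarging $R$ if necessary, for all $N$). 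I expect the main obstacle to be precisely making the estimate $N$-independent — i.e.~organizing the sum over $k$ so that the binomial-factorial factor $\binom{N}{k}k!N^{-k}$ is exploited as the decaying quantity $\prod_{j<k}(1-j/N)$ rather than bounded by $1$, and checking that the resulting series in $R^{-1}$ converges with a constant independent of $N$; everything else (the minor expansion, the variance computation, Rouché) is routine and parallels Section \ref{sec:proof-main-1}.
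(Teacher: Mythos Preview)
Your approach is correct and is essentially the same as the paper's: expand the characteristic polynomial in principal minors, use the variance bound $\E|\widetilde P_k|^2\le 1$, and compare the tail $\sum_{k\ge 1}(-z)^{N-k}\widetilde P_k$ to the leading term $(-z)^N$. The difference is only in execution, and the paper's is cleaner. First, your detour through the Gaussian decay $\E|\widetilde P_k|^2\le e^{-k(k-1)/2N}$ is unnecessary: the uniform bound $\E|\widetilde P_k|^2\le 1$ already gives $\E\bigl[\sum_{k\ge 1}R^{-k}|\widetilde P_k|\bigr]\le\sum_{k\ge 1}R^{-k}=1/(R-1)$, which is exactly the $C/R$ you want, with no splitting at $k\sim\sqrt N$ needed. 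Second, the paper avoids even this $L^1$-plus-Markov step by applying Chebyshev termwise with a geometric threshold: from $\Var(\widetilde P_k)\le 1$ one gets $\P(|\widetilde P_k|>\bar C^k)\le \bar C^{-2k}$, so the union bound over all $k$ costs only $\sum_{k\ge 1}\bar C^{-2k}\le 2/\bar C^2$; on the complement, for $|z|>4\bar C$,
\[
\Bigl|\sum_{k\ge 1}(-z)^{N-k}\widetilde P_k\Bigr|\le |z|^N\sum_{k\ge 1}(\bar C/|z|)^k<\tfrac13|z|^N,
\]
so there are no zeros with $|z|>4\bar C$. This sidesteps Rouch\'e and the failed $N^2$ attempt entirely, and makes the $N$-independence transparent from the outset.
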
 
We remark that Proposition \ref{prop-specrad} seems to be contained in Theorem \ref{thm:no-outlier}. However, formally the
latter cannot be applied since it would require one to take $\bm a \equiv 0$, while throughout the paper (and in particular, in the proof of Theorem
\ref{thm:no-outlier}), we assume that $\bm a$ is a nontrivial Laurent polynomial.

If the entries of $E_N$ are i.i.d.~having a finite $(2+\delta)$-th moment and possessing a symmetric law then it is known that $\varrho_N\to 1$ in probability, see \cite{bordenave}, while the operator norm of $N^{-1/2} E_N$
blows up as soon as the fourth moment of the entries is infinite. It is conjectured in \cite{bordenave}
 that in the critical case of finiteness of second moments, the convergence in probability to one still
holds. Proposition \ref{prop-specrad} is a weak form of the conjecture with elementary proof.
\begin{proof} Set $\Delta_N= N^{-1/2} E_N$. We decompose 
$$ \det(\Delta_N-zI_N)=(-z)^N+\sum_{k=1}^{N}  (-z)^{N-k}P_k$$
where
 \begin{equation}\label{eq:P_k-z-new}
P_k:=\sum_{\substack{X\subset [N]\\ |X|=k}} N^{- k/2} \det (E_N[X; X]),
\end{equation}
compare with \eqref{eq:P_k-z}. Note that $\Var(P_k)\leq 1$, while $\E P_k=0$. Therefore, for a fixed constant $\bar C$,
we have that $\P(|P_k|>\bar C^k)\leq \bar C^{-2k}$. So, setting $\mathcal{A}_0=\cup_{k=1}^\infty \{ |P_k|>\bar C^k\}$, it yields that
$$\P(\mathcal{A}_0)\leq \frac{2}{ \bar C^2}.$$
Note that on $\mathcal {A}_0^c$ we have that for $z$ with $|z|>4 \bar C$,
$$ \frac{|z^N|}{|\sum_{k=1}^{N}  (-z)^{N-k}P_k|}\geq \frac{1}{\sum_{k=1}^N 4^{-k}}\geq 3.$$
This in particular implies that there can be no zero of  $\det(\Delta_N-zI_N)$ with modulus larger than $4\bar C$. Thus the  claim follows.
\end{proof}


\begin{thebibliography}{99}



\bibitem{Al}Per~Alexandersson.
\newblock Schur Polynomials, Banded Toeplitz Matrices and Widom's Formula.
\newblock {\em The Electronic Journal of Combinatorics}, {\bf 22(4)}, paper no.~22, 2012.

\bibitem{BPZ-twisted}Anirban~Basak, Elliot~Paquette, and Ofer~Zeitouni.
\newblock Regularization of non-normal matrices by Gaussian noise - the banded Toeplitz and twisted Toeplitz cases.
\newblock {\em Forum of Mathematics, Sigma}, {\bf 7}, E3, 2019.

\bibitem{BPZ-non-triang}Anirban~Basak, Elliot~Paquette, and Ofer~Zeitouni.
\newblock Spectrum of random perturbations of Toeplitz matrices with finite symbols.
\newblock {\em Transactions of American Mathematical Society}, to appear.

\bibitem{Bax-Sch}Glenn~Baxter and Palle~Schmidt. 
\newblock Determinants of a certain class of non-Hermitian Toeplitz matrices. 
\newblock {\em Mathematica Scandinavica}, {\bf 9}, 122--128, 1961.



\bibitem{BC}Charles~Bordenave and Mireille~Capitaine.
\newblock Outlier Eigenvalues for Deformed I.I.D.~Random Matrices.
\newblock {\em Communications on Pure and Applied Mathematics}, {\bf 69(11)}, 2131--2194, 2016.

\bibitem{bordenave}
  Charles~Bordenave, Pietro~Caputo, Djalil~Chafa\"{i}, and Konstantin~Tikhomirov.
\newblock On the spectral radius of a random matrix: An upper bound without fourth moment.
\newblock {\em Annals Probab.} {\bf 46}, 2268--2286, 2018.
  
\bibitem{bottcher-finite-band}Albrecht~B\"{o}ttcher and Sergei~M.~Grudsky.
\newblock {\em Spectral Properties of Banded Toeplitz Matrices}.
\newblock Vol.~96, Siam, 2005.  

\bibitem{brillinger}David~R.~Brillinger.
\newblock The Analyticity of the Roots of a Polynomial as Functions of the Coefficients.
\newblock {\em Mathematics Magazine}, {\bf 39(3)}, 145--147, 1966.

\bibitem{BD}Daniel~Bump and Persi~Diaconis.
\newblock Toeplitz minors.
\newblock {\em Journal of Combinatorial Theory, Series A}, {\bf 97}, 252--271, 2002.  

\bibitem{DV}D.~J.~Daley and D.~Vere-Jones. 
\newblock {\em An introduction to the theory of point processes: volume II:~general theory and structure}. 
\newblock Springer Science \& Business Media, 2007.

\bibitem{DH}
  Edward~B.~Davies and Mildred~Hager.
  \newblock Perturbations of Jordan matrices.
  \newblock {\em J. Approx. Theory} {\bf 156}, 82--94, 2009.
  
\bibitem{EK}
Alan Edelman and Eric Kostlan.
\newblock  How many zeros of a random polynomial
are real?
\newblock {\em  Bulletin of the American Mathematical Society} {\bf 32}, 1--37, 1995.


\bibitem{HKPV}John~B.~Hough, Manjunath~Krishnapur, Yuval~Peres, and Balint~Vir\'{a}g.
\newblock {\em Zeros of Gaussian analytic functions and determinantal point processes}.  University Lecture Series, {\bf 51}, American Mathematical Society, Providence, RI, 2009.

\bibitem{NV}
  St\'{e}phane Nonnenmacher and Martin Vogel.
  \newblock{Local eigenvalue statistics of one dimensional random nonseladjoint pseudodifferential operators}.
 \newblock {\em ArXiv preprint}, arXiv:1711.05850 (2017).
  \bibitem{Rataj}
  Jan~Rataj and Ludek~Zaj\'{i}\v{c}ek.
  \newblock On the structure of sets with positive reach.
  \newblock {\em Mathematische Nachrchten} {\bf 290(11--12)}, 1806--1829, 2017.
  
  \bibitem{Sh}Tomoyuki~Shirai. 
  \newblock Limit theorems for random analytic functions and their zeros.
  \newblock {\em Functions in number theory and their probabilistic aspects}, 335--359, RIMS K\^{o}ky\^{u}roku Bessatsu, B34, Res.~Inst.~Math.~Sci.~(RIMS), Kyoto, 2012.

\bibitem{SV}
  Johannes~Sj\"{o}strand and Martin~Vogel.
  \newblock Large bi-diagonal matrices and random perturbations.
  \newblock {\em Journal of spectral theory} {\bf 6}, 977--1020, 2016.
\bibitem{SV1}
  Johannes~Sj\"{o}strand and Martin~Vogel.
  \newblock Interior eigenvalue density of large bi-diagonal matrices subject to random perturbations.
  \newblock {\em ArXiv preprint}, arXiv:1604.05558.
  
 \bibitem{SV2}Johannes~Sj\"{o}strand and Martin~Vogel.
 \newblock Interior eigenvalue density of Jordan matrices with random perturbations.
 \newblock In {\em Analysis Meets Geometry}, pp.~439--466, Birkh\"{a}user, Cham, 2017. 
  
  \bibitem{SV3}Johannes~Sj\"{o}strand and M.~Vogel.
\newblock Toeplitz band matrices with small random perturbations.
\newblock {\em ArXiv preprint}, arXiv:1901.08982.

\bibitem{Sn}Piotr~\'{S}niady.
\newblock Random regularization of Brown spectral measure.
\newblock {\em Journal of Functional Analysis} {\bf 193}, 291--313, 2002.

\bibitem{Stanley}Richard~P.~Stanley.
\newblock {\em Enumerative Combinatorics}, Volume 2.
\newblock Cambridge University Press, 1999.




\bibitem{trefethen}Lloyd~M.~Trefethen and Mark~Embree.
\newblock {\em Spectra and pseudospectra:~the behavior of nonnormal matrices and operators}.
\newblock Princeton University Press, 2005.

\bibitem{Trefethen1991} Lloyd~N.~Trefethen.
  \newblock {Pseudospectra of matrices},
  \newblock {in Numerical Analysis 1991},
  \newblock {D.~F.~Griffiths and G.~A.~Watson, eds.},
  \newblock {Longman Sci. Tech., Harlow, UK} {\bf 260}, 234--266, 1992.
  

\bibitem{trench}William~F.~Trench.
\newblock On the eigenvalue problem for Toeplitz band matrices.
\newblock {\em Linear Algebra and its Applications}, {\bf 64}, 199--214, 1985.

\bibitem{W}Phillip~M.~Wood.
\newblock Universality of the ESD for a fixed matrix plus small random noise: A stability approach.
newblock {\em Annales de l'Institut Henri Poincar\'{e}, Probabilit\'{e}s et Statistiques} 
{\bf 52}, 1877--1896, 2016.





\bibitem{whitney}Hassler~Whitney.
\newblock {\em Complex Analytic Varieties}.
\newblock Addison-Wesley Pub.~Co. Reading, Massachusetts, 1972.
	
\end{thebibliography}
\end{document}